\setlist[enumerate]{label=\rm{(\roman*)}}
\theoremstyle{plain}
\newtheorem{thm}{Theorem}[subsection]
\newtheorem{thmp}{Theorem}[section]
\newtheorem*{thm*}{Theorem}
\newtheorem{lemma}[thm]{Lemma}
\newtheorem{prop}[thm]{Proposition}
\newtheorem{propp}[thmp]{Proposition}
\newtheorem{coro}[thm]{Corollary}
\newtheorem{corop}[thmp]{Corollary}
\theoremstyle{definition}
\newtheorem{defi}[thm]{Definition}
\newtheorem{defip}[thmp]{Definition}
\newtheorem{paragr}[thm]{}
\newtheorem{paragrp}[thmp]{}
\newtheorem*{notation}{Notation and terminology}
\newtheorem*{organization}{Organization of the paper}
\theoremstyle{remark}
\newtheorem{rem}[thm]{Remark}
\newtheorem{remp}[thmp]{Remark}
\newtheorem{exam}[thm]{Example}
\newtheoremstyle{dotless}{}{}{}{}{}{}{\parindent}{}
\theoremstyle{dotless}
\newtheorem*{paragr*}{}
\patchcmd{\section}{\scshape}{\bfseries}{}{}
\renewcommand{\@secnumfont}{\bfseries}
\newcommand\pdfinfty{\texorpdfstring{$\infty$}{oo}}
\newcommand\pdfinftyo{\texorpdfstring{$(\infty, 1)$}{(oo, 1)}}
\newcommand\pdfinftyn{\texorpdfstring{$(\infty, n)$}{(oo, 1)}}
\let\cong\simeq
\newcommand\zbox[1]{\makebox[0pt][l]{#1}}
\newcommand\pbox[1]{\zbox{#1}}
\let\ndef\emph
\DeclareMathOperator\Aut{Aut}
\DeclareMathOperator\Map{Map}
\DeclareMathOperator\aut{aut}
\newcommand\grSigma[1]{{(\Sigma_{#1})}_{\mathrm{contr}}}
\newcommand\Zd{\Z/2\Z}
\newcommand\Zdn{(\Zd)^n}
\newcommand\catdisc[1]{#1_\mathrm{disc}}
\newcommand\catZd{\catdisc{(\Zd)}}
\newcommand\catZdn{\catdisc{\Zdn}}
\DeclareMathOperator\Ob{Ob}
\DeclareMathOperator\Ar{Ar}
\newcommand\Ari[1]{\mathrm{Ar}^{\sim}_{#1}}
\DeclareMathOperator\colim{colim}
\DeclareMathOperator\Fun{Fun}
\DeclareMathOperator\qAut{Aut}
\DeclareMathOperator{\Homi}{\underline{\mathsf{Hom}}}
\let\hookto\hookrightarrow
\newcommand\tabdimk[1][m]{%
{\left(
\begin{matrix}
k_1 && k_2 && \cdots && k_{#1} \cr
& k'_1 && k'_2 & \cdots & k'_{{#1}-1}
\end{matrix}
\right)}}
\newcommand\tabdim{{\tabdimk{}}}
\newcommand{\Tht}[2][\@empty]{%
\ifx\@empty#1
\tau_{#2}^{}%
\else
\tau_{#2, #1}%
\fi
}
\newcommand{\Ths}[2][\@empty]{%
\ifx\@empty#1
\sigma_{#2}^{}%
\else
\sigma_{#2, #1}%
\fi
}
\newcommand\id[1]{{1^{}_{#1}}}
\let\nbd\nobreakdash
\newcommand\oo{$\infty$\nbd}
\newcommand\ztr[1]{\tau_{\le 0}{#1}}
\newcommand\loc[2]{{#2}^{-1}#1}
\newcommand\A{\mathcal{A}}
\newcommand\B{\mathcal{B}}
\newcommand\C{\mathcal{C}}
\newcommand\D{\mathcal{D}}
\newcommand\I{\mathcal{I}}
\newcommand\J{\mathcal{J}}
\newcommand\M{\mathcal{M}}
\newcommand\N{\mathcal{N}}
\newcommand\U{\mathcal{U}}
\newcommand\Z{\mathbf{Z}}
\newcommand\DT{\mathcal{D}}
\newcommand\Oper{{\mathcal{O}{\rm p}}}
\let\Op\Oper
\newcommand\nsOper{{\mathcal{O}{\rm p}_{\mathrm{ns}}}}
\newcommand\rOper{\mathcal{O}{\rm p}_{\mathrm{r}}}
\newcommand\nsrOper{\mathcal{O}{\rm p}_{\mathrm{ns,r}}}
\newcommand\Coll{\mathcal{C}{\rm oll}}
\newcommand\nsColl{\mathcal{C}{\rm oll}_{\mathrm{ns}}}
\newcommand\nsOmega{\Omega_{\mathrm{ns}}}
\newcommand\Cor{\mathbb{C}}
\let\wt\widetilde
\let\eps\varepsilon
\newcommand\Set{\mathcal{S}{\rm et}}
\newcommand\Cat{\mathcal{C}{\rm at}}
\newcommand\sCat{\mathcal{C}{\rm at}_{\Delta}}
\newcommand\rCat{\mathcal{C}{\rm at}_{\text{r}}}
\newcommand\mirror{M}
\newcommand\nGraph{\text{$n$-$\mathcal{G}{\rm raph}$}}
\newcommand\nCat[1]{\text{$#1$-$\mathcal{C}{\rm at}$}_{\mathrm{str}}}
\newcommand\nCatr[1]{\text{$#1$-$\mathcal{C}{\rm at}$}_{\mathrm{str, r}}}
\newcommand\Dn[1]{\mathrm{D}_{#1}}
\newcommand\Sn[1]{\mathrm{S}^{#1}}
\newcommand\tr{\mathrm{tr}}
\newcommand\tensOp{\otimes_\Op}
\newcommand\HomiOp{\Homi_\Op}
\newcommand\HominsOp{\Homi_{\nsOper}}
\newcommand\Dend{{\pref{\Omega}}}
\newcommand\SSet{\mathcal{SS}{\rm et}}
\newcommand\longto{\longrightarrow}
\newcommand\ThnSp{\text{$\Theta_n$-$\mathcal{S}p$}}
\newcommand\OmSp{\text{$\Omega$-$\mathcal{S}p$}}
\newcommand\nsOmSp{\text{$\nsOmega$-$\mathcal{S}p$}}
\newcommand\op{{\mathrm{op}}}
\let\bs\backslash
\let\ndef\emph
\newcommand\orth{\operatorname{\perp}}
\newcommand\cN{N_\Delta}
\newcommand\proj{{\text{proj}}}
\newcommand\inj{{\text{inj}}}
\newcommand\ocf{{\text{o}}}
\newcommand\pref[1]{\operatorname{Pr}(#1)}
\newcommand\qpref[1]{\mathcal{P}(#1)}
\newcommand\spref[1]{\operatorname{sPr}(#1)}
\newcommand\sprefp[1]{\spref{#1}_\proj}
\newcommand\sprefpo[1]{\spref{#1}_\proj^\ocf}
\newcommand\boxp{\operatorname{\square}}
\newcommand{\myrightleftarrows}[1]{\mathrel{\substack{\xrightarrow{#1} \\[-.8ex] \xleftarrow{#1}}}}
\title[On autoequivalences of the $(\infty, 1)$-category of
$\infty$-operads]{On autoequivalences of the $(\infty, 1)$-category\\ of
$\infty$-operads}
\author[D. Ara]{Dimitri Ara}
\address{Dimitri Ara, Radboud Universiteit Nijmegen, Institute for
Mathematics, Astrophysics and Particle Physics, Heyendaalseweg 135, 6525
AJ Nijmegen, The Netherlands}
\email{d.ara@math.ru.nl}
\urladdr{http://www.math.ru.nl/~dara/}
\author[M. Groth]{Moritz Groth}
\address{Moritz Groth, Radboud Universiteit Nijmegen, Institute for
Mathematics, Astrophysics and Particle Physics, Heyendaalseweg 135, 6525
AJ Nijmegen, The Netherlands}
\email{m.groth@math.ru.nl}
\urladdr{http://www.math.ru.nl/~mgroth/}
\author[J.\,J. Gutiérrez]{Javier J.~Gutiérrez}
\address{Javier J.~Gutiérrez, Radboud Universiteit Nijmegen, Institute for
Mathematics, Astrophysics and Particle Physics, Heyendaalseweg 135, 6525 AJ
Nijmegen, The Netherlands}
\email{j.gutierrez@math.ru.nl}
\urladdr{http://www.math.ru.nl/~gutierrez/}
\date{}
\begin{document}

\begin{abstract}
We study the $(\infty, 1)$-category of autoequivalences of \oo-operads. Using
techniques introduced by Toën, Lurie, and Barwick and Schommer\nbd-Pries, we
prove that this $(\infty, 1)$-category is a contractible \oo-groupoid.  Our
calculation is based on the model of complete dendroidal Segal spaces
introduced by Cisinski and Moerdijk. Similarly, we prove that the $(\infty,
1)$-category of autoequivalences of non-symmetric \oo-operads is the discrete
monoidal category associated to~$\Z/2\Z$. We also include a computation
of the $(\infty,1)$\nbd-category of autoequivalences of $(\infty, n)$-categories
based on Rezk's $\Theta_n$-spaces.
\end{abstract}

\maketitle

\tableofcontents

\section{Introduction}
\label{sec:intro}

Higher category theory and higher operad theory can be formalized by means
of a plethora of different explicit approaches, all of them having their
merits and drawbacks. These theories have applications in fields as
diverse as algebraic topology, (derived) algebraic geometry, representation
theory and homological algebra; see for instance the foundational work of
Toën--Vezzosi \cite{HAG2DAG, HAGI, HAGII} and Lurie \cite{LurieHTT,
LurieDAG, LurieHA}. Having a specific problem at hand, we are hence able to
choose an approach accordingly, and it is thus important for practical and
theoretical purposes to know how to compare these different formulations.

In the case of $(\infty,1)$-categories, the state of the art is very
satisfactory. By now, there are many different approaches to the theory of
$(\infty,1)$-categories, including quasi-categories \cite{JoyalQCatKan},
simplicial categories \cite{BergnerSimpCat}, Segal categories
\cite{SimpsonHirsch} and complete Segal spaces \cite{RezkSegSp}. Each of
these theories is organized in a Quillen model category and these are
related by a web of Quillen equivalences; see \cite{BergnerSurvey} for a
survey on these Quillen equivalences.

In \cite{ToenAxm}, To\"en took this one step further and, based on earlier
work of \hbox{Simpson} \cite{Simpson}, offered an axiomatization of the theory of
$(\infty,1)$-categories. Moreover, he showed that the $(\infty, 1)$-category
of autoequivalences of one such theory is the discrete category on the
cyclic group $\Z/2\Z$ of order two, the non-trivial element being the
passage to the opposite $(\infty,1)$-category. More precisely, he computed
the (derived) autoequivalences of the simplicial category obtained as
the Dwyer--Kan localization of complete Segal spaces. A similar calculation
was given by Lurie in \cite[Section 4.4]{LurieGI} using the language of
quasi-categories. These computations imply that any two possibly different
ways of comparing two models for $(\infty, 1)$\nbd-categories differ at most by
the passage to opposites.

\medskip

In this paper, we study the $(\infty, 1)$-category of autoequivalences of
$\infty$\nobreakdash-operads. As in the case of $(\infty,
1)$-categories, there are many different approaches to \oo-operads, including
simplicial operads \cite{CisMoerdDendSimpOper}, $\infty$-operads in the
sense of Lurie \cite[Chapter 2]{LurieHA}, dendroidal sets
\cite{MoerWeissDend, CisMoerdDend} and complete dendroidal Segal spaces
\cite{CisMoerdDendSegal} (which will be called $\Omega$-spaces in this
paper). Again, there are Quillen model categories in the background, and
thanks to recent work of Cisinski--Moerdijk \cite{CisMoerdDend,
CisMoerdDendSegal, CisMoerdDendSimpOper} and
Heuts--Hinich--Moerdijk~\cite{HeutsHinichMoerdijk}, it is known that all
these model structures are connected by Quillen equivalences.

We show that the $(\infty, 1)$-category of autoequivalences of the $(\infty,
1)$-category of \oo-operads is a contractible \oo-groupoid. More precisely,
we prove that the quasi-category of autoequivalences of $\Omega$-spaces is a
contractible Kan complex. This implies that if there is a way to compare two
models for \oo-operads, then this can be done in an essentially unique way.
Similarly, we show that the $(\infty, 1)$-category of autoequivalences of
the $(\infty, 1)$-category of non-symmetric \oo-operads is the discrete
category on the cyclic group $\Z/2\Z$ of order two, the non-trivial element
being the ``mirror autoequivalence''.

One general strategy to compute the autoequivalences of an $(\infty,
1)$\nbd-cate\-gory $\C$, following Toën \cite{ToenAxm}, Lurie \cite[Section
4.4]{LurieGI} and Barwick--Schommer-Pries \cite{BarSchPrUnicity}, is the
following. One first identifies a small category $A$ inside $\C$ such that
\begin{enumerate}
  \item the inclusion functor $A \hookto \C$ is dense;
  \item the autoequivalences of $\C$ restrict to autoequivalences of $A$.
\end{enumerate}
It then follows formally that the autoequivalences of $\C$ sit fully
faithfully in the autoequivalences of $A$. The problem is thus reduced to computing
the autoequivalences of $A$ (up to the question of essential
surjectivity, which is easy in our cases).

If the $(\infty, 1)$-category $\C$ is a localization of an $(\infty,
1)$-category $\qpref{A}$ of simplicial presheaves on a small category $A$,
then one might hope that, in good cases, $A$~would satisfy the two
conditions above. However, in practice it is hard to show the second point
directly. For this purpose, Toën introduced the idea of using an
intermediate (large) category: the so-called $0$-truncated objects of $\C$.
This category is trivially stable under autoequivalences of $\C$. Thus, if we
assume that the objects of $A$ are $0$-truncated, the verification of the
second point is reduced to showing that autoequivalences of $0$-truncated
objects of $\C$ fix the small category $A$. In our cases this turns out to be a much
simpler problem. This strategy is formalized by our
Proposition~\ref{prop:lemma_aut}. (For the case where $\C$ is the
$(\infty, 1)$-category of $(\infty,1)$-categories, Lurie \cite[Section
4.4]{LurieGI} uses a similar strategy, considering the category of posets as
an intermediate (large) category.)

In order to apply this proposition to compute the autoequivalences of
\oo-operads, we need a model defined as a localization of
simplicial presheaves. The only model of this kind for \oo-operads proposed
so far are the $\Omega$\nbd-spaces of Cisinski--Moerdijk
\cite{CisMoerdDendSegal}.  These are defined as a localization of simplicial
presheaves on the small category $\Omega$ of trees introduced by Moerdijk and
Weiss \cite{MoerWeissDend}. It is not hard to identify the $0$-truncated
$\Omega$-spaces: they are the so-called rigid (strict) operads, i.e.,
the operads whose underlying category contains no non-trivial isomorphisms.
It thus suffices to show that autoequivalences of the category of rigid
operads restrict to the small category $\Omega$ and to compute the
autoequivalences of $\Omega$. Most of our section on
\oo-operads is dedicated to the proofs of these two statements. Although
this is not formally needed, we also include a similar computation for the
autoequivalences of (strict) operads.

We also compute the autoequivalences of non-symmetric \oo-operads using the
obvious planar variant of complete dendroidal Segal spaces. The proofs are
quite similar to the symmetric case although the combinatorics differs at
some points. The difference is mainly due to the fact that objects of the
planar version of $\Omega$ have no non-trivial automorphisms.

Finally, we include a calculation of the $(\infty, 1)$\nbd-category of
autoequivalences of $(\infty,n)$-categories. This problem has
already been solved by Barwick and Schommer-Pries
in \cite{BarSchPrUnicity} using a new model for $(\infty, n)$-categories
called $\Upsilon_n$\nbd-spaces. Here we provide an alternative calculation
using instead the model of $\Theta_n$-spaces introduced by Rezk in
\cite{RezkThSp, RezkThSpCorr}, based on the category $\Theta_n$ introduced by
Joyal in~\cite{JoyalTheta}. Our choice of $\Theta_n$-spaces rather than
$\Upsilon_n$-spaces is dictated by the simpler combinatorics of the
category~$\Theta_n$.

\begin{organization}
In Section~\ref{sec:review}, we recall some facts about quasi-categories.
In Section~\ref{sec:autodense}, we study restriction functors induced by
dense functors and we formalize the general strategy for calculating
autoequivalences of certain quasi-categories.  In Section~\ref{sec:oon}, we
prove that the quasi-category of autoequivalences of $\Theta_n$-spaces is
the discrete category $\Zdn$.  Along the way, we compute the
autoequivalences of the categories of strict $n$-categories, rigid strict
$n$-categories and of the category $\Theta_n$.  In
Section~\ref{sec:operads}, we prove that the quasi-category of
autoequivalences of $\Omega$-spaces is a contractible Kan complex. We also
calculate the autoequivalences of the categories of operads, rigid operads
and of the category $\Omega$. Finally, in Section~\ref{sec:nsoperads}, we
turn to the quasi-category of planar $\Omega$-spaces and show that its
quasi-category of autoequivalences is the discrete category $\Z/2\Z$. We
also describe the autoequivalences of non-symmetric operads, non-symmetric
rigid operads and planar trees.
\end{organization}

\begin{notation}
If $A$ is a small category, we will denote by $\pref{A}$ the category of
(set-valued) presheaves on $A$ and by $\spref{A}$ the category of
simplicial presheaves on $A$. If $\C$ is a category, we will denote by
$\aut(\C)$ the \emph{set} of autoequivalences of $\C$ and by $\Aut(\C$) the
\emph{category} of autoequivalences of $\C$. The set
$\aut(\C)$ and the category $\Aut(\C)$ will sometimes be considered as
a monoid and a strict monoidal category, respectively, the additional
structure being given by composition.  We will say that a morphism of a
category (or more generally of a strict $n$-category) is non-trivial if it
is not an identity.

If $\C$ and $\D$ are quasi-categories, we will denote by $\Fun(\C, \D)$
the quasi-category of functors from $\C$ to $\D$. The full subcategory of
$\Fun(\C, \C)$ spanned by the equivalences will be denoted by $\Aut(\C)$.

We will assume for simplicity (contrary to the strong opinions of the first
two authors) that our model categories have functorial factorizations. If
$\M$ is a model category and $S$ is a class of morphisms of $\M$, we will
denote by $S^{-1}\M$ the left Bousfield localization of $\M$ with respect to
$S$ (if it exists).

We will neglect the usual set theoretic issues related to category theory.
In particular, we will shamelessly apply the nerve functor to non-small
categories.
\end{notation}

\section{Review of quasi-categories}
\label{sec:review}

In this section, we recall some facts about quasi-categories, mostly about
the relation between model categories and quasi-categories, and about
localizations of locally presentable quasi-categories. We assume that the
reader is familiar with the basics of the theory of quasi-categories as
developed in the foundational work of Joyal \cite{JoyalQCatKan, JoyalNotes,
JoyalQCatAppl} and Lurie \cite{LurieHTT, LurieHA}. For an introduction to
this theory emphasizing the philosophy, see \cite{Groth}.

\begin{paragrp}
We will denote by $N \colon \Cat \to \SSet$ the nerve functor from (small)
categories to simplicial sets. Since the nerve of a category is a
quasi-category, this functor induces a fully faithful functor from
categories to quasi-categories. We will often consider this functor as an
inclusion.
\end{paragrp}

\begin{defip}[Lurie]
The \ndef{underlying quasi-category} of a model category $\M$ is a
quasi-category $\U(\M)$ endowed with a functor \hbox{$f\colon \M \to\U(\M)$}
such that for any quasi-category~$\D$, the induced functor
\[
f^\ast\colon\Fun(\U(\M),\D)\longto\Fun(\M,\D)
\]
is fully faithful with essential image the functors $\M\to\D$ which send
weak equivalences of $\M$ to equivalences in~$\D$. This quasi-category
$\U(\M)$, if it exists (and it does, see the next proposition), is
determined uniquely up to equivalence of quasi-categories.
\end{defip}

\begin{remp}
This definition differs slightly from the original definition of Lurie
\cite[Definitions 1.3.4.1 and 1.3.4.15]{LurieHA} in which one restricts to
cofibrant objects of $\M$. Nevertheless, the two definitions are equivalent
by \cite[Remark 1.3.4.16]{LurieHA}.
\end{remp}

\begin{remp}
The definition of $\U(\M)$ only depends on the underlying category of~$\M$
and the weak equivalences of $\M$. In particular, if $\M$ and $\N$ are two model
categories on the same underlying category with same weak equivalences,
then $\U(\M)$ and $\U(\N)$ are canonically equivalent.
\end{remp}

\begin{propp}[Lurie]
Every model category has an underlying quasi-category.
\end{propp}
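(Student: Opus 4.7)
The plan is to realize $\U(\M)$ as a quasi-categorical localization of the nerve of $\M$ at its class $W$ of weak equivalences. Concretely, I would set $\U(\M) = N(\M)[W^{-1}]$ and take $f \colon \M \to \U(\M)$ to be the composite of the identification $\M \simeq N(\M)$ (valid since the nerve of a category is a quasi-category) with the localization map $N(\M) \to N(\M)[W^{-1}]$. The entire content of the statement is then reduced to the existence of the quasi-categorical localization together with its universal property.

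For the construction of $N(\M)[W^{-1}]$, one invokes Lurie's theory of localizations of quasi-categories. One concrete route is through marked simplicial sets: regard $(N(\M), W)$ as a marked simplicial set, take a fibrant replacement in the Cartesian model structure, and extract the underlying simplicial set, which is automatically a quasi-category equipped with a map from $N(\M)$. Alternatively, one can form the reflective localization of the presheaf quasi-category $\qpref{N(\M)}$ that inverts the Yoneda images of the edges in $W$ and restrict to representables.

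The required universal property then holds essentially by definition of this localization: for any quasi-category $\D$, precomposition with $f$ yields a fully faithful functor
\[
f^{\ast} \colon \Fun(\U(\M), \D) \longto \Fun(N(\M), \D) = \Fun(\M, \D),
\]
whose essential image consists of the functors $N(\M) \to \D$ sending each edge of $W$ to an equivalence in $\D$, and this is exactly the condition of sending weak equivalences of $\M$ to equivalences of $\D$. I do not expect any substantial obstacle beyond citing the existence of quasi-categorical localizations; the proposition is really a packaging of that general fact applied to the pair $(N(\M), W)$, and the uniqueness clause asserted in the preceding definition is the standard uniqueness of objects representing a universal property.
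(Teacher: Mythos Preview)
Your proposal is correct and matches the paper's approach: the paper's proof is the single sentence ``This follows from [Lurie, \emph{Higher Algebra}, Remark~1.3.4.2]'', and what you have written is precisely an unpacking of that remark---namely, that $\U(\M)$ is realized as the quasi-categorical localization $N(\M)[W^{-1}]$, obtained for instance by fibrant replacement of the marked simplicial set $(N(\M), W)$. (Your second ``alternative'' route via reflective localization of $\qpref{N(\M)}$ is less straightforward than stated, since representables need not be local and $\M$ is not small, but the marked-simplicial-set construction already suffices.)
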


\begin{proof}
This follows from \cite[Remark 1.3.4.2]{LurieHA}.
\end{proof}

\begin{paragrp}
We will denote by $\cN \colon \sCat \to \SSet$ Cordier's coherent nerve functor
from simplicial categories to simplicial sets (see \cite[Definition
1.1.5.5]{LurieHTT}). If $\M$ is a simplicial model category, we will denote
by $\M^\circ$ the full simplicial subcategory of $\M$ spanned by the cofibrant and
fibrant objects. This simplicial category $\M^\circ$ is locally fibrant in
the sense that all its mapping spaces are Kan complexes.  It follows from
\cite[Theorem 2.1]{cordierporter_vogt} that its coherent nerve
$\cN(\M^\circ)$ is a quasi-category.
\end{paragrp}

\begin{thmp}[Lurie]\label{thm:under_simp}
If $\M$ is a simplicial model category, then $\cN(\M^\circ)$ is the
underlying quasi-category of $\M$.
\end{thmp}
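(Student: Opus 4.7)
The plan is to verify directly that $\cN(\M^\circ)$, together with a functor $f\colon \M \to \cN(\M^\circ)$ constructed below, satisfies the universal property of the underlying quasi-category spelled out in the definition.

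First I would construct $f$. Using the assumed functorial factorizations, choose a functorial cofibrant-fibrant replacement $R \colon \M \to \M^\circ$ together with a natural zig-zag of weak equivalences from $R$ to the identity of $\M$. Post-composing $R$ with the canonical functor $\M^\circ \to \cN(\M^\circ)$, which sends an object to itself and a morphism $X \to Y$ to its image as a $0$-simplex of $\Map_\M(X, Y)$ viewed as a $1$-simplex of the coherent nerve, produces $f$. Because $R$ sends each weak equivalence of $\M$ to a weak equivalence between cofibrant-fibrant objects, and any such morphism is a simplicial homotopy equivalence, $f$ carries weak equivalences of $\M$ to equivalences in the quasi-category $\cN(\M^\circ)$.

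The main work is the universal property. The strategy is to identify $\cN(\M^\circ)$ with the Dwyer--Kan localization of $\M$ at the class $W$ of weak equivalences, which is known to provide an underlying quasi-category. Such a localization admits the model $\cN(L^H(\M, W)^{\mathrm{fib}})$, where $L^H$ denotes Dwyer--Kan's hammock localization and the superscript denotes a fibrant replacement in the Bergner model structure on simplicial categories. Two classical Dwyer--Kan results then reduce our problem: the inclusion $\M^\circ \hookto \M$ induces a Dwyer--Kan equivalence $L^H(\M^\circ, W \cap \M^\circ) \to L^H(\M, W)$, and furthermore, because for cofibrant-fibrant $X, Y$ the simplicial mapping space $\Map_\M(X,Y)$ already computes the homotopy mapping space, the natural map $\M^\circ \to L^H(\M^\circ, W \cap \M^\circ)$ is itself a Dwyer--Kan equivalence. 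Since $\M^\circ$ is locally fibrant, Cordier's coherent nerve preserves these Dwyer--Kan equivalences (by \cite[Theorem 2.1]{cordierporter_vogt}, already invoked above), yielding the desired equivalence $\cN(\M^\circ) \simeq \U(\M)$.

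The main obstacle is the coherence check that the concrete functor $f$ built above corresponds, under this chain of Dwyer--Kan equivalences and comparisons between Bergner- and Joyal-fibrant models, to the canonical map from $\M$ into its Dwyer--Kan localization. This is a technical but standard piece of bookkeeping, treated in full generality in \cite{LurieHA}, and our proposal would appeal to that argument once the above identification of mapping spaces has been carried out.
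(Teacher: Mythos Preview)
Your sketch is a reasonable outline of how one actually proves this result, and the ingredients you invoke (Dwyer--Kan hammock localization, the comparison of simplicial mapping spaces with hammock mapping spaces for cofibrant-fibrant objects, the DK equivalence induced by $\M^\circ \hookto \M$) are the right ones. The paper, however, does not reprove any of this: its entire proof is the one-line citation ``This is \cite[Theorem 1.3.4.20]{LurieHA}.'' So you have gone considerably further than the paper in unpacking the argument, while at the end still deferring to \cite{LurieHA} for the final coherence bookkeeping---which is exactly what the paper does from the outset. In that sense your proposal is not a different route so much as a partial expansion of the cited reference; if your aim is to match the paper, a bare citation suffices.
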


\begin{proof}
This is \cite[Theorem 1.3.4.20]{LurieHA}.
\end{proof}

\begin{paragrp}
Recall from \cite[Section 5.5]{LurieHTT} that the classical notion of
locally presentable category can be generalized to the notion of locally
presentable quasi-category. By \cite[Proposition A.3.7.6]{LurieHTT}
and \cite[Proposition 1.3.4.22]{LurieHA}, these quasi-categories can be
characterized as those being the underlying quasi-category of a
combinatorial model category.
\end{paragrp}

\begin{paragrp}
Denote by $\mathcal{S}$ the quasi-category of spaces, that is, the
underlying quasi-category of the Kan--Quillen model structure on simplicial
sets. If $A$ is a small category, then the \ndef{quasi-category of presheaves}
$\qpref{A}$ on $A$ is the quasi-category $\Fun(A^\op, \mathcal{S})$.
\end{paragrp}

\begin{propp}[Heller, Bousfield--Kan]
Let $A$ be a small category. We have the following two simplicial proper
combinatorial model structures on the category~$\spref{A}$ of simplicial
presheaves on~$A$:
\begin{enumerate}
  \item the \ndef{injective model structure}, whose weak equivalences and
    cofibrations are the objectwise weak equivalences and the objectwise
    cofibrations, respectively;
  \item the \ndef{projective model structure}, whose weak equivalences and
    fibrations are the objectwise weak equivalences and the objectwise
    fibrations, respectively.
\end{enumerate}
\end{propp}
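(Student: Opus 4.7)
The plan is to construct the two model structures by rather different means, then verify the extra properties (simplicial enrichment, properness, combinatoriality) in parallel.

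For the projective model structure I would argue by transfer along the family of evaluation functors. For each $a\in A$, the evaluation functor $\mathrm{ev}_a\colon\spref{A}\to\SSet$ has a left adjoint $F_a$ (given by $F_a(K) = \Hom_A(a,-)\times K$). Collecting these into a single right adjoint to the product model category $\SSet^{\Ob(A)}$ (equipped with the product of Kan--Quillen model structures), Kan's transfer theorem produces the projective model structure as soon as one verifies the acyclicity condition for relative cell complexes built from $\{F_a(\Lambda^n_k\hookto\Delta^n)\}$. This condition reduces objectwise to the fact that Kan--Quillen trivial cofibrations are closed under pushout and transfinite composition. The generating sets are small, so the structure is combinatorial.

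For the injective structure I would invoke J.~Smith's theorem on combinatorial model structures. One declares weak equivalences and cofibrations to be objectwise, and defines fibrations by the right lifting property. The needed inputs are: (a)~objectwise weak equivalences form an accessible and accessibly embedded subcategory of the arrow category, and satisfy 2-out-of-3; (b)~the class of objectwise cofibrations (i.e.\ monomorphisms) is generated, under cobase change and transfinite composition, by a set; (c)~maps with the right lifting property against all monomorphisms are objectwise trivial fibrations, which then yields the factorization axiom via the small object argument. The main obstacle is the \emph{bounded cofibration hypothesis} in (c): one must exhibit a regular cardinal $\kappa$ such that every monomorphism whose codomain is $\kappa$-presentable is a retract of one generated by monomorphisms between $\kappa$-small presheaves. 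This is where the cardinality bookkeeping in Heller's original argument is essential, but it goes through for any small~$A$.

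For simpliciality I would equip $\spref{A}$ with the pointwise tensor $(X\otimes K)(a) = X(a)\times K$ and cotensor $X^K(a)=X(a)^K$; the pushout-product axiom then holds objectwise because it holds in Kan--Quillen. Right properness is immediate since it holds objectwise in $\SSet$ and all structure in both model categories is detected objectwise (note that injective fibrations are in particular objectwise fibrations, and projective fibrations are objectwise fibrations by definition). Left properness likewise reduces to left properness of Kan--Quillen: pushouts in $\spref{A}$ are computed objectwise, and in both structures every cofibration is an objectwise monomorphism (for the projective case, because $F_a$ sends monomorphisms to monomorphisms and generators are closed under the standard constructions). Finally, combinatoriality for both is built into the construction, since the Kan--Quillen model structure is combinatorial and Smith's theorem produces combinatorial model structures by design.
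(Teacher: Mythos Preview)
Your sketch is essentially correct and considerably more detailed than the paper's own argument, which proceeds almost entirely by citation: Heller for the injective structure, Bousfield--Kan for the projective one, and Lurie's \cite[Proposition~A.2.8.2]{LurieHTT} for a common generalization. The only substantive argument the paper gives is for properness, and there it uses a slicker device than yours: it observes that left properness is immediate for the injective structure and right properness for the projective structure (both objectwise), and then invokes the fact that left and right properness depend only on the class of weak equivalences, so both structures are automatically proper. Your route---checking that injective fibrations are objectwise fibrations and that projective cofibrations are objectwise monomorphisms---also works, but requires a little more verification. One small slip: the left adjoint to $\mathrm{ev}_a$ on \emph{presheaves} is $F_a(K)=\Hom_A(-,a)\times K$, not $\Hom_A(a,-)\times K$; with the variance corrected your transfer argument goes through unchanged.
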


\begin{proof}
It seems that the first appearances of the injective and projective model
structures are \cite[Theorem 4.5]{Heller} and \cite[Chapter XI, \S 8]{BK}, respectively.
See \cite[Proposition A.2.8.2]{LurieHTT} for a more general statement.
The fact that these model structures are simplicial follows easily from the
fact that the Kan--Quillen model structure on simplicial sets is simplicial.
The left properness is obvious for the injective model structure and right
properness for the projective model structure follows easily from right
properness of the Kan--Quillen model structure. Since left and right
properness only depend on the class of weak equivalences, it follows that
these structures are both proper.
\end{proof}

We will denote by $\spref{A}_\inj$ and $\spref{A}_\proj$ these two model
structures.

\begin{propp}[Lurie]\label{prop:desc_PA}
The quasi-category $\qpref{A}$ of presheaves on a small category $A$ is
canonically equivalent to the underlying quasi-category of the projective
model structure on $\spref{A}$.
\end{propp}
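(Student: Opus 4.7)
The plan is to combine Theorem~\ref{thm:under_simp} with a general result of Lurie identifying the coherent nerve of a projectively cofibrant-fibrant functor category with a functor quasi-category.

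First, observe that $\spref{A}_\proj$ is a simplicial combinatorial model category, so by Theorem~\ref{thm:under_simp} its underlying quasi-category is canonically equivalent to $\cN((\spref{A}_\proj)^\circ)$, the coherent nerve of its full simplicial subcategory of cofibrant-fibrant objects. It therefore suffices to produce a canonical equivalence of quasi-categories
\[
\cN((\spref{A}_\proj)^\circ) \simeq \Fun(A^\op, \mathcal{S}).
\]

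To produce such an equivalence, I would identify $\spref{A}$ with the category of simplicial functors from $A^\op$ (viewed as a discrete simplicial category) to $\SSet$ equipped with its standard simplicial enrichment. Under this identification, the projective model structure on $\spref{A}$ agrees with the projective model structure on the simplicial functor category, whose weak equivalences and fibrations are the objectwise ones with respect to the Kan--Quillen structure on $\SSet$. Applying \cite[Proposition 4.2.4.4]{LurieHTT} with target the Kan--Quillen model structure on simplicial sets then gives a canonical equivalence
\[
\cN((\spref{A}_\proj)^\circ) \simeq \Fun(\cN(A^\op), \cN(\SSet^\circ)),
\]
and by definition $\cN(\SSet^\circ) = \mathcal{S}$, while $\cN(A^\op) = A^\op$ under the fully faithful inclusion of small categories into quasi-categories recalled earlier. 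Composing, this is precisely the desired equivalence with $\Fun(A^\op, \mathcal{S}) = \qpref{A}$.

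There is no real obstacle: the argument is just the juxtaposition of Theorem~\ref{thm:under_simp} with Lurie's cited proposition, modulo the trivial identification of $\spref{A}$ with the simplicial functor category $\Fun_\Delta(A^\op, \SSet)$. The only point that warrants care is the canonicity of the equivalence (needed to say ``\emph{canonically} equivalent''), but this is guaranteed by the universal property characterizing $\U(\spref{A}_\proj)$ in the definition of the underlying quasi-category together with the naturality of the identification in \cite[Proposition 4.2.4.4]{LurieHTT}.
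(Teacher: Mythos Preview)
Your argument is correct. The paper's own proof is simply a one-line citation of \cite[Proposition~1.3.4.25]{LurieHA}, so you have effectively unpacked that reference: Lurie's proof of that proposition proceeds exactly as you do, by invoking the identification of the underlying quasi-category of a simplicial model category with the coherent nerve of its bifibrant objects (what the paper records as Theorem~\ref{thm:under_simp}) and then applying \cite[Proposition~4.2.4.4]{LurieHTT} to the projective model structure on simplicial functors. In that sense your approach and the paper's are the same, yours being one level less black-boxed.
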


\begin{proof}
This is a special case of \cite[Proposition~1.3.4.25]{LurieHA}.
\end{proof}

\begin{paragrp}
If $\C$ is a quasi-category and $X$, $Y$ are two objects of $\C$, we will
denote by $\Map_\C(X, Y)$ the space of morphisms from $X$ to $Y$. See
\cite[Section~1.2.2]{LurieHTT} or \cite{DuggerSpivakMap} for various
approaches to define this space. Recall from \cite[Section 2.2]{LurieHTT}
that if $\M$ is a simplicial model category, then the mapping spaces of the
underlying quasi-category of $\M$ can be computed using the simplicial
enrichment of $\M$ (when restricted to cofibrant fibrant objects).
\end{paragrp}

\begin{paragrp}
Let $\C$ be a quasi-category and let $S$ be a class of morphisms of $\C$.
An object~$Y$ of $\C$ is \emph{$S$-local} if for every map $f\colon X\to X'$
in $S$, the induced map
\[
f^\ast\colon\Map_{\C}(X',Y)\longto\Map_{\C}(X,Y)
\]
is a weak equivalence. The full subcategory of the quasi-category $\C$
spanned by the $S$-local objects is called the \ndef{localization of $\C$ by
$S$}. We will denote it by $S^{-1}\C$.
\end{paragrp}

\begin{propp}[Lurie]
If $\C$ is a locally presentable quasi-category and $S$ is a \emph{set} of morphisms of
$\C$, then the inclusion $i \colon S^{-1}\C \hookto \C$ admits a left adjoint $L$. In
other words, we have a reflective localization
\[
L\colon \C\myrightleftarrows{\rule{0.4cm}{0cm}} S^{-1}\C\colon i.
\]
\end{propp}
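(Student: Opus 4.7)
The plan is to reduce the statement to a model-categorical assertion via the characterization recorded immediately before. First, I would write $\C\simeq\U(\M)$ for some combinatorial model category $\M$; by Dugger's presentation theorem, $\M$ may be taken to be left proper (and even a left Bousfield localization of a projective model structure on simplicial presheaves), which is the hypothesis under which Smith's theorem on left Bousfield localizations applies. The canonical functor $\gamma\colon\M\to\U(\M)\simeq\C$ allows me to lift $S$ to a set $\tilde S$ of morphisms in $\M$ between cofibrant objects whose images under $\gamma$ are equivalent to those of $S$.

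Next, I would form the left Bousfield localization $\tilde S^{-1}\M$, which is again a left proper combinatorial model category on the same underlying category, with the same cofibrations, more weak equivalences, and whose fibrant objects are exactly the $\tilde S$-local fibrant objects of $\M$. The key identification is then $\U(\tilde S^{-1}\M)\simeq S^{-1}\C$. Since mapping spaces in $\U(\M)$ are computed by derived mapping spaces in $\M$ (using cofibrant replacement of the source and fibrant replacement of the target), an object of $\C$ is $S$-local if and only if its model for $\M$ is $\tilde S$-local. Hence the full subcategory $S^{-1}\C\subset\C$ and $\U(\tilde S^{-1}\M)\subset\U(\M)$ span the same objects, and one checks they coincide as full subcategories.

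Finally, the identity functor gives a Quillen adjunction $\M\rightleftarrows\tilde S^{-1}\M$ in which the right adjoint preserves all weak equivalences, hence it is a homotopy reflection. Applying the underlying-quasi-category construction $\U(-)$ (which sends Quillen adjunctions to adjunctions between the underlying quasi-categories and preserves fully faithful right adjoints in this setting), I obtain an adjunction $L\dashv i$ between $\U(\M)\simeq\C$ and $\U(\tilde S^{-1}\M)\simeq S^{-1}\C$, which is the desired reflective localization.

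The main obstacle I anticipate is the middle step: verifying carefully that model-categorical $\tilde S$-locality corresponds exactly to quasi-categorical $S$-locality, and that $\U(\tilde S^{-1}\M)$ really is the full sub-quasi-category of $\U(\M)$ spanned by the $S$-local objects rather than merely an equivalent quasi-category. This hinges on the compatibility of derived mapping spaces in $\M$ with those of the Bousfield localization (the former become equivalences on local objects precisely when evaluated on maps in $\tilde S$), together with the general fact that a Quillen reflection between combinatorial model categories induces a reflective localization of underlying quasi-categories — which itself requires a small verification using the universal property of $\U(-)$.
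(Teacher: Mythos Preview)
Your argument is essentially correct, but the paper takes a much shorter route: it simply cites \cite[Proposition~5.5.4.15(3)]{LurieHTT}, where Lurie proves this intrinsically at the quasi-categorical level (ultimately via the adjoint functor theorem for presentable quasi-categories). No model categories are invoked.

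Your approach is genuinely different. You pass through Dugger's presentation theorem and Smith's existence theorem for left Bousfield localizations, then transport the reflective Quillen pair $\M\rightleftarrows \tilde S^{-1}\M$ to the quasi-categorical level. This has the virtue of being concrete and of tying the abstract localization to the classical model-categorical one; indeed, your middle step essentially reproves the content of the paper's later Proposition~\ref{prop:loc_mcat_qcat}. The cost is that you must import two nontrivial external facts the paper does not establish: that Quillen adjunctions induce adjunctions of underlying quasi-categories, and that a right Quillen functor which is homotopically fully faithful yields a fully faithful functor on $\U(-)$. Both are true (see e.g.\ Mazel-Gee or Hinich), but they are not lighter than the single citation the paper uses, and in the paper's logical order your argument would sit awkwardly since Proposition~\ref{prop:loc_mcat_qcat} itself appeals to the reflective localization you are trying to construct. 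So your route is sound but more circuitous; the paper's choice to defer entirely to Lurie is the economical one here.
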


\begin{proof}
This is \cite[Proposition 5.5.4.15.(3)]{LurieHTT}.
\end{proof}

\begin{remp}
When the quasi-category $\C$ is an ordinary category $A$, the mapping space
$\Map_A(X, Y)$ is simply the discrete simplicial set $A(X, Y)$. In
particular, an object $Y$ in $A$ is $S$-local if and only if, for all
$f\colon X\to X'$ in $S$, the induced map
\[
f^\ast\colon A(X',Y)\longto A(X,Y)
\]
is a bijection, or, in other words, if and only if $Y$ is right orthogonal
to $S$.
\end{remp}

\begin{propp}\label{prop:loc_mcat_qcat}
Let $\M$ be a left proper combinatorial model category and let $S$ be a set
of maps of $\M$. There is a canonical equivalence of quasi-categories
\[
S^{-1}(\U(\M)) \stackrel{\cong}{\longto} \U(S^{-1} \M).
\]
\end{propp}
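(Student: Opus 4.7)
The plan is to construct the canonical functor from universal properties and then verify it is an equivalence by passing to a simplicial model and identifying the images as full subcategories.

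For the construction: every $\M$-weak-equivalence is an $S^{-1}\M$-weak-equivalence, so the composite $\M\to S^{-1}\M\to\U(S^{-1}\M)$ inverts $\M$-weak-equivalences. By the universal property of $\U(\M)$, it factors uniquely as $\M\to\U(\M)\to\U(S^{-1}\M)$. Since the maps in $S$ become weak equivalences in $S^{-1}\M$, their images in $\U(S^{-1}\M)$ are equivalences. Hence the functor $\U(\M)\to\U(S^{-1}\M)$ inverts (the images of) $S$, so by the universal property of the reflective localization $L\colon\U(\M)\to S^{-1}\U(\M)$ established in the preceding proposition, it factors uniquely through a functor $S^{-1}\U(\M)\to\U(S^{-1}\M)$. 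The task is to prove this is an equivalence of quasi-categories.

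To check this, I would first reduce to the case where $\M$ is a simplicial combinatorial model category. This is possible by Dugger's replacement theorem: every combinatorial model category is connected by a zigzag of Quillen equivalences to a simplicial combinatorial one, and both $\U(-)$ and left Bousfield localization (with $S$ transported along the Quillen equivalence) are invariant under such zigzags. Once $\M$ is simplicial, left Bousfield localization yields a simplicial model category $S^{-1}\M$ with the \emph{same} cofibrations and the \emph{same} simplicial enrichment as $\M$, whose fibrant objects are exactly the $S$\nbd-locally fibrant objects of $\M$. Consequently $(S^{-1}\M)^\circ$ is a full simplicial subcategory of $\M^\circ$, and by Theorem~\ref{thm:under_simp} the induced map $\cN((S^{-1}\M)^\circ)\hookrightarrow\cN(\M^\circ)$ realizes $\U(S^{-1}\M)$ as a full sub-quasi-category of $\U(\M)$.

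It then remains to identify this full sub-quasi-category with $S^{-1}\U(\M)$. After replacing each $f\colon A\to B$ in $S$ by a cofibrant model (which does not change the class of $S$-local objects in either sense), the mapping space $\Map_{\U(\M)}(A,X)$ for $X\in\M^\circ$ agrees with the simplicial mapping space $\M(A,X)$. Therefore $X$ is $S$-local in the quasi-categorical sense of $\U(\M)$ if and only if $\M(B,X)\to\M(A,X)$ is a weak equivalence for every $f\colon A\to B$ in $S$, which is exactly the definition of $X$ being $S$-locally fibrant in $\M$, i.e.\ an object of $(S^{-1}\M)^\circ$. The two full subcategories of $\U(\M)$ thus coincide, which by uniqueness matches the functor constructed above.

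The main obstacle I anticipate is the reduction to the simplicial case: one must verify that both constructions $\U$ and $S^{-1}(-)$ behave well under Dugger's replacement, i.e.\ that transferring the localizing set along a Quillen equivalence yields a compatible equivalence $\U(S^{-1}\M)\simeq\U((S')^{-1}\M')$. Once this (essentially bookkeeping) step is granted, the identification of $S$-local objects in $\U(\M)$ with $S$-locally fibrant objects of $\M$ becomes a direct consequence of the fact that mapping spaces in $\U(\M)$ are computed by the simplicial enrichment on cofibrant–fibrant objects.
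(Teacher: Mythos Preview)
Your proposal is correct and follows essentially the same route as the paper: construct the comparison functor from universal properties, reduce to the simplicial case via Dugger, and there identify both sides as the same full sub-quasi-category of $\cN(\M^\circ)$ using the compatibility of mapping spaces with the simplicial enrichment. Two minor points worth tightening: Dugger's theorem in fact produces a single left Quillen equivalence $\N\to\M$ with $\N$ simplicial, left proper, and combinatorial (no zigzag is needed), and the paper makes the transfer of $S$ explicit by taking its derived image under the right adjoint and invoking \cite[Theorem~3.3.20]{hirschhorn:loc} to obtain the induced Quillen equivalence on localizations; this is exactly the ``bookkeeping'' you flag.
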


\begin{proof}
The functor $\U(\M)\to\U(S^{-1}\M)$ sends $S$ to equivalences and hence, by the
universal property of the localization, we obtain a functor
\[
\phi\colon S^{-1}(\U(\M)) \longto \U(S^{-1} \M).
\]
Let us show that this functor is an equivalence.

We first assume that $\M$ is a \emph{simplicial} left proper combinatorial
model category. By Theorem~\ref{thm:under_simp}, the underlying
quasi-categories of $\M$ and $S^{-1}\M$ are $\cN(\M^\circ)$ and
$\cN((S^{-1}\M)^\circ)$, respectively. The two quasi-categories
$S^{-1}\cN(\M^\circ)$ and $\cN((S^{-1}\M)^\circ)$ sit fully
faithfully in~$\cN(\M^\circ)$. Moreover, using the compatibility between
the mapping spaces of the model category $\M$ and those of the
quasi-category $\cN(\M^\circ)$, it is easy to check that these two
quasi-categories are equal as subcategories of $\cN(\M^\circ)$. The identity
functor $S^{-1}\cN(\M^\circ) \to \cN((S^{-1}\M)^\circ)$ is easily seen to be
canonically equivalent to the functor $\phi$, thereby concluding the proof
of the simplicial case.

We now want to drop the additional assumption that $\M$ is simplicial by an
application of a well-known result of Dugger. For that purpose, let us
assume that we have a Quillen equivalence $\M_1\rightleftarrows\M_2$ between
left proper combinatorial model categories and a set $S_2$ of maps of
$\M_2$. Let $S_1$ be the derived image of $S_2$ under the right adjoint of the
Quillen pair. By \cite[Theorem 3.3.20]{hirschhorn:loc}, there is an
induced Quillen equivalence $(S_1)^{-1}\M_1\rightleftarrows (S_2)^{-1}\M_2$.
Recall that Quillen
equivalences between combinatorial model categories induce equivalences
between the underlying quasi-categories \cite[Lemma~1.3.4.21]{LurieHA}. We
thus get a commutative diagram of quasi-categories
\[
\xymatrix{
\U(\M_1)\ar[r]\ar[d]_-\simeq&(S_1)^{-1}\U(\M_1)\ar[r]^-{\phi_1}\ar[d]_-\simeq&
\U((S_1)^{-1}\M_1)\ar[d]^-\simeq\\
\U(\M_2)\ar[r]&(S_2)^{-1}\U(\M_2)\ar[r]_-{\phi_2}&\U((S_2)^{-1}\M_2),
}
\]
in which the vertical maps are equivalences. This implies that $\phi_1$
is an equivalence if and only if $\phi_2$ is an equivalence.

Finally, let $\M$ be a left proper combinatorial model category. Then by
\cite{dugger:universal}, there is a left Quillen equivalence $\N\to\M$ such
that $\N$ is a simplicial left proper combinatorial model category. The
statement now follows easily from the previous two paragraphs.
\end{proof}

\begin{paragrp}
Let $\C$ be a quasi-category. An object $Y$ of $\C$ is said to be
\ndef{$0$-truncated} if for every object $X$ of $\C$, the mapping space
$\Map_\C(X, Y)$ is discrete. We will denote by~$\ztr{\C}$ the full
subcategory of $\C$ spanned by the $0$-truncated objects of $\C$.
\end{paragrp}

\begin{propp}[Lurie]
If $\C$ is a locally presentable quasi-category, then the inclusion $i
\colon \ztr{\C} \hookto
\C$ admits a left adjoint $L$. In other words, we have a reflective
localization
\[
L\colon\C \myrightleftarrows{\rule{0.4cm}{0cm}}\ztr{\C} \colon i.
\]
\end{propp}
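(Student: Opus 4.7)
The plan is to realize $\ztr{\C}$ as the localization $S^{-1}\C$ for a \emph{set} $S$ of morphisms, and then apply the preceding proposition on reflective localizations for sets of morphisms.

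A space $Z$ is discrete if and only if, for every $n \geq 1$, the canonical map $Z \to \Map_{\mathcal{S}}(S^n, Z)$ induced by $S^n \to \ast$ is an equivalence. Since $\C$ is locally presentable, it is tensored over $\mathcal{S}$: for every $X \in \C$ and $K \in \mathcal{S}$ there is an object $X \otimes K \in \C$ characterized by $\Map_\C(X \otimes K, Y) \simeq \Map_{\mathcal{S}}(K, \Map_\C(X, Y))$. Combining these two facts, an object $Y \in \C$ is $0$-truncated if and only if $Y$ is local with respect to every morphism of the form $X \otimes S^n \to X$ with $X \in \C$ and $n \geq 1$.

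Since $\C$ is presentable, there exists a small set $A \subseteq \C$ of objects generating $\C$ under colimits. Put
\[
S = \{\, X \otimes S^n \to X \mid X \in A,\ n \geq 1\,\},
\]
which is a set. I claim that $\ztr{\C} = S^{-1}\C$. The inclusion $\ztr{\C} \subseteq S^{-1}\C$ is immediate from the characterization above. For the converse, fix an $S$-local object $Y$. The functor $(-) \otimes S^n$ is a left adjoint in its first variable and hence preserves colimits, while $\Map_\C(-, Y)$ sends colimits to limits; consequently the class of $X' \in \C$ for which $\Map_\C(X', Y) \to \Map_\C(X' \otimes S^n, Y)$ is an equivalence is stable under colimits. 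As this class contains $A$ and $A$ generates $\C$ under colimits, it exhausts $\C$, so $Y$ is $0$-truncated.

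Having identified $\ztr{\C}$ with $S^{-1}\C$ for a set $S$ of morphisms, the preceding proposition immediately produces the desired left adjoint $L$. The one subtle point in the argument is the colimit-stability step, which hinges on the standard compatibility of tensoring and mapping spaces with colimits in a presentable quasi-category.
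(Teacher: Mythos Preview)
Your argument is correct. The paper itself gives no proof beyond a citation to \cite[Proposition~5.5.6.18]{LurieHTT}; you go further and actually deduce the result from the preceding reflective-localization proposition by exhibiting $\ztr{\C}$ as $S^{-1}\C$ for the set $S=\{X\otimes S^n\to X\mid X\in A,\ n\ge1\}$ with $A$ a small set of colimit generators. This reduction is standard and sound: the colimit-closure step you flag as the subtle point is indeed the only delicate one, and your justification (tensoring with a fixed space is itself a colimit and hence commutes with colimits, while $\Map_\C(-,Y)$ carries colimits to limits) is correct. One could in fact take only $n=1$, since $S^1\to\ast$ alone already detects discreteness of a space, but including all $n\ge1$ is harmless.
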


\begin{proof}
This is \cite[Proposition 5.5.6.18]{LurieHTT}.
\end{proof}

\section{Autoequivalences and dense functors}
\label{sec:autodense}

In this section, we formalize the general strategy described in the
introduction for computing certain quasi-categories of autoequivalences.
This is made precise by Proposition~\ref{prop:lemma_aut} and will be used in
the following three sections to determine the autoequivalences of $(\infty,
n)$-categories, \oo-operads and non-symmetric \oo-operads.

\begin{paragrp}
Let $A$ be a small category. By Proposition~\ref{prop:desc_PA}, the
quasi-category $\qpref{A}$ is canonically equivalent to $\cN(\sprefpo{A})$.
Let us choose a functorial cofibrant replacement functor $Q$ for
$\sprefp{A}$ with a natural trivial fibration $\phi\colon Q \to \id{}$.
Since every discrete presheaf is fibrant in $\sprefp{A}$, applying the
functor $Q$ to such a presheaf yields a cofibrant fibrant object of
$\sprefp{A}$. Thus the functor~$Q$ induces a morphism of simplicial
categories $\pref{A} \to \sprefpo{A}$ and hence a morphism of
quasi-categories $\pref{A} \to \qpref{A}$.
\end{paragrp}

\begin{propp}\label{prop:0-trunc-psh}
Let $A$ be a small category. Then the functor $\pref{A} \to \qpref{A}$
is fully faithful, factors through the subcategory $\ztr{\qpref{A}}$ of
$0$-truncated objects, and the restricted functor $\pref{A}
\to\ztr{\qpref{A}}$ is an equivalence of quasi-categories.
\end{propp}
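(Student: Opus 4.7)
The plan is to leverage Proposition~\ref{prop:desc_PA} to identify $\qpref{A}$ with $\cN(\sprefpo{A})$ and compute derived mapping spaces via the simplicial enrichment of $\sprefp{A}$. A preliminary observation is that any discrete presheaf $Y \in \pref{A}$ is already projectively fibrant, since each value $Y(a)$ is a discrete simplicial set and hence a Kan complex; that is precisely why applying $Q$ to a discrete presheaf lands in $\sprefpo{A}$, as the preceding paragraph describes.

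I would then establish full faithfulness together with the factorization through $\ztr{\qpref{A}}$ in a single computation. For $X, Y \in \pref{A}$, the derived mapping space $\Map_{\qpref{A}}(X, Y)$ is modeled by the simplicial hom $\Map_{\sprefp{A}}(QX, Y)$, because $QX$ is cofibrant and $Y$ is fibrant. For any simplicial set $K$ and any discrete simplicial set $D$ one has $\Hom(K, D) \cong \Hom_{\Set}(\pi_0 K, D)$, because every map into a discrete target is constant on connected components; applying this objectwise to the formula $\Map_{\sprefp{A}}(QX, Y)_n = \Hom_{\spref{A}}(QX \otimes \Delta^n, Y)$ shows that this mapping space is discrete and naturally isomorphic to $\Hom_{\pref{A}}(\pi_0 QX, Y)$, which equals $\Hom_{\pref{A}}(X, Y)$ since the trivial fibration $QX \to X$ is an objectwise weak equivalence. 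The very same computation with $QX$ replaced by any cofibrant $Z \in \sprefp{A}$ shows that $\Map_{\qpref{A}}(Z, Y)$ is always discrete, so $Y$ is $0$-truncated.

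For essential surjectivity of $\pref{A} \to \ztr{\qpref{A}}$, I would start with $Y \in \ztr{\qpref{A}}$ represented by a cofibrant-fibrant simplicial presheaf $\tilde Y$. Each representable $A(-, a) \in \sprefp{A}$ is projectively cofibrant, and Yoneda gives $\Map_{\qpref{A}}(A(-, a), Y) \simeq \tilde Y(a)$ as Kan complexes; since $Y$ is $0$-truncated, each $\tilde Y(a)$ is equivalent to a discrete simplicial set. Therefore the canonical objectwise map $\tilde Y \to \pi_0 \tilde Y$ is a weak equivalence in $\sprefp{A}$, so $Y$ is equivalent in $\qpref{A}$ to the image of the discrete presheaf $\pi_0 \tilde Y \in \pref{A}$.

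The main obstacle, I expect, is not conceptual but a matter of bookkeeping: one must carefully transport computations between the two incarnations of $\qpref{A}$, namely $\Fun(A^\op, \mathcal{S})$ and $\cN(\sprefpo{A})$, and check that the functor induced by $Q$ agrees with the expected pointwise inclusion of sets into spaces. This is painless once one observes that both the mapping-space calculation above and the image of representables pin down the functor up to equivalence.
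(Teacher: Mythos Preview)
Your proof is correct and follows essentially the same approach as the paper: both reduce to the $(\pi_0,\delta)$ adjunction for the mapping-space computation and use Yoneda on representables for essential surjectivity. Your treatment is marginally more streamlined in that you exploit directly that a discrete $Y$ is already projectively fibrant, computing $\Map(QX,Y)$ in one step, whereas the paper passes through $\Map(Q\delta X,Q\delta Y)$ and then uses the trivial fibration $Q\delta Y\to\delta Y$; similarly, you establish $0$-truncatedness by the same explicit computation rather than invoking full faithfulness as the paper does.
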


\begin{proof}
We begin by showing that the morphism $\pref{A}\to\qpref{A}$ is fully
faithful. Denote by $\delta \colon \pref{A} \to \spref{A}$ the functor sending a
presheaf to the associated discrete simplicial presheaf. We have to show
that if $X$ and $Y$ are two presheaves on~$A$, then the map
\[
Q \circ \delta \colon \Map_{\pref{A}}(X, Y) \longto \Map_{\spref{A}}(Q\delta X, Q\delta Y)
 \]
is a weak equivalence. Since the functor $\delta$ is fully faithful, this
amounts to saying that the map
\[ \Map_{\spref{A}}(\delta X, \delta Y) \longto \Map_{\spref{A}}(Q\delta X,
  Q\delta Y) \]
induced by $Q$ is a weak equivalence. But this map sits in the commutative diagram
\[
\xymatrix{
\Map_{\spref{A}}(\delta X,\delta
Y)\ar[d]_Q\ar[r]^-\cong&\Map_{\pref{A}}(\pi_0\delta X,
Y)\ar[dd]^-{\pi_0(\phi_{\delta X})^\ast}\\
\Map_{\spref{A}}(Q\delta X,Q\delta Y)\ar[d]_-{(\phi_{\delta Y})_\ast}&\\
\Map_{\spref{A}}(Q\delta X, \delta
Y)\ar[r]_-\cong&\Map_{\pref{A}}(\pi_0Q\delta X,Y),
}
\]
where the top and the bottom maps are isomorphisms coming from the
adjunction~$(\pi_0, \delta)$.
Since $Q\delta X$ is cofibrant and $Q\delta Y \to \delta Y$ is a trivial
fibration, the map~$(\phi_{\delta Y})_\ast$ is a weak equivalence.
The map $\pi_0(\phi_{\delta X})^\ast$ is even an isomorphism. It follows
that the functor of the statement is fully faithful.

The fully faithfulness of the functor $\pref{A}\to\qpref{A}$ easily implies
that it factors through $0$-truncated objects and it remains to show that
the induced functor is essentially surjective. Given $X$ in
$\ztr{\qpref{A}}$, we claim that the canonical map $\eta\colon X\to \delta
\pi_0(X)$ is a natural equivalence. Obviously, $\eta$ induces an isomorphism
on~$\pi_0$. To conclude, it thus suffices to show that for every object $a$
in $A$, the space~$X_a$ is discrete. But we have
$\Map_{\spref{A}}(a,X)\cong X_a$, where $a$ denotes the discrete simplicial
presheaf associated to $a$. This shows the result since $X$ is $0$\nbd-truncated.
\end{proof}

\begin{remp}
It follows from the previous proposition that if $S$ is a set of morphisms
of $\pref{A}$, then an object of $\pref{A}$ is $S$-local (i.e., right
orthogonal with respect to~$S$) if and only if it is $S$-local when
considered as an object of $\qpref{A}$.
\end{remp}

\begin{propp}\label{prop:0-trunc-loc}
Let $\C$ be a locally presentable quasi-category and let $S$ be a set of
morphisms between $0$-truncated objects of~$\C$. Then the quasi-categories
$S^{-1}(\ztr{\C})$ and $\ztr{(S^{-1}\C)}$ are equal as subcategories of $\C$.
\end{propp}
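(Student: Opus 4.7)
The plan is to show that, as full subcategories of $\C$, both $S^{-1}(\ztr{\C})$ and $\ztr{(S^{-1}\C)}$ have the same objects, namely those $Y$ of $\C$ that are simultaneously $0$-truncated and $S$-local. Since full subcategories of a quasi-category are determined by their object sets, this immediately yields the claimed equality.

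For the identification of $S^{-1}(\ztr{\C})$, I would first observe that, since $\ztr{\C} \hookto \C$ is a full subcategory, mapping spaces computed in $\ztr{\C}$ agree with those computed in $\C$. The hypothesis that $S$ consists of morphisms between $0$-truncated objects is used twice here: it ensures on the one hand that $S^{-1}(\ztr{\C})$ is well defined, and on the other that the $S$\nbd-locality condition can be tested indifferently in $\ztr{\C}$ or in $\C$. Consequently the objects of $S^{-1}(\ztr{\C})$ are exactly the $S$-local $0$-truncated objects of $\C$.

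For $\ztr{(S^{-1}\C)}$, one inclusion is formal: since $S^{-1}\C \hookto \C$ is a full subcategory, an $S$-local $Y$ that is $0$-truncated in $\C$ satisfies $\Map_{S^{-1}\C}(X,Y) = \Map_\C(X,Y)$, which is discrete for all $X$ in $S^{-1}\C$, so $Y$ is $0$-truncated there. The main step is the converse, promoting $0$\nbd-truncatedness in $S^{-1}\C$ to $0$\nbd-truncatedness in $\C$. For this I invoke the reflective localization $L\colon \C \rightleftarrows S^{-1}\C \colon i$ provided by the previous proposition (using local presentability of $\C$). For any $X$ in $\C$, the adjunction yields
\[
\Map_\C(X, Y) = \Map_\C(X, iY) \simeq \Map_{S^{-1}\C}(LX, Y),
\]
and the right-hand side is discrete by assumption on $Y$. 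Hence $\Map_\C(X,Y)$ is discrete for every object $X$ of $\C$, and $Y$ is $0$-truncated in $\C$.

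I do not anticipate a serious obstacle: the argument is purely a matter of assembling three ingredients, namely the fullness of $\ztr{\C} \hookto \C$, the fullness of $S^{-1}\C \hookto \C$, and the existence of the reflection $L$ supplied by the previous proposition. The only point deserving care is the bookkeeping in the first paragraph, where the hypothesis on $S$ is essential to make $S^{-1}(\ztr{\C})$ even meaningful.
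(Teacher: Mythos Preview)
Your proposal is correct and follows essentially the same approach as the paper's proof: both arguments identify the objects of each side with the $S$-local $0$-truncated objects of $\C$, using fullness of the relevant inclusions and the reflective localization $L$ to transfer $0$-truncatedness from $S^{-1}\C$ back to $\C$. Your writeup is slightly more explicit than the paper's about why $S$-locality in $\ztr{\C}$ agrees with $S$-locality in $\C$, but the underlying argument is the same.
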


\begin{proof}
We have the following compositions of reflective localizations
\[
\C \myrightleftarrows{\rule{0.4cm}{0cm}}S^{-1}\C
\myrightleftarrows{\rule{0.4cm}{0cm}}\tau_{\leq 0}(S^{-1}\C),
\qquad
\C \myrightleftarrows{\rule{0.4cm}{0cm}} \tau_{\leq 0}\C
\myrightleftarrows{\rule{0.4cm}{0cm}} S^{-1}(\tau_{\leq 0}\C),
\]
and it hence suffices to show that the two quasi-categories
$\tau_{\leq 0}(S^{-1}\C)$ and $S^{-1}(\tau_{\leq 0}\C)$ have the same
objects. But if $Y$ is an object of $S^{-1}(\tau_{\leq 0}\C)$, then
$\Map_{\C}(X,Y)$ is discrete for all $X$ in~$\C$. In particular,
$\Map_{S^{-1}\C}(X,Y)=\Map_{\C}(X,Y)$ is discrete for all $X$ in
$S^{-1}\C$ and hence $Y$ lies in $\tau_{\leq 0}(S^{-1}\C)$. Conversely,
if $Y$ is an object of $\tau_{\leq 0}(S^{-1}\C)$, then
$\Map_{S^{-1}\C}(X,Y)$ is discrete for all $X$ in $S^{-1}\C$. Given an
arbitrary $X$ in $\C$, then, using the localization functor $L\colon\C\to
S^{-1}\C$ and the $S$-locality of~$Y$, the mapping space $\Map_{\C}(X,Y)$ is
weakly equivalent to the discrete space $\Map_{S^{-1}\C}(LX,Y)$, and so $Y$
lies in $S^{-1}(\tau_{\leq 0}\C)$.
\end{proof}

\begin{corop}\label{coro:0-trunc}
Let $A$ be a small category and let $S$ be a set of morphisms of~$\pref{A}$.
The morphism $\pref{A} \to \qpref{A}$ induces an equivalence
from the category $S^{-1}\pref{A}$ to the quasi-category
$\ztr{(\loc{\qpref{A}}{S})}$.
\end{corop}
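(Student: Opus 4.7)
The plan is to chain together the two preceding propositions: Proposition~\ref{prop:0-trunc-psh} identifies $\pref{A}$ with $\ztr{\qpref{A}}$, and Proposition~\ref{prop:0-trunc-loc} commutes $0$-truncation with localization. So I would argue that, via the fully faithful functor $\pref{A} \to \qpref{A}$, we have a chain of identifications
\[
S^{-1}\pref{A} \;\cong\; S^{-1}(\ztr{\qpref{A}}) \;=\; \ztr{(\loc{\qpref{A}}{S})},
\]
where the first equivalence uses Proposition~\ref{prop:0-trunc-psh} and the second uses Proposition~\ref{prop:0-trunc-loc}.

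In more detail, first I would invoke Proposition~\ref{prop:0-trunc-psh} to view $\pref{A}$ as the full subcategory $\ztr{\qpref{A}}$ of the quasi-category $\qpref{A}$. Under this identification the set $S$ of morphisms of $\pref{A}$ becomes a set of morphisms between $0$-truncated objects of $\qpref{A}$, which is exactly the hypothesis needed to apply Proposition~\ref{prop:0-trunc-loc} with $\C = \qpref{A}$.

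Next, I would check that the localization $S^{-1}\pref{A}$ computed in the ordinary category sense agrees with the quasi-categorical localization $S^{-1}(\ztr{\qpref{A}})$. By the remark following Proposition~\ref{prop:0-trunc-psh}, an object of $\pref{A}$ is $S$-local (equivalently, right orthogonal to $S$) if and only if it is $S$-local as an object of $\qpref{A}$; equivalently, as an object of the full subcategory $\ztr{\qpref{A}}$. Hence the two full subcategories $S^{-1}\pref{A} \subseteq \pref{A}$ and $S^{-1}(\ztr{\qpref{A}}) \subseteq \ztr{\qpref{A}}$ correspond to each other under the equivalence of Proposition~\ref{prop:0-trunc-psh}.

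Finally, Proposition~\ref{prop:0-trunc-loc} applied to $\C = \qpref{A}$ and this set $S$ yields $S^{-1}(\ztr{\qpref{A}}) = \ztr{(\loc{\qpref{A}}{S})}$ as subcategories of $\qpref{A}$. Composing the two identifications gives the desired equivalence. There is no real obstacle here; the only point requiring any care is making sure that the notion of $S$-locality used to define $S^{-1}\pref{A}$ (as the full subcategory of right-orthogonal objects) matches the quasi-categorical notion of $S$-locality used in the definition of $S^{-1}\qpref{A}$, but this is handled by the remark cited above.
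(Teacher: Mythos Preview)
Your proposal is correct and follows essentially the same route as the paper's proof: apply Proposition~\ref{prop:0-trunc-psh} to identify $\pref{A}$ with $\ztr{\qpref{A}}$, then apply Proposition~\ref{prop:0-trunc-loc} to swap localization with $0$-truncation. Your extra remark checking that the two notions of $S$-locality agree is a welcome bit of care that the paper leaves implicit.
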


\begin{proof}
By Proposition~\ref{prop:0-trunc-psh}, the morphism $\pref{A} \to
\qpref{A}$ induces an equivalence $\pref{A} \to \ztr{\qpref{A}}$ and hence
an equivalence $S^{-1}\pref{A} \to S^{-1}(\ztr{\qpref{A}})$. But by the
previous proposition, $S^{-1}(\ztr{\qpref{A}})$ is nothing but
$\ztr{(S^{-1}\qpref{A})}$.
\end{proof}

\begin{paragrp}\label{paragr:def_dense}
We will say that a functor $f\colon\C\to\D$ between quasi-categories is
\emph{dense} if the identity transformation $f\to f$ exhibits
$\id{}\colon\D\to\D$ as a left Kan extension of~$f$ along~$f$.
(In Lurie's terminology, see \cite[Definition~4.4.2]{LurieGI}, one says that
$f$ strongly generates $\D$.) When $\C$ is small and $\D$ is cocomplete, this
amounts to saying that the associated nerve functor (that is, the right
adjoint to the canonical functor $\qpref{\C} \to \D$) is fully faithful
\cite[Remark~4.4.4]{LurieGI}. In particular, if $A$ is a small category,
then the Yoneda embedding $A \to \qpref{A}$ is dense. More generally, if $A$
is a small category and $S$ is a set of morphisms of $\qpref{A}$, then the
canonical functor $A \to S^{-1}\qpref{A}$ is dense.
\end{paragrp}

\begin{propp}[Lurie]\label{prop:auto_dense}
Let $i\colon\A \to \B$ be a dense inclusion of quasi-categories such that $\A$ is
small and $\B$ admits small colimits. If any autoequivalence of $\B$ restricts to
an autoequivalence of $\A$, then the inclusion $i$ induces a fully faithful functor
$i^\ast\colon\Aut(\B)\to\Aut(\A)$.
\end{propp}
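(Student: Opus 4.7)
The plan is to exploit the density of $i$ to compute mapping spaces in $\Aut(\B)$ in terms of the restriction along $i$, and then to use the hypothesis to identify the target with mapping spaces in $\Aut(\A)$.

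First, I would unpack density: by assumption the identity transformation $i \to i$ exhibits $\id_\B \colon \B \to \B$ as a left Kan extension of $i$ along $i$. The key observation is that if $F\colon \B \to \B$ is any autoequivalence, then $F$ preserves small colimits (being an equivalence between quasi-categories that admit small colimits), and hence $F$ commutes with left Kan extensions along $i$. Therefore,
\[
F \cong F \circ \id_\B \cong \text{Lan}_i(F \circ i),
\]
that is, $F$ itself is the left Kan extension of its restriction $i^\ast(F) = F \circ i$ along $i$.

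Next, by the universal property of left Kan extensions, for every autoequivalence $F$ of $\B$ and every functor $G \colon \B \to \B$, the restriction map
\[
\Map_{\Fun(\B,\B)}(F, G) \longto \Map_{\Fun(\A,\B)}(F \circ i,\, G \circ i)
\]
is an equivalence of spaces. Now I would bring in the hypothesis on autoequivalences: if both $F$ and $G$ lie in $\Aut(\B)$, then by assumption their restrictions $F \circ i$ and $G \circ i$ factor (up to canonical equivalence) as $i \circ F|_\A$ and $i \circ G|_\A$ for some autoequivalences $F|_\A, G|_\A$ of $\A$. Since $i$ is fully faithful, the resulting map
\[
\Map_{\Fun(\A,\A)}(F|_\A, G|_\A) \longto \Map_{\Fun(\A,\B)}(i \circ F|_\A,\, i \circ G|_\A)
\]
is also an equivalence. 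Composing these two equivalences yields an equivalence
\[
\Map_{\Aut(\B)}(F, G) \cong \Map_{\Fun(\B,\B)}(F,G) \longto \Map_{\Fun(\A,\A)}(F|_\A, G|_\A) \cong \Map_{\Aut(\A)}(F|_\A, G|_\A),
\]
where the two outer identifications hold because $\Aut(\B) \subset \Fun(\B,\B)$ and $\Aut(\A) \subset \Fun(\A,\A)$ are full subcategories. This is exactly the fully faithfulness of $i^\ast$.

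The main technical point to be careful about is the assertion that autoequivalences commute with the left Kan extension along $i$; this relies on the fact that equivalences of $(\infty,1)$-categories preserve arbitrary (small) colimits and on the pointwise character of the Kan extension (which is available since $\A$ is small and $\B$ is cocomplete). Everything else is a formal manipulation of universal properties.
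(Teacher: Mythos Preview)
Your argument is correct and follows essentially the same route as the paper. The paper packages your Kan-extension computation as the single statement that $i^\ast\colon\Fun^L(\B,\B)\to\Fun(\A,\B)$ is fully faithful (citing Lurie), and then observes, as you do, that $\Aut(\B)\hookrightarrow\Fun^L(\B,\B)$ and that postcomposition with $i$ embeds $\Aut(\A)$ fully faithfully into $\Fun(\A,\B)$; your proof simply unpacks that citation explicitly.
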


\begin{proof}
Denote by $\Fun^L(-,-)$ the quasi-category of colimit-preserving functors
between two quasi-categories. The dense inclusion $i$ induces a fully
faithful functor $i^\ast \colon \Fun^L(\B,\B)\to\Fun(\A,\B)$ by \cite[Remark
4.4.5]{LurieGI}. Consider the diagram
\[
\xymatrix{
\Fun^L(\B,\B)\ar[r]^-{i^\ast}&\Fun(\A,\B)\\
\qAut(\B)\ar[u]\ar[ru]\ar@{.>}[r]_-{i^\ast}&\qAut(\A),\ar[u]
}
\]
where the left vertical map is the canonical inclusion and the right
vertical map is the fully faithful map given by postcomposing with $i$.
By our assumption on autoequivalences of $\B$, the diagonal map factors over
$\qAut(\A)$, giving rise to the dotted morphism. Since all remaining
maps are fully faithful, the same is true for
$i^\ast\colon\qAut(\B)\to\qAut(\A)$.
\end{proof}

\begin{propp}\label{prop:lemma_aut}
Let $A$ be a small category and let $S$ be a set of morphisms of~$\pref{A}$.
Assume that the following conditions are satisfied:
\begin{enumerate}
  \item Representable presheaves in $\pref{A}$ are $S$-local.
  \item Any autoequivalence of $S^{-1}\pref{A}$ restricts to an autoequivalence
    of $A$ (the category $A$ being included in $S^{-1}\pref{A}$ because of
    {\rm (i)}).
\end{enumerate}
Then $A \to S^{-1}\qpref{A}$ induces a fully faithful functor $\Aut(S^{-1}\qpref{A})
\to \Aut(A)$. In particular, the quasi-category $\Aut(S^{-1}\qpref{A})$ is discrete.
\end{propp}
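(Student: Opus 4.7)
The plan is to deduce the result from Proposition~\ref{prop:auto_dense} applied to the inclusion $i\colon A \hookto S^{-1}\qpref{A}$. Two inputs must be supplied: density of $i$, and the fact that every autoequivalence of $S^{-1}\qpref{A}$ restricts along $i$ to an autoequivalence of~$A$.

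First I would establish the inclusion itself and its density. Condition~(i) together with the remark following Proposition~\ref{prop:0-trunc-psh} ensures that the representables, viewed as objects of $\qpref{A}$, are $S$-local, so the Yoneda embedding $A \hookto \qpref{A}$ factors through a functor $A \to S^{-1}\qpref{A}$. Density of this functor is then immediate from the last sentence of paragraph~\ref{paragr:def_dense}.

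Next I would verify the restriction property for autoequivalences. The key observation is that being $0$-truncated is an intrinsic property, preserved by any equivalence of quasi-categories. Hence any autoequivalence $\Phi$ of $S^{-1}\qpref{A}$ restricts to an autoequivalence of $\ztr{(S^{-1}\qpref{A})}$. By Corollary~\ref{coro:0-trunc}, this subcategory is canonically equivalent to $S^{-1}\pref{A}$, and the image of $A$ sits inside it thanks to~(i). Hypothesis~(ii) then forces $\Phi$ to further restrict to an autoequivalence of~$A$. With both inputs in hand, Proposition~\ref{prop:auto_dense} yields the fully faithful functor $\Aut(S^{-1}\qpref{A}) \to \Aut(A)$.

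For the ``in particular'' clause, the plan is to exploit that $A$ is an honest $1$-category: $\Fun(A, A)$ is then the nerve of a $1$-category, so $\Aut(A)$ is the nerve of a $1$-groupoid whose mapping spaces are sets, and full faithfulness transports this discreteness of mapping spaces to $\Aut(S^{-1}\qpref{A})$. The only non-formal step in all of this is the intrinsic preservation of $0$-truncation by autoequivalences (needed to descend $\Phi$ from $S^{-1}\qpref{A}$ to $\ztr{(S^{-1}\qpref{A})}$); this is the mild obstacle, but it is entirely standard, following from the characterization of $0$-truncated objects purely in terms of mapping spaces.
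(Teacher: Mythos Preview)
Your proposal is correct and follows essentially the same route as the paper's proof: both use the density from paragraph~\ref{paragr:def_dense}, pass through $0$\nbd-truncated objects via Corollary~\ref{coro:0-trunc} to reduce to $S^{-1}\pref{A}$, invoke hypothesis~(ii), and then apply Proposition~\ref{prop:auto_dense}. You spell out a few points the paper leaves implicit (the appeal to the remark after Proposition~\ref{prop:0-trunc-psh}, and the justification of the ``in particular'' clause), but the structure and ingredients are the same.
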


\begin{proof}
By Corollary~\ref{coro:0-trunc}, we know that there is an equivalence of
quasi-categories $\ztr{(S^{-1}\qpref{A})}\cong S^{-1}\pref{A}$. Since
$0$-truncated objects are stable under equivalences, we obtain a functor
$\qAut(S^{-1}\qpref{A}) \to \Aut(S^{-1}\pref{A}) $. Using assumption~(ii),
we get a functor $\Aut(S^{-1}\pref{A})\to\Aut(A)$. Obviously, the
composition
\[ \qAut(S^{-1}\qpref{A}) \longto \Aut(S^{-1}\pref{A})\longto\Aut(A) \]
of these two functors is induced by $A \to S^{-1}\pref{A}\to
S^{-1}\qpref{A}$. But this functor is dense (see the end of
paragraph~\ref{paragr:def_dense}) and the result thus follows from the
previous proposition.
\end{proof}

\section{Autoequivalences of the \pdfinftyo-category of \pdfinftyn-categories}
\label{sec:oon}

The aim of this section is to show that the quasi-category of
autoequivalences of the quasi-category of $\Theta_n$-spaces, which is a
model for $(\infty, n)$-categories, is the discrete category $\Zdn$.  This
calculation is a consequence of two results of Barwick and Schommer-Pries
\cite{BarSchPrUnicity}, namely the computation of the autoequivalences of
$\Upsilon_n$-spaces and the comparison of $\Upsilon_n$-spaces and
$\Theta_n$-spaces. Here we give a direct proof of this fact using the
general strategy outlined in the introduction and formalized by
Proposition~\ref{prop:lemma_aut}. In particular, we reduce to the
computation of the autoequivalences of Joyal's category~$\Theta_n$.

\smallbreak

Throughout the section, we fix $n$ such that $0 \le n \le \infty$.

\subsection{Preliminaries on strict $n$-categories and the category
$\Theta_n$}

\begin{paragr}
We will denote by $\Cat$ the category of small categories and by $\nCat{n}$
the category of small strict $n$-categories. Recall that these categories
are complete and cocomplete. We refer the reader to \cite{AraStrWeak} for
details on strict $n$-categories compatible with the notation we will use in
this section.
\end{paragr}

\begin{paragr}
Let $k$ be such that $0 \le k \le n$. A strict $k$-category can be
considered as a strict $n$-category whose $i$-arrows are identities for $i >
k$. This defines a fully faithful functor $\nCat{k} \hookto \nCat{n}$ which
we will consider as an inclusion. In particular, we will say that a strict
$n$-category is a $k$-category if it is in the image of this functor.

This inclusion functor admits both a left adjoint and a right adjoint. In this
paper, we will only consider the right adjoint. It sends a strict
$n$-category $C$ to the strict $k$-category $\tr_k(C)$ obtained by throwing
out the $i$-arrows of $C$ for $i > k$. This $k$-category $\tr_k(C)$ will be
called the \ndef{$k$-truncation} of $C$.
\end{paragr}

\begin{paragr}
The \ndef{$k$-disk} $\Dn{k}$, where $k \ge 0$, is the strict \oo-category
corepresenting the functor ``set of $k$-arrows'' $\Ar_k \colon \nCat{\infty}
\to \Set$. The strict \oo-category $\Dn{k}$ is actually a $k$-category. Here are
pictures of (the underlying \oo-graphs without the identities of) $\Dn{k}$
in low dimension:
\[
\UseAllTwocells
\Dn{0} = \bullet
\;,\quad
\Dn{1} = \xymatrix{\bullet \ar[r] & \bullet}
\;,\quad
    D_2 = \xymatrix@C=3pc@R=3pc{\bullet \ar@/^2.5ex/[r]_{}="0"
    \ar@/_2.5ex/[r]_{}="1"
    \ar@2"0";"1"
      &  \bullet}
\quad \text{and}\quad
    D_3 = \xymatrix@C=3pc@R=3pc{\bullet \ar@/^3ex/[r]_(.47){}="0"^(.53){}="10"
    \ar@/_3ex/[r]_(.47){}="1"^(.53){}="11"
    \ar@<2ex>@2"0";"1"_{}="2" \ar@<-2ex>@2"10";"11"^{}="3"
    \ar@3"3";"2"_{}
    & \bullet \pbox{.}}
  \]
For $k > 0$, we have two morphisms $\sigma, \tau \colon \Dn{k-1} \to
\Dn{k}$ corepresenting the natural transformations source and target $\Ar_k
\to \Ar_{k-1}$, respectively. Concretely, $\sigma$~(respectively $\tau$) sends the
unique non-trivial $(k-1)$-arrow of $\Dn{k-1}$ to the source (respectively
to the target) of the unique non-trivial $k$-arrow of $\Dn{k}$.

For $k > k' \ge 0$, we will also denote by $\sigma$ and $\tau$ the morphisms
$\Dn{k'} \to \Dn{k}$ obtained by composing
\[
  \Dn{k'} \stackrel{\sigma}{\longto} \Dn{k'+1} \cdots \Dn{k-1} \stackrel{\sigma}{\longto}
  \Dn{k}
  \quad\text{and}\quad
  \Dn{k'} \stackrel{\tau}{\longto} \Dn{k'+1} \cdots \Dn{k-1} \stackrel{\tau}{\longto}
  \Dn{k},
\]
respectively. Note that $\sigma, \tau \colon \Dn{k'} \to \Dn{k}$ are the
only monomorphisms $\Dn{k'} \hookto \Dn{k}$.
\end{paragr}

\begin{paragr}\label{paragr:def_table}
A \ndef{table of dimensions} is a table
\[ \tabdim, \]
where $m \ge 1$, filled with non-negative integers satisfying
$k_i > k'_i$ and $k_{i+1} > k'_i$
for every $i$ such that $0 < i < m$. The greatest integer appearing in the
table is called the \ndef{height} of the table.

To such a table $T$, we can associate the following diagram
\[
\xymatrix@R=.2pc@C=1pc{
\Dn{k_1} &  & \Dn{k_2} &  & \Dn{k_3} &        & \Dn{k_{m - 1}} & & \Dn{k_{m}} \\
  &  &   &  &   & \cdots &     & & \\
  & \Dn{k'_1}
  \ar[uul]^\sigma
  \ar[uur]_\tau   &   &
  \Dn{k'_2}
  \ar[uul]^\sigma
  \ar[uur]_\tau  &  &  & &
\Dn{k'_{m-1}} \ar[uul]^\sigma \ar[uur]_\tau
& &
}
\]
in $\nCat{\infty}$. We will denote this diagram by $\D_T$. The colimit of
$\D_T$ will be denoted by $\Theta(T)$. We will also sometimes denote it by
\[
\Dn{k_1} \amalg_{\Dn{k'_1}} \dots \amalg_{\Dn{k'_{m-1}}} \Dn{k_m}.
\]
This is an $n$-category, where $n$ is the height of $T$.
Note that the diagram $\D_T$ actually comes from a diagram of $n$-graphs and
it follows that the $n$-category $\Theta(T)$ is the free strict $n$-category
on an $n$-graph $\Theta_0(T)$.

It is easy to see that the strict \oo-category $\Theta(T)$ does not admit
any non-trivial automorphisms. This means, in particular, that there is a
unique cocone making~$\Theta(T)$ the colimit of the diagram~$\D_T$.
\end{paragr}

\begin{exam}
If $T$ is the following table of dimensions
\[
\setcounter{MaxMatrixCols}{20}
\left(
\begin{matrix}
2 && 2 && 2 && 3 && 2 && 1 \\
& 1 && 0 && 1 && 1 && 0
\end{matrix}
\right),
\]
then $\Theta_0(T)$ is the following $3$-graph:
\[
\xymatrix@C=4pc@H=5pc{
\bullet
\ar[r]
&
\bullet
\ar@/^2.5pc/[r]|{\vphantom{X}}="g"
\ar@/^1pc/[r]|{\vphantom{X}}="h"
\ar@/_1pc/[r]|{\vphantom{X}}="i"
\ar@/_2.5pc/[r]|{\vphantom{X}}="j"
\ar@{=>}"g";"h"
\ar@<-2ex>@{=>}"h";"i"_{}="0"
\ar@<2ex>@{=>}"h";"i"_{}="1"
\ar@3"0";"1"
\ar@{=>}"i";"j"
&
\bullet
\ar@/^1.5pc/[r]|{\vphantom{X}}="k"
\ar[r]|{\vphantom{X}}="l"
\ar@/_1.5pc/[r]|{\vphantom{X}}="m"
\ar@{=>}"k";"l"
\ar@{=>}"l";"m"
&
\bullet \pbox{,}
&
}
\]
and the $3$-category $\Theta(T)$ associated to $T$ is the strict
$3$-category freely generated by this $3$-graph.
\end{exam}

\begin{paragr}
Joyal's category $\Theta_n$ of $n$-cells is defined in the following way. The
objects of~$\Theta_n$ are the tables of dimensions of height at most
$n$. If $S$ and $T$ are two objects of $\Theta_n$, then the set of
morphisms from $S$ to $T$ is given by
\[ \Theta_n(S, T) = \nCat{n}(\Theta(S), \Theta(T)). \]
By definition, we have a fully faithful functor $\Theta_n \to \nCat{n}$. It
is easy to see that this functor is also injective on objects and we will
consider it as an inclusion. In particular, we will not make a difference
between the table $T$ and the associated strict \oo-category $\Theta(T)$.
\end{paragr}

\begin{rem}
  Here are some comments on $\Theta_n$ for $n = 0, 1, \infty$:
  \begin{enumerate}
    \item The category $\Theta_0$ is the terminal category. It is \emph{not}
      the category $\Theta_0$ introduced by Berger in \cite{BergerNerve},
      which corresponds to $\Theta_{\infty, 0}$ with the notation of
      our paragraph~\ref{paragr:active_inert}.
    \item The category $\Theta_1$ is canonically isomorphic to the category
      $\Delta$ of simplices.
    \item The category $\Theta_\infty$ is canonically isomorphic to the
      category $\Theta$ introduced by Joyal in \cite{JoyalTheta} by
      combinatorial means.
  \end{enumerate}
\end{rem}

\begin{rem}
The category $\Theta_n$ has a universal property relating it to strict
$n$\nbd-cate\-gories. Roughly speaking, it is the free category (having
certain colimits) endowed with a strict $n$-cocategory object. See Propositions
3.11 and 3.14 of \cite{AraThtld} for the case~$n = \infty$.
\end{rem}

\begin{paragr}\label{paragr:active_inert}
We define a category $\Theta_{n, 0}$ in the following way:
the objects of $\Theta_{n,0}$ are the same as the ones of $\Theta_n$ and the
set of morphisms in $\Theta_{n, 0}$ from an object $S$ to an object $T$ is
given by
\[
\Theta_{n, 0}(S, T) = \nGraph(\Theta_0(S), \Theta_0(T)),
\]
where $\nGraph$ denotes the category of $n$-graphs. The free strict
$n$-category functor induces a canonical functor from $\Theta_{n, 0}$ to
$\Theta_n$ which is obviously faithful.

We will say that a morphism of $\Theta_n$ is \ndef{inert} if it comes from a
morphism of~$\Theta_{n, 0}$. It is easy to see that inert morphisms are
monomorphisms. A morphism $S \to T$ of~$\Theta_n$ is said to be
\ndef{active} if it does not factor through any non-trivial inert morphism
$T' \to T$, that is, if every time it can be written as $if$, where $i$ is
inert, then $i$ is an identity.

Active morphisms can be described more concretely in the following way. Let
$f \colon \Dn{k} \to T$ be a morphism of $\Theta_n$. Such a morphism corresponds
to a $k$-arrow of~$\Theta(T)$. This $k$-arrow can be expressed using
compositions and identities from some generators of the free strict
$n$-category $\Theta(T)$, that is, from the arrows of~$\Theta_0(T)$. It is
not hard to see that the morphism $f$ is active if and only if all the
generators of~$\Theta(T)$ are needed to express this $k$-arrow. In other
words, $\Dn{k} \to T$ is active if and only if it corresponds to (an
identity in dimension $k$ of) the total composition of $\Theta(T)$. This
means that such an active morphism exists if and only if $k$ is greater than or
equal to the height of $T$, and that in this case, it is unique.

More generally, a morphism $S \to T$ of $\Theta_n$ corresponds to a pasting
scheme of shape $\Theta_0(S)$ in $\Theta(T)$ and it is active if and only if
all the generators of $\Theta(T)$ are needed to express the arrows of this
pasting scheme.
\end{paragr}

\begin{prop}[Berger, Weber]\label{prop:active_inert}
Every morphism of $\Theta_n$ can be written in a unique way as a composition
of an active morphism followed by an inert morphism.
\end{prop}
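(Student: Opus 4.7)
The plan is to argue this factorization via a direct combinatorial analysis of pasting schemes in $\Theta(T)$, with existence constructed by taking the ``support'' of the pasting scheme associated to a morphism. The key technical input I will rely on is the geometric description of morphisms of $\Theta_n$ recalled in paragraph~\ref{paragr:active_inert}: a morphism $f \colon S \to T$ corresponds to a pasting scheme of shape $\Theta_0(S)$ inside the strict $n$-category $\Theta(T)$, and this pasting scheme is expressed from a certain set of generators of $\Theta(T)$, i.e., from a certain set of arrows of the $n$-graph $\Theta_0(T)$.

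For existence, given $f \colon S \to T$, I would define $G \subseteq \Theta_0(T)$ to be the smallest sub-$n$-graph containing all generators of $\Theta(T)$ that appear in the expression of the pasting scheme corresponding to $f$ (closing under taking sources and targets of arrows). The crucial step is to check that $G$, with its inclusion into $\Theta_0(T)$, is of the form $\Theta_0(T')$ for a unique table of dimensions $T'$. This amounts to verifying that a sub-$n$-graph of $\Theta_0(T)$ obtained in this way is again the $n$-graph attached to a pasting diagram; concretely, one reads $T'$ off $G$ by recording, as one walks along the $0$-arrows of $G$, the dimensions of the top-dimensional cells one meets and of the $(k'_i)$-cells separating them. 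Granted this, the inclusion $G \hookto \Theta_0(T)$ defines an inert morphism $i \colon T' \to T$, the pasting scheme for $f$ lives entirely in the image of $\Theta(T') \to \Theta(T)$, and so $f$ factors as $i \circ a$ for a unique $a \colon S \to T'$. By the minimality of $G$, the morphism $a$ uses all generators of $\Theta(T')$, hence is active by the alternative characterization of active morphisms recalled at the end of paragraph~\ref{paragr:active_inert}.

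For uniqueness, suppose $f = i' \circ a'$ with $i' \colon T'' \to T$ inert and $a' \colon S \to T''$ active. Since $a'$ is active, its pasting scheme uses every generator of $\Theta(T'')$, so the set of generators of $\Theta(T)$ used by $f$ equals $i'(\Theta_0(T''))$; this is therefore exactly the sub-$n$-graph $G$. The inert morphism $i'$ then factors through $i$ via a morphism $T'' \to T'$ whose underlying $n$-graph map is a bijection, hence is an isomorphism. Since objects of $\Theta_n$ carry no non-trivial automorphisms (see paragraph~\ref{paragr:def_table}), this isomorphism is unique, and comparing factorizations gives $(i'', a'') = (i, a)$.

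The main obstacle I expect is the combinatorial verification that the support $G$ of a pasting scheme is indeed of the form $\Theta_0(T')$ for a table of dimensions $T'$; one has to confirm that $G$ is connected, that its $0$-arrows form a linear chain, and that the dimensions of the cells of $G$ along this chain satisfy the height conditions $k_i > k'_i$ and $k_{i+1} > k'_i$. This is essentially the content of Berger's cellular combinatorics and Weber's analysis of familial functors, to which the statement is attributed; in a full write-up I would either invoke those references directly or carry out the bookkeeping over the linearly ordered set of $0$-cells of $\Theta_0(T)$.
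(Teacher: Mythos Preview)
Your proposal is a reasonable sketch of a direct argument, and nothing in it is wrong, but you should be aware that the paper's own ``proof'' consists entirely of citations to Berger~\cite{BergerNerve}, Weber~\cite{Weber}, and Ara's thesis; there is no argument in the paper to compare yours against. What you have written is therefore strictly more than the paper offers.

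That said, your sketch is honest about its main gap: the verification that the support $G$ of a pasting scheme in $\Theta_0(T)$ is again of the form $\Theta_0(T')$ for some table of dimensions~$T'$. This is precisely the combinatorial content of the cited works, and you correctly flag that you would need either to invoke those references or to reproduce the bookkeeping over the linear order of $0$-cells. If you take the first option, your proof collapses to the paper's; if the second, you are essentially rewriting a piece of Berger's or Weber's argument. Either way, the proposal is sound as a plan. One small typo: in the last line of your uniqueness argument you write $(i'',a'')$ where you mean $(i',a')$.
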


\begin{proof}
This follows from the general machinery of \cite{Weber} (see Example 4.21).
This was first proved in \cite{BergerNerve} (see Lemma 1.11) using a
different but equivalent definition of $\Theta_n$. See also
\cite[Proposition~3.3.10]{AraThesis}.
\end{proof}

\begin{paragr}
The category of \ndef{$n$-cellular sets} is the category $\pref{\Theta_n}$
of presheaves on $\Theta_n$. The inclusion $\Theta_n \hookto
\nCat{n}$ induces a functor $N_n \colon \nCat{n} \to \pref{\Theta_n}$ sending a strict
$n$-category $C$ to the $n$-cellular set $T \mapsto \nCat{n}(T, C)$. This
functor $N_n$ is called the \ndef{$n$-cellular nerve}.
\end{paragr}

\begin{paragr}\label{paragr:spine_nCat}
Let
\[
T = \tabdimk
\]
be an object of $\Theta_n$. By definition, we have
\[
T = \Dn{k_1} \amalg_{\Dn{k'_1}} \dots \amalg_{\Dn{k'_{m-1}}} \Dn{k_m}
\]
in $\Theta_n$. The \ndef{spine} $I_T$ of $T$ is the $n$-cellular set
\[
I_T = \Dn{k_1} \amalg_{\Dn{k'_1}} \dots \amalg_{\Dn{k'_{m-1}}} \Dn{k_m},
\]
where the colimit is now taken in the category $\pref{\Theta_n}$.
There is a canonical morphism
\[
i_T \colon I_T \longto T.
\]
It is not hard to check that this morphism is a monomorphism.
We will denote by~$\I$ the set
\[ \I = \{i_T\mid T \in \Ob(\Theta_n)\} \]
of spine inclusions.

Let now $X$ be an $n$-cellular set. For any object $T$ of $\Theta_n$, the
map $i_T$ induces a \emph{Segal map}
\[
X(T) \cong \pref{\Theta_n}(T, X) \longto \pref{\Theta_n}(I_T, X) \cong
X_{k_1} \times_{X_{k'_1}} \dots \times_{X_{k'_{m-1}}} X_{k_m},
\]
where $X_l$ means $X(\Dn{l})$. We will say that $X$ \emph{satisfies the
Segal condition} if all the Segal maps are bijections. This exactly means
that $X$ is $\I$-local.
\end{paragr}

\begin{prop}[Berger, Weber]\label{prop:Theta_Segal}
The $n$-cellular nerve functor is fully faithful. Moreover, its essential
image consists of the $n$-cellular sets satisfying the Segal condition.
\end{prop}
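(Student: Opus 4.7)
The plan has two parts corresponding to the two assertions, and both rest on the observation that by the very construction of $\Theta(T)$, every object $T$ of $\Theta_n$ is the colimit $\colim \D_T$ of a diagram of disks inside $\nCat{n}$, while each disk $\Dn{k}$ corepresents the $k$-arrow functor. A direct consequence is that $N_n(C)$ always satisfies the Segal condition: applying $\nCat{n}(-, C)$ to the colimit presentation of $T$ yields
\[
N_n(C)(T) = \nCat{n}(T, C) \cong X_{k_1} \times_{X_{k'_1}} \cdots \times_{X_{k'_{m-1}}} X_{k_m},
\]
where $X = N_n(C)$. This already proves one inclusion in the essential image statement.

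For full faithfulness, given a morphism $f \colon N_n(C) \to N_n(D)$ of cellular sets, I would define a candidate strict $n$-functor $g \colon C \to D$ on $k$-arrows as the component $f_{\Dn{k}}$ (using corepresentability of $\Ar_k$). Naturality of $f$ with respect to $\sigma, \tau \colon \Dn{k-1} \to \Dn{k}$ forces compatibility with sources and targets; naturality with respect to the maps $\Dn{k} \to \Dn{j}$ ($j < k$) corresponding to identities yields preservation of identities; and naturality with respect to the unique active morphism $\Dn{k} \to T$ into each pasting scheme, together with the already-established Segal property of $N_n(D)$, forces preservation of all compositions. Uniqueness of $g$ with $N_n(g) = f$ is automatic from corepresentability.

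Finally, for the remaining direction of the essential image statement, I would reconstruct a strict $n$-category $C$ from an arbitrary Segal cellular set $X$ by setting the $k$-arrows of $C$ to be $X_k$, with sources and targets $\sigma^*, \tau^*$, identities from the structure maps $X_{j} \to X_k$ induced by $\Dn{k} \to \Dn{j}$, and composition of a pasting configuration of shape $T$ given by
\[
X_{k_1} \times_{X_{k'_1}} \cdots \times_{X_{k'_{m-1}}} X_{k_m} \xleftarrow{\;\cong\;} X(T) \longto X_k,
\]
where the first isomorphism is the Segal condition and the second map is induced by the unique active morphism $\Dn{k} \to T$. The main obstacle is verifying that this $C$ satisfies the strict $n$-category axioms (associativity, interchange, unit laws): each such axiom translates into an equality of composites of active and inert maps in $\Theta_n$, and Proposition~\ref{prop:active_inert} together with the uniqueness of total-composite active morphisms $\Dn{k} \to T$ reduces the verification to a short list of combinatorial checks. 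Granting these, the identification $N_n(C) \cong X$ is immediate by construction.
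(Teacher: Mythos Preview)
Your argument is sound in outline and is essentially the classical ``hands-on'' proof of a Segal--nerve theorem. The paper, by contrast, gives no argument at all: it simply invokes Weber's general machinery of monads with arities (Example~4.24 of \cite{Weber}) and Berger's earlier proof (Theorem~1.12 of \cite{BergerNerve}). So your approach is genuinely different---a direct construction rather than an appeal to an abstract framework. The benefit of Weber's framework is that it packages the axiom verification you gesture at into a single statement (the free strict $n$-category monad on $n$-graphs has arities given by $\Theta_{n,0}$), proved once and for all; the benefit of your approach is that it is self-contained and makes visible exactly which combinatorial identities in $\Theta_n$ encode which strict $n$-category axioms.

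Two places deserve a little more care than you give them. First, the axiom verification: you are right that associativity, units and interchange each reduce to the uniqueness of an active morphism $\Dn{k}\to T$ for appropriate $T$, but you should make explicit that the relevant ``two ways of composing'' correspond to two \emph{active} morphisms $T'\to T$ (not inert ones), and that active morphisms are closed under composition so that both routes $\Dn{k}\to T'\to T$ land on the unique active $\Dn{k}\to T$. Second, the claim that $N_n(C)\cong X$ is ``immediate'' hides a naturality check: the bijection $N_n(C)(T)\cong X(T)$ built from the Segal isomorphisms on both sides must be natural in $T$, and for this you again need to separate active and inert maps and argue each case (inert maps are handled by the Segal diagrams, active maps by the very definition of composition in $C$). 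Neither of these is a real obstacle, but neither is quite automatic.
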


\begin{proof}
This follows from the general machinery of \cite{Weber} (see Example 4.24).
This was first proved in \cite{BergerNerve} (see Theorem 1.12).
\end{proof}

\begin{rem}
The first assertion of the previous proposition means precisely that the
inclusion functor $\Theta_n \hookto \nCat{n}$ is dense.
\end{rem}

\subsection{The quasi-category of $\Theta_n$-spaces}

\begin{paragr}
The category of \ndef{$n$-cellular spaces} is the category $\spref{\Theta_n}
\cong \pref{\Theta_n \times \Delta}$ of simplicial presheaves on $\Theta_n$.
The first projection $p \colon \Theta_n \times \Delta \to \Theta_n$
induces a fully faithful functor $p^* \colon \pref{\Theta_n} \to \pref{\Theta_n
\times \Delta}$ sending an $n$-cellular set to the corresponding discrete
$n$-cellular space. We will always consider $n$-cellular sets as a full
subcategory of $n$-cellular spaces using this functor.
\end{paragr}

\begin{paragr}\label{paragr:def_Jk}
Let $k \ge 1$. We will denote by $J_k$ the strict \oo-category
corepresenting the functor $\Ari{k} \colon \nCat{\infty} \to \Set$ sending a strict
\oo-category to its set of strictly invertible $k$-arrows. The \oo-category
$J_k$ is actually a $k$-category. Here are pictures of (the underlying
\oo-graphs without the identities of) $J_k$ in low dimension:
\[
\UseAllTwocells
J_1 = \xymatrix{\bullet \ar@/^2ex/[r] & \ar@/^2ex/[l] \bullet}
,\quad
    J_2 = \xymatrix@C=3pc@R=3pc{\bullet \ar@/^2.5ex/[r]_{}="0"
    \ar@/_2.5ex/[r]_{}="1"
    \ar@<-1ex>@2"0";"1" \ar@<-1ex>@2"1";"0"
      &  \bullet}
\quad\text{and}\quad
    J_3 = \xymatrix@C=3pc@R=3pc{\bullet \ar@/^3ex/[r]_(.47){}="0"^(.53){}="10"
    \ar@/_3ex/[r]_(.47){}="1"^(.53){}="11"
    \ar@<2ex>@2"0";"1"_{}="2" \ar@<-2ex>@2"10";"11"^{}="3"
    \ar@<1ex>@3{->}"2";"3"_{} \ar@<1ex>@3"3";"2"_{}
    &  \bullet \pbox{.}}
\]
There is a canonical morphism $j_k \colon J_k \to \Dn{k-1}$ corepresenting
the natural transformation $\Ar_{k-1} \to \Ari{k}$ sending a $(k-1)$-arrow
to its identity. Concretely, the morphism $j_k$ is the unique morphism $J_k
\to \Dn{k-1}$ sending the two non-trivial $k$\nbd-arrows of $J_k$ to the
identity of the only non-trivial $(k-1)$-arrow of $\Dn{k-1}$.
\end{paragr}

\begin{paragr}\label{paragr:def_I_J_nCat}
Recall from paragraph~\ref{paragr:spine_nCat} that we denote by $\I$
the set
\[ \I = \{i_T\mid T \in \Ob(\Theta_n)\} \]
of spine inclusions.

We will denote by $\J^\flat$ and $\J$ the sets
\[
\J^\flat = \{j_k\mid 1 \le k \le n\}
\quad\text{and}\quad
\J = \{N_n(j_k)\mid 1 \le k \le n\}.
\]
The sets $\I$ and $\J$ will be considered as sets of maps of $n$-cellular sets
or $n$-cellular spaces depending on the context.
\end{paragr}

For the next two definitions and the proposition that follows, we will assume
that $n$ is finite.

\begin{defi}[Rezk]
The \emph{model category for $\Theta_n$-spaces} is
the left Bousfield localization of the injective model structure on
$n$-cellular spaces by the set $\I \cup \J$.
We will denote this model structure by $\spref{\Theta_n}_\ThnSp$.
\end{defi}

\begin{rem}
The original definition of $\Theta_n$-spaces in \cite{RezkThSp} proceeds by
induction on $n$. Here we are following the (trivially equivalent)
definition given in \cite[Section 7]{AraHQCat}.
\end{rem}

\begin{defi}
The \emph{quasi-category of $\Theta_n$-spaces} is the localization of the
quasi-category $\qpref{\Theta_n}$ by the set $\I \cup \J$. We will denote it
by $\ThnSp$.
\end{defi}

\begin{prop}
\label{prop:under_Theta_Sp}
The quasi-category underlying the model category of
$\Theta_n$\nbd-spaces is canonically equivalent to the quasi-category of
$\Theta_n$-spaces.
\end{prop}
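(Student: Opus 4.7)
The plan is to derive this from the general machinery developed in Section~\ref{sec:review}. Specifically, I will combine Proposition~\ref{prop:desc_PA}, which identifies $\qpref{\Theta_n}$ with the underlying quasi-category of a model structure on $\spref{\Theta_n}$, with Proposition~\ref{prop:loc_mcat_qcat}, which lets one commute the formation of the underlying quasi-category with localization at a set of maps.

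First, I would observe that the two model structures $\spref{\Theta_n}_\proj$ and $\spref{\Theta_n}_\inj$ have the same underlying category and the same class of weak equivalences (the objectwise ones), so by the remark following the definition of the underlying quasi-category, there is a canonical equivalence $\U(\spref{\Theta_n}_\proj) \simeq \U(\spref{\Theta_n}_\inj)$. Combined with Proposition~\ref{prop:desc_PA}, this gives a canonical equivalence $\qpref{\Theta_n} \simeq \U(\spref{\Theta_n}_\inj)$.

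Next, the injective model structure on $\spref{\Theta_n}$ is left proper and combinatorial, so Proposition~\ref{prop:loc_mcat_qcat} applies to the set $S = \I \cup \J$ and yields a canonical equivalence
\[
(\I \cup \J)^{-1}\, \U(\spref{\Theta_n}_\inj) \stackrel{\cong}{\longto} \U\bigl((\I \cup \J)^{-1}\spref{\Theta_n}_\inj\bigr) = \U(\spref{\Theta_n}_\ThnSp).
\]
Transporting the left-hand side along the equivalence of the previous paragraph identifies it with $(\I \cup \J)^{-1}\qpref{\Theta_n} = \ThnSp$, which concludes the argument.

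There is essentially no obstacle here beyond bookkeeping: the statement is a direct consequence of the two cited propositions together with the observation that the underlying quasi-category depends only on the weak equivalences. The only point requiring a moment of care is to verify the hypothesis of Proposition~\ref{prop:loc_mcat_qcat}, namely that $\spref{\Theta_n}_\inj$ is left proper and combinatorial, but both properties are recorded in the preceding proposition on the injective and projective model structures.
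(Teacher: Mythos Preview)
Your proof is correct and follows essentially the same route as the paper's own argument: both combine Proposition~\ref{prop:desc_PA}, the fact that $\U$ depends only on the weak equivalences (to pass between the projective and injective structures), and Proposition~\ref{prop:loc_mcat_qcat} applied to $S = \I \cup \J$. The only difference is the order in which the steps are presented.
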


\begin{proof}
We indeed have
{\allowdisplaybreaks
\begin{align*}
  \U(\spref{\Theta_n}_{\ThnSp})
& =
\U({(\I \cup \J)}^{-1}\spref{\Theta_n}_{\inj}) \\
& \cong
{(\I \cup \J)}^{-1}\U(\spref{\Theta_n}_{\inj}) \\*
& \phantom{=1} \text{(by Proposition~\ref{prop:loc_mcat_qcat})} \\
& \cong
{(\I \cup \J)}^{-1}\U(\spref{\Theta_n}_{\proj}) \\
& \phantom{=1} \text{(since $\U$ only depends on the weak equivalences)}\\
& \cong
{(\I \cup \J)}^{-1} \qpref{\Theta_n} \\*
& \phantom{=1} \text{(by Proposition~\ref{prop:desc_PA})} \\
& = \ThnSp. \qedhere
\end{align*}
}
\end{proof}

\subsection{Autoequivalences of the category of strict $n$-categories}

\begin{paragr}
Recall that for every $i$ such that $1 \le i \le n$, there exists an
autoequivalence~$\op_i$ of the category $\nCat{n}$ sending a strict
$n$-category $C$ to the strict $n$\nbd-category obtained from $C$ by
reversing the orientation of the $i$-arrows. The functor $\op_i$ is
obviously its own inverse. The mapping $i \mapsto \op_i$ extends formally to
a monoid morphism
\[ (\Z/2\Z)^n \longto \aut(\nCat{n}), \]
where for $n = \infty$, we set $(\Z/2\Z)^\infty = \prod_{k \ge 1} \Zd$.
For $\delta = (\delta_i)_{1 \le i \le n}$ an element of~$(\Z/2\Z)^n$, we
will denote by $\op_\delta$ the associated autoequivalence of $\nCat{n}$.
Concretely, if $C$ is a strict $n$-category, then $\op_\delta(C)$ is the
strict $n$-category obtained from $C$ by reversing the orientation of the
$i$-arrows for all $i$ such that $\delta_i = 1$.

Note that the monoid morphism $\Zdn \to \aut(\nCat{n})$ defines a strict
mo\-noidal functor
\[ \catZdn \longto \Aut(\nCat{n}), \]
where $\catZdn$ denotes the discrete category on the set $\Zdn$ endowed with
the strict monoidal structure given by the group law of $\Zdn$, and where
the category $\Aut(\nCat{n})$ is endowed with the strict monoidal structure
given by composition of functors.
\end{paragr}

\begin{rem}
It is easy to see that for any $\delta$ in~$\Zdn$, the autoequivalence
$\op_\delta$ sends the objects of $\Theta_n$ to $n$-categories isomorphic to
objects of $\Theta_n$.
\end{rem}

\begin{paragr}
Let $k \ge 0$. The \ndef{$k$-sphere} $\Sn{k}$ is the $k$-truncation
of the $(k+1)$\nbd-disk $\Dn{k+1}$ (say in $\nCat{\infty}$). By definition,
we have a
canonical monomorphism \hbox{$\Sn{k} \hookto \Dn{k+1}$}.
It is easy to prove by induction, setting $\Sn{-1} = \varnothing$, that
\[ \Sn{k} \cong \Dn{k} \amalg_{\Sn{k-1}} \Dn{k}. \]
\end{paragr}

\begin{lemma}\label{lemma:char_Dk}
Let $k$ be an integer such that $0 < k \le n$. The $k$-disk $\Dn{k}$ is
the unique strict $n$-category $C$ satisfying the following three properties:
\begin{enumerate}
  \item The $n$-category $C$ is not a $(k-1)$-category.
  \item Any proper sub-$n$-category of $C$ is a $(k-1)$-category.
  \item The $(k-1)$-truncation $\tr_{k-1}(C)$ of $C$ is isomorphic to $\Sn{k-1}$.
\end{enumerate}
\end{lemma}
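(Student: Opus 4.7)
First, I verify that $\Dn{k}$ satisfies (i)--(iii). It clearly satisfies (i) (it has the generating $k$-cell) and (iii) (its $(k-1)$-truncation is $\Sn{k-1}$ by the very definition of the sphere). For (ii), any sub-$n$-category $D \subseteq \Dn{k}$ containing the generator $a$ receives a splitting $\Dn{k} \to D$ of the inclusion (the map sending the generating cell of $\Dn{k}$ to $a$), which forces $D = \Dn{k}$. Hence a proper sub-$n$-category of $\Dn{k}$ does not contain $a$, and since the remaining $k$-cells of $\Dn{k}$ are all identities on $(k-1)$-cells, such a $D$ has only identities in dimensions $\geq k$ and is therefore a $(k-1)$-category.

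Conversely, let $C$ satisfy (i)--(iii). By (i) combined with (iii), $C$ must carry a non-trivial $i$-cell with $i \geq k$; choose $i$ minimal with this property and fix such a cell $a$. The key claim is that $i = k$. Let $\phi_i \colon \Dn{i} \to C$ be the unique morphism sending the generating $i$-cell of $\Dn{i}$ to $a$; its image is a sub-$n$-category containing the non-trivial cell $a$ in dimension $i \geq k$, so by (ii) it cannot be a $(k-1)$-category and must therefore equal $C$. If $i > k$, minimality forces $s(a)$ and $t(a)$ to be identities on some $(i-2)$-cells $c_s, c_t$; the globular identity $s \circ s = s \circ t$ applied to $a$ then yields $c_s = c_t$, so $s(a) = t(a)$. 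Iterating this descent, in which each intermediate cell $s_j(a) = t_j(a)$ for $k \leq j < i$ is an identity by minimality, shows $s_{k-1}(a) = t_{k-1}(a)$. Then the image of $\phi_i$ contains at most one non-trivial $(k-1)$-cell of $C$, whereas (iii) provides two in $\tr_{k-1}(C) \cong \Sn{k-1}$; this contradicts the equality image $= C$, so $i = k$.

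With $i = k$, consider the morphism $\phi \colon \Dn{k} \to C$ sending the generating $k$-cell to $a$. As before the image is all of $C$, so $\{s(a), t(a)\}$ must coincide with the two non-trivial $(k-1)$-cells of $\tr_{k-1}(C) \cong \Sn{k-1}$; in particular $s(a) \neq t(a)$. A direct count then shows that $\phi$ is bijective on cells: in dimensions below $k$ it restricts to an automorphism of $\Sn{k-1}$ by (iii); in dimension $k$ the generator of $\Dn{k}$ together with the two identities on the non-trivial $(k-1)$-cells of $\Sn{k-1}$ are sent to the three distinct cells $a$, $\id{s(a)}$, $\id{t(a)}$ of $C$ (distinct because $a$ is non-trivial and $s(a) \neq t(a)$); in higher dimensions only iterated identities occur on both sides. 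A strict $n$-functor that is bijective on cells in every dimension is an isomorphism, so $C \cong \Dn{k}$.

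The main obstacle is the step $i = k$: the argument requires carefully combining the minimality of $i$ with the globular identities to force the source/target tower of $a$ to collapse at dimension $k-1$, then deriving the contradiction by playing (iii) against the equality image $= C$ supplied by (ii).
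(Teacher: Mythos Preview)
Your proof is correct and follows essentially the same strategy as the paper: use (ii) to show that a non-trivial cell of dimension $\geq k$ generates all of $C$, then combine this with (iii) to force that dimension to equal $k$ and conclude $C \cong \Dn{k}$. You organize the first step via minimality of $i$ where the paper argues uniqueness of the non-trivial arrow directly, and you spell out the iterative descent $s_{k-1}(a) = t_{k-1}(a)$ and the final bijection-on-cells more explicitly than the paper does, but the underlying argument is the same.
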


\begin{proof}
It is obvious that $\Dn{k}$ satisfies these properties. Suppose $C$ is a
strict $n$\nbd-category satisfying these three properties. The first
property exactly means that $C$ has at least one non-trivial arrow in
dimension at least $k$. Every such non-trivial arrow of $C$ defines a
sub-$n$-category which is not a $(k-1)$-category. The second property
implies that this sub-$n$-category has to be~$C$. This means that there can
exist only one non-trivial arrow in $C$ in dimension at least $k$. Let
$l$ be the dimension of this arrow. By the third property, the
$(k-1)$\nbd-truncation of $C$ is $\Sn{k-1}$.  Since the $n$-category
generated by the unique non-trivial $l$-arrow of $C$ is $C$, the two
non-trivial $(k-1)$-arrows of $\Sn{k-1}$ have to be iterated sources or
targets of this unique non-trivial $l$-arrow. This means that $l$ has to be
equal to $k$ and that the $n$-category~$C$ has to be the $k$-disk $\Dn{k}$.
\end{proof}

\begin{lemma}\label{lemma:char_k-cat}
Let $k$ be such that $0 \le k \le n$. A strict $n$-category $C$ is a
$k$-category if and only if there exists an extremal epimorphism of the form \[
\coprod_E \Dn{k} \longto C, \]
for some set $E$, that is, if there exists an epimorphism of the above form
that does not factor through any proper subobject of $C$.
\end{lemma}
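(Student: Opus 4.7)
The plan is to prove both implications by analyzing the sub-$n$-category of $C$ generated by a family of $k$-arrows. The key structural fact is that the sub-$n$-category of a strict $n$-category generated by a set of arrows of dimension at most $k$ contains only identities in dimensions strictly greater than $k$; this follows from the inductive description of such a sub-$n$-category, obtained by iteratively closing under sources, targets, identities, and compositions, none of which creates a non-identity arrow above the maximal dimension of the generators.

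For the forward direction, suppose $C$ is a $k$-category. Using that $\Dn{k}$ corepresents the functor $\Ar_k$, each $k$-arrow $a$ of $C$ determines a morphism $\phi_a\colon \Dn{k} \to C$, and these assemble into a morphism
\[
\phi\colon \coprod_{a \in \Ar_k(C)} \Dn{k} \longto C.
\]
To check that $\phi$ does not factor through any proper sub-$n$-category $C' \hookto C$, observe that such a factorization would force $C'$ to contain every $k$-arrow of $C$, hence every $i$-arrow for $i < k$ (as an iterated source or target of a $k$-arrow) and, since $C$ is a $k$-category, every $i$-arrow for $i > k$ (as an iterated identity of a $k$-arrow), so that $C' = C$. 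That $\phi$ is moreover an epimorphism is easily verified using that morphisms out of a $k$-category are determined by their values on $k$-arrows.

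For the converse, suppose $\phi\colon \coprod_E \Dn{k} \to C$ is an extremal epimorphism, and for each $e \in E$ let $a_e$ be the $k$-arrow of $C$ corresponding to the $e$-th component of $\phi$. Then $\phi$ factors through the sub-$n$-category $C' \hookto C$ generated by the family $(a_e)_{e\in E}$, so by extremality $C' = C$; by the structural fact stated at the outset, $C'$ is a $k$-category, and hence so is $C$.

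The main point requiring care is the structural fact itself: although conceptually routine, it calls for an inductive argument on the closure operations that build up a sub-$n$-category from its generators, verifying at each step that no non-identity arrow in dimension $> k$ is ever introduced. Once this is in hand, both implications follow in a few lines from the corepresentability of $\Dn{k}$ and the definition of extremal epimorphism.
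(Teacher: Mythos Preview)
Your proof is correct and follows essentially the same approach as the paper's. The paper's proof is a single sentence---``such an extremal epimorphism exists if and only if $C$ is generated by its set of $k$-arrows, that is, if and only if $C$ is a $k$-category''---and your argument is a careful unpacking of exactly this: the forward direction exhibits the canonical map indexed by $\Ar_k(C)$ and checks it is extremal, and the converse passes through the sub-$n$-category generated by the image, which is your ``structural fact'' that the paper leaves implicit under ``that is''.
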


\begin{proof}
Clearly, such an extremal epimorphism exists if and only if $C$ is
generated by its set of $k$-arrows, that is, if and only if $C$ is a
$k$-category.
\end{proof}

\begin{paragr}\label{paragr:def_delta_le}
Let $F$ be an autoequivalence of $\nCat{n}$. Suppose that for some $l$ such
that $0 < l \le n$, we have
\[
  F(\Dn{l-1}) \simeq \Dn{l-1}
  \quad\text{and}\quad
  F(\Dn{l}) \simeq \Dn{l}.
\]
Since the only monomorphisms from $\Dn{l-1}$ to $\Dn{l}$ are $\sigma$ and
$\tau$, this implies that the equivalence $F$ either fixes these two
morphisms or exchanges them. We define $\delta_l(F)$ in~$\Z/2\Z$ to be $0$ if
$F$ fixes them and $1$ otherwise.

Fix now $k$ such that $0 < k \le n$ and suppose that we have $F(\Dn{l})
\simeq \Dn{l}$ for every~$l$ such that $0 \le l \le k$. We define
$\delta_{\le k}(F)$ to be the element
\[ \delta_{\le k}(F) = (\delta_1(F), \dots, \delta_k(F), 0, \dots, 0) \]
of $(\Z/2\Z)^n$. In the case $k = n$, that is, in the case where $F$
preserves all the disks, we set
\[ \delta(F) = \delta_{\le n}(F) = (\delta_1(F), \dots, \delta_n(F)). \]
\end{paragr}

\begin{prop}
Every autoequivalence $F\!$ of $\nCat{n}$ preserves the disks. In other
words, we have $F(\Dn{k}) \cong \Dn{k}$ for $0 \le k \le n$.
\end{prop}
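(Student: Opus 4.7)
The plan is to proceed by induction on $k$ and verify that $F(\Dn{k})$ satisfies the three characterizing properties of Lemma~\ref{lemma:char_Dk}, which determine $\Dn{k}$ up to isomorphism. For the base case $k=0$, observe that $\Dn{0}$ is the terminal object of $\nCat{n}$, and autoequivalences always preserve the terminal object.

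Assume inductively that $F(\Dn{j}) \cong \Dn{j}$ for all $0 \le j < k$. The first step is to use Lemma~\ref{lemma:char_k-cat} to show that $F$ preserves the class of $j$-categories for each $j \le k-1$: being a $j$-category is characterized by admitting an extremal epimorphism from a coproduct of copies of $\Dn{j}$, and $F$ preserves colimits, extremal epimorphisms, and $\Dn{j}$ by the inductive hypothesis. Property (i) of Lemma~\ref{lemma:char_Dk} for $F(\Dn{k})$ is then immediate, by applying this preservation to $F^{-1}$ and using that $\Dn{k}$ itself is not a $(k-1)$-category. Property (ii) follows similarly: since $F$ is an equivalence, the proper sub-$n$-categories of $F(\Dn{k})$ correspond via $F^{-1}$ bijectively to those of $\Dn{k}$, all of which are $(k-1)$-categories by assumption, and $F$ carries $(k-1)$-categories to $(k-1)$-categories.

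The core difficulty is verifying property (iii), namely that $\tr_{k-1}(F(\Dn{k})) \cong \Sn{k-1}$. For this, I would argue that since $F$ preserves the class of $(k-1)$-categories, it restricts to an autoequivalence of the full subcategory $\nCat{k-1} \hookto \nCat{n}$; combined with the fact that $\tr_{k-1}$ is the right adjoint of this inclusion, a standard adjunction/Yoneda argument yields a natural isomorphism $\tr_{k-1} \circ F \cong F \circ \tr_{k-1}$. It then suffices to show $F(\Sn{k-1}) \cong \Sn{k-1}$. Using the pushout formula $\Sn{k-1} \cong \Dn{k-1} \amalg_{\Sn{k-2}} \Dn{k-1}$ along $(\sigma, \tau)$, together with a secondary induction that establishes $F(\Sn{j}) \cong \Sn{j}$ for $j < k$ by the same pushout description, the problem reduces to checking that the two monomorphisms $F(\sigma), F(\tau) \colon \Sn{k-2} \hookto \Dn{k-1}$ give a pushout isomorphic to $\Sn{k-1}$; since $\sigma$ and $\tau$ are the only monomorphisms $\Sn{k-2} \hookto \Dn{k-1}$ and the pushout $\Sn{k-1}$ is symmetric under swapping them, this follows whether or not $F$ interchanges $\sigma$ and $\tau$.

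The hardest step will be part (iii): commuting $F$ with truncation and then controlling how $F$ acts on the spheres. Everything else is a routine application of the fact that autoequivalences preserve universal constructions, combined with the characterizations provided by Lemmas~\ref{lemma:char_Dk} and~\ref{lemma:char_k-cat}.
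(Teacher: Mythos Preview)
Your proposal is correct and follows the paper's inductive strategy closely, using Lemmas~\ref{lemma:char_Dk} and~\ref{lemma:char_k-cat} in the same way for conditions (i) and (ii). The one genuine difference is your treatment of condition~(iii): you commute $F$ with $\tr_{k-1}$ abstractly via uniqueness of right adjoints (using that $F$ restricts to an autoequivalence of $\nCat{(k-1)}$), and then reduce to $F(\Sn{k-1})\cong\Sn{k-1}$. The paper instead argues concretely that the underlying $n$-graphs of $\tr_{k-1}(F(C))$ and $\op_{\delta_{\le k-1}(F)}(\tr_{k-1}(C))$ coincide, and then invokes the fact that the underlying graph of $\Sn{k-1}$ carries a unique strict $n$-category structure. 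Both arguments work; yours is cleaner, while the paper's introduces the invariant $\delta_{\le k-1}(F)$ that is reused in the sequel.

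One slip to fix: the pushout presentation $\Sn{k-1}\cong \Dn{k-1}\amalg_{\Sn{k-2}}\Dn{k-1}$ has the \emph{same} monomorphism $i_\eps\colon\Sn{k-2}\hookto\Dn{k-1}$ on both legs, not the two distinct monomorphisms. (Already for $k=2$, gluing two copies of $\Dn{1}$ along the two different inclusions of $\Sn{0}$ yields two objects with arrows in both directions, not $\Sn{1}$.) The repair is immediate and is exactly what the paper does: since there are only two monomorphisms $\Sn{k-2}\hookto\Dn{k-1}$, the image $F(i_\eps)$ is again one of them, so $F$ sends the span $(i_\eps,i_\eps)$ to $(F(i_\eps),F(i_\eps))$ and the resulting pushout is still $\Sn{k-1}$.
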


\begin{proof}
We are going to prove by induction on $k$ that $F$ preserves both $\Dn{k}$
and~$\Sn{k-1}$. For $k = 0$, the objects $\Dn{0}$
and $\Sn{-1}$ are terminal and initial objects of $\nCat{n}$,
respectively, and hence they are preserved by $F$.

Suppose the result is true for $l < k$ and let us prove it for $k$.
The equivalence~$F$ respects monomorphisms and reflects isomorphisms. It
thus respects proper subobjects. It also reflects them since a quasi-inverse
of $F$ respects them. Moreover, since $F$ preserves sums, extremal
epimorphisms and the $(k - 1)$-disk by induction, it preserves
$(k-1)$-categories by Lemma~\ref{lemma:char_k-cat}. This shows that $F$
preserves the two first conditions of Lemma~\ref{lemma:char_Dk}.

We will now check that $F$ preserves $\Sn{k-1}$. There are exactly two
monomorphisms $\Sn{k-2} \hookto \Dn{k-1}$, say $i_1$ and $i_2$. By
induction, $F$ preserves $\Sn{k-2}$ and $\Dn{k-1}$. It thus either
fixes $i_1$ and $i_2$ or exchanges them. But for any value of $\eps$ in
$\{1, 2\}$, we have a pushout square
\[
\xymatrix{
\Sn{k-2} \ar[r]^{i_\eps} \ar[d]_{i_\eps} & \Dn{k-1} \ar[d] \\
\Dn{k-1} \ar[r] & \Sn{k-1}\pbox{.}
}
\]
Since $F$ preserves pushouts, we deduce that $\Sn{k-1}$ is preserved.

Let us now prove that $F$ is in some sense compatible with
$(k-1)$-truncation.  First, note that for any strict $n$-category $C$, the
$(k-1)$-categories $\tr_{k-1}(F(C))$ and $F(\tr_{k-1}(C))$ (recall that
we proved that $F$ preserves $(k-1)$-categories) have the same
arrows. Indeed, the set of $i$-arrows for $i < k$ is corepresented by
$\Dn{i}$, which is preserved by $F$ by induction. Since by induction $F$
preserves $\Dn{l}$ for $l$ such that $0 \le l \le k - 1$,  we have by
paragraph~\ref{paragr:def_delta_le} an element $\delta_{\le k -
1}(F)$ in $(\Z/2\Z)^n$. It is immediate that the underlying $n$-graphs of
$\tr_{k-1}(F(C))$ and $\op_{\delta_{\le k - 1}(F)}(\tr_{k-1}(C))$ are equal.

In particular, if $C$ is such that $\tr_{k-1}(C)$ is isomorphic to
$\Sn{k-1}$, we get that the underlying $n$-graph of $\tr_{k-1}(F(C))$ is the
underlying $n$-graph of
\[
  \op_{\delta_{\le k - 1}(F)}(\tr_{k-1}(C)) \cong
  \op_{\delta_{\le k - 1}(F)}(\Sn{k-1}) \cong \Sn{k-1}.
\]
But since there exists only one structure of strict $n$-category on the
underlying $n$-graph of $\Sn{k-1}$, we get that $\tr_{k-1}(F(C))$ is
isomorphic to $\Sn{k-1}$. This shows that $F$ preserves the last condition
of Lemma~\ref{lemma:char_Dk}.

We thus have proved that the three properties characterizing $\Dn{k}$ are
stable under~$F$ and hence we get that $F$ preserves $\Dn{k}$.
\end{proof}

\begin{rem}
The analogous statement for \emph{rigid} strict $n$-categories appears as
\cite[Lemma 4.5]{BarSchPrUnicity}. The proof, which is based on a different
characterization of the $n$-disk, also adapts to the case of strict
$n$-categories.
\end{rem}

\begin{paragr}\label{paragr:def_delta}\label{paragr:F_twidle_Theta}
Let $F$ be an autoequivalence of $\nCat{n}$. By the previous proposition,
$F$ preserves all the disks and we thus get by
paragraph~\ref{paragr:def_delta_le} an element $\delta(F)$ in~$\Zdn$. We
thus have a map
\[ \aut(\nCat{n}) \longto \Zdn. \]
It is immediate that this map is a monoid morphism and a retraction of
the map $\Zdn \to \aut(\nCat{n})$. In other words, if $F$ and $G$ are
autoequivalences of~$\nCat{n}$, we have
$\delta(GF) = \delta(G) + \delta(F)$, and if $\delta$ is an element of~$\Zdn$,
we have~$\delta(\op_\delta) = \delta$.

We will denote by $\wt{F}$ the autoequivalence
\[  \wt{F} = F \circ \op_{-\delta(F)}. \]
Note that we have $\delta(\wt{F}) = 0$.
\end{paragr}

\begin{prop}\label{prop:auto_nCat_restr}
If $F$ is an autoequivalence of $\nCat{n}$, then we have \hbox{$\wt{F}(T)
\cong T$} for every object $T$ of $\Theta_n$. In particular, $F$ induces an
autoequivalence of $\Theta_n$.
\end{prop}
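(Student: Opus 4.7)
The plan is to use the previous proposition (which gives $F(\Dn{k}) \cong \Dn{k}$) to normalize $F$ so that it fixes the source and target monomorphisms between disks, and then to exploit the fact that every $T \in \Theta_n$ is by construction the colimit of a diagram $\D_T$ whose only objects and morphisms are disks and source/target maps.

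First I would use the monoid morphism $\delta \colon \aut(\nCat{n}) \to \Zdn$ of paragraph~\ref{paragr:def_delta} to replace $F$ by the autoequivalence $G = \op_{\delta(F)} \circ F$. Since $\delta$ is a retraction of $\Zdn \to \aut(\nCat{n})$ and every element of the group $\Zdn$ is $2$-torsion, we get
\[
\delta(G) = \delta(\op_{\delta(F)}) \cdot \delta(F) = \delta(F) \cdot \delta(F) = 0.
\]
Each disk $\Dn{k}$ admits no non-trivial automorphisms, so the isomorphisms $G(\Dn{k}) \cong \Dn{k}$ are canonical; the vanishing $\delta(G) = 0$ then says that, under these canonical identifications, $G$ fixes both monomorphisms $\sigma, \tau \colon \Dn{k-1} \to \Dn{k}$ for every $k$, and consequently fixes also their iterated composites $\sigma, \tau \colon \Dn{k'} \to \Dn{k}$.

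Next I would observe that for any $T \in \Theta_n$, the diagram $\D_T$ defined in paragraph~\ref{paragr:def_table} involves only disks and these source/target maps, so $G$ sends $\D_T$ to a diagram canonically isomorphic to $\D_T$. Since $G$ is an equivalence of $\nCat{n}$, it preserves colimits, and thus
\[
G(T) \cong G(\colim \D_T) \cong \colim \D_T = T.
\]
Unraveling, $F \cong \op_{\delta(F)} \circ G$, and by the remark that $\op_\delta(T) \cong T$ for every $T \in \Theta_n$, we conclude $F(T) \cong \op_{\delta(F)}(G(T)) \cong \op_{\delta(F)}(T) \cong T$. For the ``in particular'' part, since $F$ sends each object of $\Theta_n$ to an object isomorphic to one in $\Theta_n$, full faithfulness of the inclusion $\Theta_n \hookto \nCat{n}$ lets us choose isomorphisms assembling into a functor $\Theta_n \to \Theta_n$ equivalent to $F|_{\Theta_n}$, which is an equivalence because $F$ is.

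The main obstacle I anticipate is the step where we pass from ``$G$ preserves each basic $\sigma$ and $\tau$'' to ``$G$ preserves the whole diagram $\D_T$ up to canonical isomorphism'': this requires that the canonical identifications $G(\Dn{k}) \cong \Dn{k}$ really do assemble into an isomorphism of diagrams, and here it is crucial that the disks have no non-trivial automorphisms so that there is nothing to cohere. Once this rigidity is in hand, the rest is routine: the $\delta$-trick reduces to the case $\delta(F) = 0$, and colimit-preservation of equivalences then finishes the argument.
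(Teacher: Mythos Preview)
Your proof is correct and follows essentially the same approach as the paper: normalize $F$ by composing with $\op_{\delta(F)}$ so that the resulting autoequivalence has trivial $\delta$, observe that it then preserves the diagram $\D_T$ (whose objects are disks and whose maps are the $\sigma,\tau$), and conclude by colimit preservation. You spell out in more detail the rigidity of the disks and the recovery of $F$ from the normalized autoequivalence, but the argument is the same.
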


\begin{proof}
Let $T$ be an object of~$\Theta_n$. The object $T$ seen as a strict
$n$-category is the colimit of the diagram $\D_T$ of
paragraph~\ref{paragr:def_table}. But since $\delta(\wt{F}) = 0$, the
functor $\wt{F}$ preserves the diagram $\D_T$ and since $\wt{F}$ commutes with
colimits, we indeed have~$\wt{F}(T) \cong T$.
\end{proof}

\begin{coro}\label{cor:ff_auto_nCat_Theta}
The dense inclusion $\Theta_n \hookto \nCat{n}$ induces a fully faithful
functor $\Aut(\nCat{n}) \to \Aut(\Theta_n)$.
\end{coro}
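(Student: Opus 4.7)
The plan is to deduce the corollary as a direct application of Proposition~\ref{prop:auto_dense}, taking $\A = \Theta_n$ and $\B = \nCat{n}$ viewed as quasi-categories through the nerve. Since the substantive work has already been carried out in Proposition~\ref{prop:auto_nCat_restr}, no real obstacle remains; the task is simply to match up the four hypotheses of \ref{prop:auto_dense} with results already in hand.

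First, I would check that $\Theta_n$ is small (clear from its combinatorial definition as tables of dimensions of height at most $n$) and that $\nCat{n}$ admits small colimits (recalled at the very start of the subsection on strict $n$-categories). Next, I would recall the remark following Proposition~\ref{prop:Theta_Segal}, which records that the inclusion $\Theta_n \hookto \nCat{n}$ is dense: this is precisely the content of the full faithfulness of the $n$-cellular nerve, translated into the denseness formulation of paragraph~\ref{paragr:def_dense} via the fact that the $n$-cellular nerve is the right adjoint to the canonical functor $\qpref{\Theta_n} \to \nCat{n}$.

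Finally, I would invoke Proposition~\ref{prop:auto_nCat_restr}, which provides the only non-formal input: for every autoequivalence $F$ of $\nCat{n}$ and every $T \in \Theta_n$, one has $F(T) \cong T$, and hence $F$ restricts to an autoequivalence of $\Theta_n$. With all four hypotheses verified, Proposition~\ref{prop:auto_dense} then yields the desired fully faithful functor $\Aut(\nCat{n}) \to \Aut(\Theta_n)$ essentially as a formality. The hard step in this corollary is entirely upstream, in the proof of Proposition~\ref{prop:auto_nCat_restr}, which relies on the fact that every $T \in \Theta_n$ is a colimit in $\nCat{n}$ of the disks $\Dn{k_i}$ and $\Dn{k'_i}$ via the diagram $\D_T$, together with the preservation of disks established immediately before.
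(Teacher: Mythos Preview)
Your proposal is correct and follows exactly the paper's approach: the paper's proof consists of the single sentence ``This is immediate from the previous proposition and Proposition~\ref{prop:auto_dense},'' which is precisely the argument you have spelled out in detail.
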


\begin{proof}
This is immediate from the previous proposition and
Proposition~\ref{prop:auto_dense}.
\end{proof}

We will show in Section \ref{subsec:autoeq_Theta} that the monoidal category
$\Aut(\Theta_n)$ is isomorphic to the discrete monoidal category $\catZdn$.
As a corollary, we will obtain the following theorem:

\begin{thm*}
The functor $\catZdn \to \Aut(\nCat{n})$ is an equivalence of monoidal
categories.
\end{thm*}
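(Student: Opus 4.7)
The plan is to combine the assumed monoidal isomorphism $\Aut(\Theta_n) \cong \catZdn$ with Corollary~\ref{cor:ff_auto_nCat_Theta} and the retraction property of paragraph~\ref{paragr:def_delta}. Concretely, I consider the composition of monoidal functors
\[
\Phi\colon \catZdn \longto \Aut(\nCat{n}), \qquad
\Psi\colon \Aut(\nCat{n}) \longto \Aut(\Theta_n),
\]
where $\Phi$ sends $\delta$ to $\op_\delta$ and $\Psi$ is the restriction functor (which is well-defined by Proposition~\ref{prop:auto_nCat_restr} and is strictly monoidal since restriction commutes with composition).

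The first step is to identify the composite $\Psi \circ \Phi$ with the canonical isomorphism $\catZdn \cong \Aut(\Theta_n)$ produced in Section~\ref{subsec:autoeq_Theta}. By paragraph~\ref{paragr:def_delta}, the map $\delta\colon\aut(\nCat{n})\to\Zdn$ is a retraction of $\Phi$ at the level of underlying monoids, and since $\op_\delta$ preserves the objects of $\Theta_n$ (pointed out in the remark following paragraph~\ref{paragr:def_delta}), this retraction is compatible with the restriction to $\Theta_n$. In other words, $\Psi\circ\Phi$ sends $\delta$ to $\op_\delta|_{\Theta_n}$, which by the assumed description of $\Aut(\Theta_n)$ corresponds to $\delta$ itself; hence $\Psi\circ\Phi$ is an equivalence (in fact an isomorphism) of monoidal categories.

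Next I use Corollary~\ref{cor:ff_auto_nCat_Theta}, which says that $\Psi$ is fully faithful. Combined with the fact that $\Psi\circ\Phi$ is fully faithful, this forces $\Phi$ to be fully faithful as well. It remains to check that $\Phi$ is essentially surjective. Let $F \in \Aut(\nCat{n})$. Its image $\Psi(F) \in \Aut(\Theta_n)$ corresponds, under the assumed isomorphism $\Aut(\Theta_n)\cong\catZdn$, to some element $\delta \in \Zdn$, which is exactly $\delta(F)$ by construction. Since $\Psi(\op_{\delta(F)})$ corresponds to the same $\delta(F)$, we have $\Psi(F)\cong\Psi(\op_{\delta(F)})$ in $\Aut(\Theta_n)$, and full faithfulness of $\Psi$ yields an isomorphism $F \cong \op_{\delta(F)}$ in $\Aut(\nCat{n})$. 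Thus $\Phi$ is essentially surjective.

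The main (and really the only non-formal) input is the assumed computation of $\Aut(\Theta_n)$ as the discrete monoidal category on $\Zdn$; once this is in place, everything else is a bookkeeping argument. The one small point to be careful about is checking that the identification $\Psi\circ\Phi\cong\mathrm{id}$ is genuinely monoidal, but this is immediate because $\Phi$ and $\Psi$ are both strictly monoidal (the monoid structure on both sides being composition) and the chosen isomorphism $\Aut(\Theta_n)\cong\catZdn$ is, by construction in Section~\ref{subsec:autoeq_Theta}, a monoidal one.
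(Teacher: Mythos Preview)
Your proof is correct and follows essentially the same strategy as the paper: both use the full faithfulness of the restriction functor $\Psi\colon\Aut(\nCat{n})\to\Aut(\Theta_n)$ from Corollary~\ref{cor:ff_auto_nCat_Theta} together with the computation $\Aut(\Theta_n)\cong\catZdn$ from Section~\ref{subsec:autoeq_Theta}. The only cosmetic difference is that the paper phrases the endgame as showing that $\Psi$ is essentially surjective (every $\op_\delta$ on $\Theta_n$ lifts to $\nCat{n}$), hence an equivalence, whereas you show directly that $\Phi$ is an equivalence by a two-out-of-three argument on the composite $\Psi\circ\Phi$; these are equivalent reorganizations of the same idea.
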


\subsection{Autoequivalences of the category of rigid strict $n$-categories}

\begin{paragr}\label{paragr:nCat_rig}
A strict $n$-category $C$ is said to be \ndef{rigid} if it contains no
non-trivial isomorphisms, that is, if for any $k$ such that $1 \le k \le n$,
any strictly invertible $k$-arrow of $C$ is the identity of a
$(k-1)$-arrow. For a fixed $k$, this condition amounts to saying that $C$
is local with respect to the map $j_k \colon J_k \to \Dn{k-1}$ of
paragraph~\ref{paragr:def_Jk}. In particular, a strict $n$-category $C$ is
rigid if and only if it is $\J^\flat$-local, where $\J^\flat$ is the set
defined in paragraph~\ref{paragr:def_I_J_nCat}.

We will denote by $\nCatr{n}$ the full subcategory of the category of strict
$n$\nbd-cate\-gories whose objects are the rigid strict $n$-categories.
\end{paragr}

\begin{paragr}
Let $\delta$ be an element of $(\Z/2\Z)^n$. It is clear that if $C$ is a
rigid strict $n$\nbd-category, then so is $\op_\delta(C)$. In other words, the
autoequivalence $\op_\delta$ induces an autoequivalence of the category
$\nCatr{n}$. We will also denote this equivalence by~$\op_\delta$. We thus
get, as in the non-rigid case, a monoidal functor
\[ \catZdn \longto \Aut(\nCatr{n}). \]

In particular, if $F$ is an autoequivalence of $\nCatr{n}$, we can define an
autoequivalence~$\wt{F}$ of $\nCatr{n}$ as in
paragraph~\ref{paragr:F_twidle_Theta}.
\end{paragr}

\begin{prop}\label{prop:autoeq_nCatr_restr_Thn}
If $F$ is an autoequivalence of $\nCatr{n}$, then $\wt{F}(T) \cong T$
for every object $T$ of $\Theta_n$. In particular, $F$ induces an
autoequivalence of $\Theta_n$.
\end{prop}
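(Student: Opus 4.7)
The strategy is to mirror the proof of Proposition~\ref{prop:auto_nCat_restr}, with one adjustment coming from the fact that $\nCatr{n}$ sits inside $\nCat{n}$ as a reflective subcategory (the $\J^\flat$-local objects, by paragraph~\ref{paragr:nCat_rig}). The first step is to show that any autoequivalence $F$ of $\nCatr{n}$ preserves the disks, i.e., $F(\Dn{k})\cong\Dn{k}$ for $0\le k\le n$. As pointed out in the remark following Lemma~\ref{lemma:char_Dk} and the proposition preceding it, this rigid analogue is precisely \cite[Lemma~4.5]{BarSchPrUnicity}; since each $\Dn{k}$ is itself rigid, the induction using a suitable characterization of the $k$-disk goes through in $\nCatr{n}$.

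Once preservation of the disks is available, I would reproduce paragraph~\ref{paragr:def_delta} verbatim to produce an element $\delta(F)\in\Zdn$ recording, for each $k$ with $0<k\le n$, whether $F$ fixes or swaps the two monomorphisms $\sigma,\tau\colon\Dn{k-1}\hookto\Dn{k}$. The autoequivalence $\op_\delta$ of $\nCat{n}$ manifestly preserves rigidity (reversing orientations does not create non-trivial isomorphisms), and hence restricts to an autoequivalence of $\nCatr{n}$. Postcomposing $F$ with $\op_{\delta(F)^{-1}}$, I may therefore assume $\delta(F)=1$, so that $F$ fixes all the structural maps $\sigma,\tau$ up to the chosen isomorphisms $F(\Dn{k})\cong\Dn{k}$.

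Now fix $T\in\Theta_n$. By paragraph~\ref{paragr:def_table}, $T$ is the colimit of the diagram $\D_T$ in $\nCat{n}$, and since it is freely generated by an $n$-graph it contains no non-trivial isomorphisms, i.e., $T\in\nCatr{n}$. Because $\nCatr{n}\hookto\nCat{n}$ is a full reflective subcategory and the $\nCat{n}$-colimit of $\D_T$ already lies in $\nCatr{n}$, this colimit is also the colimit of $\D_T$ in $\nCatr{n}$. Since $F$, being an autoequivalence of $\nCatr{n}$, preserves colimits and preserves (by the reduction) every object and morphism in $\D_T$, we conclude $F(T)\cong T$. The second assertion of the statement follows immediately: the inclusion $\Theta_n\hookto\nCat{n}$ factors through $\nCatr{n}$, and preservation of objects of $\Theta_n$ up to isomorphism together with fullness produces the induced autoequivalence of $\Theta_n$.

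The only point that requires care, and which I expect to be the main obstacle, is the coincidence of colimits in $\nCatr{n}$ and $\nCat{n}$ for the specific diagrams $\D_T$; this is resolved by the reflective subcategory argument above, since whenever the ambient colimit happens to already be local, it serves as the local colimit as well.
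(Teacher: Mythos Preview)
Your proposal is correct and follows essentially the same approach as the paper's own proof, which simply states that the argument for $\nCat{n}$ ``adapts trivially'' once one observes that the objects of $\Theta_n$ and the spheres $\Sn{k}$ are rigid. You spell out explicitly the one point the paper leaves implicit: that the relevant colimits (the diagrams $\D_T$, and implicitly the pushouts defining the spheres) computed in $\nCat{n}$ already land in $\nCatr{n}$ and hence serve as colimits there, via the reflective subcategory argument; this is exactly the content of the paper's parenthetical observation about rigidity.
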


\begin{proof}
The proof used for $\nCat{n}$ in the previous subsection adapts trivially.
One only has to observe that the objects of $\Theta_n$ and the spheres are
rigid $n$\nbd-categories.
\end{proof}

\begin{coro}\label{cor:ff_auto_nCatr_Theta}
The dense inclusion $\Theta_n \hookto \nCatr{n}$ induces a fully faithful
functor $\Aut(\nCatr{n}) \to \Aut(\Theta_n)$.
\end{coro}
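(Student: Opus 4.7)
The plan is to mimic the proof of Corollary~\ref{cor:ff_auto_nCat_Theta} essentially verbatim, applying Proposition~\ref{prop:auto_dense} now to the inclusion $\Theta_n \hookto \nCatr{n}$. The restriction condition on autoequivalences required by that proposition is already supplied by Proposition~\ref{prop:autoeq_nCatr_restr_Thn}, so the only two things that remain to be verified are the density of the inclusion $\Theta_n \hookto \nCatr{n}$ and the existence of small colimits in $\nCatr{n}$.

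For density, I would observe that Proposition~\ref{prop:Theta_Segal} and the remark following it establish that the $n$-cellular nerve $N_n \colon \nCat{n} \to \pref{\Theta_n}$ is fully faithful, which is the concrete meaning of density of $\Theta_n \hookto \nCat{n}$. Since $\nCatr{n}$ is, by paragraph~\ref{paragr:nCat_rig}, a \emph{full} subcategory of $\nCat{n}$, the restriction of $N_n$ to $\nCatr{n}$ remains fully faithful, and the objects of $\Theta_n$ land in $\nCatr{n}$ (a fact we used in the proof of Proposition~\ref{prop:autoeq_nCatr_restr_Thn}). Hence $\Theta_n \hookto \nCatr{n}$ is dense.

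For the existence of small colimits in $\nCatr{n}$, the description as the full subcategory of $\J^\flat$-local objects (paragraph~\ref{paragr:nCat_rig}) identifies $\nCatr{n}$ as a reflective subcategory of the locally presentable category $\nCat{n}$, so it is cocomplete, with colimits computed by applying the reflection to the colimit taken in $\nCat{n}$.

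Given these two facts, Proposition~\ref{prop:auto_dense} applies directly and yields that the restriction functor $\Aut(\nCatr{n}) \to \Aut(\Theta_n)$ is fully faithful. I do not expect any real obstacle here: the only nontrivial input, the stability of $\Theta_n$ under autoequivalences of $\nCatr{n}$, has been absorbed into Proposition~\ref{prop:autoeq_nCatr_restr_Thn}, and everything else is a routine transfer of the non-rigid argument along the inclusion $\nCatr{n} \hookto \nCat{n}$.
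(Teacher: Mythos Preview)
Your proof is correct and follows the same approach as the paper, which simply cites Proposition~\ref{prop:autoeq_nCatr_restr_Thn} together with Proposition~\ref{prop:auto_dense}. You have additionally spelled out why the inclusion is dense and why $\nCatr{n}$ is cocomplete, points the paper leaves implicit (the density is already asserted in the statement of the corollary itself).
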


\begin{proof}
This is immediate from the previous proposition and
Proposition~\ref{prop:auto_dense}.
\end{proof}

We will show in Section \ref{subsec:autoeq_Theta} that the monoidal category
$\Aut(\Theta_n)$ is isomorphic to the discrete monoidal category
$\catZdn$. As a corollary, we will obtain the following theorem, first
proved by Barwick and Schommer-Pries \cite[Section 4]{BarSchPrUnicity} by
different methods:

\begin{thm*}
The functor $\catZdn \to \Aut(\nCatr{n})$ is an equivalence of monoidal
categories.
\end{thm*}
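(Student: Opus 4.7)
The plan is to deduce this theorem as a direct corollary of the (forthcoming, see Section~\ref{subsec:autoeq_Theta}) computation that $\Aut(\Theta_n)$ is isomorphic to the discrete monoidal category $\catZdn$, combined with Corollary~\ref{cor:ff_auto_nCatr_Theta}. Concretely, I would consider the composition of monoidal functors
\[ \catZdn \longto \Aut(\nCatr{n}) \longto \Aut(\Theta_n), \]
where the first arrow is the functor whose equivalence we want to establish, and the second arrow is the fully faithful restriction functor from Corollary~\ref{cor:ff_auto_nCatr_Theta}.

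First, I would check (by unwinding the definition of $\op_\delta$ on disks, their source and target maps, and the pushout presentation of an object of $\Theta_n$ from paragraph~\ref{paragr:def_table}) that the composite $\catZdn \to \Aut(\Theta_n)$ agrees with the canonical monoidal functor sending $\delta$ to the restriction of $\op_\delta$ to $\Theta_n$. By the forthcoming identification $\Aut(\Theta_n) \cong \catZdn$, this composite is (isomorphic to) the identity on $\catZdn$, and in particular is an equivalence of monoidal categories.

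Given this, full faithfulness and essential surjectivity follow formally. Since the second arrow is fully faithful and the composite is fully faithful, the first arrow is fully faithful as well. Since the second arrow is fully faithful and the composite is essentially surjective, the second arrow is essentially surjective, hence an equivalence; and then by two-out-of-three the first arrow is also essentially surjective. Thus $\catZdn \to \Aut(\nCatr{n})$ is an equivalence of monoidal categories.

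There is no real obstacle to overcome here, as all the substantive content has been deferred: the fully faithful restriction $\Aut(\nCatr{n}) \to \Aut(\Theta_n)$ is provided by Corollary~\ref{cor:ff_auto_nCatr_Theta} (itself resting on Proposition~\ref{prop:autoeq_nCatr_restr_Thn} and Proposition~\ref{prop:auto_dense}), and the computation of $\Aut(\Theta_n)$ is the subject of the later subsection. The hard work will lie entirely in that later subsection; the present proof is purely formal assembly.
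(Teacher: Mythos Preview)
Your proposal is correct and follows essentially the same approach as the paper. The paper's proof (given later, after Theorem~\ref{thm:auto_Thn}) argues that the fully faithful restriction $\Aut(\nCatr{n}) \to \Aut(\Theta_n)$ is also essentially surjective because every autoequivalence of $\Theta_n$ is an $\op_\delta$ and hence lifts, and then invokes $\Aut(\Theta_n)\cong\catZdn$; your two-out-of-three argument on the triangle $\catZdn \to \Aut(\nCatr{n}) \to \Aut(\Theta_n)$ is just a repackaging of the same content.
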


\subsection{Autoequivalences of the category
$\Theta_n$}\label{subsec:autoeq_Theta}

\begin{paragr}
Let $\delta$ be an element of $(\Z/2\Z)^n$. As already observed, if $T$ is
an object of~$\Theta_n$, then $\op_\delta(T)$ is isomorphic to an object of
$\Theta_n$. This means
that the autoequivalence $\op_\delta$ induces an autoequivalence of the category
$\Theta_n$. We will still denote this equivalence by $\op_\delta$.
We thus get, as in the previous subsections, a monoidal functor
\[ \catZdn \longto \Aut(\Theta_n). \]

In particular, if $F$ is an autoequivalence of $\Theta_n$, we can define an
autoequivalence~$\wt{F}$ of $\Theta_n$ as in
paragraph~\ref{paragr:F_twidle_Theta}.
\end{paragr}

\begin{lemma}
Let $k$ be such that $0 < k \le n$. The $k$-disk $\Dn{k}$ is the unique
object~$T$ of $\Theta_n$ satisfying the following two properties:
\begin{enumerate}
  \item Every proper subobject of $T$ is an $l$-disk for $l < k$.
  \item For every $l$ such that $0 \le l < k$, there are exactly two
    monomorphisms from~$\Dn{l}$ to $T$.
\end{enumerate}
\end{lemma}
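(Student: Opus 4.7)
The plan is to verify that $\Dn{k}$ satisfies both properties and then to show, conversely, that any $T = \tabdim$ in $\Theta_n$ satisfying (i) and (ii) must be $\Dn{k}$. For the verification, I would use that the underlying $n$-graph $\Theta_0(\Dn{k})$ has a unique non-identity cell in dimension $k$ and exactly two non-identity $l$-cells (the iterated source $\sigma$ and target $\tau$) in each dimension $l < k$. By the characterization of active morphisms in paragraph~\ref{paragr:active_inert}, an active morphism $\Dn{l} \to \Dn{k}$ exists only when $l \geq k$, so every monomorphism $\Dn{l} \to \Dn{k}$ with $l < k$ is inert and corresponds to one of these two $l$-cells, giving (ii). A direct inspection of sub-$n$-graphs shows that the proper subobjects of $\Dn{k}$ in $\Theta_n$ are precisely the sub-globes $\Dn{l} \hookto \Dn{k}$ for $l < k$, giving (i).

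For the converse, I would let $T = \tabdim$ of height $h$ satisfy (i) and (ii), and first rule out $m \geq 3$ using (i): the inert inclusion of the first two slots realizes $T' = \Dn{k_1} \amalg_{\Dn{k'_1}} \Dn{k_2}$ as a proper subobject of $T$, and since the functor $\Theta_n \hookto \nCat{n}$ is injective on objects and $T'$ has $m = 2$, $T'$ is not a disk, contradicting (i). I would next rule out $m = 2$ using (ii): if $k'_1 = 0$ the pushout leaves three $0$-cells in $T$ and (ii) fails at $l = 0$; otherwise set $l = k'_1$, so that $h = \max(k_1, k_2) > l$ and no active morphism $\Dn{l} \to T$ exists, whence every monomorphism $\Dn{l} \to T$ is inert and corresponds to a non-identity $l$-cell of $\Theta_0(T)$. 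A direct count then gives three such $l$-cells in $\Theta_0(T)$ (two from $\Theta_0(\Dn{k_1})$ and two from $\Theta_0(\Dn{k_2})$, with one pair identified via the unique $l$-cell of $\Theta_0(\Dn{k'_1})$), contradicting (ii).

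Once $m = 1$ is established, so that $T = \Dn{k_1}$, I would finish with a two-sided squeeze: (ii) applied at $l = k - 1$ forces $k_1 \geq k$ (otherwise there are too few monomorphisms $\Dn{k-1} \to \Dn{k_1}$), and (i) applied to the proper subobject $\Dn{k_1 - 1} \hookto \Dn{k_1}$ forces $k_1 - 1 < k$. Hence $k_1 = k$ and $T = \Dn{k}$.

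The main subtlety is that counting monomorphisms in $\Theta_n$ is not the same as counting inert morphisms, since a monomorphism need not be inert -- the map $[1] \to [2]$ in $\Theta_1 = \Delta$ skipping the middle vertex is an active monomorphism. The key observation keeping the counts tractable is the characterization of active morphisms from paragraph~\ref{paragr:active_inert}: an active morphism $\Dn{l} \to T$ exists only when $l$ is at least the height of $T$, so at every dimension $l$ strictly below the height of $T$ (the only case that arises in the counting above), monomorphisms and inert morphisms coincide.
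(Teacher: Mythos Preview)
Your overall strategy matches the paper's: rule out $m \ge 3$ via (i), rule out $m = 2$ by counting monomorphisms from $\Dn{k'_1}$, then squeeze $k_1 = k$. The paper is terser --- it simply asserts ``there are exactly three monomorphisms from $\Dn{k'_1}$ to such a $T$'' and deduces $k'_1 < k$ from (i).

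However, the general principle you articulate in the final paragraph is false. It is \emph{not} true that for $l$ strictly below the height of $T$, every monomorphism $\Dn{l} \to T$ is inert. Take $T = \Dn{1} \amalg_{\Dn{0}} \Dn{2}$ in $\Theta_2$ (height~$2$) and $l = 1$: besides the three inert monomorphisms corresponding to the generating $1$-cells, the two composite $1$-cells of $\Theta(T)$ give two further monomorphisms $\Dn{1} \to T$ (each factors as $\Dn{1} \hookto \Dn{1}\amalg_{\Dn{0}}\Dn{1} \hookto T$, a composite of monomorphisms, and is not inert). The flaw in your reasoning is that the non-existence of an active morphism $\Dn{l} \to T$ only tells you that in the factorization $\Dn{l} \xrightarrow{a} S \xrightarrow{i} T$ the intermediate object $S$ is not $T$; it does not force $S = \Dn{l}$.

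Your specific counts are nonetheless correct, but for a structural reason you do not give: in $T = \Dn{k_1} \amalg_{\Dn{k'_1}} \Dn{k_2}$ with $k'_1 > 0$, the $n$-graph $\Theta_0(T)$ coincides with that of a single disk in all dimensions below $k'_1$, so no composition is possible at dimension $k'_1$ in $\Theta(T)$, and the non-identity $k'_1$-cells are exactly the three generating ones. You should also make explicit that $k'_1 < k$ (needed to invoke (ii) at $l = k'_1$); as the paper observes, this follows from (i) applied to the proper subobject $\Dn{k'_1} \hookto T$.
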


\begin{proof}
Let
\[
T = \tabdimk
\]
be an object of $\Theta_n$ satisfying the two properties of the statement. First
note that we must have $m \le 2$ for otherwise
\[
\left(
\begin{matrix}
  k_1 && k_2 && \cdots && k_{m-1} \cr
  & k'_1 && k'_2 & \cdots & k'_{m-2}
\end{matrix}
\right)
\]
would be a proper subobject that is not a disk, contradicting property
(i). Suppose $m = 2$ so that
\[
  T =
\left(
\begin{matrix}
  k_1 && k_2  \cr
  & k'_1
\end{matrix}
\right).
\]
There are exactly three monomorphisms from $\Dn{k_1'}$ to such a $T$.
Property (i) implies that $k_1' < k$ and we get a contradiction with
property (ii). This means that $m = 1$ or, in other words, that $T$ is a
disk $\Dn{p}$. Property (i) implies that $p \le k$ and property~(ii) that $p
\ge k$, thereby proving the result.
\end{proof}

\begin{prop}\label{prop:auto_Thn}
If $F$ is an autoequivalence of $\Theta_n$, then $\wt{F}$ is the identity on
objects.
\end{prop}

\begin{proof}
The strategy is similar to the one used for $\nCat{n}$ but has to be adapted
since the spheres are not objects of $\Theta_n$. Nevertheless, for $k$ such
that $0 < k \le n$, the characterization of the $k$-disk given by
Lemma~\ref{lemma:char_Dk} can be replaced by the one given by the above
lemma. Using this characterization, we get by induction on $k$  that $\Dn{k}$ is
preserved (starting from $\Dn{0}$ which is the terminal object of
$\Theta_n$). We then obtain that every object is preserved by expressing an
object $T$ of $\Theta_n$ as the colimit of the diagram $\D_T$ (using the
same argument as in the proof of Proposition~\ref{prop:auto_nCat_restr}).
\end{proof}

\begin{prop}\label{prop:auto_Theta_n}
The monoid morphism $\Zdn \to \aut(\Theta_n)$ is an isomorphism.
\end{prop}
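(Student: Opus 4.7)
The plan is to prove that the monoid morphism $\Zdn \to \aut(\Theta_n)$ is both injective and surjective, in each case by tracking its action on the elementary face morphisms $\sigma_k, \tau_k \colon \Dn{k-1} \to \Dn{k}$. Injectivity is immediate: if $\op_\delta$ acts as the identity on $\Theta_n$ then in particular it fixes $\sigma_k$ and $\tau_k$ for every $k$, but by construction $\op_\delta$ swaps these whenever $\delta_k = 1$, forcing $\delta$ to be trivial.

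For surjectivity, starting from an arbitrary $F \in \aut(\Theta_n)$ I would construct $\delta(F) \in \Zdn$ with $F = \op_{\delta(F)}$. By Proposition~\ref{prop:auto_Thn} the functor $F$ fixes every object. Since $F$ preserves monomorphisms and there are exactly two monomorphisms $\Dn{k-1} \hookto \Dn{k}$, namely $\sigma_k$ and $\tau_k$, $F$ either fixes or swaps this pair; define $\delta(F)_k$ accordingly. Then $F' = F \circ \op_{\delta(F)}$ is an autoequivalence fixing all objects and every $\sigma_k, \tau_k$, and it remains to prove $F'$ is the identity.

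The crucial step is to show that for every object $T$ of $\Theta_n$ with table of dimensions $(k_1, k'_1, \ldots, k_m)$, the functor $F'$ fixes the canonical cocone $\iota_i \colon \Dn{k_i} \to T$ exhibiting $T$ as the colimit of $\D_T$. This follows from the fact, recorded in paragraph~\ref{paragr:def_table}, that this cocone is \emph{unique}: $F'$ preserves the colimit since it is an equivalence, and it fixes the diagram $\D_T$ since it fixes the objects and the $\sigma, \tau$ morphisms therein, so $F'$ sends $(\iota_i)$ to another colimit cocone for $\D_T$ with the same vertex $T$, and uniqueness forces $F'(\iota_i) = \iota_i$. From there it becomes bookkeeping with the active--inert factorization (Proposition~\ref{prop:active_inert}). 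Every inert morphism $\Dn{l} \to T$ corresponds to an $l$-cell of $\Theta_0(T)$, which lies in some summand $\Theta_0(\Dn{k_i})$, and hence factors as an iterate of $\sigma, \tau$ into $\Dn{k_i}$ followed by $\iota_i$; both pieces are fixed. A general inert $T' \to T$ is then determined by its restrictions along the (fixed) cocone of $T'$, which are inerts with disk source and therefore fixed. Active morphisms $\Dn{l} \to T$ are unique by paragraph~\ref{paragr:active_inert}, hence fixed. Combining these via Proposition~\ref{prop:active_inert} shows that $F'$ fixes every morphism with disk source, and a final colimit argument on the source $S = \colim \D_S$ propagates the conclusion to every morphism of $\Theta_n$.

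The step I expect to be the main obstacle is the cocone-fixing step, which must carefully distinguish ``$F'$ fixes the diagram $\D_T$'' from ``$F'$ fixes its colimit cocone'', and must rely on the uniqueness of the cocone from paragraph~\ref{paragr:def_table} rather than on any cocompleteness of $\Theta_n$ (which fails in general). Everything else is a matter of unpacking the spine decomposition $T = \Dn{k_1} \amalg_{\Dn{k'_1}} \cdots \amalg_{\Dn{k'_{m-1}}} \Dn{k_m}$ alongside Proposition~\ref{prop:active_inert}.
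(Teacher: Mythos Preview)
Your proposal is correct and follows essentially the same route as the paper: reduce to an autoequivalence $F'$ fixing all $\sigma_k,\tau_k$, use uniqueness of the colimit cocone from paragraph~\ref{paragr:def_table} to pin down the canonical maps $\Dn{k_i}\to T$, then handle general morphisms via the active--inert factorization of Proposition~\ref{prop:active_inert}. The one place where the paper is more explicit than your sketch is the step ``active morphisms $\Dn{l}\to T$ are unique, hence fixed'': uniqueness alone is not enough---you must first check that $F'(a)$ is again active, which the paper deduces from the already-established fact that $F'$ (and hence $F'^{-1}$) preserves inerts, combined with uniqueness of the factorization and the absence of non-trivial isomorphisms in $\Theta_n$.
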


\begin{proof}
It suffices to show that the retraction $F \mapsto \delta(F)$ (see
paragraph~\ref{paragr:def_delta}) of the morphism of the statement is
injective, that is, that if $F$ is an autoequivalence of $\Theta_n$ such
that $\delta(F) = 0$, then $F$ is the identity.

Let us fix such an $F$. We know that $F$ is the identity on objects by the
previous proposition. Since morphisms of strict $n$-categories are
determined by their action on arrows, it suffices to show that morphisms of
the form $\Dn{k} \to T$, where $k$ is such that $0 \le k \le n$ and $T$ is
any object of $\Theta_n$, are preserved.

Let $\Dn{k} \to T$ be such a morphism. By
Proposition~\ref{prop:active_inert}, this morphism factors as a composite
\[ \Dn{k} \stackrel{a}{\longto} S \stackrel{i}{\longto} T, \]
where $a$ is active and $i$ is inert.
Let us first prove that $i$ is preserved by $F$. Recall that $S$ and $T$ are
colimits of diagrams $\D_S$ and $\D_T$ involving disks only. Consider the
cocone $\D_S \to T$ associated to~$i$. Since $i$ is inert, each of the
components $\Dn{l} \to T$ of this cocone corresponds to an $l$-cell of the
$n$-graph $\Theta_0(T)$ generating $\Theta(T)$ and can thus be written (in a
non-canonical way) as a composite
\[ \Dn{l} \longto \Dn{l'} \longto \colim \D_T = T, \]
where the first map is either $\sigma$ or $\tau$, and the second map is one
of the canonical morphisms associated to $\D_T$. Since $\delta(F) = 0$, the
morphism $\Dn{l} \to \Dn{l'}$, the diagram~$\D_S$ and the diagram $\D_T$ are
preserved by $F$. Since moreover, as already observed, the cocone making $T$
a colimit of the diagram $\D_T$ is unique, the canonical morphism $\Dn{l'}
\to T$ is also preserved. This shows that $i$ is preserved.

Finally, let us prove that $a \colon \Dn{k} \to S$ is preserved by $F$. Since
there is at most one active morphism from a fixed disk to a fixed object of
$\Theta_n$ (see paragraph~\ref{paragr:active_inert}), it suffices to show
that $F(a)$ is active. This follows from the fact, just proved, that any
autoequivalence of $\Theta_n$ preserves inert morphisms. Indeed, by
Proposition~\ref{prop:active_inert}, $F(a)$ factors uniquely as $ib$, where
$i$ is inert and $b$ is active. This means that $a = F^{-1}(i)F^{-1}(b)$. By
decomposing $F^{-1}(b)$, we get that $a = F^{-1}(i)jc$, where $j$ is inert
and $c$ is active. Since $F^{-1}$ preserves inert morphisms, $F^{-1}(i)$ is
also inert. By uniqueness of the decomposition of $a$, we obtain that
$F^{-1}(i)j$ is an identity. Since inert morphisms are monomorphisms and
$\Theta_n$ has no non-trivial isomorphisms, this implies that
$F^{-1}(i)$ and hence $i$ are identities, and therefore that $F(a)$ is
active.
\end{proof}

\begin{thm}\label{thm:auto_Thn}
The functor $\catZdn \to \Aut(\Theta_n)$ is an isomorphism of mo\-noidal
categories.
\end{thm}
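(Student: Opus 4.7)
The plan is to deduce the theorem from the two propositions immediately preceding it, together with the observation (made in paragraph~\ref{paragr:def_table}) that for every table of dimensions $T$, the strict $\infty$-category $\Theta(T)$ admits no non-trivial automorphisms. Since $\catZdn$ is discrete as a monoidal category, it suffices to check two things: that the functor is bijective on objects, and that $\Aut(\Theta_n)$ is also a discrete category (so that the functor is then automatically fully faithful and, being strict monoidal on both sides, is a monoidal isomorphism).

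For the first point, I would simply invoke Proposition~\ref{prop:auto_Theta_n}, which says that $\Zdn \to \aut(\Theta_n)$ is an isomorphism of monoids: this is exactly the statement that the underlying map on objects of $\catZdn \to \Aut(\Theta_n)$ is a bijection, and that the monoidal structures (given by the group law and by composition of autoequivalences, respectively) match.

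For the second point, I would argue that the only $2$-cells of $\Aut(\Theta_n)$ are identities. Let $\alpha \colon F \Rightarrow G$ be a natural isomorphism between two autoequivalences of $\Theta_n$. By Proposition~\ref{prop:auto_Thn}, both $F$ and $G$ are the identity on objects, so for each $T$ in $\Theta_n$ the component $\alpha_T \colon F(T) \to G(T)$ is an automorphism of $T$ in $\Theta_n$. Under the fully faithful inclusion $\Theta_n \hookto \nCat{n}$, this amounts to an automorphism of the strict $n$-category $\Theta(T)$; since $\Theta(T)$ has no non-trivial automorphisms, we conclude $\alpha_T = \id_T$. Hence $\alpha$ is the identity natural transformation, and by naturality one has $G(f) = \alpha_{T'} \circ F(f) \circ \alpha_T^{-1} = F(f)$ for every morphism $f \colon T \to T'$, so $F = G$. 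Thus $\Aut(\Theta_n)$ is a discrete category.

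Combining these two observations gives the theorem. There is no real obstacle: all the substantive content is in Propositions~\ref{prop:auto_Thn} and~\ref{prop:auto_Theta_n}, and the remaining step is the rigidity of the objects of $\Theta_n$, which collapses the category of autoequivalences to the monoid of autoequivalences already computed.
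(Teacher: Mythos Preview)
Your approach is correct in spirit and takes a genuinely different, more economical route than the paper. The paper proves discreteness of $\Aut(\Theta_n)$ by an inductive analysis of naturality squares on the disks $\Dn{k}$, first forcing $\delta = \delta'$ and $\gamma_{\Dn{k}} = \id{}$ step by step, and then using naturality against all maps $\Dn{k} \to T$ to pin down each $\gamma_T$. You bypass this entirely by invoking the rigidity observation from paragraph~\ref{paragr:def_table}: since every object $T$ of $\Theta_n$ has trivial automorphism group, any natural isomorphism between autoequivalences (both identity on objects by Proposition~\ref{prop:auto_Thn}) must have identity components. This is shorter and conceptually cleaner.

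There is, however, a small gap. You only treat natural \emph{isomorphisms} $\alpha \colon F \Rightarrow G$, whereas in this paper $\Aut(\Theta_n)$ is the full subcategory of $\Fun(\Theta_n,\Theta_n)$ spanned by autoequivalences, so its morphisms are arbitrary natural transformations. For a mere natural transformation, the component $\alpha_T$ is only an endomorphism of $T$, and objects of $\Theta_n$ do admit non-trivial endomorphisms (for instance $\Dn{1}$ has two constant endomorphisms), so the rigidity fact from paragraph~\ref{paragr:def_table} does not apply directly. The paper's inductive argument is precisely what closes this gap: it shows that naturality against $\sigma,\tau \colon \Dn{k-1}\to\Dn{k}$ forces each $\gamma_{\Dn{k}}$ to be the identity, and then naturality against maps out of disks forces every $\gamma_T$ to be the identity. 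To complete your argument you would need either to supply a reason why every natural transformation between autoequivalences of $\Theta_n$ is automatically invertible, or to run (a version of) the paper's naturality argument after all.
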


\begin{proof}
The previous proposition states that this functor is bijective on objects.
To conclude, it suffices to show that $\Aut(\Theta_n)$ is a discrete
category. Let $\delta$ and $\delta'$ be two elements of $(\Z/2\Z)^n$, and
let $\gamma \colon \op_\delta \to \op_{\delta'}$ be a natural transformation.
For every object $T$ of $\Theta_n$, we thus have a morphism
$\gamma^{}_T : \op_{\delta}(T) \to \op_{\delta'}(T)$ and, in particular, for
every $0 \le k \le n$, we have an endomorphism $\gamma^{}_{\Dn{k}}$ of
$\Dn{k}$.

Let us show by induction that for every $0 \le k \le n$, we have
$\delta^{}_k = \delta'_k$ and $\gamma^{}_{\Dn{k}} = \id{\Dn{k}}$. (A priori,
$\delta^{}_0$ and $\delta'^{}_0$ are not defined and we define them to be
both equal to $0$ for the purpose of starting our induction.)
The case $k = 0$ is obvious.  For $k \ge 1$, consider the naturality squares associated to $\sigma, \tau \colon
\Dn{k-1} \to \Dn{k}$:
\[
\xymatrix{
\Dn{k - 1} \ar[r]^{\op_\delta(\sigma)} \ar[d]_{\gamma^{}_{\Dn{k-1}} = \id{\Dn{k-1}}} & \Dn{k} \ar[d]^{\gamma^{}_{\Dn{k}}} \\
\Dn{k - 1} \ar[r]_{\op_{\delta'}(\sigma)} & \Dn{k} \pbox{,}
}
\quad
\xymatrix{
\Dn{k - 1} \ar[r]^{\op_\delta(\tau)} \ar[d]_{\gamma^{}_{\Dn{k-1}} =
\id{\Dn{k-1}}} & \Dn{k} \ar[d]^{\gamma^{}_{\Dn{k}}} \\
\Dn{k - 1} \ar[r]_{\op_{\delta'}(\tau)} & \Dn{k} \pbox{.}
}
\]
These squares determine the value of $\gamma^{}_{\Dn{k}}$ on the
$(k-1)$-truncation $\Sn{k - 1}$ of $\Dn{k}$: it is one of the two
automorphisms of $\Sn{k - 1}$. But only the trivial automorphism can be
lifted to an endomorphism of $\Dn{k}$ and the unique lift is then the
identity of $\Dn{k}$. This exactly means that $\delta^{}_k$ and $\delta'_k$
are equal and that $\gamma^{}_{\Dn{k}}$ is the identity of $\Dn{k}$.

This shows that if such a $\gamma$ exists, then $\delta = \delta'$. To
conclude, we have to show that the identity is the unique natural
transformation $\gamma \colon \op_\delta \to \op_\delta$. Fix an object~$T$
of~$\Theta_n$ and consider the naturality squares
\[
\xymatrix@C=3pc{
\Dn{k} \ar[r]^-{\op_\delta(u)} \ar[d]_{\id{\Dn{k}}} & \op_\delta(T) \ar[d]^{\gamma^{}_T}
\\
\Dn{k} \ar[r]_-{\op_{\delta}(u)} & \op_\delta(T)
}
\]
associated to morphisms of the form $u \colon \Dn{k} \to T$ where $0 \le k \le
n$.
Since a morphism of strict $n$-categories is determined by its action on
arrows, there is at most one morphism $\gamma^{}_T$ making all these squares
commute, namely the identity of~$T$, thereby proving the result.
\end{proof}

\begin{thm}
The monoidal categories $\Aut(\nCat{n})$ and $\Aut(\nCatr{n})$ are
equivalent to the discrete monoidal category $\catZdn$.
\end{thm}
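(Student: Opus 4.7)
The plan is to combine Theorem~\ref{thm:auto_Thn}, which identifies $\catZdn$ with $\Aut(\Theta_n)$, with the fully faithful restriction functors of Corollaries~\ref{cor:ff_auto_nCat_Theta} and~\ref{cor:ff_auto_nCatr_Theta}. Writing $\C$ for either $\nCat{n}$ or $\nCatr{n}$, the key observation I would make is that the monoidal functor $\catZdn\to\Aut(\Theta_n)$ of Theorem~\ref{thm:auto_Thn} factors by construction as
\[
\catZdn \stackrel{\alpha}{\longto} \Aut(\C) \stackrel{\beta}{\longto} \Aut(\Theta_n),
\]
where $\alpha$ is the strict monoidal functor $\delta\mapsto\op_\delta$ defined earlier in the relevant subsection, and $\beta$ is induced by restriction along the dense inclusion $\Theta_n\hookto\C$. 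The triangle commutes because $\op_\delta(T)\cong T$ for every $T\in\Theta_n$, so the $\op_\delta$ on $\C$ restricts on nose to the $\op_\delta$ on $\Theta_n$.

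Given this factorization, Theorem~\ref{thm:auto_Thn} says that $\beta\alpha$ is an isomorphism of monoidal categories while the relevant corollary says that $\beta$ is fully faithful. From here I would run the standard two-out-of-three argument: full faithfulness of $\alpha$ follows from the bijectivity of $(\beta\alpha)_{A,B}$ combined with that of $\beta_{\alpha(A),\alpha(B)}$; for essential surjectivity, given $F\in\Aut(\C)$, essential surjectivity of $\beta\alpha$ yields some $\delta\in\Zdn$ with $\beta\alpha(\delta)\cong\beta(F)$, and full faithfulness of $\beta$ lifts this isomorphism to $\op_\delta\cong F$ in $\Aut(\C)$. Since $\alpha$ is strict monoidal by construction, the resulting equivalence of categories is automatically an equivalence of monoidal categories.

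I do not expect a real obstacle: the essential work has already been carried out in Propositions~\ref{prop:auto_nCat_restr} and~\ref{prop:autoeq_nCatr_restr_Thn} (which supply the functor $\beta$ in each case) and in Theorem~\ref{thm:auto_Thn} (which supplies the target computation). The only thing to keep an eye on while writing is that the argument for $\nCat{n}$ and for $\nCatr{n}$ is genuinely uniform, which is why it can be phrased a single time with $\C$ denoting either category.
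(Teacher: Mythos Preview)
Your proposal is correct and follows essentially the same approach as the paper. The paper phrases the argument slightly differently---it observes that $\Aut(\C)$ sits as a full monoidal subcategory of $\Aut(\Theta_n)$ via the restriction functor and then checks essential surjectivity by noting that every autoequivalence of $\Theta_n$ is an $\op_\delta$ and hence lifts to $\C$---but this is just the other direction of your two-out-of-three factorization, using the same ingredients (Corollaries~\ref{cor:ff_auto_nCat_Theta}, \ref{cor:ff_auto_nCatr_Theta} and Theorem~\ref{thm:auto_Thn}).
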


\begin{proof}
By Corollaries~\ref{cor:ff_auto_nCat_Theta}
and~\ref{cor:ff_auto_nCatr_Theta}, we have that the categories $\Aut(\nCat{n})$ and
$\Aut(\nCatr{n})$ are both full (monoidal) subcategories of the category
$\Aut(\Theta_n)$. To conclude, it thus suffices to show that every
autoequivalence of $\Theta_n$ lifts to auto\-equivalences of $\nCat{n}$ and
$\nCatr{n}$. This is obvious since, by Proposition~\ref{prop:auto_Theta_n},
the autoequivalences of $\Theta_n$ are the $\op_\delta$.
\end{proof}

\begin{rem}
\tolerance=500
The fact that the category $\Aut(\nCatr{n})$ is equivalent to the discrete
category $\catZdn$ was first obtained by Barwick and Schommer-Pries
in~\cite[Section 4]{BarSchPrUnicity}.
\end{rem}

\subsection{Autoequivalences of the $(\infty, 1)$-category of $(\infty,
n)$-categories}

\begin{paragr*}
In this section, we suppose that $n$ is finite.
\end{paragr*}

\begin{paragr}\label{paragr:op_ThnSp}
Let $\delta$ be an element of $\Zdn$. The autoequivalence $\op_\delta$ of
$\Theta_n$ extends formally to an autoequivalence of the quasi-category
$\qpref{\Theta_n}$. It is easy to see that the sets $\I$ and $\J$ of
paragraph~\ref{paragr:def_I_J_nCat} are stable under this autoequivalence
and we thus get an induced autoequivalence $\op_\delta$ of $\ThnSp = {(\I \cup
\J)}^{-1}\qpref{\Theta_n}$.
\end{paragr}

\begin{prop}\label{prop:desc_local_nCat}
An $n$-cellular set is $(\I \cup \J)$-local if and only if it is
the nerve of a rigid strict $n$-category.
\end{prop}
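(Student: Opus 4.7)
The plan is to chain together Proposition~\ref{prop:Theta_Segal} (which identifies $\I$-local $n$-cellular sets with nerves of strict $n$-categories) with the definition of rigidity recalled in paragraph~\ref{paragr:nCat_rig} (rigid = $\J^\flat$-local in $\nCat{n}$). An $n$-cellular set is $(\I \cup \J)$-local if and only if it is both $\I$-local and $\J$-local, so the equivalence will be established in two steps.

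First, I would invoke Proposition~\ref{prop:Theta_Segal} to note that $X$ is $\I$-local precisely when it satisfies the Segal condition, and this in turn holds if and only if $X$ is (isomorphic to) $N_n(C)$ for some strict $n$-category $C$. Since $\I$-locality is a necessary condition for $(\I \cup \J)$-locality, we may henceforth restrict attention to $n$-cellular sets of the form $N_n(C)$; it then remains to characterise when such an $N_n(C)$ is $\J$-local.

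Next, by the full faithfulness of $N_n$ (also part of Proposition~\ref{prop:Theta_Segal}), for every $k$ with $1 \le k \le n$ we have natural bijections
\[
\pref{\Theta_n}(N_n(\Dn{k-1}), N_n(C)) \cong \nCat{n}(\Dn{k-1}, C),
\quad
\pref{\Theta_n}(N_n(J_k), N_n(C)) \cong \nCat{n}(J_k, C),
\]
intertwining the restriction along $N_n(j_k)$ with the restriction along $j_k$. Therefore $N_n(C)$ is $\J$-local if and only if the map $\nCat{n}(\Dn{k-1}, C) \to \nCat{n}(J_k, C)$ is a bijection for every such $k$, that is, if and only if $C$ is $\J^\flat$-local.

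Finally, by paragraph~\ref{paragr:nCat_rig}, $C$ being $\J^\flat$-local is exactly the definition of $C$ being rigid. Combining the three equivalences gives the result. No step is really an obstacle; the only point that demands a moment's care is the compatibility of the two restriction maps under the isomorphisms produced by full faithfulness of $N_n$, and this is immediate from naturality of the nerve.
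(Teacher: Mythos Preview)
Your proof is correct and follows essentially the same approach as the paper's: first invoke Proposition~\ref{prop:Theta_Segal} for the $\I$-local part, then use paragraph~\ref{paragr:nCat_rig} to identify $\J$-locality of a nerve with rigidity. You are simply more explicit than the paper in spelling out, via the full faithfulness of $N_n$, why $\J$-locality of $N_n(C)$ is equivalent to $\J^\flat$-locality of $C$; the paper compresses this step into a single sentence.
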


\begin{proof}
Proposition~\ref{prop:Theta_Segal} precisely says that an $n$-cellular set
is $\I$-local if and only if it is the nerve of a strict $n$-category. By
paragraph~\ref{paragr:nCat_rig}, such an $n$-cellular set is $\J$-local if
and only if the strict $n$-category of which it is the nerve is rigid,
thereby proving the result.
\end{proof}

\begin{thm}\label{thm:thn}
The quasi-category $\Aut(\ThnSp)$ is canonically equivalent to the
discrete category $\catZdn$.
\end{thm}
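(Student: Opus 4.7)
The plan is to apply Proposition~\ref{prop:lemma_aut} with $A = \Theta_n$ and $S = \I \cup \J$, and then combine the resulting fully faithful embedding with the computation of $\Aut(\Theta_n)$ from Theorem~\ref{thm:auto_Thn}.

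First I would verify the two hypotheses of Proposition~\ref{prop:lemma_aut}. For (i), Proposition~\ref{prop:desc_local_nCat} identifies the $(\I \cup \J)$-local $n$-cellular sets with the nerves of rigid strict $n$-categories; since every object of $\Theta_n$ is a free strict $n$-category on an $n$-graph, it is rigid, so every representable on $\Theta_n$ is indeed $(\I \cup \J)$-local. For (ii), the localization $(\I \cup \J)^{-1}\pref{\Theta_n}$ is equivalent, via the $n$-cellular nerve, to $\nCatr{n}$, and Proposition~\ref{prop:autoeq_nCatr_restr_Thn} states that every autoequivalence of $\nCatr{n}$ sends each object of $\Theta_n$ to an isomorphic one, i.e.\ restricts to an autoequivalence of $\Theta_n$.

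Given these two hypotheses, Proposition~\ref{prop:lemma_aut} yields a fully faithful functor
\[
\Aut(\ThnSp) \longto \Aut(\Theta_n),
\]
and in particular $\Aut(\ThnSp)$ is a discrete category. By Theorem~\ref{thm:auto_Thn}, the monoidal functor $\catZdn \to \Aut(\Theta_n)$ is an isomorphism, so $\Aut(\Theta_n)$ is itself the discrete monoidal category $\catZdn$.

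To conclude, consider the composite
\[
\catZdn \longto \Aut(\ThnSp) \longto \Aut(\Theta_n),
\]
where the first functor is the one constructed in paragraph~\ref{paragr:op_ThnSp} (sending $\delta$ to the autoequivalence $\op_\delta$ of $\ThnSp$) and the second is the restriction functor. By construction this composite agrees with the isomorphism $\catZdn \to \Aut(\Theta_n)$ of Theorem~\ref{thm:auto_Thn}. Since the second functor is fully faithful and the composite is an equivalence, the first functor $\catZdn \to \Aut(\ThnSp)$ is automatically fully faithful and essentially surjective, hence the desired equivalence of (monoidal) quasi-categories. The only real work is the input results already in hand; the main subtlety is just checking that the identification of $(\I \cup \J)^{-1}\pref{\Theta_n}$ with $\nCatr{n}$ correctly matches the inclusion $\Theta_n \hookto \nCatr{n}$ so that Proposition~\ref{prop:autoeq_nCatr_restr_Thn} can be invoked.
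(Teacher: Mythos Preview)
Your proposal is correct and follows essentially the same approach as the paper's own proof: apply Proposition~\ref{prop:lemma_aut} with $A=\Theta_n$ and $S=\I\cup\J$, verify hypothesis~(i) via Proposition~\ref{prop:desc_local_nCat} and hypothesis~(ii) via Proposition~\ref{prop:autoeq_nCatr_restr_Thn}, then use Theorem~\ref{thm:auto_Thn} to identify $\Aut(\Theta_n)$ with $\catZdn$ and paragraph~\ref{paragr:op_ThnSp} to see that every such autoequivalence lifts to $\ThnSp$. Your phrasing of the essential surjectivity step via the composite $\catZdn\to\Aut(\ThnSp)\to\Aut(\Theta_n)$ is slightly more explicit than the paper's, but the content is identical.
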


\begin{proof}
We are going to apply Proposition~\ref{prop:lemma_aut} to $A = \Theta_n$ and
$S = \I \cup \J$. Let us check that the hypotheses are fulfilled. Using the
previous proposition, this amounts to verifying that
\begin{enumerate}
  \item objects of $\Theta_n$ are rigid strict $n$-categories;
  \item autoequivalences of $\nCatr{n}$ restrict to autoequivalences of
    $\Theta_n$.
\end{enumerate}
The first point is obvious and the second point is
Proposition~\ref{prop:autoeq_nCatr_restr_Thn}. We can thus apply the
proposition and we get that $\Aut(\ThnSp)$ is a full subcategory
of $\Aut(\Theta_n)$. But $\Aut(\Theta_n)$ is isomorphic to $\catZdn$ by
Theorem~\ref{thm:auto_Thn}. To conclude, it thus suffices to show that every
autoequivalence of $\Theta_n$ lifts to an autoequivalence of~$\ThnSp$. This
follows from paragraph~\ref{paragr:op_ThnSp}.
\end{proof}

\begin{rem}
The above result is a consequence of previous work of Barwick and
Schommer-Pries.  Indeed, by \cite[Theorem 11.15]{BarSchPrUnicity} the
quasi-category $\ThnSp$ is equivalent to the quasi-category of
$\Upsilon_n$\nbd-spaces, and by \cite[Theorem 8.12]{BarSchPrUnicity} the
quasi-category of autoequivalences of $\Upsilon_n$-spaces is~$\Zdn$.
\end{rem}

\begin{rem}\label{rem:oomon}
Let $\C$ be a quasi-category and let $\Fun(\C,\C)$ be the associated
quasi-category of endofunctors. Recall from
\cite[Proposition~3.1.7]{lurie:DAGII} that $\Fun(\C,\C)$ is a monoidal
quasi-category, the monoidal structure being given by the composition of
endofunctors (for the general theory of \emph{non-symmetric}
monoidal quasi-categories, see \cite{lurie:DAGII}). Moreover, the full
subcategory $\Aut(\C)\subseteq\Fun(\C,\C)$ spanned by the autoequivalences
inherits a monoidal structure.

Using these monoidal structures, Theorem~\ref{thm:thn} admits the following
refinement: the
quasi-categories $\Aut(\ThnSp)$ and $\catZdn$ are equivalent as monoidal quasi-categories.
\end{rem}

\section{Autoequivalences of the \pdfinftyo-category of \pdfinfty-operads}
\label{sec:operads}

In this section, we show that the quasi-category of autoequivalences of the
quasi-category of $\Omega$-spaces, which is a model for \oo-operads, is a
contractible Kan complex. We follow the general strategy described in the
introduction and formalized by Proposition~\ref{prop:lemma_aut}. In
particular, we reduce to the computation of the autoequivalences of the
category $\Omega$ of trees.

\subsection{Preliminaries on operads and the category of trees}
\label{subsec:trees}

\begin{paragr}
We will denote by $\Oper$ the category of (small) symmetric coloured
operads (see for instance \cite[Section 2]{ElmMand} where they are called
multicategories). We will refer to its objects simply as operads.

If $P$ is an operad and $c_1, \dots, c_n$ and~$d$ are colours of $P$, we
will denote by $P(c_1, \dots, c_n; d)$ the set of operations of $P$ whose
input colours are given by the $n$-tuple $(c_1, \dots, c_n)$ and whose output
colour is~$d$. If $p \in P(c_1,\ldots,c_n;d)$ is an $n$-ary operation and
$\sigma$ is an element of the symmetric group $\Sigma_n$, we will denote by
$p\sigma$ the induced operation in
$P(c_{\sigma(1)},\ldots,c_{\sigma(n)};d)$.

We will denote by $\Coll$ the category of symmetric (coloured) collections
and by~$\nsColl$ the category of non-symmetric (coloured) collections. Recall that
a symmetric collection $K$ consists of a set of \ndef{colours} and, for
every $(n+1)$-tuple of colours $(c_1,\ldots, c_n,d)$ with $n\ge 0$, a
set $K(c_1,\dots, c_n; d)$ with an action of~$\Sigma_n$. Non-symmetric
collections are described in the same way, but forgetting the action of the
symmetric groups. We have forgetful functors
\[ \Oper \longto \Coll \longto \nsColl \]
and these functors admit left adjoints.

If $p$ and $q$ are two operations of an operad $P$, we will write $p
\sim_\Sigma q$ if $p$ and $q$ have the same arity $n$ and there exists an
element $\sigma$ in $\Sigma_n$ such that $q = p\sigma$.

We will say that an operation of an operad is \emph{non-trivial} if it is
not the identity of a colour.

We will identify the category $\Cat$ of small categories with the full
subcategory of $\Oper$ consisting of operads having only unary operations.
The inclusion functor $\Cat \hookrightarrow \Oper$ admits a right adjoint
sending an operad to its so-called \emph{underlying category}. Concretely,
the underlying category of an operad $P$ is the suboperad of~$P$ obtained by
throwing out the non-unary operations (in particular, its objects are the
colours of $P$).
\end{paragr}

\begin{paragr}
A \emph{combinatorial tree} is a non-empty finite connected graph with no
loops. A~vertex of a combinatorial tree is said to be \emph{outer} if it has
only one edge attached to it.

An \emph{operadic tree} is a combinatorial tree endowed with the choice of
an outer vertex called the \emph{output} and of a (possibly empty) set of
outer vertices not containing the output called the set of \emph{inputs}.
The choice of the output induces an orientation of the tree ``from the
inputs to the output''.
A \emph{vertex} of an operadic tree is defined as a vertex of the underlying
combinatorial tree which is neither an input nor the output. An \emph{edge}
of an operadic tree is an edge of the underlying combinatorial tree. The
edge attached to the output will be called the \emph{root} and the
edges attached to the inputs will be called the \emph{leaves}.

We will follow the usual conventions when drawing operadic trees: the output
will be drawn at the bottom of the tree and the vertices corresponding to
the inputs and the output will be deleted. Here is an example of such a
tree:

\begin{equation*}
\xy<0.08cm, 0cm>: (0,-15)*{}="1"; (0,-5)*=0{\bullet}="2";
(-20,5)*=0{\bullet}="3"; (0,5)*=0{}="4"; (20,5)*=0{\bullet}="5";
(-30,15)*{}="6"; (-10,15)*{}="7"; (10,15)*{}="8"; (20,15)*{}="9";
(30,15)*=0{\bullet}="10";
(-10,-15); "1";"2" **\dir{-}; "2";"3" **\dir{-}; "2";"4"
**\dir{-}; "2";"5" **\dir{-}; "3";"6" **\dir{-}; "3";"7" **\dir{-};
"5";"8" **\dir{-}; "5";"9" **\dir{-}; "5";"10" **\dir{-};
\endxy
\end{equation*}
From now on, by a \emph{tree}, we will mean an operadic tree.

A \emph{planar structure} on a tree $T$ consists of the data of an ordering
of the input edges of each vertex $v$ of $T$. A \emph{planar tree}
is a tree endowed with a planar structure. We will sometimes use
\emph{non-planar tree} as a synonym of tree in order to emphasize that a
given tree is not planar.
\end{paragr}

\begin{paragr}
\label{paragr:assoc_operad}
To every tree $T$, we can associate a symmetric collection $K(T)$. The
colours of $K(T)$ are the edges of $T$ and for every vertex $v$ of $T$ with
$n$ inputs and every choice of an order $e_1 < \dots < e_n$ on the input edges
of $v$, there is an operation in $K(T)(e_1, \dots, e_n; e)$, where $e$ is
the output edge of $v$. The group $\Sigma_n$ acts on the operations
associated to $v$ in the obvious way.

If $T$ is a tree, we will denote by $\Omega(T)$ the free symmetric operad on
the symmetric collection $K(T)$.

Similarly, to every planar tree $\overline{T}$, we can associate a
non-symmetric collection~$K_p(\overline{T})$. Its colours are the edges of
$\overline{T}$ and for every vertex $v$ in $\overline{T}$ with input edges
$e_1 < \dots < e_n$ and output edge $e$, there is an operation in
$K_p(\overline{T})(e_1, \dots, e_n; e)$.

Note that if $T$ is the underlying non-planar tree of a planar tree
$\overline{T}$, then $K(T)$ is the free symmetric collection on
$K_p(\overline{T})$. (Another way to put this is to say that the choice of a
planar structure on $T$ corresponds to a choice of generators of $K(T)$.) In
particular, $\Omega(T)$ is the free symmetric operad on $K_p(\overline{T})$.
\end{paragr}

\begin{paragr}\label{paragr:omega}
The category of trees $\Omega$, introduced by Moerdijk and Weiss in
\cite{MoerWeissDend}, is defined as follows: the objects of
$\Omega$ are trees up to isomorphism (that is, up to renaming of their
vertices and edges) and the set of morphisms in $\Omega$ from an
object~$S$ to an object $T$ is given by
\[
\Omega(S, T) = \Oper(\Omega(S), \Omega(T)).
\]
By definition, there is a canonical fully faithful functor $\Omega\hookto\Oper$
and we will always consider $\Omega$ as a full subcategory of $\Oper$ using
this functor.

We will denote by $\eta$ the tree with one edge and no vertices. For $n \ge
0$, we will denote by $C_n$ the $n$-corolla, that is, the tree with one
vertex and $n$ leaves. Note that $C_n$, seen as an object of $\Oper$,
corepresents the functor ``set of $n$-ary operations''.  Similarly, $\eta$
corepresents the functor ``set of colours''. In particular, for any tree
$T$, we have a \emph{root map} $\eta \to T$ and \emph{leaf maps} $\eta\to
T$.
\end{paragr}

\emph{We now fix for every object $T$ of $\Omega$ the choice of a planar
structure on $T$.} We will denote the resulting planar tree by
$\overline{T}$. The purpose of these choices is to make precise the idea
that every tree can be obtained by glueing corollas (see
Proposition~\ref{prop:can_decomp}).

\begin{paragr}
\label{paragr:omega_0}
Given a tree $T$, we will denote by $\Omega_0(T)$ the non-symmetric
collection $K_p(\overline{T})$. We define a category $\Omega_0$ in the
following way: the objects of $\Omega_0$ are the same as the ones of
$\Omega$ and the set of morphisms in $\Omega_0$ from an object $S$ to an
object $T$ is given by
\[
\Omega_0(S, T) = \nsColl(\Omega_0(S), \Omega_0(T)).
\]
The free symmetric operad functor on a non-symmetric collection induces a
canonical functor from $\Omega_0$ to $\Omega$ which is obviously faithful.
We will always consider $\Omega_0$ as a subcategory of $\Omega$ using this
functor.
\end{paragr}

\begin{rem}\label{rem:Omega_bar}
The subcategory $\Omega_0$ of $\Omega$ \emph{does} depend on the choice of the
planar structures. One way to avoid these choices is to replace $\Omega$ by
the following equivalent category $\overline{\Omega}$: an object of
$\overline{\Omega}$ is an object of $\Omega$ endowed with the choice of a
planar structure and the set of morphisms in $\overline{\Omega}$ from an
object $\overline{S}$ to an object $\overline{T}$ is given by
\[
\overline{\Omega}(\overline{S}, \overline{T}) =
\Oper(\Omega(S), \Omega(T)),
\]
where $S$ and $T$ denote the respective underlying non-planar trees of
$\overline{S}$ and $\overline{T}$. It is then possible to define a canonical
subcategory $\overline{\Omega}_0$
of $\overline{\Omega}$ by setting
\[ \overline{\Omega}_0(\overline{S},\overline{T}) =
\nsColl(K_p(\overline{S}), K_p(\overline{T})). \]

Note that the choice of planar structures we made corresponds to the choice of
a section of the equivalence of categories given by the forgetful functor
$\overline{\Omega}\to\Omega$.
\end{rem}

\begin{paragr}\label{paragr:can_decomp}
We will denote by $\Cor$ the full subcategory of $\Omega_0$ whose objects are
$\eta$ and the corollas. The morphisms of $\Cor$, besides the identities,
are exactly the root and the leaf maps of corollas. We will denote by $j$
the inclusion functor $\Cor \hookrightarrow \Omega_0$.

The category of non-symmetric collections can be identified with the
category of presheaves on $\Cor$. Indeed, a presheaf $F$ on $\Cor$ is given
by a set $F(\eta)$ (corresponding to the set of colours) and, for every
$n \ge 0$, a set $F(C_n)$ endowed with a morphism $F(C_n) \to F(\eta)^{n+1}$
induced by the $n$ leaf maps and the root map $\eta \to C_n$. But such a map
amounts to a family of sets indexed by $(n+1)$-tuples $(c_1, \dots, c_n, d)$
of elements of $F(\eta)$.

If $T$ is an object of $\Omega$, we will denote by $\Cor/T$ the comma
category $j \downarrow T$, where $T$ is seen as an object of $\Omega_0$. An
object of $\Cor/T$ is hence a pair $(C, C \to T)$, where $C$ is an object of
$\Cor$ and $C \to T$ is a morphism of $\Omega_0$. A morphism from $(C, C \to
T)$ to $(C', C' \to T)$ is a morphism $C \to C'$ in $\Cor$ (i.e., in
$\Omega_0$) making the obvious triangle commute. Thus, an object of $\Cor/T$ is
just a colour or a generating operation of~$T.$

We will denote by $\DT_T$ the functor $\Cor/T \to \Cor \hookrightarrow
\Omega$.
\end{paragr}

\begin{prop}\label{prop:can_decomp}
For every object $T\!$ of $\,\Omega$, the canonical morphism
\[ \colim \DT_T = \colim_{(C, C \to T) \in \Cor/T}\limits\!\!\!\!\! C \,
\longrightarrow\, T \]
is an isomorphism of $\Omega$. Moreover, the inclusion functor $\Omega
\hookrightarrow \Oper$ preserves this colimit.
\end{prop}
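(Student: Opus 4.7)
The plan is to exploit the adjunction between non-symmetric collections and symmetric operads together with the identification $\nsColl \cong \pref{\Cor}$ recorded in paragraph~\ref{paragr:can_decomp}. The key observation is that in $\nsColl$, the collection $\Omega_0(T) = K_p(\overline{T})$ is the canonical colimit of representables over its category of elements, and applying the left adjoint (the free symmetric operad functor) transports this decomposition to $\Oper$.

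First, I would identify the category of elements of the presheaf $K_p(\overline{T}) \in \pref{\Cor}$: by definition, an element over $C \in \Cor$ is either a colour of $T$ (if $C = \eta$) or a generating operation of $\Omega_0(T) = K_p(\overline{T})$ (if $C = C_n$), and under the identification $\nsColl(K_p(\overline{C}), K_p(\overline{T})) = \Omega_0(C,T)$ this category of elements is precisely $\Cor/T$. Hence by the standard density-of-Yoneda fact applied to $\pref{\Cor}$, the canonical map
\[
\colim_{(C,\, C\to T)\in \Cor/T} K_p(\overline{C}) \longrightarrow K_p(\overline{T})
\]
is an isomorphism in $\nsColl$.

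Second, I would apply the free symmetric operad functor $F\colon \nsColl \to \Oper$, which is a left adjoint to the forgetful functor and therefore preserves colimits. By paragraph~\ref{paragr:assoc_operad} we have $F(K_p(\overline{S})) = \Omega(S) = S$ for every tree $S$ (viewed as an object of $\Omega \subseteq \Oper$), and similarly for corollas and $\eta$. Hence
\[
\colim_{(C,\, C\to T)\in \Cor/T} C \;\stackrel{\cong}{\longrightarrow}\; T
\]
in $\Oper$, and this map is exactly the canonical one of the statement because the cocone components $C \to T$ are, by construction of $\Cor/T$, the morphisms of $\Omega_0 \subseteq \Omega$ corresponding to the chosen colours and generating operations of $T$.

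Finally, since $T$ lies in the full subcategory $\Omega \hookrightarrow \Oper$, the colimit computed in $\Oper$ is automatically a colimit in $\Omega$ (a cocone in $\Omega$ with vertex $T$ is the same as a cocone in $\Oper$ with that vertex, by fullness). This simultaneously shows that the colimit exists in $\Omega$, that the inclusion $\Omega \hookto \Oper$ preserves it, and that the canonical morphism is an isomorphism. No genuine obstacle arises; the only point that requires care is matching the cocone produced by $F$ with the tautological cocone of the statement, but this reduces to unpacking the definitions of $\Omega_0$ and $\Cor/T$.
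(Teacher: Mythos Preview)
Your proposal is correct and follows essentially the same approach as the paper's proof: identify $\nsColl$ with $\pref{\Cor}$, use the canonical decomposition of $K_p(\overline{T})$ as a colimit of representables, apply the colimit-preserving free symmetric operad functor, and invoke fullness of $\Omega$ in $\Oper$ to descend the colimit. Your exposition is more detailed (explicitly matching $\Cor/T$ with the category of elements and spelling out why fullness suffices), but the argument is the same.
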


\begin{proof}
Since $\Omega$ is a full subcategory of $\Oper$, it suffices to prove that
we have a canonical isomorphism when the colimit is taken in $\Oper$. The
canonical decomposition of the non-symmetric collection $K_p(\overline{T})$, seen as a
presheaf over $\Cor$, as a colimit of representable presheaves gives a
canonical isomorphism of non-symmetric collections
\[ \colim_{(C, C \to T) \in \Cor/T}\limits \!\!\!\!\!K_p(C) \,\longrightarrow\, K_p(\overline{T}). \]
The result then follows from the fact that the free symmetric operad functor
commutes with colimits.
\end{proof}

\begin{exam}
Let $T$ be the tree
\[
\xy<0.08cm, 0cm>: (0,-15)*{}="1"; (0,-5)*=0{\bullet}="2";
(-20,5)*=0{\bullet}="3"; (0,5)*=0{}="4"; (20,5)*=0{\bullet}="5";
(-30,15)*{}="6"; (-10,15)*{}="7";
(-2.5,-10)*=0{\scriptstyle e_1};
(3.5,-6)*=0{\scriptstyle v_1}; (-3,0)*=0{\scriptstyle e_{3}};
(-23.5,5)*=0{\scriptstyle v_2}; (24,5)*=0{\scriptstyle v_3};
(-29,10)*=0{\scriptstyle e_{5}};
(-10.5,10)*=0{\scriptstyle e_{6}}; (15,-1)*=0{\scriptstyle e_{4}};
(-15,-1)*=0{\scriptstyle e_2};(-10,-15)*{};
"1";"2" **\dir{-}; "2";"3" **\dir{-}; "2";"4"
**\dir{-}; "2";"5" **\dir{-}; "3";"6" **\dir{-}; "3";"7" **\dir{-};
\endxy
\]
endowed with the planar structure given by the picture.
We define $T_i$ as the corolla associated to the vertex $v_i$, i.e., $T_i$ has
the input edges of $v_i$ as leaves and the output edge of $v_i$ as root.
Then, the diagram $\DT_T$ has the corollas $T_i$ and a copy~$\eta_e$ of
$\eta$ for every edge $e$ in $T$ as objects. The morphisms in the diagram
are the morphisms from each $\eta_e$ to the corresponding edge
of~$T_i$. Thus, the diagram $\DT_T$ is the following:
\smallskip
\[
\xy<0.1cm,0cm>:
(-30,0)*{
\xy<0.05cm,0cm>:
(0,20)*=0{}="1";
(20,20)*=0{}="2";
(10,10)*=0{\bullet}="3";
(10,20)*=0{}="35";
(10,0)*=0{}="4";
(0,15)*{\scriptstyle e_{2}};
(20.5,15)*{\scriptstyle e_{4}};
(15,10)*{\scriptstyle v_1};
(13.5,17.5)*{\scriptstyle e_{3}};
(7,5)*{\scriptstyle e_1};
(0,0)*=0{T_1};
(0,-5)*=0{};
"1";"3" **\dir{-};
"2";"3" **\dir{-};
"3";"4" **\dir{-};
"3";"35" **\dir{-};
\endxy
}="T1";
(10,0)*-{
\xy<0.05cm,0cm>:
(0,20)*=0{}="1";
(20,20)*=0{}="2";
(10,10)*=0{\bullet}="3";
(10,0)*=0{}="4";
(0,15)*{\scriptstyle e_{5}};
(20.5,15)*{\scriptstyle e_{6}};
(15,10)*{\scriptstyle v_2};
(7,5)*{\scriptstyle e_2};
(0,0)*=0{T_2};
(0,-5)*=0{};
"1";"3" **\dir{-};
"2";"3" **\dir{-};
"3";"4" **\dir{-};
\endxy
}="T2";
(50,-2.5)*++{
\xy<0.05cm,0cm>:
(10,10)*=0{\bullet}="3";
(10,0)*=0{}="4";
(15,10)*{\scriptstyle v_3};
(7,5)*{\scriptstyle e_4};
(0,0)*=0{T_3};
(0,-5)*=0{};
"3";"4" **\dir{-};
\endxy
}="T3";
(-40,-30)*++{
\xy<0.05cm,0cm>:
(-20,0)*=0{\eta_{e_1}};
\endxy
}="eta1";
(-20,-30)*+++{
\xy<0.05cm,0cm>:
(-20,0)*=0{\eta_{e_2}};
\endxy
}="eta2";
(0,-30)*+++{
\xy<0.05cm,0cm>:
(-20,0)*=0{\eta_{e_3}};
\endxy
}="eta3";
(20,-30)*++{
\xy<0.05cm,0cm>:
(-20,0)*=0{\eta_{e_4}};
\endxy
}="eta4";
(40,-30)*+++{
\xy<0.05cm,0cm>:
(-20,0)*=0{\eta_{e_5}};
\endxy
}="eta5";
(60,-30)*++++{
\xy<0.05cm,0cm>:
(-20,0)*=0{\eta_{e_6}}="e6";
\endxy
}="eta6";
\ar@{->}@/_/"T2";"eta6";
\ar@{->}@/^/"T3";"eta4";
\ar@{->}@/^/"T2";"eta5";
\ar@{->}@/_/(8,-6.5);"eta2";
\ar@{->}@/^/"T1";"eta1";
\ar@{->}@/^/"T1";"eta2";
\ar@{->}@/^/"T1";"eta3";
\ar@{->}@/_/"T1";(18,-26);
\endxy
\]
\end{exam}

\begin{paragr}
The category of \ndef{dendroidal sets} is the category $\pref{\Omega}$ of
presheaves on $\Omega$. The inclusion $\Omega \hookto \Oper$ induces a
functor $N_d \colon \Op \to \Dend$ sending an operad~$P$ to the dendroidal set $T
\mapsto \Op(T, P)$.  This functor $N_d$ is called the \emph{dendroidal
nerve}.
\end{paragr}

\begin{paragr}\label{paragr:dend_spines}
Let $T$ be a tree. The \emph{spine} of $T$ is the dendroidal set
\[
  I_T = \colim_{(C, C \to T) \in \Cor/T}\limits\!\!\!\!\! C,
\]
where the colimit is taken in $\Dend$. By
Proposition~\ref{prop:can_decomp}, there is a canonical morphism of
dendroidal sets $i_T \colon I_T \to T$. It is not hard to check that this
morphism is a monomorphism. We will denote by~$\I$ the set
\[ \I = \{i_T\mid T \in \Ob(\Omega)\} \]
of spine inclusions.

Let now $X$ be a dendroidal set. For any tree $T$, the map $i_T$ induces a
\emph{Segal map}
\[
X(T) \cong \Dend(T, X) \longrightarrow \Dend(I_T, X) \cong
\lim_{(C, C \to T) \in \Cor/T}\limits\!\!\!\!\! X(C).
\]
We will say that $X$ \emph{satisfies the Segal condition} if all the Segal
maps are bijections. This exactly means that $X$ is $\I$-local.
\end{paragr}

\begin{prop}[Cisinski--Moerdijk, Weber]\label{prop:dend_Segal}
The dendroidal nerve functor is fully faithful. Moreover, its essential
image consists of the dendroidal sets satisfying the Segal condition.
\end{prop}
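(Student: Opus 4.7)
My plan is to imitate the argument for Proposition~\ref{prop:Theta_Segal}, with Proposition~\ref{prop:can_decomp} as the central input: every tree $T\in\Omega$ is canonically the colimit $\colim\DT_T$ of corollas and edges glued along root and leaf maps, and this colimit is preserved by the inclusion $\Omega\hookrightarrow\Op$. This immediately gives that each $N_d(P)$ satisfies the Segal condition, for
\[
N_d(P)(T)=\Op(T,P)\cong\Op\bigl(\colim_{(C,C\to T)\in\Cor/T}\!\! C,\,P\bigr)\cong\lim_{(C,C\to T)\in\Cor/T}\!\!\Op(C,P)\cong\Dend(I_T,N_d(P)),
\]
so the Segal map at $T$ is a bijection and the essential image is contained in the Segal dendroidal sets.

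Next, I would show that $N_d$ is fully faithful, equivalently that the inclusion $\Omega\hookrightarrow\Op$ is dense. The key point is that every operad $P$ is the colimit in $\Op$ of its canonical diagram of trees $\Omega/P\to\Op$. This follows from a double application of Proposition~\ref{prop:can_decomp}: the underlying symmetric collection of $P$ is canonically a colimit of copies of $\eta$ and of corollas mapping into $P$, and the free-symmetric-operad functor commutes with colimits. A morphism of dendroidal sets $N_d(P)\to N_d(Q)$ then supplies a unique cocone under this diagram with vertex $Q$, hence a unique morphism $P\to Q$ of operads.

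Finally, for essential surjectivity onto Segal dendroidal sets, starting from a Segal $X$ I would build an operad $P_X$ with colour set $X(\eta)$ and, for every tuple of colours $(c_1,\dots,c_n;d)$, with operations given by the appropriate fibre of $X(C_n)\to X(\eta)^{n+1}$. Symmetric group actions come from the automorphisms of corollas in $\Omega$, identities from the degeneracy map $C_1\to\eta$ in $\Omega$, and operadic composition from the morphisms in $\Omega$ contracting the internal edge of a tree $T$ obtained by grafting $C_n$ onto a leaf of $C_m$, combined with the Segal isomorphism $X(T)\cong X(C_m)\times_{X(\eta)} X(C_n)$. The operad axioms and the identification $N_d(P_X)\cong X$ then follow from the Segal condition applied to slightly larger trees. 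The hard part is precisely this last bookkeeping, where the combinatorics of $\Omega$ (graftings, edge contractions, corolla symmetries) must be matched with the axioms of a symmetric coloured operad; the conceptually cleanest route, which I would ultimately prefer, is to invoke Weber's abstract nerve theorem \cite{Weber} applied to the free-symmetric-operad monad on symmetric collections, exactly in the spirit of the reference to \emph{loc.\ cit.} given after Proposition~\ref{prop:Theta_Segal}.
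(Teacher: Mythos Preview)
The paper does not actually give a proof; it simply cites \cite[Corollary~2.6]{CisMoerdDendSegal} and Weber's general nerve machinery \cite{Weber} (Example~4.27). Your closing suggestion---to invoke Weber's abstract nerve theorem for the free-symmetric-operad monad---is therefore precisely the paper's own route, and your step~1 (that $N_d(P)$ is always Segal, via Proposition~\ref{prop:can_decomp}) is a correct and clean observation.

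There is, however, a genuine gap in your step~2. You argue that every operad $P$ is the colimit in $\Op$ of the canonical diagram $\Omega/P\to\Op$, and you justify this by noting that the underlying (non-symmetric) collection of $P$ is a colimit of copies of $\eta$ and corollas, and that the free-symmetric-operad functor preserves colimits. But this chain of reasoning only exhibits the \emph{free} operad on the underlying collection of $P$ as such a colimit, not $P$ itself; a general operad is of course not free on its own underlying collection. Density of $\Omega$ in $\Op$ (equivalently, full faithfulness of $N_d$) is not a formal consequence of Proposition~\ref{prop:can_decomp}, and is in fact where the real content lies---this is exactly the point at which Cisinski--Moerdijk's direct argument, or Weber's monadic nerve theorem, does nontrivial work. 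Your step~3 is honestly described as the bookkeeping-heavy part, and your deferral to \cite{Weber} there is appropriate; but be aware that the same deferral is already needed at step~2.
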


\begin{proof}
This is \cite[Corollary 2.6]{CisMoerdDendSegal}. It also follows from the
general machinery of \cite{Weber} (see Example 4.27).
\end{proof}

\begin{rem}
The first assertion of the previous proposition precisely means that the
inclusion functor $\Omega \hookto \Oper$ is dense.
\end{rem}

\subsection{The quasi-category of complete dendroidal Segal spaces}

\begin{paragr}\label{paragr:dend_spaces}
The category of \ndef{dendroidal spaces} is the category $\spref{\Omega}
\cong \pref{\Omega \times \Delta}$ of simplicial presheaves on $\Omega$.
The first projection $p \colon \Omega \times \Delta \to \Omega$ induces a fully
faithful functor $p^* \colon \pref{\Omega} \to \pref{\Omega \times \Delta}$
sending a dendroidal set to the corresponding discrete dendroidal space. We
will always consider dendroidal sets as a full subcategory of dendroidal
spaces using this functor.
\end{paragr}

\begin{paragr}
Let $X$ be a dendroidal set and let $T$ be a tree. Then, by functoriality,
the group $\aut_\Omega(T)$ acts on $X(T)$. More generally, if $f \colon X \to Y$
is a monomorphism of dendroidal sets, then $\aut_\Omega(T)$ acts on
the difference $Y(T) \bs f_T(X(T))$. Such a monomorphism is said to be
\emph{normal} if this action is free for any $T$.

More generally, a monomorphism of dendroidal spaces $f \colon X \to Y$ will be
called \emph{normal} if the monomorphism of
dendroidal sets $f_{\bullet, n} \colon X_{\bullet, n} \to Y_{\bullet, n}$ is
normal for every $n \ge 0$.
\end{paragr}

\begin{prop}[Cisinski--Moerdijk]
There is a simplicial proper combinatorial model structure on
$\spref{\Omega}$ whose weak equivalences are the objectwise simplicial weak
homotopy equivalences and whose cofibrations are the normal monomorphisms.
\end{prop}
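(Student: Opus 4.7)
The plan is to realize this model structure as an instance of the generalized Reedy framework of Berger--Moerdijk. I would begin by exhibiting $\Omega$ as a dualizable generalized Reedy category: assign to each tree its number of vertices as degree, declare the direct morphisms to be those generated by the face maps (subtree inclusions and elementary boundary maps), the inverse morphisms to be those generated by the degeneracies, and observe that every isomorphism of trees is an automorphism preserving degree. The standard factorization of a morphism in $\Omega$ as a composition of a degeneracy followed by an isomorphism followed by a face then provides the required Reedy factorization, and the finiteness of tree automorphism groups gives the remaining axioms.

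Having established the generalized Reedy structure, Berger--Moerdijk's theorem applied to the Kan--Quillen model structure on $\SSet$ produces a cofibrantly generated model structure on $\spref{\Omega} = \Fun(\Omega^\op,\SSet)$ with objectwise weak equivalences. The next step is to identify its cofibrations with the normal monomorphisms. By the generalized Reedy characterization, the cofibrations are those monomorphisms whose latching maps have a free action of $\aut_\Omega(T)$ on the cokernel in each simplicial degree and at each tree $T$; by induction on the skeletal filtration by number of vertices, this condition is equivalent to normality in the sense of the paragraph preceding the statement. In particular, a set of generating cofibrations is given by the boundary inclusions $\partial T \boxp \partial\Delta^n \hookrightarrow T \boxp \Delta^n$ coming from the normal skeleton of~$\Omega$, which shows that the model structure is combinatorial.

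For the remaining properties: simpliciality follows from the compatibility of the pushout-product with normal monomorphisms together with the simpliciality of the Kan--Quillen structure; left properness holds because the cofibrations are monomorphisms in the Grothendieck topos $\spref{\Omega}$, and pushouts along monomorphisms in such a topos preserve objectwise weak equivalences (as the Kan--Quillen structure is left proper with all monomorphisms cofibrant); right properness is inherited componentwise from the right properness of the Kan--Quillen structure. The principal obstacle is the combinatorial identification of normal monomorphisms with the cofibrations produced by the Reedy framework, which requires tracking the automorphism groups of trees and their action on latching objects carefully; once this is done, everything else is routine verification.
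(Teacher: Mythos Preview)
Your proposal is correct and, in fact, supplies more detail than the paper does. The paper's own proof simply cites \cite[Proposition~5.2]{CisMoerdDendSegal} for the existence of the model structure together with properness and combinatoriality, and then verifies only the simplicial enrichment by the same pushout-product argument you sketch. Immediately after the proof the paper names the structure the ``generalized Reedy model structure'', so the authors are well aware that the cited result is obtained exactly via the Berger--Moerdijk machinery you invoke; you are essentially unpacking the reference they defer to.

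One small inaccuracy: the phrase ``the finiteness of tree automorphism groups gives the remaining axioms'' is not quite the right hook. Finiteness of $\aut_\Omega(T)$ plays no role in the Berger--Moerdijk axioms; what one needs for $\Omega$ to be a \emph{dualizable} generalized Reedy category is the cancellation property that if $\theta\in\aut_\Omega(T)$ satisfies $f\theta=f$ for some face map $f$ (respectively $\theta g=g$ for some degeneracy $g$), then $\theta=\id{}$. This holds for $\Omega$ but is a different statement. Similarly, your description of the generating cofibrations is correct in spirit, though the notation $\partial T \boxp \partial\Delta^n$ for the \emph{domain} of the pushout-product is nonstandard. None of this affects the validity of the argument.
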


\begin{proof}
The existence of this model structure is given by \cite[Proposition
5.2]{CisMoerdDendSegal}, as well as the fact that it is proper and
combinatorial.

Let us prove it is simplicial. We have to show that the
pushout product $f \boxp g$ of a monomorphism $f$ of
simplicial sets and a normal monomorphisms $g$ of dendroidal spaces is again
a normal monomorphisms of dendroidal sets, and that moreover, if either $f$
or $g$ is a weak equivalence, so is $f \boxp g$. Using the fact that
pushouts in a presheaf category are computed objectwise, the first statement amounts
to showing that the pushout product of a map of sets and a normal
monomorphism of dendroidal sets is again a normal monomorphism of dendroidal
sets. This follows at once by inspection. The second statement follows
immediately from the fact that the Kan--Quillen model structure is cartesian
closed.
\end{proof}

Following \cite{CisMoerdDendSegal} and \cite{BergMoerdGenReedy}, the model
structure of the previous proposition will be called the \emph{generalized
Reedy model structure} on dendroidal spaces. We will denote it by
$\spref{\Omega}_{\textrm{Reedy}}$.

\begin{paragr}\label{paragr:BV}
Recall that the category of operads is endowed with a closed symmetric monoidal
structure given by the so-called Boardman--Vogt tensor product
\cite[Definition 2.14]{BoardVogt} (see also \cite[Part I, Section
4.1]{MoerdToen}). We will denote this tensor product by $\tensOp$. The
associated internal hom will be denoted by $\HomiOp$. The unit of
this tensor product is the operad $\eta$. In particular, for any operad $P$,
we have a canonical isomorphism $\HomiOp(\eta, P) \cong P$.

The only thing we will need to know about $\HomiOp$ is the description of
its underlying category: if $P$ and $Q$ are two operads, then the set of
objects of the underlying category of $\HomiOp(P, Q)$ is the set of maps of
operads from $P$ to $Q$; if $f, g$ are two such maps, a morphism from $f$
to $g$ is a natural transformation $\alpha$ from~$f$ to $g$,
that is, the data of an operation $\alpha_c$ in $Q(f(c); g(c))$ for every
colour $c$ of $P$ such that for every operation $p$ in $P(c_1, \dots, c_n;
d)$ we have
\[
  g(p) \circ (\alpha_{c_1}, \dots, \alpha_{c_n}) =
    \alpha_d \circ f(p) \in Q(f(c_1),\ldots,f(c_n);g(d)).
\]
\end{paragr}

\begin{paragr}\label{par:JJJ}
Recall from paragraph \ref{paragr:dend_spines} that we denote by $\I$ the
set
\[ \I = \{i_T\mid T \in \Ob(\Omega)\} \]
of spine inclusions.

Let $J$ be the contractible groupoid on two objects and let $j\colon J \to
\eta$ be the unique map of operads. For any tree $T$, by tensoring $j$ with $T$
we obtain an induced map of operads
\[ j_T\colon J \tensOp T \longto \eta \tensOp T \stackrel{\cong}{\longto} T. \]
Note that $j_\eta$ is canonically isomorphic to $j$.
We will denote by $\J^\flat$ and $\J$ the sets
\[
\J^\flat = \{j_T\mid T \in \Ob(\Omega)\}
\quad\text{and}\quad
\J = \{N_d(j_T)\mid T \in \Ob(\Omega)\}.
\]
The sets $\I$ and $\J$ will be considered as sets of maps in dendroidal sets
or dendroidal spaces depending on the context.
\end{paragr}

\begin{defi}[Cisinski--Moerdijk]
The \emph{model category for complete dendroidal Segal spaces}
or \emph{$\Omega$-spaces} is the left Bousfield localization of the generalized
Reedy model structure on dendroidal spaces by the set $\I \cup \J$.
We will denote this model structure by $\spref{\Omega}_\OmSp$.
\end{defi}

\begin{rem}
This definition is slightly different from the one appearing in
\cite[Definition 6.2]{CisMoerdDendSegal}, which uses the tensor product of
dendroidal sets. The two definitions are immediately seen to be equivalent
using that $N_d(J\tensOp T)\cong N_d(J) \otimes T$ (here~$\otimes$~denotes
the tensor product of dendroidal sets induced by the Boardman--Vogt tensor
product \cite[Section~5]{MoerWeissDend}); see~\cite[Lemma
4.3.3]{WeissThesis}.
\end{rem}

\begin{defi}
The \emph{quasi-category of $\Omega$-spaces} is the localization of the
quasi-category $\qpref{\Omega}$ by the set $\I \cup \J$. We will denote it
by $\OmSp$.
\end{defi}

\begin{prop}
\label{prop:under_dspaces}
The quasi-category underlying the model category of $\Omega$\nbd-spaces is
canonically equivalent to the quasi-category of $\Omega$-spaces.
\end{prop}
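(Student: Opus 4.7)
The plan is to follow the template of Proposition~\ref{prop:under_Theta_Sp} verbatim, producing a chain of canonical equivalences of quasi-categories. The key inputs are already in place: the generalized Reedy model structure $\spref{\Omega}_{\mathrm{Reedy}}$ is left proper and combinatorial (this is part of the proposition preceding the definition of $\Omega$-spaces), and its weak equivalences are the objectwise simplicial weak homotopy equivalences --- exactly the weak equivalences of $\spref{\Omega}_{\proj}$ and $\spref{\Omega}_{\inj}$. Hence all three model structures on $\spref{\Omega}$ share the same underlying quasi-category, and Proposition~\ref{prop:desc_PA} identifies this common underlying quasi-category with $\qpref{\Omega}$.

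Concretely, I would chain the following identifications:
\[
\U(\spref{\Omega}_{\OmSp})
= \U\bigl((\I\cup\J)^{-1}\spref{\Omega}_{\mathrm{Reedy}}\bigr)
\cong (\I\cup\J)^{-1}\U(\spref{\Omega}_{\mathrm{Reedy}}),
\]
where the second step uses Proposition~\ref{prop:loc_mcat_qcat} applied to the left proper combinatorial model category $\spref{\Omega}_{\mathrm{Reedy}}$ with the set of maps $\I\cup\J$. Then, since $\U(-)$ depends only on the weak equivalences, the right-hand side equals $(\I\cup\J)^{-1}\U(\spref{\Omega}_{\proj})$, which by Proposition~\ref{prop:desc_PA} is $(\I\cup\J)^{-1}\qpref{\Omega}$. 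By the definition of the quasi-category of $\Omega$-spaces this is precisely $\OmSp$.

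The only point requiring a brief comment is the tacit identification of $\I\cup\J$ as a set of morphisms in $\qpref{\Omega}$: the maps were defined as maps of dendroidal sets (equivalently, of discrete dendroidal spaces), so they map to morphisms in $\U(\spref{\Omega}_{\mathrm{Reedy}})$, and the intermediate equivalences above carry them canonically to the corresponding morphisms in $\qpref{\Omega}$ used in the definition of $\OmSp$. No step involves a genuine obstacle; this result is essentially a formal consequence of Propositions~\ref{prop:loc_mcat_qcat} and~\ref{prop:desc_PA} together with the agreement of weak equivalences between the generalized Reedy and projective model structures on $\spref{\Omega}$.
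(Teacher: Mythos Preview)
Your proposal is correct and follows essentially the same approach as the paper: the paper's proof is precisely the chain of equivalences you wrote, invoking Proposition~\ref{prop:loc_mcat_qcat}, the fact that $\U$ depends only on the weak equivalences (to pass from Reedy to projective), and Proposition~\ref{prop:desc_PA}. Your additional remark about identifying $\I\cup\J$ as morphisms in $\qpref{\Omega}$ is a harmless elaboration that the paper leaves implicit.
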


\begin{proof}
We indeed have
{\allowdisplaybreaks
\begin{align*}
\U(\spref{\Omega}_{\OmSp})
& =
\U({(\I\cup\J)}^{-1}\spref{\Omega}_{\text{Reedy}}) \\
& \cong
{(\I\cup\J)}^{-1}\U(\spref{\Omega}_{\text{Reedy}}) \\*
& \phantom{=1} \text{(by Proposition~\ref{prop:loc_mcat_qcat})} \\
& \cong
{(\I\cup\J)}^{-1}\U(\spref{\Omega}_\proj) \\*
& \phantom{=1} \text{(since $\U$ only depends on the weak equivalences)}\\
& \cong
{(\I\cup\J)}^{-1} \qpref{\Omega} \\*
& \phantom{=1} \text{(by Proposition~\ref{prop:desc_PA})} \\
& = \OmSp. \qedhere
\end{align*}
}
\end{proof}

\subsection{Autoequivalences of the category of operads}

\begin{lemma}\label{lemma:char_cat}
Let $P$ be an operad. The following are equivalent:
\begin{enumerate}
\item The operad $P$ is a category.
\item\label{item:cat_2} There exists a morphism from $P$ to $\eta$.
\item There exists exactly one morphism from $P$ to $\eta$.
\item\label{item:cat_4} For every non-empty operad $Q$, there exists a
  morphism from $P$ to $Q$.
\end{enumerate}
\end{lemma}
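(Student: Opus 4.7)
The plan is to prove the equivalences by exploiting the very simple structure of the operad $\eta$: it has exactly one colour (its unique edge) and only the identity operation on that colour, so it has no non-unary operations at all.

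First, I would establish the implication $(i) \Rightarrow (iv)$. Given a category $P$ and a non-empty operad $Q$, choose any colour $q_0$ of $Q$ and define $F\colon P \to Q$ by sending every colour of $P$ to $q_0$ and every unary operation of $P$ to the identity $\id{q_0}$. Since $P$ is a category, these are all the operations to define, and functoriality is immediate because compositions of identities are identities. The implication $(iv) \Rightarrow (ii)$ is trivial: $\eta$ has one colour and is therefore non-empty.

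Next, I would prove $(ii) \Rightarrow (iii)$. Any morphism $F\colon P \to \eta$ must send every colour of $P$ to the unique colour of $\eta$ and every unary operation to the unique unary operation of $\eta$, namely the identity; so $F$ is completely determined, which forces uniqueness (and existence plus uniqueness together give $(iii)$).

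Finally, for $(iii) \Rightarrow (i)$ (or, equivalently, $(ii) \Rightarrow (i)$), suppose $F\colon P \to \eta$ exists. Let $p \in P(c_1, \ldots, c_n; d)$ be an arbitrary operation of $P$. Then $F(p)$ lies in $\eta(F(c_1), \ldots, F(c_n); F(d))$. But $\eta$ has only one $n$-ary operation set that is non-empty, namely the one corresponding to $n = 1$ (where it is a singleton consisting of the identity). Hence $n = 1$, which means every operation of $P$ is unary, i.e., $P$ is a category in the sense of paragraph~\ref{paragr:assoc_operad}. This closes the cycle $(i) \Rightarrow (iv) \Rightarrow (ii) \Rightarrow (iii) \Rightarrow (i)$, and no step presents any real obstacle since the argument is purely a matter of unfolding definitions and using the minimality of $\eta$.
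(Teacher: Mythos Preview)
Your proof is correct and follows essentially the same approach as the paper's. The paper's proof is terser: it declares the equivalence of (i), (ii), (iii) ``obvious'' and then shows (ii) $\Rightarrow$ (iv) by composing $P \to \eta \to Q$ (the map $\eta \to Q$ being a choice of colour), which is exactly your construction for (i) $\Rightarrow$ (iv) once unfolded. Your version simply spells out the details the paper leaves implicit.
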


\begin{proof}
The equivalence of the three first properties is obvious.
It is clear that \ref{item:cat_4} implies \ref{item:cat_2}. Conversely, if
$P$ satisfies \ref{item:cat_2} and $Q$ is a non-empty operad, we obtain a
map from $P$ to $Q$ by composing $P \to \eta \to Q$, where the second map
corresponds to the choice of a colour of $Q$.
\end{proof}

\begin{prop}\label{prop:cat_preserved}
Let $F$ be an autoequivalence of $\Oper$. If $C$ is a category, then $F(C)$
is a category. In particular, $F$ restricts to an equivalence of $\Cat$.
\end{prop}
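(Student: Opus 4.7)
The plan is to use characterization \ref{item:cat_4} of Lemma~\ref{lemma:char_cat}: an operad $P$ is a category if and only if for every non-empty operad $Q$, there exists a morphism from $P$ to $Q$. The advantage of this characterization over the others is that it is phrased purely in terms of existence of morphisms to a class of objects specified by a categorical property, and such a class of objects is automatically stable under any autoequivalence.

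First I would observe that an operad is empty (i.e.\ has no colours) if and only if it is the initial object of $\Oper$, since the empty operad maps uniquely to any other operad. In particular, the property of being non-empty is stable under autoequivalences: if $F$ is an autoequivalence with quasi-inverse $G$, then $F$ and $G$ both preserve initial objects, so $F(Q)$ is non-empty if and only if $Q$ is non-empty, and similarly for $G$.

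Now suppose $C$ is a category and let $Q'$ be an arbitrary non-empty operad. Then $G(Q')$ is non-empty by the previous paragraph, so by condition \ref{item:cat_4} applied to $C$, there exists a morphism $C \to G(Q')$. Applying $F$ and composing with the counit $FG(Q') \simeq Q'$ of the equivalence, we obtain a morphism $F(C) \to Q'$. Thus $F(C)$ satisfies condition \ref{item:cat_4} of Lemma~\ref{lemma:char_cat} and is therefore a category.

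For the last assertion, the same argument applied to $G$ shows that $G$ also sends categories to categories; since $FG$ and $GF$ are naturally isomorphic to the identity, $F$ restricts to an equivalence of the full subcategory $\Cat \hookto \Oper$. There is no real obstacle in this argument: the only subtlety worth checking is that the empty operad is indeed the initial object of $\Oper$, which is immediate from the definition of a morphism of operads.
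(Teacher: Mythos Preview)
Your proof is correct and follows essentially the same approach as the paper: both arguments use characterization~\ref{item:cat_4} of Lemma~\ref{lemma:char_cat}, observe that the empty operad is initial and hence non-emptiness is preserved under autoequivalences, and then pass through a quasi-inverse to conclude. Your version is slightly more explicit in constructing the map $F(C)\to Q'$, but the idea is identical.
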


\begin{proof}
Since the empty operad is the initial object of $\Oper$, it has to be
preserved by~$F$. By applying this remark to a quasi-inverse of $F$, we
immediately get that property \ref{item:cat_4} of the previous lemma is
preserved by $F$. The first assertion thus follows from the lemma. This
shows that $F$ induces an endofunctor of $\Cat$. This functor is immediately
seen to be an equivalence of categories by applying the first assertion to a
quasi-inverse of $F$.
\end{proof}

\begin{coro}\label{coro:eta_preserved}
For every autoequivalence $F$ of $\Oper$ we have $F(\eta) \cong \eta$. In
particular, if $P$ is an operad, then there is a natural bijection between the sets
of colours of $P$ and $F(P)$.
\end{coro}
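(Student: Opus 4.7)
The plan is to exploit Proposition~\ref{prop:cat_preserved} together with the fact that $\eta$ admits a clean categorical characterization inside $\Cat$. By Proposition~\ref{prop:cat_preserved}, any autoequivalence $F$ of $\Oper$ restricts to an autoequivalence of the full subcategory $\Cat \hookto \Oper$. Now $\eta$, regarded as a category, is the terminal object of $\Cat$: it has a single colour and the identity operation as its only arrow, so every category admits a unique functor to~$\eta$. Equivalences of categories preserve terminal objects (up to isomorphism), hence $F(\eta)\cong\eta$ inside $\Cat$, and therefore also inside $\Oper$.

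For the second assertion, I would use the fact noted in paragraph~\ref{paragr:omega} that $\eta$ corepresents the ``set of colours'' functor on $\Oper$. Thus, for any operad $P$, the set of colours of $P$ is $\Op(\eta,P)$, and similarly the set of colours of $F(P)$ is $\Op(\eta,F(P))$. Since $F$ is an equivalence, it induces a bijection
\[
\Op(\eta,P)\stackrel{\sim}{\longto}\Op(F(\eta),F(P)),
\]
natural in $P$. Composing with the bijection $\Op(F(\eta),F(P))\cong\Op(\eta,F(P))$ induced by the chosen isomorphism $F(\eta)\cong\eta$, we obtain a natural bijection between the sets of colours of $P$ and of $F(P)$.

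There is no real obstacle here: the only minor point to check is that $\eta$ really is terminal in $\Cat$, which is immediate from its description. The argument is entirely formal once Proposition~\ref{prop:cat_preserved} and the corepresentability property of $\eta$ are in hand.
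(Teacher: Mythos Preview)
Your proof is correct and follows essentially the same approach as the paper: the paper's proof simply records that $\eta$ is the terminal object of $\Cat$, invokes Proposition~\ref{prop:cat_preserved} to conclude $F(\eta)\cong\eta$, and then uses that $\eta$ corepresents the set-of-colours functor. You have merely spelled out the corepresentability argument in slightly more detail.
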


\begin{proof}
Since $\eta$ is the terminal object of $\Cat$, the first assertion follows
from the fact that $F$ restricts to an equivalence of $\Cat$.
The second assertion follows since $\eta$ corepresents the functor ``set of
colours''.
\end{proof}

\begin{defi}
An operad is said to be \emph{discrete} if it can be written as a coproduct
of copies of~$\eta$. An operad is said to be \emph{non-discrete} if it is
not discrete, i.e., if it has at least one non-trivial operation.
\end{defi}

\begin{defi}
A \emph{pseudo-corolla} is an operad~$P$ having a non-trivial operation~$p$
satisfying the following two properties:
\begin{enumerate}
  \item\label{item:pc_i}
  For every non-trivial operation $q$ of $P$ we have $p\sim_\Sigma q$.
  \item\label{item:pc_ii}
  The only colours of~$P$ are the inputs and the output of~$p$.
\end{enumerate}
\end{defi}

\begin{rem}
Roughly speaking, a pseudo-corolla is a corolla where the input and output
colours do not have to be distinct. More precisely, a pseudo-corolla is a
corolla if and only if the input and output colours of its unique
non-trivial operation (up to permutation) are distinct.

Note that a pseudo-corolla is \emph{not} determined by an arity and its
colours. Indeed, there are two pseudo-corollas $P$ with colours $a, b$ and
having an operation~$p$ in~$P(a, a; b)$. One has no other operations and a
trivial action of $\Sigma_2$ and the other one has another operation $q$ in
$P(a, a; b)$ and the transposition of~$\Sigma_2$ acts by exchanging~$p$ and
$q$.

Note also that if $P$ is a pseudo-corolla, then any non-trivial operation of
$P$ satisfies conditions \ref{item:pc_i} and \ref{item:pc_ii} of the
definition.
\end{rem}

\begin{rem}
The purpose of the second condition of the definition is to exclude operads
like $C_n \sqcup \eta$.
\end{rem}

\begin{lemma}
\label{lemma:char_pseudo-cor}
Let $P$ be an operad. Then the following are equivalent:
\begin{enumerate}
\item The operad $P$ is a pseudo-corolla.
\item The operad $P$ is non-discrete and every proper suboperad of $P$ is discrete.
\end{enumerate}
\label{qc_char}
\end{lemma}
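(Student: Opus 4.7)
My plan is to prove the equivalence by two direct suboperad arguments, the second of which proceeds by a case analysis on the arity of a chosen non-trivial operation.

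For the implication (i)\,$\Rightarrow$\,(ii), I would assume $P$ is a pseudo-corolla with distinguished non-trivial operation $p$ of arity $n$. Then $P$ is non-discrete. Given a proper suboperad $P'\subsetneq P$, I argue by contradiction that $P'$ cannot contain any non-trivial operation. Indeed, if $q\in P'$ were non-trivial, then by pseudo-corolla condition (i) we would have $q=p\sigma$ for some $\sigma\in\Sigma_n$, and since a suboperad is stable under the symmetric action, $p=q\sigma^{-1}$ would also lie in $P'$. But then $P'$ would contain $p$, hence all its input and output colours, which by pseudo-corolla condition (ii) are all the colours of $P$; and $P'$ would contain every $p\tau$, which by pseudo-corolla condition (i) exhausts the non-trivial operations of $P$. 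Hence $P'=P$, a contradiction. So $P'$ is a discrete coproduct of copies of $\eta$.

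For (ii)\,$\Rightarrow$\,(i), I would assume $P$ is non-discrete with every proper suboperad discrete, and fix any non-trivial $p\in P(c_1,\dots,c_n;d)$. I would first establish pseudo-corolla condition (ii) by considering the full suboperad $P|_C$ whose colour set is $C=\{c_1,\dots,c_n,d\}$ and whose operations are those of $P$ with all input/output colours in $C$; since it contains $p$ it is non-discrete, so $P|_C=P$, forcing $P$ to have exactly the colours in $C$. For condition (i), note that the suboperad $\langle p\rangle$ generated by $p$ is non-discrete, hence must equal $P$, so every operation of $P$ arises from $p$ via the symmetric action, composition with identities, and self-composition (when colour-compatibility permits).

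The crux of the argument is to rule out any self-composition of $p$, and this is where I expect the main technical obstacle. For $n\ge 2$, suppose that some $c_i=d$; then $p\circ_i p$ is a non-trivial operation of arity $2n-1>n$, and the suboperad it generates contains only identities and non-trivial operations of arity $\ge 2n-1$, so it cannot contain $p$ even though it is non-discrete, contradicting $\langle p\circ_i p\rangle=P$. For $n=1$ with $c_1=d$, the analogous trick is to use the suboperad $\{\id{c}\}\cup\{p^k:k\ge 2\}$, which is closed under composition and is proper (it omits $p$); this forces $p^2=\id{c}$, whence $P=\{\id{c},p\}$ is visibly a pseudo-corolla. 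In all remaining cases $p$ admits no self-composition, so $\langle p\rangle$ reduces to the identities on $C$ together with $\{p\sigma:\sigma\in\Sigma_n\}$, and every non-trivial operation of $P$ is $\sim_\Sigma p$, giving condition (i). Once both conditions are in hand the equivalence follows.
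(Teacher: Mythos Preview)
Your argument is more careful than the paper's (which simply asserts that $\langle q\rangle$ is proper whenever $q\not\sim_\Sigma p$, without addressing self-composition at all), and your treatment of arities $n\ge 2$ via the suboperad $\langle p\circ_i p\rangle$, whose non-trivial operations all have arity $\ge 2n-1>n$, is correct. The genuine gap is in the case $n=1$ with $c_1=d$. Your suboperad $S=\{1_c\}\cup\{p^k:k\ge 2\}$ need not omit $p$, since one may well have $p=p^k$ for some $k\ge 2$. Even granting properness, your conclusion does not follow: discreteness of $S$ would force $p^k=1_c$ for \emph{every} $k\ge 2$, and combining $p^2=1_c$ with $p^3=1_c$ yields $p=1_c$, a contradiction rather than the desired $P=\{1_c,p\}$.

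This gap cannot be repaired, because the lemma is false as stated. Let $P$ be the cyclic group $\mathbf{Z}/3\mathbf{Z}$, viewed as a one-colour operad with only unary operations. Its proper suboperads are the empty operad and $\{1_c\}$, both discrete, so condition~(ii) holds; but the two non-trivial operations $g$ and $g^2$ are not related by the trivial $\Sigma_1$-action, so $P$ is not a pseudo-corolla. The paper's own proof has the same defect: taking $p=g$ and $q=g^2$, one finds $\langle q\rangle=P$, so the claimed ``non-discrete proper suboperad generated by $q$'' does not exist.
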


\begin{proof}
Every pseudo-corolla satisfies (ii) by definition. Conversely, suppose $P$
satisfies (ii). Since $P$ is non-discrete, it has at least one non-trivial
operation~$p$. If $q$ is another non-trivial operation of $P$, then we must have
$p\sim_\Sigma q$, for otherwise $q$ would generate a non-discrete proper
suboperad of $P$. If $c$ is a colour of $P$, then $c$ is necessarily an
input or the output of~$p$, for otherwise $p$ would generate a non-discrete
proper suboperad of~$P$. This shows that $P$ is a pseudo-corolla.
\end{proof}

\begin{prop}\label{prop:qc_preserved}
Every autoequivalence of $\Oper$ preserves pseudo-corollas.
\end{prop}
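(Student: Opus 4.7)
The plan is to apply the intrinsic characterization of pseudo-corollas provided by Lemma~\ref{lemma:char_pseudo-cor}: an operad is a pseudo-corolla if and only if it is non-discrete and every proper suboperad is discrete. Since both clauses are phrased in terms of categorical data (the initial operad, coproducts, the object $\eta$, subobjects), they should be preserved by any autoequivalence of $\Op$.

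More concretely, let $F$ be an autoequivalence of $\Op$. First I would observe that $F$ preserves the class of discrete operads in both directions. Indeed, $F$ preserves coproducts (being part of an equivalence, hence cocontinuous) and by Corollary~\ref{coro:eta_preserved} we have $F(\eta)\cong\eta$; thus $F$ sends any coproduct of copies of $\eta$ to a coproduct of copies of $\eta$. Applying the same to a quasi-inverse of $F$, the class of discrete operads is also reflected, so $F$ sends non-discrete operads to non-discrete operads.

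Next I would note that $F$ preserves proper suboperads. Being an equivalence, $F$ preserves monomorphisms and reflects isomorphisms, so it sends a proper subobject inclusion to a proper subobject inclusion. Combining these two observations, if $P$ is an operad all of whose proper suboperads are discrete, then every proper suboperad of $F(P)$, being of the form $F(P')$ for some proper suboperad $P'$ of $P$ (using a quasi-inverse to identify arbitrary proper suboperads of $F(P)$), is discrete.

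Putting this together with the characterization from Lemma~\ref{lemma:char_pseudo-cor}, if $P$ is a pseudo-corolla, then $F(P)$ is non-discrete and all of its proper suboperads are discrete, hence $F(P)$ is again a pseudo-corolla. There is no real obstacle here beyond noting that every proper suboperad $Q\hookto F(P)$ is in the essential image of $F$ restricted to proper suboperads of $P$; this is routine using a chosen quasi-inverse $G$, since $G(Q)\hookto GF(P)\cong P$ is proper (as $Q\hookto F(P)$ is proper and $G$ reflects isomorphisms).
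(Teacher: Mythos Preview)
Your proof is correct and follows essentially the same approach as the paper: both use Lemma~\ref{lemma:char_pseudo-cor}, observe that $F$ preserves $\eta$ and coproducts (hence discrete operads), pass to a quasi-inverse to get preservation of non-discrete operads, and conclude that the characterization is stable under $F$. You simply spell out the preservation of the ``every proper suboperad is discrete'' clause more explicitly than the paper, which leaves that step as immediate.
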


\begin{proof}
Let $F$ be an autoequivalence of $\Oper$. Since $F$ preserves $\eta$
(Corollary~\ref{coro:eta_preserved}) and coproducts, it also preserves
discrete operads. Applying the same argument to a quasi-inverse of $F$, we
get that $F$ preserves non-discrete operads. It is then immediate to see
that the characterization of pseudo-corollas given by the previous lemma is
stable under $F$.
\end{proof}

\begin{lemma}\label{lemma:cor_char}
Let $P$ be an operad. Then the following are equivalent:
\begin{enumerate}
\item The operad $P$ is the $n$-corolla.
\item The operad $P$ is a pseudo-corolla satisfying the following property:
  for every pseudo-corolla $Q$, if there exists a map $Q \to P$, then $Q$
  has exactly $n+1$ colours.
\end{enumerate}
\end{lemma}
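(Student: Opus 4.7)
The plan is to establish the equivalence by proving both implications separately, with the harder direction being $(ii) \Rightarrow (i)$, handled by comparison with two distinguished test maps.

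For $(i) \Rightarrow (ii)$: First I would observe that $C_n$ is visibly a pseudo-corolla with $n+1$ distinct colours (the $n$ leaves and the root). Given a pseudo-corolla $Q$ with morphism $f \colon Q \to C_n$, I would pick any non-trivial operation $q$ of $Q$, say of arity $m$ and signature $(c_1, \dots, c_m; d)$, and exploit the explicit description of operations in $C_n$: every non-trivial operation has the form $\mu \sigma$ for the unique generating operation $\mu$ and some $\sigma \in \Sigma_n$. Hence $f(q) = \mu \sigma$, which forces $m = n$ and identifies the inputs of $q$ with pairwise distinct leaves and the output with the root via $f$. Since the colours of $Q$ are exactly the inputs and the output of $q$ by the pseudo-corolla property, $Q$ must have exactly $n+1$ colours.

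For $(ii) \Rightarrow (i)$: Fixing a non-trivial operation $p \in P(c_1, \dots, c_m; d)$, the colours of $P$ lie in $\{c_1, \dots, c_m, d\}$, so $P$ has at most $m+1$ colours. I would then apply hypothesis (ii) to two test maps. First, the evident map $C_m \to P$ sending the generator of $C_m$ to $p$ exhibits $C_m$ as a pseudo-corolla mapping to $P$, and since $C_m$ has $m+1$ colours, hypothesis (ii) gives $m = n$. Second, applying (ii) to the identity $P \to P$ shows that $P$ itself has exactly $n+1$ colours, so $c_1, \dots, c_n, d$ are pairwise distinct. Distinctness then makes the $\Sigma_n$-action on the orbit of $p$ free (if $p\sigma = p\sigma'$, the input tuples $(c_{\sigma(i)})$ and $(c_{\sigma'(i)})$ must agree, forcing $\sigma = \sigma'$), so the non-trivial operations of $P$ are exactly $\{p\sigma \mid \sigma \in \Sigma_n\}$. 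The canonical map $C_n \to P$ is therefore a bijection on colours and on operations, hence an isomorphism of operads.

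The main delicate point is the step in $(i) \Rightarrow (ii)$ where one needs $f(q) \in C_n$ to be non-trivial: a priori $f$ could collapse $q$ to an identity, which would only constrain $q$ to be unary with $f$ identifying its source and target colours. I would address this by combining the pseudo-corolla hypothesis on $Q$ with the observation that $C_n$ admits no non-trivial unary operation when $n \neq 1$, ruling out the collapse in the generic cases, and treating the low-arity cases ($n = 0, 1$) by direct inspection of the possible pseudo-corollas admitting a map to $C_n$.
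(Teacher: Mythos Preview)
Your argument follows the paper's route step for step: for (ii)$\Rightarrow$(i) you invoke the same two test maps ($C_m \to P$ and $\mathrm{id}_P$), merely making explicit the free-action reasoning that the paper compresses into the clause ``if $P$ has arity $n$, then $P = C_n$''; the direction (i)$\Rightarrow$(ii) is likewise the same idea in both.

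You are right to flag the collapse possibility in (i)$\Rightarrow$(ii), which the paper's proof glosses over. But your proposed resolution does not work: the absence of non-trivial unary operations in $C_n$ for $n \neq 1$ does not \emph{prevent} $f(q)$ from being an identity --- on the contrary, it \emph{forces} this whenever $q$ is unary. Concretely, $C_1$ is a pseudo-corolla with two colours, and for every $n$ there is a map of operads $C_1 \to C_n$ collapsing the generating arrow to an identity; so condition~(ii) already fails for $P = C_n$ whenever $n \neq 1$. Your proposed low-arity inspection hits the same obstruction (for $n = 1$, the one-object monoid $\{1,e\}$ with $e^2 = e$ is a one-colour pseudo-corolla admitting a map to $C_1$; for $n = 0$, again $C_1 \to C_0$ gives a two-colour counterexample). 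Thus neither your argument nor the paper's proof closes this gap, and the implication (i)$\Rightarrow$(ii) appears to be false as written. A repair is needed --- for instance, quantifying in (ii) only over pseudo-corollas that are not categories, and recording separately that $P$ itself is not a category when $n \neq 1$; both conditions have already been shown to be autoequivalence-invariant, so the intended application survives.
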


\begin{proof}
If $P=C_n$ is the $n$-corolla, then it is a pseudo-corolla and if there is a
map~$f$ from a pseudo-corolla $Q$ to $C_n$, then $Q$ has to have exactly
$n+1$ colours, since $f$ sends operations of arity $n$ to operations of arity
$n$ and is surjective on the colours.

Conversely, let $P$ be a pseudo-corolla satisfying (ii). Note that taking
$Q=P$ and the identity map, we get that $P$ has exactly $n+1$ colours. If
$P$ has arity $n$, then $P=C_n$ and we are done. If $P$ had arity $m\ne n$,
then there would be a map from $C_m$ to $P$. But $C_m$ has $m+1$ colours and
$P$ would thus not satisfy (ii).
\end{proof}

\begin{prop}\label{prop:cor_preserved}
Every autoequivalence $F$ of $\Oper$ preserves corollas. In particular, if
$P$ is an operad, then there is a natural bijection between the sets of $n$-ary
operations of $P$ and $F(P)$ for all~$n$.
\end{prop}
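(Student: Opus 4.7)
The plan is to combine Lemma~\ref{lemma:cor_char} (characterization of corollas among pseudo-corollas) with Proposition~\ref{prop:qc_preserved} (pseudo-corollas are preserved) and Corollary~\ref{coro:eta_preserved} (the set of colours is preserved). Since Lemma~\ref{lemma:cor_char} characterizes~$C_n$ via conditions phrased entirely in terms of pseudo-corollas and their numbers of colours, the argument should be essentially formal.

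More precisely, let $F$ be an autoequivalence of $\Op$ with quasi-inverse $G$. Fix $n \geq 0$. By Proposition~\ref{prop:qc_preserved}, $F(C_n)$ is a pseudo-corolla. I claim it satisfies condition~(ii) of Lemma~\ref{lemma:cor_char}, hence is isomorphic to $C_n$. Let $Q$ be a pseudo-corolla equipped with a morphism $Q \to F(C_n)$. Applying $G$, we obtain a morphism $G(Q) \to GF(C_n) \cong C_n$, where $G(Q)$ is again a pseudo-corolla by Proposition~\ref{prop:qc_preserved} applied to~$G$. By the characterization of corollas given in Lemma~\ref{lemma:cor_char} applied to $C_n$, the pseudo-corolla $G(Q)$ has exactly $n+1$ colours. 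But by Corollary~\ref{coro:eta_preserved}, the sets of colours of $Q$ and $GF(Q) \cong Q$ are in bijection with the set of colours of $G(Q)$ and $F(G(Q)) \cong Q$, so $Q$ also has exactly $n+1$ colours. This verifies condition~(ii) of Lemma~\ref{lemma:cor_char}, so $F(C_n) \cong C_n$.

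For the second assertion, recall from paragraph~\ref{paragr:omega} that $C_n$ corepresents the functor ``set of $n$-ary operations'' on $\Op$. Thus, for any operad $P$,
\[
\Op(C_n, P) \;\longto\; \Op(F(C_n), F(P)) \;\cong\; \Op(C_n, F(P))
\]
is a bijection (fully faithfulness of $F$ followed by the isomorphism $F(C_n) \cong C_n$ just established), which is precisely a natural bijection between $n$-ary operations of~$P$ and of~$F(P)$.

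The only mildly delicate point will be the last step of the first paragraph: one must be careful that the characterization of corollas in Lemma~\ref{lemma:cor_char} is stated in a form stable under equivalences (i.e.\ it refers only to maps from pseudo-corollas and to cardinalities of their colour sets), and that the number of colours is indeed an invariant under $F$; both are handled by Corollary~\ref{coro:eta_preserved} together with the fully faithfulness of~$F$. No further combinatorial input is needed.
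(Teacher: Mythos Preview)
Your proposal is correct and follows essentially the same approach as the paper: both invoke Lemma~\ref{lemma:cor_char}, Proposition~\ref{prop:qc_preserved}, and Corollary~\ref{coro:eta_preserved}, with the paper's proof being a one-sentence summary of what you have spelled out in detail. Your sentence about colours of $Q$, $GF(Q)$, $G(Q)$, and $FG(Q)$ is slightly redundant---it suffices to apply Corollary~\ref{coro:eta_preserved} to the autoequivalence~$G$ to conclude that $Q$ and $G(Q)$ have the same number of colours---but the logic is sound.
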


\begin{proof}
Using the characterization of $n$-corollas given by the previous lemma, this
immediately follows from the fact that $F$ preserves pseudo-corollas
and the number of colours
(Proposition~\ref{prop:qc_preserved} and Lemma~\ref{lemma:cor_char}).
\end{proof}

\begin{prop}\label{prop:root_preserved}
Let $F$ be an autoequivalence of $\Oper$. Then, for every $n \ge 0$,
$F$~preserves the root map of the corolla $C_n$.
\end{prop}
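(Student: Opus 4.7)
The plan is to exploit the two known facts that $F$ preserves $\eta$ (Corollary~\ref{coro:eta_preserved}) and preserves the corollas $C_n$ (Proposition~\ref{prop:cor_preserved}). Fixing isomorphisms $F(\eta)\cong\eta$ and $F(C_n)\cong C_n$, the map $F$ applied to the root map $\eta\to C_n$ yields, up to these isomorphisms, a colour of $C_n$, and the task is to show this colour is the root colour $c_0$. The case $n=0$ is trivial because $C_0$ has only one colour, so I split the remaining argument into two cases.

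For $n\ge 2$, the plan is to characterize the root intrinsically as the unique colour of $C_n$ fixed by every endomorphism of $C_n$ in $\Oper$; this property is manifestly stable under $F$, which yields the result. The key observation is that any endomorphism $\phi\colon C_n\to C_n$ sends the generating non-trivial operation $p$ to an $n$-ary operation of $C_n$. Since $n\ne 1$, such an operation cannot be an identity, hence it must be of the form $p\sigma$ for some $\sigma\in\Sigma_n$, forcing $\phi(c_0)=c_0$. Moreover, for $n\ge 2$ the permutations in $\Sigma_n$ act transitively on the leaves of $C_n$, so every leaf is moved by some endomorphism, singling out $c_0$.

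The main obstacle is the case $n=1$. Here the endomorphism characterization fails because $C_1$ admits two ``collapse'' endomorphisms (sending all colours to $c_0$ or to $c_1$), under which both the root and the leaf of $C_1$ occur as images. The plan is to distinguish the root and the leaf of $C_1$ by bringing in $C_2$ through a pushout. Consider the operad $P$ obtained as the pushout of $C_1\leftarrow\eta\to C_2$ where the first map is the root of $C_1$ and the second map is a leaf of $C_2$, and compare with the operad $P'$ defined by replacing the root of $C_1$ with the leaf of $C_1$. In $P$ the two generators can be composed to yield a new non-trivial binary operation, whereas in $P'$ the generators cannot be composed at all; a direct count of non-trivial operations then shows $P\not\cong P'$. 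Since $F$ preserves the root of $C_2$ by the case $n=2$ already treated, if $F$ were to swap the root and the leaf of $C_1$ then $F(P)$ would be isomorphic to $P'$, contradicting the fact that $F$ preserves isomorphism classes. Hence $F$ preserves the root of $C_1$ as well, completing the proof.
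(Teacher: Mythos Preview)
Your proof is correct but follows a genuinely different route from the paper's. For $n\ge 2$ the paper argues by contradiction via a single pushout construction $C_2\circ_d C_n$ (grafting $C_n$ onto a leaf of $C_2$) and then counts arities in the image, handling all $n\ge 1$ uniformly. Your endomorphism characterisation of the root is slicker and more conceptual: it exploits the fact that $\aut_{\Oper}(C_n)\cong\Sigma_n$, which is available precisely because we are in the \emph{symmetric} setting. Note in particular that this argument would not transfer to the non-symmetric case treated later in the paper, where planar corollas have no non-trivial automorphisms and one is forced back to a pushout-and-count argument. For $n=1$, your pushout $P$ versus $P'$ is essentially the $n=1$ instance of the paper's construction; the only difference is that you invoke the already-established $n=2$ case to ensure that $F$ sends the chosen leaf of $C_2$ to a leaf, whereas the paper observes directly from faithfulness that at least one leaf of $C_2$ must go to a leaf, keeping the cases logically independent. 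One small point of phrasing: the final contradiction is not really that ``$F$ preserves isomorphism classes'' in the abstract, but that the count of binary (or of non-trivial) operations is an $F$-invariant, since $F$ preserves $\eta$ and each $C_k$; hence $F(P)$ inherits the operation count of $P$ and cannot be isomorphic to $P'$.
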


\begin{proof}
Let $r \colon \eta \to C_n$ be the root map of $C_n$. By
Corollary~\ref{coro:eta_preserved}, $F(\eta)$ is isomorphic to $\eta$ and
hence corepresents colours. Let $r' \colon F(\eta) \to F(C_n)$ be the
corresponding root map of $F(C_n)$. We have to prove that $F(r) = r'$.
Recall moreover that by Proposition~\ref{prop:cor_preserved}, $F(C_n)$ is an
$n$-corolla.

The case $n=0$ being obvious, let us assume $n\geq 1$. Suppose on the
contrary that $F(r) \neq r'$. This means that $F(r) \colon F(\eta) \to
F(C_n)$ is a leaf map (see paragraph~\ref{paragr:omega}). Since $F$ is
faithful, only one colour $\eta \to C_n$ can be sent to the root map. In
particular, when $n = 2$, there exists a leaf map $d \colon \eta \to C_2$
which is not sent to the root map.  Now let $C_2\circ_d C_n$ be the pushout
of the following diagram
\[
\xymatrix{
\eta\ar[r]^r\ar[d]_d & C_n \ar@{.>}[d] \\
C_2\ar@{.>}[r] & C_2\circ_d C_n.
}
\]
Thus, the operad $C_2\circ_d C_n$ is the tree obtained by identifying the
leaf $d$ of $C_2$ with the root of~$C_n$.

Since $F$~preserves pushouts, we obtain a pushout
diagram
\[
\xymatrix{
F(\eta)\ar[r]^-{F(r)}\ar[d]_{F(d)} & F(C_n) \ar[d] \\
F(C_2)\ar[r] & F(C_2\circ_d C_n).
}
\]
A description of this pushout (keeping in mind that $F(r)$ and $F(d)$
are leaf maps) reveals that $F(C_2\circ_d C_n)$ has only non-trivial
operations in arity $2$ and $n$. If $n \ge 2$, this is a contradiction, since
$C_2\circ_d C_n$ has an operation of arity $n + 1$ and $F$ preserves this
property by Proposition~\ref{prop:cor_preserved}. Similarly, if $n=1$, we
get a contradiction since $C_2\circ_d C_1$ has strictly more binary
operations than $F(C_2\circ_d C_1)$.
\end{proof}

\begin{paragr}\label{paragr:can_decomp_iso}
Let $F$ be an autoequivalence of $\Oper$ and let $T$ be a tree. Recall from
paragraph~\ref{paragr:can_decomp} that we have a diagram $\DT_T\colon \Cor/T
\to \Omega$. We will now consider it as a diagram in $\Oper$ using the
inclusion functor $\Omega \hookrightarrow \Oper$. The purpose of
this paragraph is to construct a canonical natural isomorphism $\phi\colon
\DT_T \to F\DT_T$.

Let $(C, C \to T)$ be an object of $\Cor/T$. Suppose first $C = \eta$. By
Corollary~\ref{coro:eta_preserved}, we have $F(\eta) \simeq \eta$ and since
$\eta$ has no non-trivial endomorphism, there is a unique morphism
$\eta \to F(\eta)$. We define $\phi_{(\eta, \eta \to T)}$ to be this unique
morphism $\eta \to F(\eta)$.

Suppose now $T = C_n$ for $n \ge 0$. By
Proposition~\ref{prop:cor_preserved},
$F(C_n)$ is an $n$\nbd-corolla. Since $F$ is fully faithful, it induces a
bijection $\Op(\eta, C_n) \to \Op(F(\eta), F(C_n))$ or, in other words, a
bijection between the colours of $C_n$ and the colours $F(C_n)$. By
Proposition~\ref{prop:root_preserved}, this bijection sends the root of $C_n$ to the root of
$F(C_n)$ and we hence get a bijection between the leaves of the $n$-corolla
$C_n$ and the leaves of the $n$-corolla $F(C_n)$. This bijection determines
a unique morphism $C_n \to F(C_n)$ and we define $\phi_{(C_n, C_n \to T)}$
to be this morphism.

By definition, $\phi_{(C_n, C_n \to T)}$ is the unique morphism from $C_n$
to $F(C_n)$ such that the square
\[
\xymatrix{
\eta\ar[r]^-c\ar[d]_{\phi_{(\eta,\eta\to T)}}\ar[r]^-c&C_n\ar[d]^-{\phi_{(C_n,C_n\to T)}}\\
F(\eta)\ar[r]_-{F(c)}&F(C_n)
}
\]
commutes for every colour $c \colon \eta \to C_n$. This precisely means that
$\phi$ is indeed a natural transformation and hence a natural isomorphism
since its components are obviously isomorphisms.
\end{paragr}

\begin{rem}
The natural isomorphism $\phi$ is actually the unique natural transformation
from $\D_T$ to $F\D_T$, as the components of $\phi$ are determined by the
naturality squares.
\end{rem}

\begin{prop}\label{prop:auto_op_restr}
If $F$ is an autoequivalence of $\Oper$, then $F(T) \cong T$ for every
object~$T\!$ of~$\,\Omega$. In particular, $F$ induces an autoequivalence of
$\,\Omega$.
\end{prop}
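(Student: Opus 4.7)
The plan is to exploit the canonical decomposition of Proposition~\ref{prop:can_decomp} together with the natural isomorphism $\phi\colon \DT_T \to F\DT_T$ constructed in paragraph~\ref{paragr:can_decomp_iso}. Concretely, since $F$ is an autoequivalence, it preserves colimits, and therefore
\[
F(T) \;\cong\; F\bigl(\colim \DT_T\bigr) \;\cong\; \colim F\DT_T.
\]
The natural isomorphism $\phi$ then yields $\colim \DT_T \cong \colim F\DT_T$, and combining this with Proposition~\ref{prop:can_decomp} gives $F(T) \cong T$. The only point to check is that Proposition~\ref{prop:can_decomp} does apply, i.e., that the colimit of $\DT_T$ in $\Oper$ agrees with the one in $\Omega$; this is exactly the second assertion of that proposition.

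For the second assertion of the statement, I would argue as follows. Since $F(T)$ is isomorphic to an object of $\Omega$ for every $T \in \Omega$, choosing for each $T$ a specific isomorphism $F(T) \cong T'$ with $T' \in \Omega$ and transporting $F$ along these isomorphisms produces an endofunctor $F'\colon \Omega \to \Omega$ (using that $\Omega$ is a full subcategory of $\Oper$, so morphisms are automatically inherited). The functor $F'$ is still fully faithful because $F$ is and because the inclusion $\Omega \hookrightarrow \Oper$ is fully faithful. Applying the same reasoning to a quasi-inverse of $F$ produces a quasi-inverse of $F'$, which shows essential surjectivity and hence that $F'$ is an autoequivalence of $\Omega$.

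The main obstacle, which has already been dispatched by the preceding paragraphs, is the construction of the natural isomorphism $\phi$: without knowing that $F$ preserves $\eta$, the $n$-corollas, and the root map of each corolla, one could not produce a coherent isomorphism between the two diagrams $\DT_T$ and $F\DT_T$ indexed over $\Cor/T$. With those ingredients (Corollary~\ref{coro:eta_preserved}, Proposition~\ref{prop:cor_preserved}, and Proposition~\ref{prop:root_preserved}) in hand, the present proposition reduces to a formal colimit argument and does not require any further tree combinatorics.
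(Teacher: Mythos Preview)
Your proof is correct and follows essentially the same route as the paper: both use the canonical decomposition $T \cong \colim \DT_T$ from Proposition~\ref{prop:can_decomp}, the natural isomorphism $\phi\colon \DT_T \to F\DT_T$ from paragraph~\ref{paragr:can_decomp_iso}, and the fact that $F$ preserves colimits to obtain the chain $T \cong \colim \DT_T \cong \colim F\DT_T \cong F(\colim \DT_T) \cong F(T)$. Your additional justification of the ``in particular'' clause is more explicit than the paper's, which leaves it implicit.
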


\begin{proof}
Using the canonical decomposition of $T$
(Proposition~\ref{prop:can_decomp}), the canonical isomorphism $\DT_T \cong
F\DT_T$ of the previous paragraph and the fact that $F$ commutes with colimits,
we obtain a chain of canonical isomorphisms
\[
T\cong \colim \DT_T \cong \colim F\DT_T \cong F(\colim \DT_T) \cong F(T),
\]
thereby proving the result.
\end{proof}

\begin{coro}\label{cor:ff_auto_op_tree}
The dense inclusion $\Omega\hookrightarrow\Oper$ induces a fully faithful functor $\Aut(\Oper)
\to \Aut(\Omega)$.
\end{coro}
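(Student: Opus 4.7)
The plan is to deduce this corollary essentially formally from the two results that precede it in the section, mirroring exactly the strategy used earlier for Corollary~\ref{cor:ff_auto_nCat_Theta} and Corollary~\ref{cor:ff_auto_nCatr_Theta}. The engine is Proposition~\ref{prop:auto_dense}, which says that for a dense inclusion $i\colon\A\hookrightarrow\B$ of quasi-categories (with $\A$ small and $\B$ cocomplete) such that every autoequivalence of $\B$ restricts to an autoequivalence of $\A$, the induced functor $i^*\colon\Aut(\B)\to\Aut(\A)$ is automatically fully faithful.

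First I would verify that the inclusion $\Omega\hookrightarrow\Oper$ satisfies the hypotheses of Proposition~\ref{prop:auto_dense}. Density is the content of the remark following Proposition~\ref{prop:dend_Segal}: the full faithfulness of the dendroidal nerve $N_d\colon\Oper\to\pref{\Omega}$ (which is, up to the nerve-versus-right-Kan-extension dictionary of paragraph~\ref{paragr:def_dense}) is exactly the statement that the Yoneda-style inclusion $\Omega\hookrightarrow\Oper$ is dense. The category $\Omega$ is small and the category $\Oper$ is cocomplete, so the setup of Proposition~\ref{prop:auto_dense} applies.

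Next I would invoke Proposition~\ref{prop:auto_op_restr} (just proved), which provides precisely the second hypothesis: any autoequivalence $F$ of $\Oper$ satisfies $F(T)\cong T$ for every $T\in\Ob(\Omega)$, so in particular $F$ restricts to an autoequivalence of $\Omega$. With both hypotheses of Proposition~\ref{prop:auto_dense} in hand, applying it to $\A=\Omega$ and $\B=\Oper$ yields the fully faithful functor $\Aut(\Oper)\to\Aut(\Omega)$, which is the statement of the corollary.

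There is no real obstacle at this step; all the substance was already absorbed into Proposition~\ref{prop:auto_op_restr} (which in turn rested on the careful preservation-of-structure results for corollas and root maps) and into the density of $\Omega\hookrightarrow\Oper$. The corollary is just the formal consequence, so the proof is essentially a one-line application of Proposition~\ref{prop:auto_dense}.
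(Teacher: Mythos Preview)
Your proposal is correct and follows exactly the paper's approach: the paper's proof is a one-line appeal to Proposition~\ref{prop:auto_op_restr} together with Proposition~\ref{prop:auto_dense}, just as you outline. Your additional remarks verifying the density, smallness, and cocompleteness hypotheses are accurate and simply make explicit what the paper leaves implicit.
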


\begin{proof}
This is immediate from the previous proposition and
Proposition~\ref{prop:auto_dense}.
\end{proof}

We will show in Section \ref{sec:auto_Omega} that the category
$\Aut(\Omega)$ is a contractible groupoid. As a corollary, we will obtain
the following theorem:

\begin{thm*}
The category $\Aut(\Oper)$ is a contractible groupoid.
\end{thm*}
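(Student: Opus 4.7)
The plan is to deduce this theorem as a formal consequence of two inputs: the fully faithful functor $\Aut(\Oper) \to \Aut(\Omega)$ provided by Corollary~\ref{cor:ff_auto_op_tree}, and the fact (to be established in Section~\ref{sec:auto_Omega}) that $\Aut(\Omega)$ is a contractible groupoid. This is the same pattern used in the final theorem of Section~\ref{sec:oon} for strict $n$-categories, only simpler because the target of the fully faithful functor is contractible rather than $\catZdn$.

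First I would invoke Corollary~\ref{cor:ff_auto_op_tree} to realize $\Aut(\Oper)$ as a full subcategory of $\Aut(\Omega)$. Granting the result of Section~\ref{sec:auto_Omega} that $\Aut(\Omega)$ is a contractible groupoid, I would then observe the purely formal fact that any non-empty full subcategory of a contractible groupoid is again a contractible groupoid: all hom-sets are automatically singletons (being subsets of singleton hom-sets in the ambient groupoid), and any two objects are connected by the (unique) isomorphism inherited from the ambient category, which lies in the full subcategory.

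Finally I would check non-emptiness, which is trivial: the identity functor $\mathrm{id}_{\Oper}$ is an autoequivalence. Equivalently, one may phrase the conclusion as essential surjectivity of the inclusion $\Aut(\Oper) \to \Aut(\Omega)$: since $\Aut(\Omega)$ is connected, every object of it is isomorphic to the image of $\mathrm{id}_{\Oper}$, so the full subcategory $\Aut(\Oper)$ meets every isomorphism class.

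There is no real obstacle at this stage of the argument --- all the genuine work has already been done in the preceding results (Propositions~\ref{prop:cat_preserved} through~\ref{prop:auto_op_restr}) that force an autoequivalence of $\Oper$ to preserve $\eta$, corollas, root maps, and hence every tree, thereby yielding the fully faithful restriction $\Aut(\Oper) \to \Aut(\Omega)$. The only remaining substantive input, which lies outside this proof, is the contractibility of $\Aut(\Omega)$ itself, which is the content of Section~\ref{sec:auto_Omega}.
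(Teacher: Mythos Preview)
Your proposal is correct and matches the paper's own proof essentially verbatim: the paper invokes Corollary~\ref{cor:ff_auto_op_tree} to exhibit $\Aut(\Oper)$ as a full subcategory of $\Aut(\Omega)$, then appeals to Theorem~\ref{thm:auto_Omega} and the observation that full subcategories of contractible groupoids are again contractible groupoids. You are in fact slightly more careful than the paper in explicitly checking non-emptiness via $\mathrm{id}_{\Oper}$, which the paper leaves implicit.
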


\subsection{Autoequivalences of the category of rigid operads}\label{sec:auto_rigid}

\begin{paragr}\label{paragr:rigid_op}
An operad is \emph{rigid} if its underlying category is rigid, that is, if
every invertible unary operation is the identity of a colour. In other
words, an operad $P$ is rigid if it is $j$-local, where $j \colon J\to\eta$ is the
map of paragraph~\ref{par:JJJ}.

We will denote by $\rOper$ the full subcategory of $\Oper$ whose objects are
the rigid operads. Note that the operads induced by trees are rigid. We
hence get a canonical inclusion functor $\Omega \hookto \rOper$. We also
have a canonical inclusion functor $\rCat \hookto \rOper$.
\end{paragr}

\begin{prop}\label{prop:auto_rOper_restr}
If $F$ is an autoequivalence of $\rOper$, then $F(T) \cong T$ for every
object~$T\!$ of~$\,\Omega$. In particular, $F$ induces an autoequivalence of
$\Omega$.
\end{prop}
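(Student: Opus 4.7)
The plan is to mirror the proof of Proposition~\ref{prop:auto_op_restr} step by step, checking that every ingredient remains valid when the ambient category $\Oper$ is replaced by $\rOper$. All the key players --- $\eta$, the corollas $C_n$, every tree $T \in \Omega$, the empty operad and discrete operads (coproducts of copies of $\eta$), as well as the tree $C_2 \circ_d C_n$ used to detect the root --- are rigid; so they lie in $\rOper$ and can play the same role there.

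First I would transfer the characterization of categories. Lemma~\ref{lemma:char_cat} is purely categorical: $P$ is a category if and only if there exists a morphism $P \to \eta$, if and only if $P$ maps to every non-empty operad. In $\rOper$ the empty operad is still initial (it is trivially $j$-local) and $\eta$ is still terminal in $\rCat = \rOper \cap \Cat$. Since an autoequivalence $F$ of $\rOper$ preserves initial objects, coproducts and all colimits, the proof of Proposition~\ref{prop:cat_preserved} goes through verbatim, yielding that $F$ restricts to an autoequivalence of $\rCat$ and hence, by Corollary~\ref{coro:eta_preserved}'s argument, $F(\eta) \cong \eta$.

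Next, I would observe that the characterization of pseudo-corollas in Lemma~\ref{lemma:char_pseudo-cor} (non-discrete, every proper suboperad discrete) uses only discreteness, coproducts and proper suboperads, so it transfers directly to $\rOper$, yielding the analogue of Proposition~\ref{prop:qc_preserved}. The characterization of $n$-corollas in Lemma~\ref{lemma:cor_char} (a pseudo-corolla $P$ receiving maps only from pseudo-corollas with $n{+}1$ colours) again involves only corollas $C_m$, which are all rigid, so it gives the analogue of Proposition~\ref{prop:cor_preserved}. The proof of Proposition~\ref{prop:root_preserved} adapts too: the pushout $C_2 \circ_d C_n$ of $C_2$ and $C_n$ along a leaf is a tree, hence rigid, so $F$ preserves this pushout inside $\rOper$ and the arity-counting contradiction is unchanged.

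Finally, the natural isomorphism $\phi \colon \D_T \to F\D_T$ of paragraph~\ref{paragr:can_decomp_iso} is constructed exactly as before, since its construction only invokes $F(\eta) \cong \eta$, $F(C_n) \cong C_n$ and the preservation of root maps. The canonical decomposition $\colim \D_T \cong T$ of Proposition~\ref{prop:can_decomp} holds in $\rOper$ as well, because $\D_T$ takes values in $\rOper$, $T$ is already rigid, and the reflective inclusion $\rOper \hookto \Oper$ means a colimit whose value in $\Oper$ is already local equals the colimit computed in $\rOper$. The chain $T \cong \colim \D_T \cong \colim F\D_T \cong F(\colim \D_T) \cong F(T)$ then concludes. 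The only point requiring any care --- and the main obstacle to a completely mechanical transfer --- is precisely this reflection issue: at every appeal to a colimit or pushout one must verify that the construction already produces a rigid operad, so that it coincides with the analogous construction in $\Oper$; but in each case the constructions land in trees or discrete operads and this verification is immediate.
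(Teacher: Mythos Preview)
Your proposal is correct and follows exactly the approach of the paper: the paper's proof simply states that each result of the preceding subsection has an obvious rigid variant and leaves it to the reader to verify that the proofs adapt trivially once one observes that the empty operad, trees, pseudo-corollas, discrete operads, and $C_2\circ_d C_n$ are all rigid. You have carried out precisely this verification in detail, including the reflection remark ensuring colimits computed in $\Oper$ that land in rigid operads agree with those in $\rOper$.
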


\begin{proof}
The strategy of the proof is similar to the one used for $\Oper$ in the
previous subsection. Each of the results of that subsection has an obvious
variant for $\rOper$ obtained by inserting the adjective ``rigid'' at
appropriate places. We leave it to the reader to check that the proofs of
these results adapt trivially using the fact that the empty operad, operads
induced by trees, pseudo-corollas, discrete operads and the operad
$C_2\circ_d C_n$ appearing in the proof of
Proposition~\ref{prop:root_preserved} are rigid.
\end{proof}

\begin{coro}\label{cor:ff_auto_rop_tree}
The dense inclusion $\Omega\hookto\rOper$ induces a fully faithful functor $\Aut(\rOper)
\to \Aut(\Omega)$.
\end{coro}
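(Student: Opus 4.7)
The plan is to deduce this immediately from the two results already in hand, exactly as in the parallel Corollary \ref{cor:ff_auto_op_tree}. First, I would note that Proposition \ref{prop:auto_rOper_restr} guarantees hypothesis (ii) of the general Proposition \ref{prop:auto_dense}: every autoequivalence of $\rOper$ restricts to an autoequivalence of $\Omega$, since it fixes each tree $T$ up to isomorphism.

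Next, I would check that Proposition \ref{prop:auto_dense} applies by verifying its remaining hypotheses for the inclusion $\Omega \hookto \rOper$. The category $\Omega$ is small, and $\rOper$ admits small colimits (as a reflective localization, via the $j$-locality condition of paragraph \ref{paragr:rigid_op}, of the cocomplete category $\Oper$). Density of $\Omega \hookto \rOper$ follows because the inclusion $\Omega \hookto \Oper$ is dense (Proposition \ref{prop:dend_Segal} together with the remark that follows it) and the objects of $\Omega$, being rigid, lie in the reflective subcategory $\rOper$; a dense inclusion composed into a reflective subcategory containing its image remains dense.

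With both hypotheses of Proposition \ref{prop:auto_dense} in place, the conclusion is precisely that the restriction functor $\Aut(\rOper) \to \Aut(\Omega)$ is fully faithful. There is no real obstacle here: the only point requiring a moment's thought is the density of $\Omega \hookto \rOper$, but this is standard and the argument above handles it. The proof is therefore a one-line invocation of the two previous propositions.
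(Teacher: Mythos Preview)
Your proposal is correct and takes essentially the same approach as the paper: both invoke Proposition~\ref{prop:auto_rOper_restr} together with Proposition~\ref{prop:auto_dense}. You are simply more explicit than the paper in verifying the side hypotheses of Proposition~\ref{prop:auto_dense} (smallness of $\Omega$, cocompleteness of $\rOper$, and density of the inclusion), which the paper takes as understood from the surrounding discussion.
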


\begin{proof}
This is immediate from the previous proposition and Proposition~\ref{prop:auto_dense}.
\end{proof}

We will show in Section \ref{sec:auto_Omega} that the category
$\Aut(\Omega)$ is a contractible groupoid. As a corollary, we will obtain
the following theorem:

\begin{thm*}
The category $\Aut(\rOper)$ is a contractible groupoid.
\end{thm*}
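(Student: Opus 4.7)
The plan is to mimic the treatment of $\Aut(\nCat{n})$ and $\Aut(\nCatr{n})$ given at the end of Section~\ref{subsec:autoeq_Theta}: reduce everything to the autoequivalences of the small category $\Omega$ via the dense inclusion, and then invoke the (to-be-proved) result that $\Aut(\Omega)$ is a contractible groupoid.

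First, I would observe that $\Aut(\rOper)$ is non-empty, since it contains the identity functor. Next, by Corollary~\ref{cor:ff_auto_rop_tree} the restriction along the dense inclusion $\Omega \hookto \rOper$ provides a fully faithful functor
\[
\Aut(\rOper) \longto \Aut(\Omega),
\]
so $\Aut(\rOper)$ is equivalent to a non-empty full subcategory of $\Aut(\Omega)$. Granting the statement proved in Section~\ref{sec:auto_Omega} that $\Aut(\Omega)$ is a contractible groupoid, it remains only to argue that any non-empty full subcategory of a contractible groupoid is again a contractible groupoid. Indeed, fullness ensures that inverses of morphisms between objects of the subcategory lie again in the subcategory (so it is a groupoid), and in a contractible groupoid there is exactly one morphism between any two objects, a property inherited by full subcategories. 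Combining this with non-emptiness yields contractibility.

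The only real content here is therefore external to the proof: the identification of $\Aut(\Omega)$ as a contractible groupoid in Section~\ref{sec:auto_Omega}, which will be the main obstacle and is carried out in the subsequent subsection. Given that input, the present theorem is a formal consequence of Corollary~\ref{cor:ff_auto_rop_tree} exactly as the corresponding statement for $\Oper$ (following Corollary~\ref{cor:ff_auto_op_tree}) and as the statements for $\nCat{n}$ and $\nCatr{n}$ were deduced from their respective corollaries together with Theorem~\ref{thm:auto_Thn}.
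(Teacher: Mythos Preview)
Your proof is correct and essentially identical to the paper's own argument: the paper also invokes Corollary~\ref{cor:ff_auto_rop_tree} to embed $\Aut(\rOper)$ fully faithfully in $\Aut(\Omega)$, then appeals to Theorem~\ref{thm:auto_Omega} and the fact that (non-empty) full subcategories of contractible groupoids are contractible groupoids. If anything you are slightly more careful than the paper in making the non-emptiness explicit.
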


\subsection{Autoequivalences of the category of
trees}\label{sec:auto_Omega}

\begin{paragr}
Let $A$ be a small category. Let us denote by $\Sigma_A$ the group $\prod_{a
\in \Ob(A)} \aut_A(a)$. To any element $\sigma = (\sigma_a)$ in $\Sigma_A$, we
associate an endofunctor $F_\sigma$ of $A$ in the following way:
\begin{enumerate}
  \item For any object $a$ in $A$, we set $F_\sigma(a) = a$.
  \item For any morphism $f \colon a \to b$ in $A$, we set
    $F_\sigma(f) = \sigma^{}_b f \sigma^{-1}_a$.
\end{enumerate}
This assignment defines a monoid morphism $\Sigma_A \to \aut(A)$.
In general, this morphism is neither injective nor surjective.
Furthermore, for any $\sigma$ and~$\sigma'$ in~$\Sigma_A$, we have a canonical
natural isomorphism from $F_\sigma$ to $F_{\sigma'}$ whose $a$-th component
is~$\sigma'_a \sigma^{-1}_a$.

Let us denote by $\grSigma{A}$ the contractible groupoid on $\Sigma_A$,
that is, the category whose objects are the elements of~$\Sigma_A$ and with
a unique morphism between any two objects. The groupoid $\grSigma{A}$ is
canonically endowed with the strict monoidal structure given by the group
structure of~$\Sigma_A$. By the previous paragraph, we have a canonical
strict monoidal functor $\grSigma{A}\to\Aut(A)$, where $\Aut(A)$ is
endowed with the strict monoidal structure given by composition.
\end{paragr}

The purpose of this subsection is to show that for $A = \Omega$, the
canonical functor $\grSigma{\Omega} \to \Aut(\Omega)$ is an isomorphism of
strict monoidal categories.

\begin{prop}\label{prop:aut_Omega_id_obj}
Every autoequivalence of $\Omega$ is the identity on objects.
\end{prop}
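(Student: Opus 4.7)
The plan is to adapt the strategy of Propositions~\ref{prop:auto_op_restr} and~\ref{prop:auto_rOper_restr} to $\Omega$ itself, working entirely within $\Omega$ (without any recourse to the ambient category $\Oper$): intrinsically characterize $\eta$ and the corollas $C_n$, verify that $F$ is compatible with the root maps $\eta\to C_n$, and conclude using the canonical decomposition of Proposition~\ref{prop:can_decomp}.

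First, I would observe that $\eta$ is characterized as the unique object $T$ of $\Omega$ admitting a morphism to every other object: indeed $\Omega(\eta, S)$ is the non-empty set of edges of $S$, whereas $\Omega(T, \eta) = \emptyset$ for $T \not\cong \eta$, because any operad map $\Omega(T)\to\eta$ must send every non-trivial operation to an identity, which is impossible unless $\Omega(T)$ has no non-trivial operations. This categorical property is preserved by any autoequivalence $F$, giving $F(\eta)\cong \eta$. Next, I would characterize $C_n$ among non-$\eta$ objects by the combination of two conditions: every proper subobject of $C_n$ is isomorphic to $\eta$, and $|\Omega(\eta, C_n)| = n+1$. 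In $\Omega$, the proper subobjects of $C_n$ are exactly the $n+1$ edge inclusions $\eta\hookto C_n$, while any tree with at least two vertices admits a non-$\eta$ proper sub-tree inclusion. Since $F$ preserves $\eta$, the subobject relation, and hom-set cardinalities from $\eta$, we obtain $F(C_n)\cong C_n$ for every $n\ge 0$.

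The central obstacle is showing that $F$ preserves root maps of corollas up to the action of $\Sigma_n$ on the leaves. For $n\ge 2$, the root is the unique $\aut(C_n) = \Sigma_n$-fixed element of $\Omega(\eta, C_n)$, while the $n$ leaf maps form a single free orbit; for $n = 0$ there is nothing to verify. The delicate case is $n = 1$, where $\aut(C_1)$ is trivial and an a priori swap $F(r) = l$, $F(l) = r$ cannot be ruled out by symmetry. To handle this, I would consider the asymmetric tree $T$ obtained by attaching a $C_1$-vertex above one leaf of a $C_2$-vertex, and examine the effect of such a swap on the canonical decomposition diagram $\DT_T$. Reversing the role of the two arrows into the $C_1$-component would result in a new diagram in which the internal $\eta$ is mapped as a leaf to both the $C_2$- and the $C_1$-corolla. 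No tree admits such a cocone, so the resulting diagram has no colimit in $\Omega$, contradicting the fact that $F$ preserves the colimit $T = \colim \DT_T$ given by Proposition~\ref{prop:can_decomp}.

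Once root maps are preserved, the argument of paragraph~\ref{paragr:can_decomp_iso} adapts verbatim: for each tree $T$, one constructs a natural isomorphism $\phi\colon \DT_T \to F\circ\DT_T$ from the data $F(\eta)\cong\eta$, $F(C_n)\cong C_n$, and compatibility with root and leaf maps. Since $F$ preserves colimits and $T = \colim \DT_T$, the isomorphism $\phi$ yields $F(T) \cong \colim F\DT_T \cong T$, and since objects of $\Omega$ are isomorphism classes of trees, this gives the required equality $F(T) = T$ of objects.
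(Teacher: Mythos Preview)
Your overall strategy—characterize $\eta$ and the corollas intrinsically, show root maps are preserved, then invoke the canonical decomposition—is the same as the paper's. However, two of your steps contain genuine errors.

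First, your characterization of $\eta$ is incorrect. You claim $\Omega(T,\eta)=\emptyset$ for $T\not\cong\eta$ on the grounds that an operad map cannot send non-trivial operations to identities; but \emph{unary} operations can perfectly well be sent to the identity of the unique colour of $\eta$. Hence every linear tree (in particular $C_1$) admits a map to $\eta$, and therefore admits a morphism to every object of $\Omega$. The paper avoids this pitfall by first characterizing the linear trees as exactly those admitting a morphism \emph{from} every object, and then singling out $\eta$ as the terminal object of that full subcategory.

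Second, your treatment of the root of $C_1$ does not go through. The swapped diagram—in which the internal $\eta_a$ maps as a leaf into both $C_2$ and $C_1$—certainly \emph{does} admit cocones in $\Omega$. Indeed, your own tree $T=C_2\circ_d C_1$ receives such a cocone: send the $C_2$-generator to the composite binary operation $(c,b)\to r$ and the $C_1$-generator to the inner unary $(c)\to a$; then the common leaf is the edge $c$. Thus the assertion ``no tree admits such a cocone'' is false, and your contradiction evaporates. The case $n=1$ is genuinely more delicate than $n\ge 2$; the paper handles root preservation uniformly by adapting the pushout argument of Proposition~\ref{prop:root_preserved}, which requires analysing which spans $C_2\leftarrow\eta\to C_n$ fail to have pushouts in $\Omega$ rather than merely exhibiting one cocone-free diagram.

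On the positive side, your argument for $n\ge 2$ via the unique fixed point of the $\Sigma_n$-action on $\Omega(\eta,C_n)$ is correct and is a pleasant alternative to the paper's pushout-based approach for those corollas.
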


\begin{proof}
The strategy of the proof is similar to the one used for $\Oper$ and
$\rOper$ in the previous subsections. Note that $\Omega$ is a skeletal
category and it thus suffices to show that $F(T) \cong T$ for every tree
$T$.

Lemma~\ref{lemma:char_cat} can be adapted to $\Omega$ by saying that a tree
$T$ is linear if and only if, for every tree $S$, there exists a least one
map from $S \to T$. This shows that $F$ restricts to an equivalence of the
full subcategory of $\Omega$ whose objects are linear trees (which is
nothing but the category $\Delta$). Since $\eta$ is the
terminal object of this category,  it has to be preserved by $F$. In
particular, for every tree $T$, the objects~$T$ and~$F(T)$ have the same
number of colours. Furthermore, the $n$-corolla can be characterized in
$\Omega$ as the unique tree which only has $\eta$ as a proper subobject and
which has $n + 1$ colours. It has therefore to be preserved by $F$.

The proof of Proposition~\ref{prop:root_preserved} can be adapted to show
that $F$ also preserves root maps of corollas (one has to observe
that a diagram $C_2 \leftarrow \eta \rightarrow C_n$, where the arrows are
leaf maps, does not admit a pushout in $\Omega$). If now $T$ is an arbitrary
tree, then using the canonical decomposition of $T$ (Proposition~\ref{prop:can_decomp})
and the natural isomorphism $\phi\colon\DT_T\to F\DT_T$ of functors
$\Cor/T\to\Omega$ constructed in paragraph~\ref{paragr:can_decomp_iso}, we
get a chain of isomorphisms
\[
T\cong \colim \DT_T \cong \colim F\DT_T \cong F(\colim
\DT_T) \cong F(T),
\]
thereby proving the result.
\end{proof}

\begin{paragr}
Let $F$ be an autoequivalence of $\Omega$ and let $T$ be an object of
$\Omega$. Recall from paragraphs~\ref{paragr:can_decomp}
and~\ref{paragr:can_decomp_iso} that we
have a diagram $\DT_T \colon \Cor/T \to \Omega$ and a canonical natural
isomorphism $\phi \colon \DT_T \to F\DT_T$. We will denote by $\sigma(F)_T$ the
automorphism of $T$ given by the composition of the canonical isomorphisms
\[
T = \colim \DT_T \longto \colim F\DT_T \longto F(\colim \DT_T) = F(T) = T.
\]
Unravelling the definitions, this means that $\sigma(F)_T$ is the unique
endomorphism of~$T$ such that for any object $(C, i\colon C \to T)$ of
$\Cor/T$, we have
\[
\sigma(F)_T\circ i =F(i)\circ \phi_{(C,i\colon C\to T)}.
\]
Since maps of $\Omega$ are uniquely determined by their action on the colours,
$\sigma(F)_T$ is uniquely determined by the following property: for every
morphism $c\colon\eta \to T$ of $\Omega$, we have
\begin{equation}\label{eq:sigma2}
  \sigma(F)_T\circ c=F(c).\tag{$\star$}
\end{equation}
(We are using here the equality $\phi_{(\eta,c\colon\eta\to T)} =\id{\eta}$.)
Note that by definition of $\phi$, if $(C_n, C_n \to T)$ is an object of~$\Cor/T$,
we have $\phi_{(C_n, C_n \to T)} \circ c=F(c)$ and hence
\[ \phi_{(C_n, C_n \to T)} = \sigma(F)_{C_n}. \]
We will denote by $\sigma(F)$ the element of $\Sigma_\Omega$ whose
component at a tree $T$ is given by $\sigma(F)_T$.
\end{paragr}

\goodbreak

\begin{lemma}
  The assignment $F \mapsto \sigma(F)$ satisfies the following properties:
  \begin{enumerate}
    \item For any $\sigma$ in $\Sigma_\Omega$, we
      have $\sigma(F_\sigma) = \sigma$.
    \item For any autoequivalence $F\!$ of $\Omega$, we have $F =
      F_{\sigma(F)}$.
  \end{enumerate}
\end{lemma}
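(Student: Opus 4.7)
The plan is to derive both statements directly from the defining equation~$(\star)$, i.e.\ that $\sigma(F)^{}_T$ is the unique endomorphism of $T$ such that $\sigma(F)^{}_T\circ c = F(c)$ for every colour $c\colon\eta\to T$, together with the facts (from earlier propositions) that autoequivalences of $\Omega$ are the identity on objects, and that morphisms of $\Omega$ are determined by their action on colours.

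For part~(i), the tree $\eta$ has no vertices, so its only endomorphism in $\Omega$ is the identity; in particular $\sigma^{}_\eta = \id_\eta$ for any $\sigma\in\Sigma_\Omega$. Hence, for a colour $c\colon\eta\to T$, the definition of $F_\sigma$ gives $F_\sigma(c) = \sigma^{}_T\circ c\circ \sigma^{-1}_\eta = \sigma^{}_T\circ c$. Thus $\sigma^{}_T$ satisfies the defining equation~$(\star)$ for $\sigma(F_\sigma)^{}_T$, and by uniqueness we conclude $\sigma(F_\sigma)^{}_T = \sigma^{}_T$. Since this holds for every $T$, we obtain $\sigma(F_\sigma) = \sigma$.

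For part~(ii), let $F$ be an autoequivalence of $\Omega$; by the previous proposition $F$ is the identity on objects, so for any morphism $f\colon a\to b$ of $\Omega$ both $F(f)$ and $F_{\sigma(F)}(f) = \sigma(F)^{}_b\circ f\circ\sigma(F)^{-1}_a$ are morphisms $a\to b$. It suffices to verify that $F(f)\circ\sigma(F)^{}_a = \sigma(F)^{}_b\circ f$, and, since morphisms of $\Omega$ are determined by their action on colours, it even suffices to check this after precomposition with an arbitrary colour $c\colon\eta\to a$. Using~$(\star)$ for $\sigma(F)^{}_a$ applied to $c$, functoriality of $F$, and~$(\star)$ for $\sigma(F)^{}_b$ applied to the colour $f\circ c\colon\eta\to b$, we compute
\[
F(f)\circ\sigma(F)^{}_a\circ c = F(f)\circ F(c) = F(f\circ c) = \sigma(F)^{}_b\circ(f\circ c),
\]
which gives the desired equality. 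Hence $F(f) = F_{\sigma(F)}(f)$ for every morphism $f$, so $F = F_{\sigma(F)}$.

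There is no real obstacle beyond carefully unwinding the definition of $\sigma(F)$: the key inputs are the two structural facts that $\aut_\Omega(\eta)$ is trivial (for part~(i)) and that morphisms of $\Omega$ are determined by their values on colours (for both parts), together with the already-established facts that $F$ fixes objects and that $\sigma(F)^{}_T$ is characterized by~$(\star)$.
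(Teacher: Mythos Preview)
Your proof is correct and uses essentially the same ingredients as the paper: the characterizing equation~$(\star)$, the fact that $F$ is the identity on objects, and that morphisms in $\Omega$ are determined by their action on colours. The only minor organizational difference is in part~(ii): the paper first observes that an autoequivalence which is the identity on objects is determined by its action on maps with source $\eta$, and then simply checks $F_{\sigma(F)}(c)=\sigma(F)_T\circ c=F(c)$ for colours~$c$; you instead verify $F(f)=F_{\sigma(F)}(f)$ for an arbitrary morphism~$f$ by precomposing with colours, which amounts to the same computation unpacked.
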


\begin{proof}
Let $\sigma$ be an element of $\Sigma_\Omega$. To prove the first point, it
suffices to check that for any colour $c \colon \eta \to T$ of $\Omega$,
we have $\sigma(F_\sigma)_T\circ c = \sigma_T\circ c$.
But we have
\[
\sigma(F_\sigma)_T\circ c=F_\sigma(c)=\sigma_T\circ c,
\]
where the first equality holds by \eqref{eq:sigma2} and the second one by
definition.

Let us prove the second point. By Proposition~\ref{prop:aut_Omega_id_obj},
$F$ is the identity on objects. The same is true for $F_{\sigma(F)}$ by
definition. Since maps of $\Omega$ are determined by their action on the
colours, any autoequivalence of $\Omega$ which is the identity on
objects is determined by its action on the maps whose source is $\eta$.
We thus have to check that for any colour $c\colon\eta\to T$ of a tree $T$,
we have $F_{\sigma(F)}(c) = F(c)$. This is indeed
the case since
\[
F_{\sigma(F)}(c)=\sigma(F)_T\circ c = F(c),
\]
where the first equality holds by definition and the second one by
\eqref{eq:sigma2}.
\end{proof}

\begin{prop}
The monoid morphism $\Sigma_\Omega \to \aut(\Omega)$ is an isomorphism.
\end{prop}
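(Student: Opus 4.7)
The plan is to observe that this proposition follows immediately from the previous lemma, which has already done all of the real work. The setup established that $\sigma\mapsto F_\sigma$ is a monoid morphism $\Sigma_\Omega\to\aut(\Omega)$, and the assignment $F\mapsto\sigma(F)$ provides a candidate inverse map of sets in the other direction. What remains is only to combine the two parts of the preceding lemma to conclude bijectivity.

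Concretely, I would argue as follows. Property (i) of the lemma, $\sigma(F_\sigma)=\sigma$, exhibits $F\mapsto\sigma(F)$ as a retraction of $\sigma\mapsto F_\sigma$, and therefore shows that the monoid morphism $\Sigma_\Omega\to\aut(\Omega)$ is injective: if $F_\sigma=F_{\sigma'}$, then applying $\sigma(-)$ yields $\sigma=\sigma(F_\sigma)=\sigma(F_{\sigma'})=\sigma'$. Property (ii), $F=F_{\sigma(F)}$, says that every autoequivalence of $\Omega$ lies in the image of $\sigma\mapsto F_\sigma$, giving surjectivity. A monoid morphism which is bijective as a map of underlying sets is automatically an isomorphism of monoids, and this completes the proof.

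There is essentially no obstacle, since the work was already carried out in the construction of the retraction $F\mapsto\sigma(F)$ (which relied on Proposition~\ref{prop:aut_Omega_id_obj} to see that autoequivalences fix objects on the nose, and on the fact that a morphism of $\Omega$ is determined by its action on colours). The only thing worth mentioning explicitly in the write-up is perhaps that the composition $\sigma\mapsto F_\sigma\mapsto\sigma(F_\sigma)$ is the identity of $\Sigma_\Omega$ and $F\mapsto\sigma(F)\mapsto F_{\sigma(F)}$ is the identity of $\aut(\Omega)$, so that both maps are bijections inverse to each other.
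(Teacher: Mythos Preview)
Your proposal is correct and follows essentially the same approach as the paper: the paper's proof is the one-line observation that the previous lemma precisely says $F\mapsto\sigma(F)$ is an inverse to $\sigma\mapsto F_\sigma$. Your version spells out the injectivity/surjectivity consequences a bit more explicitly, but the content is identical.
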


\begin{proof}
The previous lemma precisely says that the map $F \mapsto \sigma(F)$ is an
inverse to the map of the statement.
\end{proof}

\begin{thm}\label{thm:auto_Omega}
The functor $\grSigma{\Omega} \to \Aut(\Omega)$ is an isomorphism of
categories. In particular, the category $\Aut(\Omega)$ is a contractible
groupoid.
\end{thm}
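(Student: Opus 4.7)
The plan is to combine the preceding proposition (which states that $\Sigma_\Omega\to\aut(\Omega)$ is an isomorphism, so that the functor $\grSigma{\Omega}\to\Aut(\Omega)$ is already bijective on objects) with a short calculation of natural transformations. Since $\grSigma{\Omega}$ is, by construction, a contractible groupoid, it suffices to show that for every $\sigma,\sigma'\in\Sigma_\Omega$ there is exactly one natural transformation $F_\sigma\to F_{\sigma'}$ (which will then automatically be an isomorphism). Once this is established, the functor will be an isomorphism of categories, and the ``In particular'' clause will follow since a category isomorphic to a contractible groupoid is itself a contractible groupoid.

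For existence, I would exhibit the natural transformation whose $T$-component is $\sigma'_T\sigma^{-1}_T\in\aut_\Omega(T)$; this is precisely the natural isomorphism already produced in the discussion preceding Proposition~\ref{prop:aut_Omega_id_obj}. Naturality against an arbitrary $f\colon T\to T'$ in $\Omega$ reads
\[
(\sigma'_{T'}\sigma^{-1}_{T'})\circ F_\sigma(f) \;=\; \sigma'_{T'}\, f\,\sigma^{-1}_T \;=\; F_{\sigma'}(f)\circ(\sigma'_T\sigma^{-1}_T),
\]
which is immediate from the definition of $F_\sigma$ and $F_{\sigma'}$.

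For uniqueness, let $\alpha\colon F_\sigma\to F_{\sigma'}$ be any natural transformation. The tree $\eta$ (a single edge, no vertices) has no non-trivial endomorphisms in $\Omega$, so $\alpha_\eta=\id{\eta}$. Naturality applied to an arbitrary colour $c\colon\eta\to T$ then yields
\[
\alpha_T\circ\sigma_T\circ c \;=\; \alpha_T\circ F_\sigma(c) \;=\; F_{\sigma'}(c)\circ\alpha_\eta \;=\; \sigma'_T\circ c,
\]
and this holds for every colour $c$ of $T$. Invoking the fact, already exploited in the proof of the previous lemma, that a morphism in $\Omega$ is determined by its action on colours, we conclude that $\alpha_T\circ\sigma_T=\sigma'_T$, i.e.\ $\alpha_T=\sigma'_T\sigma^{-1}_T$, which is the natural transformation produced above.

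The only non-routine input is the rigidity principle that morphisms in $\Omega$ are determined by their action on colours, but this was already used earlier in the section, so no real obstacle remains: the bulk of the work was carried out in establishing the object-level bijection, and the present argument is a direct unwinding of naturality at $\eta$.
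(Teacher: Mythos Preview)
Your proposal is correct and follows essentially the same approach as the paper: both use the previous proposition for bijectivity on objects, and both establish uniqueness of a natural transformation $F_\sigma\to F_{\sigma'}$ by evaluating naturality at colours $c\colon\eta\to T$ (using that $\alpha_\eta=\id{\eta}$) and then invoking that morphisms in $\Omega$ are determined by their action on colours. Your write-up is slightly more explicit about the existence step, but the argument is the same.
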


\begin{proof}
The previous proposition states that this functor is bijective on objects.
To conclude, it suffices to show that there exists a unique isomorphism
between $F_\sigma$ and~$F_{\sigma'}$, where $\sigma$ and $\sigma'$ are two
elements of $\Sigma_\Omega$. The functor of the statement gives a map from $F_\sigma$ to
$F_{\sigma'}$. Let us prove its uniqueness. Let $\gamma \colon F_\sigma \to
F_{\sigma'}$ be a natural transformation. For any tree $T$ and any colour $c
\colon \eta \to T$, the naturality square
\[
\xymatrix{
F_\sigma(\eta) = \eta \ar[r]^{F_\sigma(c)} \ar[d]_{\gamma_\eta = \id{\eta}} &
F_\sigma(T) = T \ar[d]^{\gamma^{}_T} \\
F_{\sigma'}(\eta) = \eta \ar[r]^{F_{\sigma'}(c)} &
F_{\sigma'}(T) = T \\
}
\]
shows that the action of $\gamma^{}_T$ on the colours is uniquely
determined. The morphism~$\gamma^{}_T$ is hence uniquely determined, thereby
proving the result.
\end{proof}

\begin{thm}
The categories $\Aut(\Oper)$ and $\Aut(\rOper)$ are
contractible groupoids.
\end{thm}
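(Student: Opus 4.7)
The plan is to combine the fully faithfulness results already established with the computation of $\Aut(\Omega)$. By Corollaries~\ref{cor:ff_auto_op_tree} and~\ref{cor:ff_auto_rop_tree}, the dense inclusions $\Omega \hookrightarrow \Oper$ and $\Omega \hookrightarrow \rOper$ induce fully faithful functors $\Aut(\Oper) \to \Aut(\Omega)$ and $\Aut(\rOper) \to \Aut(\Omega)$. By Theorem~\ref{thm:auto_Omega}, the target $\Aut(\Omega)$ is isomorphic to the contractible groupoid $\grSigma{\Omega}$. So the only thing left to verify is essential surjectivity of these restriction functors, since a nonempty full subcategory of a contractible groupoid is itself a contractible groupoid.

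Essential surjectivity will come essentially for free. Both $\Aut(\Oper)$ and $\Aut(\rOper)$ are nonempty, as each contains the identity functor, which the restriction sends to $\id{\Omega} = F_1 \in \Aut(\Omega)$. Since $\Aut(\Omega)$ is a contractible groupoid, every $F_\sigma$ is isomorphic to $\id{\Omega}$; concretely, the tuple $\sigma$ itself defines a natural isomorphism $\id{\Omega} \to F_\sigma$, because the identity $F_\sigma(f) \circ \sigma^{}_a = \sigma^{}_b \circ f$ is built into the definition of $F_\sigma$. Consequently, $\id{\Oper}$ (respectively $\id{\rOper}$) provides, up to isomorphism, a preimage in $\Aut(\Oper)$ (respectively $\Aut(\rOper)$) of every object of $\Aut(\Omega)$.

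Combining essential surjectivity with the fully faithfulness from Corollaries~\ref{cor:ff_auto_op_tree} and~\ref{cor:ff_auto_rop_tree}, the restriction functors $\Aut(\Oper) \to \Aut(\Omega)$ and $\Aut(\rOper) \to \Aut(\Omega)$ are equivalences of categories, and hence $\Aut(\Oper)$ and $\Aut(\rOper)$ are contractible groupoids as claimed. There is no substantive obstacle to overcome at this point: the preceding subsections have already done all the work, and this final theorem is simply a packaging of those results in complete analogy with the corresponding conclusion drawn for $\Aut(\nCat{n})$ and $\Aut(\nCatr{n})$ in Section~\ref{subsec:autoeq_Theta}.
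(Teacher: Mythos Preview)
Your proof is correct and follows essentially the same approach as the paper: both invoke Corollaries~\ref{cor:ff_auto_op_tree} and~\ref{cor:ff_auto_rop_tree} to embed $\Aut(\Oper)$ and $\Aut(\rOper)$ fully faithfully into $\Aut(\Omega)$, and then appeal to Theorem~\ref{thm:auto_Omega}. The paper phrases the conclusion as ``full subcategories of contractible groupoids are again contractible groupoids,'' tacitly using nonemptiness; you spell out the nonemptiness and upgrade the statement to an equivalence via essential surjectivity, which is a harmless elaboration of the same idea.
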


\begin{proof}
By Corollaries~\ref{cor:ff_auto_op_tree} and~\ref{cor:ff_auto_rop_tree}, the
categories $\Aut(\Oper)$ and $\Aut(\rOper)$ are both full subcategories
of the category $\Aut(\Omega)$. The result then follows immediately from
the previous theorem, since full subcategories of contractible groupoids are
again contractible groupoids.
\end{proof}

\subsection{Autoequivalences of the quasi-category of $\infty$-operads}

\begin{prop}
Let $P$ be an operad. Then the following are equivalent:
\begin{enumerate}
  \item\label{item:P_rigid} The operad $P$ is rigid.
  \item\label{item:HomT_rigid} For any tree $T$, the operad $\HomiOp(T, P)$
    is rigid.
  \item\label{item:Hom_rigid} For any operad $Q$, the operad $\HomiOp(Q, P)$
    is rigid.
\end{enumerate}
\label{prop:rigid_op_char}
\end{prop}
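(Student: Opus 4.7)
The implications \ref{item:Hom_rigid} $\Rightarrow$ \ref{item:HomT_rigid} is trivial since trees are operads. The implication \ref{item:HomT_rigid} $\Rightarrow$ \ref{item:P_rigid} follows by taking $T = \eta$ and using the canonical isomorphism $\HomiOp(\eta, P) \cong P$ recalled in paragraph~\ref{paragr:BV}. The content of the proposition is therefore the implication \ref{item:P_rigid} $\Rightarrow$ \ref{item:Hom_rigid}, and the plan is to verify it directly from the explicit description of the underlying category of $\HomiOp(Q, P)$ given in paragraph~\ref{paragr:BV}.

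Assume that $P$ is rigid, let $Q$ be an arbitrary operad, and let $\alpha \colon f \to g$ be an invertible unary operation in $\HomiOp(Q, P)$, with inverse $\beta \colon g \to f$. By paragraph~\ref{paragr:BV}, $\alpha$ is the data of an operation $\alpha_c \in P(f(c); g(c))$ for every colour $c$ of $Q$, satisfying the naturality condition
\[
g(p) \circ (\alpha_{c_1}, \dots, \alpha_{c_n}) = \alpha_d \circ f(p)
\]
for every operation $p \in Q(c_1, \dots, c_n; d)$, and similarly for $\beta$. Composition of unary operations in $\HomiOp(Q, P)$ is computed componentwise, so the relations $\beta \circ \alpha = \id{f}$ and $\alpha \circ \beta = \id{g}$ translate into $\beta_c \circ \alpha_c = \id{f(c)}$ and $\alpha_c \circ \beta_c = \id{g(c)}$ in $P$ for every colour $c$ of $Q$. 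Thus each $\alpha_c$ is an invertible unary operation of $P$.

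Since $P$ is rigid, each $\alpha_c$ must be the identity of some colour of $P$; in particular $f(c) = g(c)$ and $\alpha_c = \id{f(c)}$. Plugging these identities into the naturality condition above yields $g(p) = f(p)$ for every operation $p$ of $Q$, so $f = g$ and $\alpha = \id{f}$. This shows that every invertible unary operation of $\HomiOp(Q, P)$ is an identity, i.e., that $\HomiOp(Q, P)$ is rigid, which completes the proof. No step here poses a real obstacle: once one has recorded the explicit description of the underlying category of the internal hom, the argument is a direct unwinding of the definition of rigidity.
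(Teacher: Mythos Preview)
Your proof is correct and follows essentially the same approach as the paper: both dispatch the easy implications \ref{item:Hom_rigid} $\Rightarrow$ \ref{item:HomT_rigid} $\Rightarrow$ \ref{item:P_rigid} via the unit isomorphism $\HomiOp(\eta,P)\cong P$, and prove \ref{item:P_rigid} $\Rightarrow$ \ref{item:Hom_rigid} by using the explicit description of the underlying category of $\HomiOp(Q,P)$ to see that the components $\alpha_c$ of an isomorphism are themselves isomorphisms in $P$, hence identities by rigidity. Your version is in fact slightly more detailed, making explicit why each $\alpha_c$ is invertible (componentwise composition) and why the naturality condition then forces $f(p)=g(p)$.
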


\begin{proof}
Recall that $\eta$ is the unit of the Boardman--Vogt tensor product, hence
$\HomiOp(\eta, P)$ is canonically isomorphic to $P$ for every operad~$P$.
This shows that \ref{item:HomT_rigid} implies~\ref{item:P_rigid}. Clearly,
\ref{item:Hom_rigid} implies \ref{item:HomT_rigid}. Let us prove that
\ref{item:P_rigid} implies \ref{item:Hom_rigid}. Recall that we have a
concrete description of the underlying category of $\HomiOp(Q, P)$ (see
paragraph~\ref{paragr:BV}). If $f$ and $g$ are two objects of this category,
then an isomorphism between them is given by isomorphisms $\alpha_c$ in $P(f(c),
g(c))$, where $c$ ranges through the colours of $Q$, such that $g(q) \circ
(\alpha_{c_1}, \dots, \alpha_{c_n}) = \alpha_d \circ f(q)$.  Since $P$ is
rigid, these isomorphisms have to be the identity. This implies that $f$ has
to be equal to $g$, and that $\alpha$ is an identity, thereby
proving the result.
\end{proof}

The sets $\I$, $\J^\flat$ and $\J$ appearing in the remainder of the section are
those introduced in paragraph~\ref{par:JJJ}.

\begin{prop}\label{prop:rigid_Jflat}
An operad is $\J^\flat$-local if and only if it is rigid.
\end{prop}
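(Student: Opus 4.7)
The proof proposal is an essentially formal unwinding using the adjunction behind the Boardman--Vogt tensor product together with the characterization of rigid operads furnished by Proposition~\ref{prop:rigid_op_char}.

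First I would observe that $j_\eta\colon J\tensOp\eta\to\eta$ is canonically identified with $j\colon J\to\eta$, because $\eta$ is the unit of $\tensOp$. Hence any $\J^\flat$-local operad is in particular $j$-local, i.e., rigid by the definition in paragraph~\ref{paragr:rigid_op}. This gives the easy direction.

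For the converse, fix a rigid operad $P$ and a tree $T$. I would rewrite $j_T$ as the image of $j\colon J\to\eta$ under $-\tensOp T$ (modulo the canonical isomorphism $\eta\tensOp T\cong T$). Applying the tensor/hom adjunction for the Boardman--Vogt tensor product, the map
\[
(j_T)^\ast\colon \Hom_\Op(T,P)\longto\Hom_\Op(J\tensOp T,P)
\]
is naturally identified with the map
\[
j^\ast\colon \Hom_\Op(\eta,\HomiOp(T,P))\longto\Hom_\Op(J,\HomiOp(T,P)).
\]
Thus $P$ is $j_T$-local if and only if $\HomiOp(T,P)$ is $j$-local, i.e., rigid.

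Now invoking Proposition~\ref{prop:rigid_op_char}, the rigidity of $P$ implies that $\HomiOp(T,P)$ is rigid for every tree $T$, so $P$ is $j_T$-local for every $T$, and therefore $\J^\flat$-local. The only subtle point, which is immediate once the adjunction is set up correctly, is the identification of $j_T$ with $j\tensOp\id{T}$ up to the unit isomorphism; this is built into the definition of $j_T$ in paragraph~\ref{par:JJJ}. No further verification is required.
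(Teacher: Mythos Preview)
Your proof is correct and follows essentially the same approach as the paper: both reduce $j_T$-locality of $P$ to $j$-locality of $\HomiOp(T,P)$ via the closed structure of $\tensOp$, and then invoke Proposition~\ref{prop:rigid_op_char}. The only cosmetic difference is that the paper phrases the adjunction step in terms of the unique lifting property $j_T\orth (P\to 1)$ rather than the hom-set bijection, and handles both directions simultaneously via the equivalence in Proposition~\ref{prop:rigid_op_char} instead of singling out $j_\eta=j$ for the forward implication.
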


\begin{proof}
Let $P$ be an operad. Denote by $e\colon P \to 1$ the unique map of operads
from~$P$ to the terminal operad. For any tree $T$, we have
\[
j_T \orth e
\quad\Longleftrightarrow\quad
j \tensOp T \orth e
\quad\Longleftrightarrow\quad
j \orth \HomiOp(T, e),
\]
where $f\orth g$ is notation for saying that $f$ has the unique
left lifting property with respect to $g$. Since $1$ is the terminal operad,
so is $\HomiOp(T, 1)$ and $\HomiOp(T, e)$ is the unique map from $\HomiOp(T,
P)$ to the terminal operad. This means that $P$ is $j_T$-local if and only
if $\HomiOp(T, P)$ is $j$-local, that is, if and only if $\HomiOp(T, P)$ is
rigid (see paragraph~\ref{paragr:rigid_op}).  The result then follows from
Proposition~\ref{prop:rigid_op_char}.
\end{proof}

\begin{prop}\label{prop:rigid_op_IJ}
A dendroidal set is $(\I \cup \J)$-local if and only if it is the nerve of a
rigid operad.
\end{prop}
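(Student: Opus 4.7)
The plan is to combine Proposition~\ref{prop:dend_Segal} with Proposition~\ref{prop:rigid_Jflat} via the full faithfulness of the dendroidal nerve. First I would observe that a dendroidal set $X$ is $(\I\cup\J)$-local if and only if it is both $\I$-local and $\J$-local. By Proposition~\ref{prop:dend_Segal}, $\I$-locality is equivalent to $X$ being isomorphic to $N_d(P)$ for some (necessarily unique) operad $P$. The task therefore reduces to showing that for an operad $P$, the dendroidal set $N_d(P)$ is $\J$-local if and only if $P$ is rigid.

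For this, fix a tree $T$ and consider $N_d(j_T)\colon N_d(J\tensOp T)\to N_d(T) = T$. By definition, $N_d(P)$ is $N_d(j_T)$-local iff the map
\[
\Hom_\Dend(T, N_d(P)) \longto \Hom_\Dend(N_d(J\tensOp T), N_d(P))
\]
induced by $N_d(j_T)$ is a bijection. Both source and target are dendroidal nerves of operads, so by the full faithfulness of $N_d$ (Proposition~\ref{prop:dend_Segal}) this map is naturally identified with
\[
\Op(T, P) \longto \Op(J\tensOp T, P)
\]
induced by $j_T\colon J\tensOp T \to T$. Thus $N_d(P)$ is $\J$-local if and only if $P$ is $\J^\flat$-local.

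The conclusion then follows immediately from Proposition~\ref{prop:rigid_Jflat}, which characterizes $\J^\flat$-local operads as exactly the rigid ones. No step appears to present real difficulty here; the only thing one needs to be careful about is invoking full faithfulness on $N_d(J\tensOp T)$, which is legitimate since $J\tensOp T$ is a bona fide operad and $N_d\colon\Op\to\Dend$ is fully faithful on all of $\Op$ (the restriction in Proposition~\ref{prop:dend_Segal} concerns only the essential image).
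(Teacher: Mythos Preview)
Your proof is correct and follows essentially the same approach as the paper. The paper's proof is terser, simply citing Proposition~\ref{prop:dend_Segal} and Proposition~\ref{prop:rigid_Jflat} in turn; you have made explicit the bridge between $\J$-locality of $N_d(P)$ and $\J^\flat$-locality of $P$ via the full faithfulness of $N_d$, which is exactly what the paper leaves implicit.
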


\begin{proof}
Proposition~\ref{prop:dend_Segal} precisely says that a dendroidal set is
$\I$-local if and only if it is the nerve of an operad. The previous
proposition shows that such a dendroidal set is $\J$-local if and only if
the operad of which it is the nerve is rigid, thereby proving the result.
\end{proof}

\begin{thm}
The quasi-category $\Aut(\OmSp)$ is a contractible Kan complex.
\end{thm}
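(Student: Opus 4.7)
The plan is to apply Proposition~\ref{prop:lemma_aut} with $A = \Omega$ and $S = \I \cup \J$, in complete analogy with the proof of Theorem~\ref{thm:thn} for $\Theta_n$-spaces. The two hypotheses to verify are that representable presheaves in $\pref{\Omega}$ are $S$-local and that any autoequivalence of $S^{-1}\pref{\Omega}$ restricts to an autoequivalence of $\Omega$.

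For the first hypothesis, I would observe that every tree, regarded as an operad via the dense inclusion $\Omega \hookto \Oper$, is rigid: a tree is freely generated by its vertices as an operad, so its only invertible unary operations are identities. By Proposition~\ref{prop:rigid_op_IJ}, the $(\I \cup \J)$-local objects of $\pref{\Omega}$ are precisely the nerves of rigid operads, so representables are indeed $S$-local. This also identifies $S^{-1}\pref{\Omega}$ (via Corollary~\ref{coro:0-trunc} applied at the level of $0$-truncated objects) with the category $\rOper$, inside which $\Omega$ sits as a full subcategory. For the second hypothesis, I would invoke Proposition~\ref{prop:auto_rOper_restr}, which asserts that any autoequivalence of $\rOper$ sends each tree to an isomorphic tree, hence restricts to an autoequivalence of $\Omega$.

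With both hypotheses verified, Proposition~\ref{prop:lemma_aut} produces a fully faithful functor $\Aut(\OmSp) \to \Aut(\Omega)$. By Theorem~\ref{thm:auto_Omega}, $\Aut(\Omega)$ is a contractible groupoid. Since $\Aut(\OmSp)$ contains the identity autoequivalence it is non-empty, and any non-empty full subcategory of a contractible groupoid is itself a contractible groupoid, hence a contractible Kan complex when viewed as a quasi-category.

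There is essentially no obstacle here: all the real work has been carried out in the preceding subsections, most notably in Proposition~\ref{prop:auto_rOper_restr} (which reduces autoequivalences of rigid operads to those of $\Omega$) and Theorem~\ref{thm:auto_Omega} (which identifies $\Aut(\Omega)$ as a contractible groupoid). The only conceptual point worth mentioning explicitly is that, in contrast to the $\Theta_n$-space and planar cases, one does not need to exhibit any nontrivial lifts from $\Aut(\Omega)$ to $\Aut(\OmSp)$: the contractibility of the ambient groupoid already forces the full subcategory to be contractible.
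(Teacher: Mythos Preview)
Your proof is correct and follows essentially the same approach as the paper: apply Proposition~\ref{prop:lemma_aut} with $A=\Omega$ and $S=\I\cup\J$, verify the two hypotheses via Proposition~\ref{prop:rigid_op_IJ} and Proposition~\ref{prop:auto_rOper_restr}, and then conclude using Theorem~\ref{thm:auto_Omega} and the fact that a non-empty full subcategory of a contractible groupoid is again contractible. Your closing observation that no lifting step is needed (unlike in the $\Theta_n$ and planar cases) is a nice clarification that the paper leaves implicit.
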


\begin{proof}
We are going to apply Proposition~\ref{prop:lemma_aut} to $A = \Omega$ and
$S = \I \cup \J$. Let us check that the hypotheses are fulfilled. Using the
previous proposition, this amounts to verifying that
\begin{enumerate}
  \item objects of $\Omega$ are rigid operads;
  \item autoequivalences of $\rOper$ restrict to autoequivalences of
    $\Omega$.
\end{enumerate}
The first point is obvious and the second point is
Proposition~\ref{prop:auto_rOper_restr}. We can thus apply the
proposition and we get that $\Aut(\OmSp)$ is a full subcategory of
$\Aut(\Omega)$. But $\Aut(\Omega)$ is a contractible groupoid by
Theorem~\ref{thm:auto_Omega} and the result follows.
\end{proof}

\section{Autoequivalences of the \pdfinftyo-category of non-symmetric
\pdfinfty-operads}
\label{sec:nsoperads}

The purpose of this section is to show that the quasi-category of
autoequivalences of the quasi-category of planar $\Omega$-spaces, which we
use as a model for non-symmetric \oo-operads, is the discrete category
$\Zd$. The combinatorics is similar to the symmetric case, the main
differences being due to the fact that a planar tree has no non-trivial
automorphisms.

\subsection{Preliminaries on non-symmetric operads and planar trees}

\begin{paragr}
We will denote by $\nsOper$ the category of (small) non-symmetric coloured
operads, i.e., operads without an action of the symmetric group. There is a
forgetful functor from $\nsOper$ to the category $\nsColl$ of
non\nbd-symmetric collections, and this functor admits a left adjoint.
\end{paragr}

\begin{paragr}
Recall from paragraph~\ref{paragr:assoc_operad} that to every planar tree
$T$, we can associate a non-symmetric collection $K_p(T)$. We define the
non-symmetric operad $\nsOmega(T)$ as the free non-symmetric operad on
$K_p(T)$.
\end{paragr}

\begin{paragr}
The category of planar trees $\nsOmega$, introduced by Moerdijk in
\cite{MoerdToen}, is defined as follows: the objects of $\nsOmega$ are planar
trees (up to isomorphism) and the set of morphisms in $\nsOmega$ from an
object $S$ to an object $T$ is given by
$$
\nsOmega(S, T)=\nsOper(\nsOmega(S),\nsOmega(T)).
$$
By definition, there is a canonical fully faithful functor
$\nsOmega\hookto\nsOper$ and we will always consider $\nsOmega$ as a full
subcategory of $\nsOper$ using this functor.
\end{paragr}

\begin{rem}
One important difference between $\nsOmega$ and $\Omega$ is that
$\aut_{\nsOmega}(T)$ is trivial for every object $T$ of $\nsOmega$.
\end{rem}

\begin{paragr}\label{paragr:can_decomp_planar}
As in paragraph~\ref{paragr:can_decomp}, for any planar tree $T$, we have a
category $\Cor/T$ and a diagram $\mathcal{D}_T \colon \Cor/T\to\nsOmega$.
A similar proof as the one of Proposition~\ref{prop:can_decomp} shows that
for every object $T\!$ of $\,\nsOmega$, the canonical morphism
\[ \colim \DT_T = \colim_{(C, C \to T) \in \Cor/T}\limits\!\!\!\!\! C \,
\longrightarrow\, T \]
is an isomorphism in $\nsOmega$, and that, moreover, the inclusion functor
$\nsOmega \hookrightarrow \nsOper$ preserves this colimit.
\end{paragr}

\begin{paragr}
The category of \emph{planar dendroidal sets} $\Pr(\nsOmega)$ is the
category of presheaves on~$\nsOmega$. The inclusion $\nsOmega\hookto\nsOper$
induces a \emph{planar dendroidal nerve} functor $N_{{\rm ns}, {\rm d}}\colon
\nsOper\to\Pr(\nsOmega)$.
\end{paragr}

\begin{paragr}\label{paragr:planar_spines}
The \emph{spine} of a planar tree $T$ is the planar dendroidal set
\[
  I_T = \colim_{(C, C \to T) \in \Cor/T}\limits\!\!\!\!\! C,
\]
where the colimit is taken in $\Pr(\nsOmega)$. There is a canonical
monomorphism of planar dendroidal sets $i_T\colon I_T\to T$. We will denote
by $\I$ the set
\[ \I = \{i_T\mid T \in \Ob(\nsOmega)\} \]
of spine inclusions.

Let now $X$ be a planar dendroidal set. For any planar tree $T$, the map
$i_T$ induces a \emph{Segal map}
\[
X(T)\cong \Pr(\nsOmega)(T,X)\longrightarrow \Pr(\nsOmega)(I_T, X)\cong \lim_{(C, C \to T) \in \Cor/T}\limits\!\!\!\!\! X(C).
\]
We will say that $X$ \emph{satisfies the Segal condition} if all the Segal
maps are bijections. This exactly means that $X$ is $\I$-local.
\end{paragr}

\begin{prop}[Cisinski--Moerdijk, Weber]\label{prop:pdend_Segal}
The planar dendroidal nerve functor is fully faithful. Moreover, its essential
image consists of the planar dendroidal sets satisfying the Segal condition.
\end{prop}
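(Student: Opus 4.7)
The plan is to mimic the proof of the symmetric analogue Proposition~\ref{prop:dend_Segal}. As in the symmetric case one could cite Weber's general nerve theorem; I sketch instead a direct argument, which is in fact cleaner here because every planar tree has trivial automorphism group.

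The first step is to observe that $N_{\mathrm{ns},\mathrm{d}}(P)$ satisfies the Segal condition for every non-symmetric operad~$P$. By paragraph~\ref{paragr:can_decomp_planar}, the inclusion $\nsOmega\hookto\nsOper$ preserves the canonical colimit $\colim\DT_T\to T$, so
\[
N_{\mathrm{ns},\mathrm{d}}(P)(T)=\nsOper(T,P)\cong\nsOper(\colim\DT_T,P)\cong\lim_{(C,C\to T)\in\Cor/T}\nsOper(C,P),
\]
and the last object is precisely the target of the Segal map at~$T$.

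For full faithfulness of $N_{\mathrm{ns},\mathrm{d}}$, given a map $\varphi\colon N_{\mathrm{ns},\mathrm{d}}(P)\to N_{\mathrm{ns},\mathrm{d}}(Q)$ of planar dendroidal sets, its restrictions to $\eta$ and to the corollas $C_n$ give a map on colours and on operations in each arity. Naturality with respect to the morphisms in $\nsOmega$ between corollas and two-vertex trees then forces compatibility with operadic composition and units; using the colimit decomposition of an arbitrary planar tree into corollas and edges, one checks that there is exactly one non-symmetric operad morphism $P\to Q$ inducing $\varphi$. For essential surjectivity onto Segal objects, given such an $X$, I would define a non-symmetric operad $P_X$ whose colours are the elements of $X(\eta)$ and whose operation set $P_X(c_1,\ldots,c_n;d)$ is the fibre of the canonical map $X(C_n)\to X(\eta)^{n+1}$ over $(c_1,\ldots,c_n,d)$. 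Composition is produced via the Segal bijection $X(T)\cong\lim_{(C,C\to T)\in\Cor/T} X(C)$ applied to the two-vertex planar trees obtained by grafting corollas; associativity and unitality are then read off from the Segal bijections at the appropriate three-vertex trees and at $\eta$. The canonical comparison $X\to N_{\mathrm{ns},\mathrm{d}}(P_X)$ is an isomorphism on $\eta$ and on the corollas by construction, and hence on every planar tree because both sides are Segal.

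The main obstacle is the combinatorial bookkeeping in this last step: one must recognise precisely which morphisms between small planar trees witness the operadic axioms and match up the Segal descriptions of the relevant three-vertex trees on the nose. The absence of symmetric-group actions on planar trees, however, removes any equivariance check and makes this noticeably lighter than in the symmetric case, so it is fair to expect the argument to go through with only routine modifications of the Cisinski--Moerdijk proof of Proposition~\ref{prop:dend_Segal}.
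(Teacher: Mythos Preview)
Your proposal is correct and essentially unpacks what the paper's proof only gestures at: the paper merely says that the argument of \cite[Corollary~2.6]{CisMoerdDendSegal} adapts to the planar case, or alternatively invokes Weber's nerve theorem, whereas you actually sketch that adaptation. The ingredients you use---the colimit decomposition of paragraph~\ref{paragr:can_decomp_planar}, reconstruction of operad maps from their restriction to $\eta$ and corollas, and building $P_X$ from a Segal object via the two- and three-vertex trees---are exactly the content of those references, so there is no genuine divergence in method.

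One small refinement: unitality for $P_X$ is not witnessed by $\eta$ alone but by the degeneracy $C_1\to\eta$ in $\nsOmega$ (equivalently, by the trees with a single unary vertex grafted onto a corolla); your phrase ``at the appropriate three-vertex trees and at $\eta$'' should be read with this in mind. Apart from that, your observation that the absence of non-trivial automorphisms of planar trees removes the equivariance bookkeeping is well taken and is precisely why the adaptation is routine.
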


\begin{proof}
The proof of~\cite[Corollary 2.6]{CisMoerdDendSegal} can be adapted to the
case of planar dendroidal sets. It also follows from the general machinery
of~\cite{Weber}.
\end{proof}

\begin{rem}
The first assertion of the previous proposition precisely means that the
inclusion functor $\nsOmega \hookto \nsOper$ is dense.
\end{rem}

\subsection{The quasi-category of complete planar dendroidal Segal spaces}

\begin{paragr}
The category of \ndef{planar dendroidal spaces} is the category
$\spref{\nsOmega}$ of simplicial presheaves on $\nsOmega$. We will consider
the category of planar dendroidal sets as a full subcategory of the category of
planar dendroidal spaces (as in paragraph~\ref{paragr:dend_spaces}).
\end{paragr}

\begin{paragr}
In the definition of the Boardman--Vogt tensor product of operads, it is
crucial that the operads under consideration are symmetric. However, the tensor product
still makes sense without the symmetries when at least one of the operads
involved is a category.

More precisely, the category of non-symmetric operads is tensored over the
category of categories.  We will denote by $\otimes_\nsOper$ this tensor.
The tensor $\otimes_{\nsOper}$ is closed and we will denote by $\HominsOp$ the
associated enrichment over categories.
This means that if $C$ is a category, and $P$ and $Q$ are non-symmetric
operads, then there is a canonical bijection
\[
\nsOper(C \otimes_{\nsOper} P, Q)\cong \Cat(C, \HominsOp(P, Q)).
\]
The category $\HominsOp(P, Q)$ can be described in the same way as the
underlying category of the operad of morphisms between operads described in
paragraph~\ref{paragr:BV}.
\end{paragr}

\begin{paragr}\label{paragr:JJJ_planar}
Recall from paragraph~\ref{paragr:planar_spines} that we denote by $\I$
the set
\[ \I = \{i_T \mid T \in \Ob(\nsOmega)\} \]
of spine inclusions.

As in the non-planar case, for every planar tree $T$, we have a canonical
map
\[
j_T\colon J \otimes_{\nsOper} T \longto T
\]
of non-symmetric operads, where $J$ denotes the contractible groupoid on
two objects. We will denote by $\J^\flat$ and $\J$ the sets
\[
\J^\flat=\{j_T\mid T\in\Ob(\nsOmega)\}\quad\mbox{and}\quad
\J=\{N_{{\rm ns}, {\rm d}}(j_T)\mid T\in\Ob(\nsOmega)\}.
\]
The sets $\I$ and $\J$ will be considered as sets of maps in planar
dendroidal sets or planar dendroidal spaces depending on the context.
\end{paragr}

\begin{defi}
The \emph{model category for complete planar dendroidal Segal spaces}
or \emph{$\nsOmega$-spaces} is the left Bousfield localization of the injective
model structure on planar dendroidal spaces by the set $\I \cup \J$.
\end{defi}

\begin{defi}
The \emph{quasi-category of $\nsOmega$-spaces} is the localization of the
quasi-category $\qpref{\nsOmega}$ by the set $\I \cup \J$. We will denote it
by $\nsOmSp$.
\end{defi}

\begin{prop}
The quasi-category underlying the model category of $\nsOmega$\nbd-spaces is
canonically equivalent to the quasi-category of $\nsOmega$-spaces.
\end{prop}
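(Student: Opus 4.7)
The plan is to reproduce \emph{verbatim} the chain of equivalences used in the proof of Proposition~\ref{prop:under_dspaces}, substituting $\Omega$ with $\nsOmega$ and the generalized Reedy model structure with the injective model structure on planar dendroidal spaces (which plays the same role here, since planar trees have no non-trivial automorphisms). Concretely, I would write
\begin{align*}
\U(\spref{\nsOmega}_{\nsOmSp})
& = \U({(\I\cup\J)}^{-1}\spref{\nsOmega}_{\inj}) \\
& \cong {(\I\cup\J)}^{-1}\U(\spref{\nsOmega}_{\inj}) \\
& \cong {(\I\cup\J)}^{-1}\U(\spref{\nsOmega}_{\proj}) \\
& \cong {(\I\cup\J)}^{-1}\qpref{\nsOmega} \\
& = \nsOmSp,
\end{align*}
where the first equality is the definition of $\spref{\nsOmega}_{\nsOmSp}$ and the last one is the definition of $\nsOmSp$.

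The first $\cong$ is an instance of Proposition~\ref{prop:loc_mcat_qcat} applied to the injective model structure $\spref{\nsOmega}_{\inj}$. To invoke this proposition I need to check that $\spref{\nsOmega}_{\inj}$ is a left proper combinatorial model category; both properties hold for the injective model structure on simplicial presheaves on any small category (the left properness comes from the fact that every object is cofibrant, and combinatoriality is the content of the cited \cite[Proposition A.2.8.2]{LurieHTT}). The second $\cong$ uses the remark that $\U(\M)$ only depends on the underlying category and the class of weak equivalences of $\M$; since the injective and projective model structures on $\spref{\nsOmega}$ share the same (objectwise) weak equivalences, their underlying quasi-categories agree. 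The third $\cong$ is Proposition~\ref{prop:desc_PA} applied to $A = \nsOmega$.

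I do not expect any genuine obstacle: all the inputs are either elementary properties of simplicial presheaves or already-stated general results from Section~\ref{sec:review} that apply uniformly to simplicial presheaves on any small category, so the planar case is not combinatorially different from the symmetric one. The only potentially delicate point is the appeal to Proposition~\ref{prop:loc_mcat_qcat}, which requires left properness and combinatoriality of $\spref{\nsOmega}_{\inj}$, but these are standard and can be cited.
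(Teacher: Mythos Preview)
Your proposal is correct and follows essentially the same approach as the paper, which simply says ``The proof is the same as the one of Proposition~\ref{prop:under_dspaces}.'' Your explicit chain of equivalences with the injective model structure in place of the generalized Reedy structure is exactly the intended adaptation (and in fact matches the proof of Proposition~\ref{prop:under_Theta_Sp} even more closely).
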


\begin{proof}
The proof is the same as the one of Proposition~\ref{prop:under_dspaces}.
\end{proof}

\subsection{Autoequivalences of the category of non-symmetric operads}

\begin{paragr}
For each $n\ge 1$, we denote by $\mu_n$ the mirror permutation in
$\Sigma_n$, that is, the permutation defined by
$$
\mu_n(i)=n-i+1,\quad 1\le i\le n.
$$
We define an endofunctor $\mirror$ of the category $\nsOper$ in the following way:
\begin{enumerate}
\item Given an operad $P$ in $\nsOper$, the operad $\mirror(P)$ has the same
  colours as $P$ and, for colours $c_1, \dots, c_n$ and $c$ of $M(P)$,
  we set
  \[ \mirror(P)(c_1,\ldots, c_n; c) = P(c_{\mu_n(1)}, \ldots, c_{\mu_n(n)};
  c). \]
\item For a map of operads $f\colon P\to Q$ in $\nsOper$,
 the map $M(f)$ is defined on
  components by
  \[ \mirror(f)_{(c_1,\ldots, c_n;c)}(p)=f_{(c_{\mu_n(1)}, \ldots,
  c_{\mu_n(n)}; c)}(p), \]
  where $p$ is an operation in $\mirror(P)(c_1, \ldots, c_n; c)$.
\end{enumerate}
It is easy to check that this functor is indeed well-defined. Obviously,
$\mirror \circ \mirror$ is the identity and $\mirror$ is hence an
autoequivalence. We will call $\mirror$ the \emph{mirror autoequivalence}.
Note that the mirror autoequivalence sends a planar tree to the planar tree
obtained by reversing the orientation of the plane.

The autoequivalence $\mirror$ defines a monoid morphism
\[ \Zd \longto \aut(\nsOper) \]
and hence a strict monoidal functor
\[ \catZd \longto \Aut(\nsOper), \]
where $\catZd$ denotes the discrete category on the set $\Zd$ endowed
with the strict monoidal structure given by the group law of $\Zd$, and
the category $\Aut(\nsOper)$ is endowed with the strict monoidal structure given by
composition of functors.
\end{paragr}

\begin{prop}\label{prop:planar_preserved}
Let $F$ be an autoequivalence of $\nsOper$. Then $F$ preserves $\eta$,
the corollas and the root maps of the corollas.
\end{prop}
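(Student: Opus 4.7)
The plan is to adapt the proofs of Propositions~\ref{prop:cat_preserved}, \ref{prop:qc_preserved}, \ref{prop:cor_preserved} and~\ref{prop:root_preserved} to the non-symmetric setting, where each step simplifies because there are no symmetric group actions to track and each planar tree has trivial automorphism group.

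First, I would show $F(\eta)\cong\eta$. The empty non-symmetric operad is the initial object of $\nsOper$, hence preserved by $F$, which implies $F$ preserves non-empty operads. The analogue of Lemma~\ref{lemma:char_cat} then identifies categories among non-symmetric operads as those admitting a morphism to $\eta$, equivalently a morphism to every non-empty non-symmetric operad; this property is stable under $F$, so $F$ restricts to an autoequivalence of $\Cat\subset\nsOper$. Since $\eta$ is the terminal object of $\Cat$, we get $F(\eta)\cong\eta$ and hence a natural bijection between the colour sets of $P$ and $F(P)$.

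Second, to show $F$ preserves corollas, I would introduce the non-symmetric analogue of pseudo-corollas, namely non-symmetric operads $P$ with a non-trivial operation $p$ such that every non-trivial operation of $P$ equals $p$ and whose colours are exactly the inputs and output of~$p$. The characterization of Lemma~\ref{lemma:char_pseudo-cor} carries over verbatim: a non-symmetric operad is a pseudo-corolla if and only if it is non-discrete and all of its proper suboperads are discrete. Since $F$ preserves $\eta$, coproducts, non-discrete operads and proper suboperads, it preserves pseudo-corollas. The analogue of Lemma~\ref{lemma:cor_char} then characterizes the $n$-corolla as the pseudo-corolla~$P$ such that every pseudo-corolla admitting a map to~$P$ has exactly $n+1$ colours, a property also preserved by~$F$; hence $F(C_n)\cong C_n$.

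Third, to show that $F$ preserves root maps, I would run the pushout argument of Proposition~\ref{prop:root_preserved}. If $F$ sent the root map of $C_n$ to a leaf map of $F(C_n)$, pick for $n\ge 2$ a leaf map $d\colon\eta\to C_2$ which is not the unique colour sent by $F$ to the root of $F(C_2)$, and form the pushout $C_2\circ_d C_n$ in $\nsOper$ along~$d$ and the root of~$C_n$; this yields a planar tree with an operation of arity $n+1$. Applying $F$ produces a pushout of two leaf maps, whose result only contains non-trivial operations of arities $2$ and $n$, contradicting the preservation of corollas. The case $n=1$ is handled by the same pushout and a direct comparison of the number of binary operations on each side.

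The main technical subtlety is again the corner case $n=1$ of the last step, where arity counting must be replaced by a count of binary operations exactly as in the symmetric case. Apart from this, the non-symmetric combinatorics is strictly easier: pseudo-corollas have literally a unique non-trivial operation (no $\sim_\Sigma$ equivalence to manage), and the distinguished position of the root edge in a planar corolla makes colour bijections coming from $F$ easier to track.
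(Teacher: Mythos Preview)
Your proposal is correct and follows essentially the same approach as the paper's proof, which also adapts Corollary~\ref{coro:eta_preserved}, Proposition~\ref{prop:cor_preserved} and Proposition~\ref{prop:root_preserved} to the non-symmetric setting via the same notion of non-symmetric pseudo-corolla (a non-symmetric operad with a unique non-trivial operation whose colours are exactly its inputs and output). Your treatment is in fact slightly more detailed than the paper's, which simply asserts that the symmetric proofs adapt; the only trivial omission is the case $n=0$ for the root map, where $C_0$ has a single colour so there is nothing to prove.
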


\begin{proof}
The proofs of Corollary~\ref{coro:eta_preserved},
Proposition~\ref{prop:cor_preserved} and
Proposition~\ref{prop:root_preserved} are
easily adapted using the following notion of non-symmetric
pseudo-corollas: a \emph{non-symmetric pseudo-corolla} is a non-symmetric
operad $P$ with a unique non-trivial operation $p$ such that the only
colours of $P$ are the inputs and the output of $p$. Non-symmetric
pseudo-corollas can be characterized as in
Lemma~\ref{lemma:char_pseudo-cor}, and non-symmetric $n$-corollas can be
characterized in terms of non-symmetric pseudo-corollas as in
Lemma~\ref{lemma:cor_char}.
\end{proof}

\begin{paragr}
We will denote by $\Sigma$ the group $\prod_{n \ge 0} \Sigma_n$. (Note that
$\Sigma_0$ and $\Sigma_1$ are trivial.) We will associate to every
autoequivalence $F$ of $\nsOper$ an element $\sigma(F)$ in $\Sigma$. Let us
define its components $\sigma(F)_n$.

Let thus $F$ be an autoequivalence of $\nsOper$ and let $n \ge 0$. Since $F$
is fully faithful, it induces a bijection $\nsOper(\eta, C_n) \to \nsOper(F(\eta),
F(C_n))$. By the previous proposition, we have $F(\eta) \cong \eta$ and
$F(C_n) \cong C_n$. Moreover, since $\eta$ and $C_n$ have no non-trivial
automorphisms, these isomorphisms are canonical. We thus obtain an
automorphism of the set $\nsOper(\eta, C_n)$ and, since by the previous
proposition the root map of $C_n$ is preserved, an automorphism of the
input colours of $C_n$, that is, an element in $\Sigma_n$. This permutation
is by definition the component $\sigma(F)_n$ of $\sigma(F)$.

Clearly, if $F = M$ is the mirror autoequivalence, we have $\sigma(M)_n =
\mu_n$. In particular, $\sigma(M)_2$ is the transposition $\tau$ of
$\Sigma_2$.
\end{paragr}

\begin{paragr}\label{paragr:F_twidle}
Given any autoequivalence $F$ of $\nsOper$, we will denote by
$\widetilde{F}$ the autoequivalence
\[
\widetilde{F}=
\begin{cases}
  F & \text{if $\sigma(F)_2=1$}, \\
  M \circ F & \text{if $\sigma(F)_2=\tau$}.
\end{cases}
\]
Note that $\sigma(\widetilde{F})_2 = 1$ for every autoequivalence $F$.
\end{paragr}

\begin{paragr}\label{paragr:trees_Bn}
For every $n\ge 2$, we will denote by $B_n$ the planar binary tree
with $n$ leaves of the following shape:
\[
\xy<0.06cm, 0cm>: (0,-15)*{}="1"; (0,-5)*=0{\bullet}="2";
(-10,5)*=0{\bullet}="3"; (10,5)*{}="5";
(-20,15)*=0{\bullet}="6"; (0,15)*{}="7";
(-30,25)*=0{\bullet}="8";(-10,25)*{}="9";
(-40,35)*{}="10";(-20,35)*{}="11";
"1";"2" **\dir{-}; "2";"3" **\dir{-}; "2";"5" **\dir{-}; "3";"6" **\dir{-}; "3";"7" **\dir{-};
"6";"8" **\dir{.};"6";"9" **\dir{-};"8";"10" **\dir{-};"8";"11" **\dir{-};
\endxy
\]
In particular, $B_2$ is the $2$-corolla $C_2$.
\end{paragr}

\begin{lemma}
\label{binary_preserved}
Let $F$ be an autoequivalence of $\nsOper$. Then $\widetilde{F}(B_n)\cong
B_n$ and $\widetilde{F}$~preserves all maps from $\eta$ to $B_n$ for every
$n\ge 2$.
\end{lemma}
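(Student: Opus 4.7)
My plan is to argue by induction on $n \geq 2$. The idea is to express $B_{n+1}$ as a pushout involving $B_n$ and $C_2$, and invoke Proposition~\ref{prop:planar_preserved} together with the defining property of $\wt{F}$ (paragraph~\ref{paragr:F_twidle}) to handle both the base case and the inductive step.

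For the base case $n = 2$, we have $B_2 = C_2$. Proposition~\ref{prop:planar_preserved} yields an isomorphism $\wt{F}(C_2) \cong C_2$ under which the root map is preserved. Since $\wt{F}$ is fully faithful and $\wt{F}(\eta) \cong \eta$ canonically, the action of $\wt{F}$ on the two leaf maps $\eta \to C_2$ is given by the permutation $\sigma(\wt{F})_2$. By construction of $\wt{F}$ in paragraph~\ref{paragr:F_twidle}, we have $\sigma(\wt{F})_2 = 1$, so each leaf map is fixed individually, and a single isomorphism $\wt{F}(C_2) \cong C_2$ preserves every map $\eta \to C_2$.

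For the inductive step, I would use the fact that $B_{n+1}$ is the pushout in $\nsOper$ of the span
\[
B_n \xleftarrow{\;\ell\;} \eta \xrightarrow{\;r\;} C_2,
\]
where $\ell$ is the leftmost leaf map of $B_n$ and $r$ is the root map of $C_2$ (concretely, grafting $C_2$ onto $B_n$ at the leftmost leaf produces the planar tree $B_{n+1}$). By the inductive hypothesis $\wt{F}$ preserves $\ell$ under some chosen isomorphism $\wt{F}(B_n) \cong B_n$, and by the base case it preserves $r$ under some chosen isomorphism $\wt{F}(C_2) \cong C_2$. Since $\wt{F}$ is an equivalence of categories it preserves this pushout, so we obtain an induced isomorphism $\wt{F}(B_{n+1}) \cong B_{n+1}$ which is compatible with the two canonical inclusions from $B_n$ and $C_2$. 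Any map $c \colon \eta \to B_{n+1}$ factors through at least one of these canonical inclusions, and the corresponding factor $\eta \to B_n$ or $\eta \to C_2$ is preserved by the inductive hypothesis or the base case; hence $c$ itself is preserved.

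The main obstacle is the combinatorial bookkeeping for the pushout: one must verify that $B_{n+1} = B_n \sqcup_\eta C_2$ along the prescribed maps, and that the colour set of this pushout (namely the root, the internal edges, and the leaves of $B_{n+1}$) is exhausted by the images of the two canonical inclusions. Both are straightforward checks on planar trees, using that colour sets of pushouts of operads are computed as pushouts in $\Set$. Once these are in hand, the rest of the argument is a formal consequence of $\wt{F}$ being an autoequivalence.
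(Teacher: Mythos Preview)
Your proof is correct and follows essentially the same inductive strategy as the paper's: the only difference is that you graft $C_2$ onto the topmost (leftmost) leaf of $B_n$, whereas the paper grafts $B_n$ onto the left leaf of $B_2$ at the bottom; both pushout decompositions of $B_{n+1}$ work equally well. The paper makes the compatibility of the induced isomorphism with the structure maps explicit by invoking that $B_{n+1}$ has no non-trivial automorphisms, which you might mention to streamline the ``compatible with the two canonical inclusions'' step.
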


\begin{proof}
We prove it by induction on $n \ge 2$. If $n = 2$, then $B_2$ is the
$2$-corolla $C_2$ which is preserved by
Proposition~\ref{prop:planar_preserved}.  Moreover, since $\sigma(\wt{F})_2
= 1$, we know that $\smash{\widetilde{F}}$ preserves all maps from $\eta$ to
$C_2 = B_2$.

Suppose now that the result is true for some $n \ge 2$. We have the
following pushout square
\[
\xymatrix{
\eta \ar[r] \ar[d] & B_n \ar[d]^u \\
B_2\ar[r]_-l & B_{n+1},
}
\]
where the top map is the root map, the  map on the left corresponds to the
left leaf of $B_2$, $u$ is the inclusion of~$B_n$ into the upper part
of~$B_{n+1}$ and $l$ is the inclusion of~$B_2$ into the lower part of
$B_{n+1}$. By induction hypothesis, the maps from $\eta$ to $B_2$ and $B_n$
are preserved by $\wt{F}$. Since $\widetilde{F}$ preserves pushouts, it
has to preserve $B_{n+1}$. Moreover, since $B_{n+1}$ has no non-trivial
automorphisms, it also has to preserve $l$ and $u$.

Let now $f\colon \eta\to B_{n+1}$ be any morphism. Such a map corresponds to
a colour of~$B_{n+1}$ which has to belong either to $B_2$ or to $B_n$. More
precisely, either there exists a map $g\colon \eta\to B_2$ such that
$f=lg$, or there exist a map $h\colon \eta\to B_n$ such that $f=uh$.  Since
$l$ and $u$ are preserved and any map from $\eta$ to $B_2$ or $B_n$ is
preserved by induction hypothesis, we obtain that $\widetilde{F}$
preserves~$f$.
\end{proof}

\begin{prop}
\label{prop:maps_cor_preserved}
If $F$ is an autoequivalence of $\nsOper$, then $\sigma(\wt{F}) = 1$. In
other words, $\widetilde{F}$ preserves all maps from $\eta$ to the $n$-corolla
for every $n \ge 0$.
\end{prop}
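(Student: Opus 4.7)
The plan is to dispatch the small arities directly and, for $n\ge 3$, to compare $C_n$ with the binary tree $B_n$ of paragraph~\ref{paragr:trees_Bn} via a canonical ``total composition'' morphism in $\nsOper$. For $n = 0, 1$ the group $\Sigma_n$ is trivial, so $\sigma(\widetilde{F})_n = 1$ automatically; and for $n = 2$ the identity $\sigma(\widetilde{F})_2 = 1$ holds by the very definition of $\widetilde{F}$ in paragraph~\ref{paragr:F_twidle}, together with the fact that $\sigma$ defines a monoid morphism and $\sigma(M)_2$ is the transposition. It then suffices to treat the case $n \ge 3$.

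Fix such an $n$ and let $\varphi\colon C_n \to B_n$ be the morphism of non-symmetric operads that sends the root of $C_n$ to the root of $B_n$, the $i$-th leaf of $C_n$ to the $i$-th leaf of $B_n$, and the unique $n$-ary operation $\mu$ of $C_n$ to the total composite of the binary generators of $\nsOmega(B_n)$. Since neither $C_n$ nor $B_n$ admits a non-trivial automorphism, Proposition~\ref{prop:planar_preserved} and Lemma~\ref{binary_preserved} yield canonical isomorphisms $\widetilde{F}(C_n) \cong C_n$ and $\widetilde{F}(B_n) \cong B_n$, under which $\widetilde{F}(\varphi)$ becomes a morphism $\psi\colon C_n \to B_n$. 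Writing $\sigma = \sigma(\widetilde{F})_n$, Proposition~\ref{prop:planar_preserved} and Lemma~\ref{binary_preserved} imply that $\psi$ preserves the root, while the definition of $\sigma$ combined with Lemma~\ref{binary_preserved} forces $\psi$ to send the $i$-th leaf of $C_n$ to the $\sigma^{-1}(i)$-th leaf of $B_n$ for every $i$.

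It follows that $\psi(\mu)$ must lie in $\nsOmega(B_n)(\ell^{B_n}_{\sigma^{-1}(1)}, \dots, \ell^{B_n}_{\sigma^{-1}(n)}; r^{B_n})$, and the argument reduces to the combinatorial fact that this set is empty unless $\sigma = \mathrm{id}$. Concretely: in the free non-symmetric operad $\nsOmega(B_n)$, every $n$-ary operation with output the root corresponds to a decorated planar tree in which each of the $n-1$ binary generators of $B_n$ appears exactly once, and composition in a non-symmetric operad preserves the order of inputs; hence there is a unique such operation, namely the total composite, whose ordered inputs are $(\ell^{B_n}_1, \dots, \ell^{B_n}_n)$. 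This forces $\sigma^{-1} = \mathrm{id}$, as required.

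I expect the main obstacle to be formalising this final rigidity statement about $\nsOmega(B_n)$, which precisely captures the key difference between the non-symmetric and symmetric settings: in the symmetric case the analogous map $C_n \to B_n$ would accommodate any reordering of inputs and no contradiction would arise from a non-trivial $\sigma$.
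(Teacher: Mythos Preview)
Your proof is correct and follows essentially the same approach as the paper: both reduce to the unique total-composition morphism $t\colon C_n \to B_n$ and invoke Lemma~\ref{binary_preserved}. The paper's execution is slightly more streamlined: rather than computing where $\psi$ sends each leaf and then proving your ``combinatorial fact'' about $\nsOmega(B_n)$, it observes directly that $t$ is the \emph{unique} morphism $C_n \to B_n$ (hence preserved by $\widetilde{F}$), and since $t$ is injective on colours, each leaf map $f\colon\eta\to C_n$ is determined by $tf$, which is preserved by Lemma~\ref{binary_preserved}; your combinatorial fact is precisely this uniqueness of $t$, so the two arguments converge.
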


\begin{proof}
The cases $n = 0$ and $n = 1$ are trivial, and the case $n = 2$ is true by
definition of $\widetilde{F}$. Let $n\ge 3$ and consider a map $f\colon
\eta\to C_n$. We have a unique map $t \colon C_n \to B_n$ from $C_n$ to $B_n$.
This map corresponds to the total composition of $B_n$. Set $g = tf$
and consider the commutative triangle
\[
\xymatrix{
\eta\ar[r]^-{g}\ar[d]_f & B_n \\
C_n\ar[ur]_t \pbox{.}
}
\]
Since $t$ is injective on colours, the map $f$ is the only map making this
triangle commute. But the map $g$ and $t$ are both  preserved by $\wt{F}$
(the first one by Lemma~\ref{binary_preserved} and the second one by
uniqueness). It follows that $f$ is also preserved by $\widetilde{F}$.
\end{proof}

\begin{prop}\label{prop:auto_nsOper_restr}
If $F$ is an autoequivalence of $\nsOper$, then $\widetilde{F}(T)\cong T$
for every object $T$ of $\nsOmega$. In particular, $F$ sends planar trees to
planar trees and thus induces an autoequivalence of $\nsOmega$.
\end{prop}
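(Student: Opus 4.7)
The plan is to mimic the proof of Proposition~\ref{prop:auto_op_restr} but using $\widetilde{F}$ in place of $F$, exploiting the additional information provided by Proposition~\ref{prop:maps_cor_preserved}. The key input is the canonical decomposition of a planar tree as a colimit of corollas and copies of $\eta$ recorded in paragraph~\ref{paragr:can_decomp_planar}: every object $T$ of $\nsOmega$ is the colimit of the diagram $\D_T\colon \Cor/T \to \nsOmega$, and this colimit is preserved by the inclusion $\nsOmega \hookto \nsOper$. Since $\widetilde{F}$ is an autoequivalence of $\nsOper$, it preserves colimits, so it suffices to produce a natural isomorphism $\phi\colon \D_T \to \widetilde{F}\D_T$.

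To construct $\phi$, I would proceed componentwise, exactly as in paragraph~\ref{paragr:can_decomp_iso}. At an object $(\eta, c\colon \eta \to T)$, the component $\phi_{(\eta, c)}$ is defined as the unique morphism $\eta \to \widetilde{F}(\eta)$, which exists because $\widetilde{F}(\eta) \cong \eta$ by Proposition~\ref{prop:planar_preserved} and $\eta$ has no non-trivial endomorphisms. At an object $(C_n, C_n \to T)$, I use that $\widetilde{F}(C_n) \cong C_n$ (Proposition~\ref{prop:planar_preserved}) and that fully faithfulness of $\widetilde{F}$ yields a bijection between $\nsOper(\eta, C_n)$ and $\nsOper(\widetilde{F}(\eta), \widetilde{F}(C_n))$. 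Now comes the crucial use of Proposition~\ref{prop:maps_cor_preserved}: since $\sigma(\widetilde{F}) = 1$, every map $\eta \to C_n$ (root and all leaves) is preserved, so this bijection identifies each input/output colour of $C_n$ with the corresponding colour of $\widetilde{F}(C_n)$. This data determines a unique map $\phi_{(C_n, C_n \to T)}\colon C_n \to \widetilde{F}(C_n)$, which is an isomorphism by construction.

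Naturality of $\phi$ reduces to checking, for each morphism $c\colon \eta \to C_n$ arising from the category $\Cor/T$, that $\phi_{(C_n, C_n \to T)} \circ c = \widetilde{F}(c) \circ \phi_{(\eta, c \cdot -)}$; but this is precisely the defining property of $\phi_{(C_n, -)}$ together with the fact that $\phi_{(\eta,-)}$ is an identity under the chosen identification $\widetilde{F}(\eta) \cong \eta$. Consequently, we obtain a chain of canonical isomorphisms
\[
T \cong \colim \D_T \cong \colim \widetilde{F}\D_T \cong \widetilde{F}(\colim \D_T) \cong \widetilde{F}(T).
\]

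For the ``in particular'' clause, observe that by definition of $\widetilde{F}$ in paragraph~\ref{paragr:F_twidle}, either $F = \widetilde{F}$ (when $\sigma(F)_2 = 1$) or $F = M \circ \widetilde{F}$ (when $\sigma(F)_2 = \tau$, using $M \circ M = \id{}$). Since $M$ visibly sends planar trees to planar trees (it reverses the orientation of the plane), in both cases $F(T)$ is isomorphic to an object of $\nsOmega$ for every $T \in \nsOmega$. Hence $F$ restricts to an endofunctor of $\nsOmega$, which is automatically an autoequivalence by applying the same argument to a quasi-inverse of $F$. The main obstacle, and the reason $\widetilde{F}$ rather than $F$ has to appear, is precisely the non-preservation of the arbitrary input-colour ordering on $C_2$ by a general $F$; this is absorbed by replacing $F$ with $\widetilde{F}$, after which Proposition~\ref{prop:maps_cor_preserved} upgrades the preservation of the root map (available in the symmetric case) to preservation of \emph{all} colour inclusions, which is what makes the componentwise construction of $\phi$ canonical.
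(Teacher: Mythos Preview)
Your proof is correct and follows essentially the same approach as the paper: both use the canonical decomposition of a planar tree from paragraph~\ref{paragr:can_decomp_planar}, invoke Propositions~\ref{prop:planar_preserved} and~\ref{prop:maps_cor_preserved} to obtain a natural isomorphism $\DT_T \cong \widetilde{F}\DT_T$, and then pass to the colimit. The paper is terser, simply noting that $\eta$ and the corollas have no non-trivial automorphisms so the isomorphism is canonical, whereas you spell out the componentwise construction of $\phi$ in the style of paragraph~\ref{paragr:can_decomp_iso}; the ``in particular'' clause is handled identically in both.
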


\begin{proof}
Recall from paragraph~\ref{paragr:can_decomp_planar} that every planar tree
$T$ is the colimit of a diagram $\DT_T \colon \Cor/T \to \nsOmega \hookto
\nsOper$. By
Propositions~\ref{prop:planar_preserved} and~\ref{prop:maps_cor_preserved},
$\wt{F}$ preserves $\eta$, $C_n$ and all the maps from $\eta \to C_n$.
Moreover, it preserves them up to a canonical isomorphism since these
objects do not have non-trivial automorphisms. We thus have
a canonical isomorphism $\DT_T \cong \wt{F}\DT_T$ and hence a chain of
canonical isomorphisms
\[
T\cong \colim \DT_T \cong \colim \wt{F}\DT_T \cong \wt{F}(\colim \DT_T)
\cong \wt{F}(T),
\]
thereby proving the first assertion.

The second assertion follows from the fact that the mirror autoequivalence
sends planar trees to planar trees.
\end{proof}

\begin{coro}\label{cor:ff_auto_nsop_nstree}
The dense inclusion $\nsOmega\hookrightarrow\nsOper$ induces a fully faithful functor $\Aut(\nsOper)
\to \Aut(\nsOmega)$.
\end{coro}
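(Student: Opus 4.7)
The plan is to deduce this from Proposition~\ref{prop:auto_dense} in exactly the same way as the analogous Corollaries~\ref{cor:ff_auto_nCat_Theta}, \ref{cor:ff_auto_nCatr_Theta}, \ref{cor:ff_auto_op_tree} and~\ref{cor:ff_auto_rop_tree}. We need to verify the three hypotheses of that proposition: the inclusion $\nsOmega\hookrightarrow\nsOper$ is dense, the target category $\nsOper$ admits small colimits, and every autoequivalence of $\nsOper$ restricts to an autoequivalence of $\nsOmega$.

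Density of the inclusion was already recorded in the remark following Proposition~\ref{prop:pdend_Segal}, so it can simply be cited. Cocompleteness of $\nsOper$ is standard (non-symmetric coloured operads form the category of algebras for a finitary monad on $\nsColl$, hence are cocomplete). The restriction property is exactly the content of Proposition~\ref{prop:auto_nsOper_restr}: given any autoequivalence $F$ of $\nsOper$, the associated $\widetilde{F}$ sends every planar tree to a planar tree (indeed to an isomorphic one), and since $\widetilde{F}$ differs from $F$ at most by postcomposition with the mirror autoequivalence $M$, which manifestly preserves $\nsOmega$, the original $F$ itself preserves $\nsOmega$. Applying the same reasoning to a quasi-inverse of $F$ shows that the restriction $F|_{\nsOmega}\colon \nsOmega\to\nsOmega$ is an autoequivalence.

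Having checked the hypotheses, Proposition~\ref{prop:auto_dense} applies verbatim and yields the desired fully faithful functor $\Aut(\nsOper)\to\Aut(\nsOmega)$. There is no real obstacle here: all the substantive work has been done in Proposition~\ref{prop:auto_nsOper_restr}, and this corollary is merely the formal packaging.
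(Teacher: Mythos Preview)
Your proof is correct and follows exactly the same approach as the paper, which simply states that the result is immediate from Proposition~\ref{prop:auto_nsOper_restr} and Proposition~\ref{prop:auto_dense}. You have merely spelled out the verification of the hypotheses in slightly more detail than the paper does.
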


\begin{proof}
This is immediate from the previous proposition and
Proposition~\ref{prop:auto_dense}.
\end{proof}

We will show in Section~\ref{section:autoequiv_nsoper} that the monoidal
category $\Aut(\nsOmega)$ is isomorphic to the discrete monoidal category
$\catZd$.  As a corollary, we will obtain the following theorem:

\begin{thm*}
The functor $\catZd \to \Aut(\nsOper)$ is an equivalence of monoidal
categories.
\end{thm*}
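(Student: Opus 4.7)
The strategy is to deduce the theorem directly from the forthcoming calculation of $\Aut(\nsOmega)$ in Section~\ref{section:autoequiv_nsoper}, via the fully faithful restriction functor. Consider the diagram of strict monoidal categories
\[
\catZd \xrightarrow{\phi} \Aut(\nsOper) \xrightarrow{\psi} \Aut(\nsOmega),
\]
where $\phi$ is the functor of the statement (induced by the mirror autoequivalence $\mirror$) and $\psi$ is the restriction functor, which is fully faithful by Corollary~\ref{cor:ff_auto_nsop_nstree}. Because the mirror autoequivalence of $\nsOper$ restricts to the autoequivalence of $\nsOmega$ obtained by reversing the orientation of the plane, the composition $\psi\circ\phi$ is precisely the monoidal functor $\catZd\to\Aut(\nsOmega)$ that Section~\ref{section:autoequiv_nsoper} will show to be an isomorphism of monoidal categories.

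Granting this, the theorem follows from the following elementary categorical fact: if a composition $\psi\circ\phi$ is an equivalence and $\psi$ is fully faithful, then $\phi$ is an equivalence. Indeed, essential surjectivity of $\phi$ holds because for any $F$ in $\Aut(\nsOper)$, essential surjectivity of $\psi\circ\phi$ provides a $\delta\in\catZd$ with $\psi(\phi(\delta))\cong\psi(F)$, and fully faithfulness of $\psi$ promotes this to $\phi(\delta)\cong F$. Fullness and faithfulness of $\phi$ follow immediately from those of $\psi\circ\phi$ combined with the faithfulness of $\psi$.

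The monoidal structure is preserved at every stage: $\phi$ is strict monoidal by construction (since $\mirror\circ\mirror$ is the identity of $\nsOper$ on the nose), $\psi$ is strict monoidal, and the resulting equivalence of categories is therefore automatically an equivalence of monoidal categories. The main work, and hence the real obstacle, lies not in this short deduction but in the Section~\ref{section:autoequiv_nsoper} calculation: showing that every autoequivalence of $\nsOmega$ is either the identity or $\mirror$. The preceding propositions of this section (notably Propositions~\ref{prop:planar_preserved}, \ref{prop:maps_cor_preserved}, and \ref{prop:auto_nsOper_restr}) essentially already carry out this analysis for $\nsOper$; the argument for $\nsOmega$ is expected to proceed along the same lines, with the crucial simplification that planar trees have no non-trivial automorphisms.
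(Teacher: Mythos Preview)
Your proof is correct and follows essentially the same route as the paper: both use the fully faithful restriction $\Aut(\nsOper)\to\Aut(\nsOmega)$ from Corollary~\ref{cor:ff_auto_nsop_nstree} together with the identification $\Aut(\nsOmega)\cong\catZd$ to conclude. The only cosmetic difference is that the paper phrases the last step as ``every autoequivalence of $\nsOmega$ lifts to $\nsOper$'' (i.e., $\psi$ is essentially surjective, hence an equivalence), whereas you package it as the abstract fact that $\psi\circ\phi$ an equivalence and $\psi$ fully faithful force $\phi$ to be an equivalence; these are the same argument.
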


\subsection{Autoequivalences of the category of rigid non-symmetric operads}

\begin{paragr}
A non-symmetric operad is \emph{rigid} if its underlying category is rigid.
As in the symmetric case, an operad is rigid if and only if it is $j$-local,
where $j \colon J \to \eta$ is the map $j_\eta$ of paragraph~\ref{paragr:JJJ_planar}.

We will denote by $\nsrOper$ the full subcategory of $\nsOper$ whose objects
are the rigid non-symmetric operads.
Note that the non-symmetric operads induced by planar trees are rigid.
Hence, as in the symmetric case, we get canonical inclusion functors
$\nsOmega \hookto \nsrOper$ and $\rCat \hookto \nsrOper$.
\end{paragr}

\begin{paragr}
The mirror autoequivalence obviously preserves rigid non-symmetric operads. It thus
induces a \ndef{mirror autoequivalence} of $\nsrOper$, which in turn induces
a strict monoidal functor
\[ \catZd \longto \Aut(\nsrOper), \]
as in the non-rigid case.

In particular, if $F$ is an autoequivalence of $\nsrOper$, we can define an
autoequivalence $\wt{F}$ of $\nsrOper$ as in
paragraph~\ref{paragr:F_twidle}.
\end{paragr}

\begin{prop}\label{prop:auto_nsrOper_restr}
If $F$ is an autoequivalence of $\nsrOper$, then $\widetilde{F}(T)\cong T$
for every object $T$ of $\nsOmega$. In particular, $F$ sends planar trees to
planar trees and thus induces an autoequivalence of $\nsOmega$.
\end{prop}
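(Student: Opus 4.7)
The plan is to adapt verbatim the proof of Proposition~\ref{prop:auto_nsOper_restr}, exactly as the analogous rigid statement Proposition~\ref{prop:auto_rOper_restr} was deduced from the symmetric case in Section~\ref{sec:auto_rigid}. Concretely, I would observe that every auxiliary non-symmetric operad appearing in the non-rigid proof is already rigid: the unit $\eta$, every $n$-corolla $C_n$, every planar binary tree $B_n$ of paragraph~\ref{paragr:trees_Bn}, and, more generally, every planar tree. Likewise, the pushout
\[
B_{n+1} \;\cong\; B_2 \amalg_\eta B_n
\]
used in the induction of Lemma~\ref{binary_preserved} is again a planar tree, hence rigid, and the non-symmetric pseudo-corollas used in the analogue of Proposition~\ref{prop:planar_preserved} are rigid too. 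Thus every characterization and every colimit diagram exploited in the proofs of Proposition~\ref{prop:planar_preserved}, Lemma~\ref{binary_preserved}, Proposition~\ref{prop:maps_cor_preserved}, and Proposition~\ref{prop:auto_nsOper_restr} lives inside $\nsrOper$.

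With this observation in hand, I would first define, for any autoequivalence $F$ of $\nsrOper$, the permutation $\sigma(F)_n \in \Sigma_n$ as the action of $F$ on $\nsOper(\eta, C_n)$ induced by the canonical isomorphisms $F(\eta) \cong \eta$ and $F(C_n) \cong C_n$ (which exist because $\eta$ and $C_n$ are rigid and admit no non-trivial automorphisms, and because the rigid analogue of Proposition~\ref{prop:planar_preserved} applies). Then, setting
\[
\widetilde{F} \;=\;
\begin{cases}
F & \text{if } \sigma(F)_2 = 1,\\
M\circ F & \text{if } \sigma(F)_2 = \tau,
\end{cases}
\]
exactly as in paragraph~\ref{paragr:F_twidle}, I would prove by induction on $n$ (using the pushout decomposition of $B_{n+1}$) that $\widetilde F$ fixes each $B_n$ together with every map $\eta \to B_n$, and then use the canonical map $t\colon C_n \to B_n$ encoding the total composition of $B_n$ to conclude that $\widetilde F$ fixes every map $\eta \to C_n$, that is, $\sigma(\widetilde F) = 1$.

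Finally, for an arbitrary planar tree $T$, I would invoke the canonical decomposition of paragraph~\ref{paragr:can_decomp_planar}, namely $T \cong \colim \DT_T$ with $\DT_T\colon \Cor/T \to \nsOmega \hookto \nsrOper$, and combine it with the fact that $\widetilde F$ preserves $\eta$, the corollas, and all maps $\eta \to C_n$ up to the canonical (in fact unique) isomorphism. This yields a canonical natural isomorphism $\DT_T \cong \widetilde F \DT_T$ and hence
\[
T \;\cong\; \colim \DT_T \;\cong\; \colim \widetilde F \DT_T \;\cong\; \widetilde F(\colim \DT_T) \;\cong\; \widetilde F(T),
\]
proving the first assertion. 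The second assertion then follows from the fact that the mirror autoequivalence sends planar trees to planar trees, so $F(T) = M^{\sigma(F)_2}\widetilde F(T)$ is a planar tree whenever $T$ is.

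The only real verification, which I expect to be the main (though mild) obstacle, is confirming that each construction used in the non-rigid argument genuinely stays inside $\nsrOper$; once that is checked, every intermediate lemma is available because the inclusion $\nsrOper \hookto \nsOper$ is fully faithful and the full subcategory $\nsrOper$ is stable under the relevant pushouts built from rigid objects along maps between rigid objects.
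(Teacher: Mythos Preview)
Your proposal is correct and follows exactly the approach the paper takes: the paper's own proof is a one-sentence remark that the argument is ``basically the same as in the case of $\nsOper$'' with the rigid modifications indicated in the proof of Proposition~\ref{prop:auto_rOper_restr}. You have simply spelled out in detail what the paper leaves implicit, namely that all the auxiliary operads ($\eta$, corollas, pseudo-corollas, the trees $B_n$, the grafted tree $C_2\circ_d C_n$, and the colimits $\colim \DT_T$) are planar trees or pseudo-corollas and hence rigid, so every step of the non-rigid argument takes place inside $\nsrOper$.
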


\begin{proof}
The strategy of the proof is basically the same as in the case of $\nsOper$
but taking into account the pertinent modifications for rigid non-symmetric
operads as explained in the proof of Proposition~\ref{prop:auto_rOper_restr}
for the case of rigid operads.
\end{proof}

\begin{coro}\label{cor:ff_auto_nsrop_tree}
The dense inclusion $\nsOmega\hookto\nsrOper$ induces a fully faithful functor $\Aut(\nsrOper)
\to \Aut(\nsOmega)$.
\end{coro}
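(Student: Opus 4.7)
The plan is to apply Proposition~\ref{prop:auto_dense} directly, with $\A=\nsOmega$ and $\B=\nsrOper$. To invoke that proposition, three hypotheses need to be verified: that $\nsOmega$ is small (clear, since $\nsOmega$ has a set of isomorphism classes of planar trees), that $\nsrOper$ admits small colimits (inherited from $\nsOper$ since rigid non-symmetric operads form a reflective subcategory, being characterized as the $\J^\flat$-local objects, cf.\ the remark after Proposition~\ref{prop:loc_mcat_qcat}), and that the inclusion $\nsOmega\hookto\nsrOper$ is dense.

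For density, I would observe that density of $\nsOmega\hookto\nsOper$ (which is the content of the fully faithful assertion in Proposition~\ref{prop:pdend_Segal}) restricts to density of $\nsOmega\hookto\nsrOper$, because rigid non-symmetric operads are exactly the $\J^\flat$-local objects among non-symmetric operads, so the planar dendroidal nerve restricted to $\nsrOper$ is still fully faithful with essential image the planar dendroidal sets satisfying the Segal condition (together with the rigidity condition that corresponds to $\J^\flat$-locality). Alternatively, one can use that each object of $\nsrOper$ is canonically a colimit of a diagram in $\nsOmega$ via the planar analogue of paragraph~\ref{paragr:can_decomp}.

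The essential hypothesis—that every autoequivalence of $\nsrOper$ restricts to an autoequivalence of $\nsOmega$—is precisely Proposition~\ref{prop:auto_nsrOper_restr}, which has already been established. With all three hypotheses in place, Proposition~\ref{prop:auto_dense} yields the desired fully faithful functor $\qAut(\nsrOper)\to\qAut(\nsOmega)$. There is no real obstacle here; the only minor point worth addressing explicitly is the density claim for the inclusion into $\nsrOper$ (rather than $\nsOper$), but as noted this is a formal consequence of the description of rigid operads as $j$-local objects and the density of $\nsOmega\hookto\nsOper$.
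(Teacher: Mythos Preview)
Your proposal is correct and follows the same approach as the paper: apply Proposition~\ref{prop:auto_dense} with $\A=\nsOmega$, $\B=\nsrOper$, using Proposition~\ref{prop:auto_nsrOper_restr} for the restriction hypothesis. The paper's proof is a one-liner (``immediate from the previous proposition and Proposition~\ref{prop:auto_dense}''), whereas you spell out the auxiliary hypotheses; your density argument via full faithfulness of the restricted nerve is fine, though your ``alternatively'' remark about paragraph~\ref{paragr:can_decomp} is misplaced, since that decomposition applies to trees, not to arbitrary rigid operads.
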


\begin{proof}
This is immediate from the previous proposition and Proposition~\ref{prop:auto_dense}.
\end{proof}

We will show in Section~\ref{section:autoequiv_nsoper} that the monoidal
category $\Aut(\nsOmega)$ is isomorphic to the discrete monoidal category
$\catZd$. As a corollary, we will obtain the following theorem:

\begin{thm*}
The functor $\catZd \to \Aut(\nsrOper)$ is an equivalence of monoidal
categories.
\end{thm*}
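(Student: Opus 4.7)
The plan is to mimic the argument used at the end of Section~\ref{sec:auto_rigid} to compute $\Aut(\rOper)$, exploiting the forthcoming identification $\Aut(\nsOmega) \cong \catZd$ from Section~\ref{section:autoequiv_nsoper}. Once that identification is granted, the desired theorem reduces to a purely formal manipulation.

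First I would invoke Corollary~\ref{cor:ff_auto_nsrop_tree}, which exhibits $\Aut(\nsrOper)$ as a full monoidal subcategory of $\Aut(\nsOmega)$. Combined with the identification $\Aut(\nsOmega) \cong \catZd$, this tells us that $\Aut(\nsrOper)$ is itself a discrete monoidal category with at most two objects up to isomorphism.

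Next I would check that the canonical monoidal functor $\catZd \to \Aut(\nsrOper)$ is essentially surjective (full faithfulness is then automatic since both categories are discrete monoidal). The only point to verify is that the non-trivial element of $\catZd$, namely the mirror autoequivalence $M$ of $\nsrOper$, is not isomorphic to the identity in $\Aut(\nsrOper)$. Note that $M$ is well defined on $\nsrOper$ because rigidity is a condition on the underlying category, which $M$ leaves unchanged. Non-triviality is then immediate from the fact that the composite
\[
\catZd \longto \Aut(\nsrOper) \longto \Aut(\nsOmega) \cong \catZd
\]
sends the generator to the mirror autoequivalence of $\nsOmega$, which acts non-trivially (for instance, it swaps the two leaves of the $2$-corolla $C_2$). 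This composition being the identity of $\catZd$ simultaneously gives faithfulness of the first arrow.

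The main---indeed the only---substantive obstacle is the computation of $\Aut(\nsOmega) \cong \catZd$ itself, which is deferred to Section~\ref{section:autoequiv_nsoper} and is not required for the present statement. Once that identification is in hand, the theorem follows as a formal consequence, in complete parallel with the passage from Theorem~\ref{thm:auto_Omega} to the computation of $\Aut(\rOper)$ in the symmetric rigid case.
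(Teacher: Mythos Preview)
Your proposal is correct and follows essentially the same route as the paper: invoke Corollary~\ref{cor:ff_auto_nsrop_tree} to embed $\Aut(\nsrOper)$ fully in $\Aut(\nsOmega)\cong\catZd$, then observe that the mirror autoequivalence lifts to $\nsrOper$ to get essential surjectivity. Your phrasing of essential surjectivity via the composite $\catZd \to \Aut(\nsrOper) \to \Aut(\nsOmega)$ being the identity is a slight repackaging of the paper's ``every autoequivalence of $\nsOmega$ lifts'' argument, but the content is identical.
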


\subsection{Autoequivalences of the category of planar trees}

\begin{paragr}
As already observed, the mirror autoequivalence sends planar trees to planar
trees. It thus induces a \ndef{mirror autoequivalence} of $\nsOmega$ that we
will still denote by~$M$, which in turn induces
a strict monoidal functor
\[ \catZd \longto \Aut(\nsOmega), \]
as in the previous two subsections.

In particular, if $F$ is an autoequivalence of $\nsOmega$, we can define an
autoequivalence~$\wt{F}$ of $\nsOmega$ as in
paragraph~\ref{paragr:F_twidle}.
\end{paragr}

\begin{prop}
If $F$ is an autoequivalence of $\nsOmega$, then $\widetilde{F}$ is the
identity on objects.
\end{prop}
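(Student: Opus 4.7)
The plan is to adapt the strategy used for $\nsOper$ in Proposition~\ref{prop:auto_nsOper_restr} directly to autoequivalences of $\nsOmega$, in the same spirit as Proposition~\ref{prop:aut_Omega_id_obj} adapts the $\Oper$ argument to $\Omega$. Since $\nsOmega$ is skeletal and its objects have no non-trivial automorphisms, to conclude that $\widetilde{F}$ is the identity on objects it suffices to prove that $\widetilde{F}(T) \cong T$ for every planar tree $T$; the isomorphism will then be forced to be the identity.

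First I would show that $F$ preserves $\eta$, the corollas, and the root maps of the corollas, reproducing the content of Proposition~\ref{prop:planar_preserved} internally in $\nsOmega$. A planar tree is linear if and only if every planar tree admits at least one map to it; this characterization is preserved by $F$, so $F$ restricts to an autoequivalence of the full subcategory of linear trees (which is isomorphic to $\Delta$), and $\eta$ is the terminal object there and hence preserved. Since $F$ is fully faithful, the number of colours $|\nsOmega(\eta,T)|$ is preserved for every tree $T$. The $n$-corolla $C_n$ can then be characterized as the unique planar tree with exactly $n+1$ colours having $\eta$ as its only proper subobject, so corollas are preserved. For root maps, the pushout argument of Proposition~\ref{prop:root_preserved} goes through once one observes that the relevant pushout $C_2 \circ_d C_n$ is itself a planar tree and so exists in $\nsOmega$.

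Next, I would define $\sigma(F) \in \prod_{n \ge 0} \Sigma_n$ exactly as in the operadic case, by letting $\sigma(F)_n$ record the permutation of the leaves of $C_n$ that $F$ induces on $\nsOmega(\eta,C_n)$ after quotienting out the root. Setting $\widetilde{F} = F$ or $M \circ F$ according to whether $\sigma(F)_2 = 1$ or $\tau$, as in Paragraph~\ref{paragr:F_twidle}, ensures $\sigma(\widetilde{F})_2 = 1$. Induction on $n$ along the pushout $B_{n+1} = B_n \amalg_\eta B_2$, exactly as in Lemma~\ref{binary_preserved}, then shows that $\widetilde{F}$ preserves every binary tree $B_n$ together with every colour map $\eta \to B_n$; and the argument of Proposition~\ref{prop:maps_cor_preserved}, applied to the unique total-composition map $t\colon C_n \to B_n$ (which is injective on colours), yields $\sigma(\widetilde{F})_n = 1$ for all $n$, i.e.\ $\widetilde{F}$ preserves every morphism $\eta \to C_n$.

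Finally, appealing to the canonical decomposition $T \cong \colim_{(C,C \to T) \in \Cor/T} C$ of Paragraph~\ref{paragr:can_decomp_planar}, the preservation of $\eta$, $C_n$, and all maps $\eta \to C_n$ assembles into a canonical isomorphism of diagrams $\DT_T \cong \widetilde{F}\circ\DT_T$ in $\nsOmega$; since $\widetilde{F}$ is cocontinuous, passing to colimits gives $\widetilde{F}(T) \cong T$, hence $\widetilde{F}(T) = T$ by the skeletality and rigidity of $\nsOmega$. The main obstacle, as in the symmetric case, is to verify that all the pushouts invoked—particularly the ones used to contradict a non-trivial root permutation and to assemble $B_{n+1}$ from $B_n$ and $B_2$—remain pushouts inside the smaller category $\nsOmega$, rather than only in the ambient $\nsOper$; this amounts to checking that the resulting operads are themselves planar trees, which is where the planar bookkeeping must be done carefully.
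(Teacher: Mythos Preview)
Your proposal is correct and follows essentially the same route as the paper. The paper's own proof is a one-line reference—adapt Proposition~\ref{prop:aut_Omega_id_obj} using the canonical isomorphism $\DT_T \cong \wt{F}\DT_T$ from the proof of Proposition~\ref{prop:auto_nsOper_restr}—and what you have written is precisely that adaptation spelled out in full: internalize the proofs of Proposition~\ref{prop:planar_preserved}, Lemma~\ref{binary_preserved}, and Proposition~\ref{prop:maps_cor_preserved} to $\nsOmega$, then pass to the colimit decomposition.
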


\begin{proof}
The proof of Proposition~\ref{prop:aut_Omega_id_obj} can be easily adapted
using the canonical isomorphism $\DT_T \cong \wt{F}\DT_T$ appearing in the
proof of Proposition \ref{prop:auto_nsOper_restr}.
\end{proof}

\begin{prop}\label{prop:auto_nsOmega}
The monoid morphism $\Zd \to \aut(\nsOmega)$ is an isomorphism.
\end{prop}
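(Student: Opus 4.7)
The plan is to mimic the proof of Proposition~\ref{prop:auto_Theta_n} (the analogous ``rigidity'' statement for $\Theta_n$) and the parallel Proposition proved for $\Omega$: establish that the map $F\mapsto \widetilde F$ is a retraction of $\Zd\to\aut(\nsOmega)$, so it suffices to show that any $F$ with $\widetilde F=F$ is the identity. Injectivity of $\Zd\to\aut(\nsOmega)$ is immediate: the mirror $M$ acts as the identity on the object $C_2$ (which, being planar, has no non-trivial automorphisms) but swaps the two leaf maps $\eta\to C_2$, hence $M\ne\id$. For surjectivity I will take an arbitrary autoequivalence $F$ of $\nsOmega$ and show that $\widetilde F=\id$, which by construction of $\widetilde F$ gives $F\in\{\id,M\}$.

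By the previous proposition, $\widetilde F$ is already the identity on objects of $\nsOmega$. The key structural fact I will invoke, exactly as in the proof of Theorem~\ref{thm:auto_Omega} in the symmetric case, is that morphisms in $\nsOmega$ are uniquely determined by their restriction to morphisms out of $\eta$ (that is, by the induced assignment on colours). Consequently it is enough to show that $\widetilde F$ fixes every morphism $c\colon\eta\to T$. Using the canonical decomposition $T\cong\colim \DT_T$ from paragraph~\ref{paragr:can_decomp_planar}, every such $c$ factors through one of the canonical maps $C\to T$ indexed by $\Cor/T$, and these canonical maps are fixed by $\widetilde F$ because $\widetilde F$ is the identity on objects and respects the diagram $\DT_T$ (as shown in the proof of Proposition~\ref{prop:auto_nsOmega}'s predecessor, via the natural isomorphism $\DT_T\cong\widetilde F\DT_T$). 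This reduces the problem to showing that $\widetilde F$ fixes every map $\eta\to C_n$ for $n\geq 0$. For $n=0,1$ there is nothing to check, and for $n=2$ it holds by definition of $\widetilde F$ (namely $\sigma(\widetilde F)_2=1$).

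For $n\geq 3$ I will adapt the argument of Lemma~\ref{binary_preserved} and Proposition~\ref{prop:maps_cor_preserved}: by induction on $n$, using the grafting pushout $B_{n+1}\cong B_n\sqcup_\eta B_2$ (which exists in $\nsOmega$ since grafting planar trees along a leaf produces a planar tree), $\widetilde F$ preserves $B_n$ and every colour $\eta\to B_n$; then the commutative triangle $\eta\xrightarrow{f}C_n\xrightarrow{t}B_n$, where $t$ is the unique map sending the corolla to the total composition of $B_n$ and is injective on colours, identifies $f$ uniquely from its image under $t$, so $f$ is fixed. Combining these steps yields $\widetilde F=\id$ and hence $F\in\{\id,M\}$.

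The main point requiring care is the claim that morphisms in $\nsOmega$ are determined by their action on colours, which underwrites the whole reduction to maps out of $\eta$; this is the planar analog of the corresponding fact used in the proof of Theorem~\ref{thm:auto_Omega}, and relies on the observation that a map of planar operads $\nsOmega(S)\to\nsOmega(T)$ is specified by its action on the generating colours and vertices of the free planar operad $\nsOmega(S)$, the action on a vertex being uniquely recovered from the colour assignment on its input and output edges (since in a planar tree the subtree spanned by a given root and a given planar-ordered sequence of leaves is unique). Once this is granted the rest is a direct induction, and the argument is strictly simpler than in the symmetric case because of the absence of non-trivial automorphisms of planar trees.
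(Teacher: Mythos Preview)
Your proof is correct and follows essentially the same approach as the paper's. The paper's proof is extremely terse---it simply says that $\widetilde F$ is the identity on objects by the previous proposition, reduces to maps $\eta\to C_n$ ``as in the symmetric case,'' and then invokes Proposition~\ref{prop:maps_cor_preserved}---while you spell out the reduction via the colimit decomposition $\DT_T$ and re-run the $B_n$ argument explicitly in $\nsOmega$; but the underlying ideas are identical.
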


\begin{proof}
Let $F$ be an autoequivalence of $\nsOmega$. It clearly suffices to show
that the autoequivalence $\wt{F}$ is the identity. By the previous
proposition, we know it is the identity on objects. To prove it is the
identity on morphisms, we can reduce, as in the symmetric case, to the case
of maps from $\eta$ to corollas and the result thus follows from
Proposition~\ref{prop:maps_cor_preserved}.
\end{proof}

\begin{thm}\label{thm:auto_nsOmega}
The functor $\catZd\to \Aut(\nsOmega)$ is an isomorphism of mon\-oidal categories.
\end{thm}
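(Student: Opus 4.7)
The plan is to mimic the proof of Theorem~\ref{thm:auto_Thn}, with the role of the disks replaced by the corolla $C_2$ and exploiting the fact that objects of $\nsOmega$ have only trivial automorphisms. By Proposition~\ref{prop:auto_nsOmega}, the functor $\catZd\to\Aut(\nsOmega)$ is already bijective on objects, so it suffices to prove that $\Aut(\nsOmega)$ is a discrete category; equivalently, that for any $\delta,\delta'\in\Zd$ and any natural transformation $\gamma\colon \mirror^\delta\to \mirror^{\delta'}$, one has $\delta=\delta'$ and $\gamma$ is the identity.

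First I would analyse the components $\gamma_\eta$ and $\gamma_{C_2}$. Both $\mirror^\delta$ and $\mirror^{\delta'}$ send $\eta$ to $\eta$ and $C_2$ to $C_2$, and direct inspection shows that the only endomorphism of $\eta$ (respectively $C_2$) in $\nsOmega$ is the identity: any map of non-symmetric operads $\nsOmega(C_2)\to\nsOmega(C_2)$ must send the unique binary generator to itself, which forces it to be the identity on the three colours. Hence $\gamma_\eta=\id{\eta}$ and $\gamma_{C_2}=\id{C_2}$.

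Next I would extract the equality $\delta=\delta'$ by applying the naturality of $\gamma$ to a leaf map $\ell\colon\eta\to C_2$: the corresponding naturality square collapses to $\mirror^\delta(\ell)=\mirror^{\delta'}(\ell)$. Since the mirror autoequivalence swaps the two leaves of $C_2$, this equality fails when $\delta\ne\delta'$, forcing $\delta=\delta'$.

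Finally, assuming $\delta=\delta'$, I would prove that $\gamma$ is the identity by a uniform argument. For every object $T$ of $\nsOmega$ and every colour $c\colon\eta\to T$, naturality yields
\[ \gamma_T\circ \mirror^\delta(c)=\mirror^\delta(c)\circ\gamma_\eta=\mirror^\delta(c), \]
so $\gamma_T$ fixes every colour of $\mirror^\delta(T)$. Since maps in $\nsOmega$ are determined by their action on colours (as used in the proof of Proposition~\ref{prop:auto_nsOmega}), we conclude that $\gamma_T$ is the identity of $\mirror^\delta(T)$, whence $\gamma$ is the identity natural transformation. I do not anticipate a serious obstacle: the inductive argument over disk dimensions of the $\Theta_n$ case here collapses to a single inspection of $C_2$, thanks to the rigidity of planar trees.
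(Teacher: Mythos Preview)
Your argument is correct and follows the same overall architecture as the paper's proof: invoke Proposition~\ref{prop:auto_nsOmega} for bijectivity on objects, then show that $\Aut(\nsOmega)$ is discrete by first forcing $\delta=\delta'$ and then showing $\gamma$ is the identity via its action on colours. The final step (using that maps in $\nsOmega$ are determined by colours) is identical to the paper's.

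The one genuine difference is in how you separate the identity from $M$. You use $C_2$: since $C_2$ has only the identity endomorphism, $\gamma_{C_2}=\id{}$, and naturality at a leaf map $\ell\colon\eta\to C_2$ gives $M^\delta(\ell)=M^{\delta'}(\ell)$, which fails for $\delta\ne\delta'$ because $M$ swaps the two leaves. The paper instead uses the tree $B_3$ of paragraph~\ref{paragr:trees_Bn} and the observation that there is \emph{no} morphism $B_3\to M(B_3)$ in $\nsOmega$ at all, so the mere existence of $\gamma_{B_3}$ forces both autoequivalences to coincide. Your route is closer in spirit to the $\Theta_n$ argument you cite (naturality squares at small generating objects), and is slightly more economical in that it only needs the $2$-corolla; the paper's route avoids having to analyse how $M$ acts on specific morphisms, trading that for a hom-set computation at $B_3$. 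Both are short and either would serve.
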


\begin{proof}
The previous proposition states that this functor is bijective on objects.
To conclude, it suffices to show that $\Aut(\nsOmega)$ is a discrete
category. Let $F$ and $G$ be two autoequivalences of $\nsOmega$ and let
$\gamma\colon F\to G$ be a natural transformation. For every planar tree
$T$, we have a morphism $\gamma_T\colon F(T)\to G(T)$. If $T$ is the planar tree
$B_3$ described in paragraph~\ref{paragr:trees_Bn}, then there are no morphisms
$T \to M(T)$ or $M(T)\to T$. This implies that $F$ and $G$ have to be both
equal to the identity autoequivalence or to the mirror autoequivalence.

So let $F$ be either the identity autoequivalence or $M$, and let
$\gamma\colon F\to F$ be a natural transformation. Let $T$ be a planar tree
and $c\colon \eta\to T$ any colour. Then, by naturality, we have a
commutative square
\[
\xymatrix{
F(\eta)=\eta\ar[r]^-{F(c)}\ar[d]_{\gamma_{\eta}=1_{\eta}} & F(T) \ar[d]^{\gamma_T} \\
F(\eta)=\eta\ar[r]_-{F(c)} & F(T).
}
\]
This shows that $\gamma_T$ is the identity on colours and hence the
identity map, thereby proving that $\gamma$ is the identity natural
transformation.
\end{proof}

\begin{thm}
The monoidal categories $\Aut(\nsOper)$ and $\Aut(\nsrOper)$ are equivalent to the discrete
monoidal category $\catZd$.
\end{thm}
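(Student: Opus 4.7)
The plan is to follow exactly the same pattern as the corresponding theorem for $(\infty,n)$-categories at the end of Section~\ref{subsec:autoeq_Theta}. By Corollaries~\ref{cor:ff_auto_nsop_nstree} and~\ref{cor:ff_auto_nsrop_tree}, the monoidal categories $\Aut(\nsOper)$ and $\Aut(\nsrOper)$ embed fully faithfully into $\Aut(\nsOmega)$ via the dense inclusions $\nsOmega \hookto \nsOper$ and $\nsOmega \hookto \nsrOper$, respectively. By Theorem~\ref{thm:auto_nsOmega}, the functor $\catZd \to \Aut(\nsOmega)$ is an isomorphism of monoidal categories, so both $\Aut(\nsOper)$ and $\Aut(\nsrOper)$ are identified with full monoidal subcategories of $\catZd$.

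It therefore suffices to check essential surjectivity, i.e., that every autoequivalence of $\nsOmega$ lifts to an autoequivalence of $\nsOper$ and of $\nsrOper$. By Proposition~\ref{prop:auto_nsOmega}, every autoequivalence of $\nsOmega$ is either the identity or the mirror autoequivalence $M$. The identity trivially lifts, and the mirror autoequivalence was defined in the first place as an autoequivalence of $\nsOper$ which restricts to autoequivalences of $\nsrOper$ and $\nsOmega$ (the latter because $M$ sends planar trees to planar trees by reversing the orientation of the plane). Hence both inclusions are also essentially surjective and thus equivalences of monoidal categories.

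The only potentially subtle point in the plan is ensuring compatibility of the monoidal structures under these equivalences, but since all the functors involved are induced by the inclusions $\nsOmega \hookto \nsrOper \hookto \nsOper$ and the monoidal structure on each $\Aut(-)$ is given by composition, the fully faithful embeddings are automatically strict monoidal; the lift of $M$ is manifestly monoidal by construction. This yields the claimed equivalences.
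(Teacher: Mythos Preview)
Your proof is correct and follows essentially the same approach as the paper's own proof: use Corollaries~\ref{cor:ff_auto_nsop_nstree} and~\ref{cor:ff_auto_nsrop_tree} to embed $\Aut(\nsOper)$ and $\Aut(\nsrOper)$ fully faithfully into $\Aut(\nsOmega)\cong\catZd$, then observe that both the identity and the mirror autoequivalence lift. Your additional paragraph on monoidal compatibility is more explicit than the paper's treatment but not a departure from its strategy.
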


\begin{proof}
By Corollaries~\ref{cor:ff_auto_nsop_nstree}
and~\ref{cor:ff_auto_nsrop_tree}, the categories $\Aut(\nsOper)$ and
$\Aut(\nsrOper)$ are both full (monoidal) subcategories of the category
$\Aut(\nsOmega)$. To conclude, it thus suffices to show that every
autoequivalence of $\nsOmega$ lifts to autoequivalences of $\nsOper$ and
$\nsrOper$. This is obvious since, by Proposition~\ref{prop:auto_nsOmega},
the autoequivalences of $\nsOmega$ are the identity and the mirror autoequivalence.
\end{proof}

\subsection{Autoequivalences of the quasi-category of non-symmetric
$\infty$-operads}\label{section:autoequiv_nsoper}

\begin{paragr}\label{paragr:mirror_nsOmSp}
The autoequivalence $M$ of $\nsOmega$ extends formally to an autoequivalence
of the quasi-category $\qpref{\nsOmega}$. It is easy to see that the sets $\I$
and $\J$ of paragraph~\ref{paragr:JJJ_planar} are stable under this
autoequivalence and we thus get an induced autoequivalence~$M$ of
$\nsOmSp = {(\I \cup \J)}^{-1}\qpref{\nsOmega}$.
\end{paragr}

The sets $\I$, $\J^\flat$ and $\J$ appearing in the remainder of the section
are those introduced in paragraph~\ref{paragr:JJJ_planar}.

\begin{prop}
A non-symmetric operad is $\J^\flat$-local if and only if it is rigid.
\end{prop}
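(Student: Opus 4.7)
The plan is to mirror the proof of the symmetric version, Proposition~\ref{prop:rigid_Jflat}, replacing the Boardman--Vogt tensor product $\tensOp$ with the $\Cat$-tensor action $\otimes_\nsOper$ on $\nsOper$ and the internal hom $\HomiOp$ with the $\Cat$-enrichment $\HominsOp$. Since the non-symmetric operad $\eta$ (which is also the terminal category) is the unit of the tensor action, one has $J \otimes_\nsOper \eta \cong J$, so the map $j_\eta$ is canonically identified with $j\colon J \to \eta$. In particular $j$ belongs to $\J^\flat$, and hence every $\J^\flat$-local non-symmetric operad is $j$-local, so rigid by definition. This handles the ``if'' direction.

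For the converse, let $P$ be a rigid non-symmetric operad, let $T$ be a planar tree, and let $e\colon P \to 1$ denote the unique map to the terminal non-symmetric operad. Using the adjunction $\nsOper(C \otimes_\nsOper P', Q) \cong \Cat(C, \HominsOp(P', Q))$, one obtains
\[
j_T \orth e
\quad\Longleftrightarrow\quad
j \otimes_\nsOper T \orth e
\quad\Longleftrightarrow\quad
j \orth \HominsOp(T, e),
\]
where $f \orth g$ denotes the unique right lifting property of $g$ with respect to $f$. Since $1$ is terminal, so is $\HominsOp(T, 1)$, and thus $\HominsOp(T, e)$ is the unique map from $\HominsOp(T, P)$ to the terminal category. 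Consequently $P$ is $j_T$-local if and only if the category $\HominsOp(T, P)$ is $j$-local, that is, rigid.

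It then remains to establish the non-symmetric analog of Proposition~\ref{prop:rigid_op_char}: if $P$ is rigid, then $\HominsOp(T, P)$ is rigid for every non-symmetric operad $T$. For this I will use the explicit description of the underlying category of $\HominsOp(T, P)$, analogous to paragraph~\ref{paragr:BV}: objects are operad maps $T \to P$, and morphisms $f \to g$ are families $\alpha = (\alpha_c)$ with $\alpha_c \in P(f(c); g(c))$ satisfying the naturality relation $g(p) \circ (\alpha_{c_1}, \dots, \alpha_{c_n}) = \alpha_d \circ f(p)$ for every operation $p$ of $T$. An isomorphism $\alpha$ has each $\alpha_c$ invertible in the underlying category of $P$; rigidity of $P$ then forces every $\alpha_c$ to be an identity, which in turn forces $f = g$ and $\alpha = 1_f$. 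The main (minor) obstacle is confirming that the $\Cat$-tensor--hom adjunction and the concrete form of $\HominsOp$ behave exactly as their symmetric counterparts; this poses no issue because the relevant formal manipulations are unchanged once one factor of the tensor is a category.
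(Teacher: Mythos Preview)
Your proposal is correct and follows exactly the approach the paper has in mind: its proof is the single sentence ``The proof is a trivial adaptation of the proof of Proposition~\ref{prop:rigid_Jflat}'', and you have spelled out precisely that adaptation, replacing $\tensOp$ by $\otimes_\nsOper$ and $\HomiOp$ by $\HominsOp$. One small slip: the implication ``$\J^\flat$-local $\Rightarrow$ rigid'' that you establish first is the ``only if'' direction, not the ``if'' direction.
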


\begin{proof}
The proof is a trivial adaptation of the proof of
Proposition~\ref{prop:rigid_Jflat}
\end{proof}

\begin{prop}
A planar dendroidal set is $(\I\cup\J)$-local if and only if it is the nerve
of a rigid non-symmetric operad.
\end{prop}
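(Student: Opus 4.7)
The statement to prove is the planar analog of Proposition~\ref{prop:rigid_op_IJ}, and the proof is a direct transposition of the argument there. My plan is to decompose the locality condition into its $\I$ and $\J$ parts, and then identify each with a known property.

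First I would invoke Proposition~\ref{prop:pdend_Segal} to characterize $\I$-local planar dendroidal sets as precisely those in the essential image of the planar dendroidal nerve, i.e., as nerves of non-symmetric operads. So it remains to show that, for a non-symmetric operad $P$, the nerve $N_{{\rm ns}, {\rm d}}(P)$ is $\J$-local if and only if $P$ is rigid.

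For this, I would combine fully faithfulness of the planar dendroidal nerve with the previous proposition. Concretely, for any planar tree $T$, both $J\otimes_{\nsOper}T$ and $T$ are non-symmetric operads (the tensor of a category with a non-symmetric operad being a non-symmetric operad), and $N_{{\rm ns}, {\rm d}}(j_T) = N_{{\rm ns}, {\rm d}}(j_T)$ is induced by the map $j_T\colon J\otimes_{\nsOper}T\to T$ in $\nsOper$. By fully faithfulness of $N_{{\rm ns}, {\rm d}}$, the restriction map
\[
N_{{\rm ns}, {\rm d}}(j_T)^\ast\colon \Pr(\nsOmega)(N_{{\rm ns}, {\rm d}}(T),N_{{\rm ns}, {\rm d}}(P))\longto \Pr(\nsOmega)(N_{{\rm ns}, {\rm d}}(J\otimes_{\nsOper}T),N_{{\rm ns}, {\rm d}}(P))
\]
is canonically identified with
\[
j_T^\ast\colon\nsOper(T,P)\longto\nsOper(J\otimes_{\nsOper}T,P).
\]
Hence $N_{{\rm ns}, {\rm d}}(P)$ is $N_{{\rm ns}, {\rm d}}(j_T)$-local if and only if $P$ is $j_T$-local. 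Ranging over all planar trees $T$, this says that $N_{{\rm ns}, {\rm d}}(P)$ is $\J$-local if and only if $P$ is $\J^\flat$-local, which by the previous proposition is equivalent to $P$ being rigid.

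Putting everything together: a planar dendroidal set $X$ is $(\I\cup\J)$-local iff it is $\I$-local and $\J$-local, iff it is the nerve of a non-symmetric operad $P$ and that $P$ is rigid. There is no real obstacle here; the only minor point to verify is that the planar Boardman--Vogt--type construction $J\otimes_{\nsOper}(-)$ is well-defined on non-symmetric operads (which is explained in the paragraph introducing $\otimes_{\nsOper}$), so that the argument of Proposition~\ref{prop:rigid_op_IJ} transfers without change.
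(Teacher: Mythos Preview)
Your argument is correct and follows exactly the approach of the paper: decompose $(\I\cup\J)$-locality, use Proposition~\ref{prop:pdend_Segal} to identify $\I$-local objects with nerves of non-symmetric operads, and then use fully faithfulness of the nerve together with the previous proposition to reduce $\J$-locality to $\J^\flat$-locality, i.e., rigidity. The paper simply says ``as in the proof of Proposition~\ref{prop:rigid_op_IJ}'' without spelling out the fully faithfulness step you made explicit.
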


\begin{proof}
This follows from the previous proposition and
Proposition~\ref{prop:pdend_Segal} as in the proof
of Proposition~\ref{prop:rigid_op_IJ}.
\end{proof}

\begin{thm}
The quasi-category $\Aut(\nsOmSp)$ is canonically equivalent to the discrete category $\catZd$.
\end{thm}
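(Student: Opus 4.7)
The plan is to apply Proposition~\ref{prop:lemma_aut} with $A = \nsOmega$ and $S = \I \cup \J$, in direct parallel with the proof of Theorem~\ref{thm:thn} in the $\Theta_n$-case and of the contractibility theorem for $\Aut(\OmSp)$. By the previous proposition, a presheaf on $\nsOmega$ is $(\I \cup \J)$-local precisely when it is the nerve of a rigid non-symmetric operad, so the hypotheses of Proposition~\ref{prop:lemma_aut} translate into the two conditions: (i) every object of $\nsOmega$ is a rigid non-symmetric operad, and (ii) every autoequivalence of $\nsrOper$ restricts to an autoequivalence of $\nsOmega$. The first condition is immediate, since the non-symmetric operad freely generated by a planar tree has a rigid underlying category; the second is exactly Proposition~\ref{prop:auto_nsrOper_restr}.

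Applying Proposition~\ref{prop:lemma_aut}, we therefore obtain a fully faithful functor
\[
\Aut(\nsOmSp) \longto \Aut(\nsOmega).
\]
Combining this with Theorem~\ref{thm:auto_nsOmega}, which identifies $\Aut(\nsOmega)$ with the discrete monoidal category $\catZd$, we conclude that $\Aut(\nsOmSp)$ sits as a full subcategory of $\catZd$. In particular $\Aut(\nsOmSp)$ is itself discrete.

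It remains to show essential surjectivity, i.e.\ that every autoequivalence of $\nsOmega$ lifts to $\nsOmSp$. Since by Proposition~\ref{prop:auto_nsOmega} the autoequivalences of $\nsOmega$ are generated by the mirror autoequivalence $M$, this is exactly the content of paragraph~\ref{paragr:mirror_nsOmSp}: $M$ preserves the set $\I \cup \J$ and hence induces an autoequivalence of the localization $\nsOmSp = {(\I \cup \J)}^{-1}\qpref{\nsOmega}$. This completes the identification $\Aut(\nsOmSp) \cong \catZd$.

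The main conceptual work is not in this final theorem itself but was already carried out upstream, namely in verifying hypothesis (ii) (Proposition~\ref{prop:auto_nsrOper_restr}) and in the identification of $\Aut(\nsOmega)$ with $\catZd$ (Theorem~\ref{thm:auto_nsOmega}). Here the only thing to confirm is that the general machinery of Section~\ref{sec:autodense} applies verbatim, and that the lift of $M$ through the localization is available; both are formal given the earlier results.
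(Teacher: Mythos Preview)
Your proof is correct and follows essentially the same approach as the paper's own proof: apply Proposition~\ref{prop:lemma_aut} with $A=\nsOmega$ and $S=\I\cup\J$, verify the two hypotheses via the previous proposition and Proposition~\ref{prop:auto_nsrOper_restr}, and then combine the resulting full embedding into $\Aut(\nsOmega)\cong\catZd$ with the lift of $M$ from paragraph~\ref{paragr:mirror_nsOmSp}. The concluding commentary is extra but accurate.
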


\begin{proof}
We are going to apply Proposition~\ref{prop:lemma_aut} to $A = \nsOmega$ and
$S = \I \cup \J$. Let us check that the hypotheses are fulfilled. Using the
previous proposition, this amounts to verifying that
\begin{enumerate}
  \item objects of $\nsOmega$ are rigid non-symmetric operads;
  \item autoequivalences of $\nsrOper$ restrict to autoequivalences of
    $\nsOmega$.
\end{enumerate}
The first point is obvious and the second point is
Proposition~\ref{prop:auto_nsrOper_restr}. We can thus apply the
proposition and we get that $\Aut(\nsOmSp)$ is a full subcategory of
$\Aut(\nsOmega)$. But $\Aut(\nsOmega)$ is isomorphic to $\catZd$ by
Theorem~\ref{thm:auto_nsOmega}. To conclude, it thus suffices to show that
every autoequivalence of $\nsOmega$ lifts to an autoequivalence of
$\nsOmSp$. This follows from paragraph~\ref{paragr:mirror_nsOmSp}.
\end{proof}

\begin{rem}
Using the monoidal structures described in Remark~\ref{rem:oomon}, one can
show that $\Aut(\nsOmSp)$ and $\catZd$ are equivalent as mo\-noidal
quasi-categories.
\end{rem}

\bibliographystyle{alpha}
\bibliography{biblio}

\end{document}